\newcommand{\F}{{\mathbb{F}}}
\newcommand{\G}{{\mathbb{G}}}
\newcommand{\Q}{{\mathbb{Q}}}
\newcommand{\K}{{\mathbb{K}}}
\newcommand{\bO}{{\mathbb{O}}}
\newcommand{\R}{{\mathbb{R}}}
\newcommand{\Z}{{\mathbb{Z}}}
\newcommand{\C}{{\mathbb{C}}}
\newcommand{\cA}{{\mathcal{A}}}
\newcommand{\cC}{{\mathcal{C}}}
\newcommand{\cL}{{\mathcal{L}}}
\newcommand{\cP}{{\mathcal{P}}}
\newcommand{\cR}{{\mathcal{R}}}
\newcommand{\cS}{{\mathcal{S}}}
\newcommand{\SG}{{\mathfrak{S}}}
\newcommand{\bk}{{\boldsymbol{k}}}
\newcommand{\bkp}{{\boldsymbol{k}^+}}
\newcommand{\bkm}{{\boldsymbol{k}^\times}}
\newcommand{\bkn}{{\boldsymbol{k}^n}}
\newcommand{\bB}{{\boldsymbol{B}}}
\newcommand{\bG}{{\boldsymbol{G}}}
\newcommand{\bH}{{\boldsymbol{H}}}
\newcommand{\bK}{{\boldsymbol{K}}}
\newcommand{\bN}{{\boldsymbol{N}}}
\newcommand{\bS}{{\boldsymbol{S}}}
\newcommand{\bT}{{\boldsymbol{T}}}
\newcommand{\bW}{{\boldsymbol{W}}}
\newcommand{\bU}{{\boldsymbol{U}}}
\newcommand{\bV}{{\boldsymbol{V}}}
\newcommand{\bX}{{\boldsymbol{X}}}
\newcommand{\bY}{{\boldsymbol{Y}}}
\newcommand{\bZ}{{\boldsymbol{Z}}}
\newcommand{\Hom}{{\operatorname{Hom}}}
\newcommand{\Aut}{{\operatorname{Aut}}}
\newcommand{\Irr}{{\operatorname{Irr}}}
\newcommand{\cRad}{\mathcal{R}_{\operatorname{ad}}}
\newcommand{\cRsc}{\mathcal{R}_{\operatorname{sc}}}
\newcommand{\bGad}{\boldsymbol{G}_{\operatorname{ad}}}
\newcommand{\bGsc}{\boldsymbol{G}_{\operatorname{sc}}}
\newcommand{\bGss}{\boldsymbol{G}_{\operatorname{ss}}}
\newcommand{\bGder}{\boldsymbol{G}_{\operatorname{der}}}
\newcommand{\id}{{\operatorname{id}}}
\newcommand{\trp}{{\operatorname{tr}}}
\newcommand{\GL}{{\operatorname{GL}}}
\newcommand{\GU}{{\operatorname{GU}}}
\newcommand{\PGL}{{\operatorname{PGL}}}
\newcommand{\SL}{{\operatorname{SL}}}
\newcommand{\SU}{{\operatorname{SU}}}
\newcommand{\SO}{{\operatorname{SO}}}
\newcommand{\Sp}{{\operatorname{Sp}}}
\newcommand{\Spin}{{\operatorname{Spin}}}
\renewcommand{\leq}{\leqslant}
\renewcommand{\geq}{\geqslant}
\newcommand{\nm}[1]{{\it{#1\index{#1}}}}
\newcommand{\nmi}[2]{{\it{#1\index{#2}}}}
\newcommand{\nms}[2]{{\it{#1\index{#2@#1}}}}
\def\dddots{\mathinner{\mkern1mu\raise1pt
    \vbox{\kern7pt\hbox{.}}\mkern2mu
    \raise4pt\hbox{.}\mkern2mu\raise7pt\hbox{.}\mkern1mu}}
\renewenvironment{proof}[1][\proofname]{\par
  \pushQED{\qed}%
  \normalfont \topsep6\p@\@plus6\p@\relax
  \trivlist
  \itemindent\normalparindent
  \item[\hskip\labelsep
        \scshape
    #1\@addpunct{.}]\ignorespaces
}{%
  \popQED\endtrivlist\@endpefalse
}
\def\daytime{
  \count100=\time             
  \divide\count100 by 60      
  \count101=\count100
  \multiply\count101 by -60
  \count102=\time
  \advance\count102  by \count101
  \number\count100:
  \ifcase                     
    \count102 00\or 01\or 02\or 03\or 04\or 05\or 06\or 07\or 08\or 09
  \else
     \number\count102
  \fi
}
\def\today{\ifcase\month\or
        January\or February\or March\or April\or May\or June\or
        July\or August\or September\or October\or November\or December\fi
        \space\number\day, \number\year}
\newtheorem{thm}{Theorem}[section]
\newtheorem{prop}[thm]{Proposition}
\newtheorem{lem}[thm]{Lemma}
\newtheorem{cor}[thm]{Corollary}
\theoremstyle{definition}
\newtheorem{defn}[thm]{Definition}
\newtheorem{exmp}[thm]{Example}
\newtheorem{abs}[thm]{}
\theoremstyle{remark}
\newtheorem{rem}[thm]{Remark}
\begin{document}
\title{Reductive groups and Steinberg maps}

\author{Meinolf Geck}

\address{IAZ -- Lehrstuhl f\"ur Algebra, Universit\"at Stuttgart,
Pfaffenwaldring 57, 70569 Stuttgart, Germany}
\email{meinolf.geck@mathematik.uni-stuttgart.de}

\author{Gunter Malle}
\address{FB Mathematik, TU Kaiserslautern, Postfach 3049,
         67653 Kaisers\-lautern, Germany.}
\email{malle@mathematik.uni-kl.de}

\date{\today}
\subjclass[2000]{Primary 20C33, Secondary 20G40}
\keywords{Finite groups of Lie type, character theory}

\begin{abstract}
This is a preliminary version of the first chapter of a book project
on the character theory of finite groups of Lie type. It provides the
foundations from the general theory of reductive algebraic groups over
a finite field. 
\end{abstract}

\maketitle

\tableofcontents



This first chapter is of a preparatory nature; its purpose is to collect 
some basic results about algebraic groups (with proofs where appropriate) 
which will be needed for the discussion of characters and applications in 
later chapters. In particular, one of our aims is to arrive at the point 
where we can give a precise definition of a ``series of finite groups of 
Lie type'' $\{\bG(q)\}$, indexed by a parameter $q$. We also introduce a 
number of tools which will be helpful in the discussion of examples. 

For a reader who is familiar with the basic notions about algebraic groups, 
root data and Frobenius maps it may just be sufficient to browse through this
chapter on a first reading, in order to see some of our notation. There are,
however, a few topics and results which are frequently used in the literature
on algebraic groups and finite groups of Lie type, but for which we have 
found the coverage in standard reference texts (like \cite{Bor}, 
\cite{Ca2}, \cite{DiMi2}, \cite{Hum}, \cite{Spr}) not to be sufficient; 
these will be treated here in a fairly self-contained manner.

Section~\ref{sec00} is purely expository: it introduces affine varieties, 
linear algebraic groups in general, and the first definitions
concerning reductive algebraic groups. 

In Section~\ref{sec:rootdata}, we consider in some detail (abstract) root 
data, the basic underlying combinatorial structure of the theory of reductive 
algebraic groups. In particular, we present the approach of \cite{BruLu}, in 
which root data simply appear as factorisations of the Cartan matrix of a 
root system. This provides, first of all, an efficient procedure for 
constructing root data from Cartan matrices; secondly, it will be extremely 
useful for computational purposes and the discussion of examples. 

Section~\ref{sec:chevalley} contains the fundamental existence and 
isomorphism theorems of Chevalley \cite{Chev}, \cite{Ch05} concerning 
connected reductive algebraic groups. We also state the more general 
``isogeny theorem'' and present some of its basic applications. (There
is now quite a short proof available, due to Steinberg \cite{St2}.) An 
important class of homomorphisms of algebraic groups to which this more 
general theorem applies are the Steinberg maps, to be discussed in detail
in Section~\ref{sec:steinberg}. 

Following \cite{St68}, one might just define a Steinberg map of a connected 
reductive algebraic group $\bG$ to be an endomorphism whose fixed point set 
is finite. But it will be important and convenient to single out a certain 
subclass of such morphisms to which one can naturally attach a positive real
number~$q$ (some power of which is a prime power) and such that one can 
speak of the corresponding finite group $\bG(q)$. The known results on
Frobenius and Steinberg maps are somewhat scattered in the literature so we 
treat this in some detail here, with complete proofs. 

In Section~\ref{sec:semisimple}, we illustrate the material developed so 
far by a number of further basic constructions and examples. In 
Section~\ref{sec:generic}, we show how all this leads to the notion of 
``generic'' reductive groups, in which $q$ will appear as a formal 
parameter. Finally, Section~\ref{sec:regemb} discusses in some detail the 
first applications to the character theory of finite groups of Lie type:
the ``Multiplicity--Freeness'' Theorem~\ref{multfree}. 

\section{Affine varieties and algebraic groups} \label{sec00}

In this section, we introduce some basic notions concerning affine
varieties and algebraic groups. We will do this in a somewhat informal
way, assuming that the reader is willing to fill in some details from
text books like \cite{Bor}, \cite{Ca2}, \cite{mybook}, \cite{Hum}, 
\cite{MaTe}, \cite{Spr}.

\begin{abs} {\bf Affine varieties.} \label{subsec11}
Let $k$ be a field and let $\bX$ be a set. Let $A$ be a subalgebra of 
the $k$-algebra $\cA(X,k)$ of all functions $f\colon \bX\rightarrow 
k$. Using $A$ we can try to define a topology on $\bX$: a subset $\bX'
\subseteq \bX$ is called closed if there is a subset $S\subseteq A$ such 
that $\bX'=\{x\in \bX\mid f(x)=0\mbox{ for all $f\in S$}\}$. This works 
well, and gives rise to the \nm{Zariski topology} on $\bX$, if $A$ is 
neither too small nor too big. The precise requiremente are 
(see \cite{cart}):
\begin{itemize}
\item[(1)] $A$ is finitely generated as a $k$-algebra and contains 
the identity of $\cA(X,k)$;
\item[(2)] $A$ separates points (that is, given $x\neq x'$ in $\bX$, there
exist some $f\in A$ such that $f(x)\neq f(x')$); 
\item[(3)] any $k$-algebra homomorphism $\lambda \colon A \rightarrow k$ 
is given by evaluation at a point (that is, there exists some $x\in \bX$ 
such that $\lambda(f)=f(x)$ for all $f\in A$). 
\end{itemize}
A pair $(\bX,A)$ satisfying the above conditions will be called an 
\nm{affine variety} {\em over~$k$}; the functions in $A$ are called the 
\nm{regular functions} on $\bX$. We define $\dim X$ to be the supremum of all 
$r \geq 0$ such that there exist $r$ algebraically independent elements in 
$A$. Since $A$ is finitely generated, $\dim \bX <\infty$. (See 
\cite[1.2.18]{mybook}.) If $A$ is an integral domain, then $\bX$ is
called  {\em irreducible}.

There is now also a natural notion of morphisms. Let $(\bX,A)$ and $(\bY,
B)$ be affine varieties over $k$. A map $\varphi \colon \bX\rightarrow\bY$ 
will be called a {\em morphism} if composition with $\varphi$ maps $B$ 
into $A$ (that is, for all $g\in B$, we have $\varphi^*(g):=g\circ \varphi 
\in A$); in this case, $\varphi^*\colon B\rightarrow A$ is an algebra
homomorphism, and every algebra homomorphism $B\rightarrow A$ arises in 
this way. The morphism $\varphi$ is an {\em isomorphism} if there is a 
morphism $\psi \colon \bY \rightarrow \bX$ such that $\psi\circ \varphi=
\id_{\bX}$. (Equivalently: the induced algebra homomorphism $\varphi^*
\colon B \rightarrow A$ is an isomorphism.) 

Starting with these definitions, the basics of (affine) algebraic geometry 
are developed in \cite{St74}, and this is also the approach taken in 
\cite{mybook}. The link with the more traditional approach via closed 
subsets in affine space (which, when considered as an algebraic set with
the Zariski topology, we denote by $\bkn$) is obtained as follows. Let 
$(\bX,A)$ be an affine variety over $k$. Choose a set $\{a_1,\ldots,a_n\}$ 
of algebra generators of $A$ and consider the polynomial ring $k[t_1,
\ldots,t_n]$ in $n$ independent indeterminates $t_1,\ldots,t_n$. There is 
a unique algebra homomorphism $\pi\colon k[t_1,\ldots,t_n] \rightarrow A$ 
such that $\pi(t_i)=a_i$ for $1\leq i \leq n$. Then we have a morphism
\[ \varphi\colon \bX \rightarrow \bkn,\qquad x \mapsto (a_1(x),
\ldots,a_n(x)),\]
such that $\varphi^*=\pi$. The image of $\varphi$ is the ``Zariski closed''
set of $\bkn$ consisting of all $(x_1,\ldots,x_n)\in \bkn$ such that
$f(x_1,\ldots,x_n)=0$ for all $f\in \ker(\pi)$.

To develop these matters any further, it is then essential to assume that 
$k$ is algebraically closed, which we will do from now on. One can go a 
long way towards those parts of the theory which are relevant 
for algebraic groups, once the following basic result about morphisms is 
available (see \cite[\S 1.13]{St74}, \cite[\S 2.2]{mybook}):

{\em Let $\varphi\colon \bX\rightarrow \bY$ be a morphism between 
irreducible affine varieties such that $\varphi(\bX)$ is dense in $\bY$. Then 
there is a non-empty open subset $\bV \subseteq \bY$ such that $\bV\subseteq 
\varphi(\bX)$ and, for all $y\in\bV$, we have $\dim\varphi^{-1}(y)= 
\dim \bX-\dim \bY$.}
\end{abs}

\begin{abs} {\bf Algebraic groups.} \label{subsec12}
In order to define algebraic groups, we need to know that direct products
of affine varieties are again affine varieties. So let $(\bX,A)$ and 
$(\bY,B)$ be affine varieties over $k$. Given $f \in A$ and $g\in B$, we 
define the function $f\otimes g\colon \bX\times \bY\rightarrow k$, 
$(x,y) \mapsto f(x)g(y)$. Let $A\otimes B$ be the subspace of 
$\cA(\bX\times \bY,k)$ spanned by all $f\otimes g$ where $f\in A$
and $g\in B$. Then $A\otimes B$ is a subalgebra of $\cA(\bX\times
\bY,k)$ (isomorphic to the tensor product of $A$, $B$ over $k$) and the pair 
$(\bX \times\bY, A\otimes B)$ is easily seen be an affine variety 
over~$k$. Now let $(\bG, A)$ be an affine variety and 
assume that $\bG$ is an abstract group where multiplication and inversion 
are defined by maps $\mu\colon \bG\times \bG \rightarrow \bG$ and 
$\iota\colon \bG\rightarrow\bG$. Then we say that $\bG$ is an \nm{affine 
algebraic group} if $\mu$ and $\iota$ are morphisms. The first example is 
the additive group of $k$ which, when considered as an algebraic group, 
we denote by~$\bkp$ (with algebra of regular functions given by the 
polynomial functions $k\rightarrow k$). 

Most importantly, the group $\GL_n(k)$ ($n\geq 1$), is an affine algebraic 
group, with algebra of regular functions given as follows. For $1\leq i,j
\leq n$ let $f_{ij}\colon \GL_n(k)\rightarrow k$ be the function which sends 
a matrix $g\in \GL_n(k)$ to its $(i,j)$-entry; furthermore, let $\delta
\colon \GL_n(k) \rightarrow k$, $g\mapsto \det(g)^{-1}$. Then the algebra of 
regular functions on $\GL_n(k)$ is the subalgebra of $\cA(\GL_n(k),k)$ 
generated by $\delta$ and all $f_{ij}$ ($1\leq i,j \leq n$). In particular, 
the muliplicative group $\bkm:=\GL_1(k)$ is an affine algebraic group. 

It is a basic fact that any affine algebraic group $\bG$ over $k$ is
isomorphic to a closed subgroup of $\GL_n(k)$, for some $n\geq 1$; see 
\cite[2.4.4]{mybook}. For this reason, an affine algebraic group is also 
called a \nm{linear algebraic group}. When we just write ``algebraic 
group'', we always mean an affine (linear) algebraic group.
\end{abs}

\begin{abs} {\bf Connected algebraic groups.} \label{subsec13}
A topological space is \nm{connected} if it cannot be written 
as a disjoint union of two non-empty open subsets. A linear algebraic
group $\bG$ can always be written as the disjoint union of finitely
many connected components, where the component containing the identity
element is a closed connected normal subgroup of $\bG$, denoted by
$\bG^\circ$; see \cite[1.3.13]{mybook}. Thus, $\bG$ is connected if and
only if $\bG=\bG^\circ$. (Equivalently: $\bG$ is irreducible as
an affine variety; see \cite[1.1.12, 1.3.1]{mybook}.) 

What is the significance of this fundamental notion? Every finite group 
$\bG$ can be regarded as a linear algebraic group, with algebra of regular 
functions given by all of $\cA(\bG,k)$. Thus, the study of {\em all} linear 
algebraic groups is necessarily more complicated than the study of all
finite groups. But, as Vogan \cite{VoE8} writes, ``a~miracle happens'' when 
we consider {\em connected} algebraic groups: things actually become much 
less complicated. One reason is that a connected algebraic group is almost 
completely determined by its Lie algebra (see \ref{subsec1tang} and also 
\ref{subsec1weights} below), and the latter can be studied using linear 
algebra methods. 

Combined with our assumption that $k$ is algebraically closed, this gives 
us some powerful tools. For example, matrices over algebraically closed
fields can be put in triangular form. An analogue of this fact for an 
arbitrary connected algebraic group is the statement that every 
element is contained in a Borel subgroup (that is, a maximal closed 
connected normal solvable subgroup); see \cite[3.4.9]{mybook}. 

A useful criterion for showing the connectedness of a subgroup of $\bG$ is
as follows. 

\smallskip
{\em Let $\{\bH_i\}_{i\in I}$ be a family of closed connected subgroups in 
$\bG$. Then the (abstract) subgroup $H=\langle \bH_i\mid i\in I\rangle
\subseteq \bG$ generated by this family is closed and connected; 
furthermore, we have $H=\bH_{i_1} \cdots \bH_{i_n}$ for some $n$ and 
$i_1,\ldots,i_n\in I$.}

\smallskip
The proof uses the result on morphisms mentioned at the end of 
\ref{subsec11}; see, e.g., \cite[2.4.6]{mybook}. Note that, if $\bU,\bV$ 
are any closed subgroups of $\bG$, then the abstract subgroup $\langle 
\bU, \bV\rangle \subseteq \bG$ need not even be closed. For example,
if $\bG=\SL_2(\C)$, then it is well-known that the subgroup $\SL_2(\Z)$
is generated by two elements of order $4$ and $6$, but this subgroup is 
certainly not closed in $\bG$. However, if $\bV$ is normalised by $\bU$,
then $\langle \bU,\bV\rangle=\bU.\bV$ is closed; see \cite[\S 3.3, 
Corollaire]{Ch05}.

We will use without further special mention some standard facts (whose 
proofs also rely on the above-mentioned result on morphisms). For example, 
if $f\colon \bG \rightarrow \bG'$ is a homomorphism of linear algebraic 
groups, then the image $f(\bG)$ is a closed subgroup of $\bG'$ (connected 
if $\bG$ is connected), the kernel of $f$ is a closed subgroup of $\bG$ 
and we have $\dim \bG=\dim \ker(f)+\dim f(\bG)$. (See, e.g., 
\cite[2.2.14]{mybook}.) 
\end{abs}

\begin{abs} {\bf Classical groups.} \label{subsecclassic} These form
an important class of examples of linear algebraic groups. They are
closed subgroups of $\GL_n(k)$ defined by certain quadratic polynomials 
corresponding to a bilinear or quadratic form on the underlying 
vector space $k^n$. There is an extensive literature on these groups; see,
e.g., \cite{bour9}, \cite{dieu}, \cite{Grove2}, \cite{dtay}. Since our
base field $k$ is algebraically closed, the general theory simplifies
considerably and we only need to consider three classes of groups, 
leading to the Dynkin types $B,C,D$. First, and quite generally, for 
any invertible matrix $Q_n\in M_n(k)$, we obtain a linear algebraic group 
\[\Gamma(Q_n,k) :=\{A\in M_n(k) \mid A^\trp Q_nA=Q_n\}; \]
note that $\det(A)=\pm 1$ for all $A\in \Gamma(Q_n,k)$. Let us now take 
$Q_n$ of the form 
\[Q_n=\left[\begin{array}{cccc} 0 & \cdots & 0 & \pm 1 \\ \vdots &\dddots &
\dddots & 0 \\ 0 &\pm  1 &\dddots & \vdots \\ \pm 1 & 0 & \cdots & 0 
\end{array} \right]\in M_n(k)\qquad (n\geq 2)\]
where the signs are such that $Q_n^\trp=\pm Q_n$. Then $Q_n$ is the 
matrix of a non-degenerate symmetric or alternating bilinear form on 
$k^n$; furthermore, $Q_n^{-1}=Q_n^\trp$ and $\Gamma(Q_n,k)$ will be 
invariant under transposing matrices. 

If $Q_n^\trp=-Q_n$ and $n$ is even, then $\Gamma(Q_n,k)$ will be denoted 
$\mbox{Sp}_n(k)$ and called the \nm{symplectic group}. This group is always 
connected; see \cite[1.7.4]{mybook}. 

Now assume that $Q_n^\trp=Q_n$ and that all signs in $Q_n$ are $+$. Then
we also consider the quadratic from on $k^n$ defined by the polynomial
\[ f_n:=\left\{\begin{array}{cl} t_1t_{2m+1}+t_2t_{2m}+\ldots +
t_mt_{m+2}+t_{m+1}^2 & \quad \mbox{if $n=2m+1$ is odd},\\
t_1t_{2m}+t_2t_{2m-2}+\ldots +t_mt_{m+1} & \quad \mbox{if $n=2m$ is even},
\end{array}\right.\]
(where $t_1,\ldots,t_n$ are indeterminates). This defines a function 
$\dot{f}_n\colon k^n\rightarrow k$, where we regard the elements of $k^n$ as 
column vectors. Then, using the notation in \cite[\S 1.2]{MaTe}, the 
\nm{general orthogonal group} is defined as 
\[ \mbox{GO}_{n}(k):=\{A\in M_n(k)\mid \dot{f}_n(Av)=\dot{f}_n(v) 
\mbox{ for all $v\in k^n$}\};\]
furthermore, $\mbox{SO}_{n}(k):=\mbox{GO}_{n}(k)^\circ$ will be called the
\nm{special orthogonal group}. In each case, we have 
$[\mbox{GO}_n(k): \mbox{SO}_n(k)]\leq 2$; see \cite[\S 1.7]{mybook}, 
\cite{Grove2} for further details. Note also that, if $\mbox{char}(k)
\neq 2$, then $\mbox{GO}_n(k)=\Gamma(Q_n,k)$; otherwise, $\mbox{GO}_n(k)$ 
will be strictly contained in $\Gamma(Q_n,k)$. (See also Example~\ref{isogBC}
for the case where $n$ is odd and $\mbox{char}(k)=2$.)

The particular choices of $Q_n$ and $f_n$ lead to simple descriptions of 
a $BN$-pair in $\mbox{Sp}_n(k)$ and $\mbox{SO}_n(k)$; see, e.g., 
\cite[\S 1.7]{mybook} (and also \ref{subsec16} below). The Dynkin types 
and dimensions are given as follows.
\[\renewcommand{\arraystretch}{1.2} \begin{array}{ccc} \hline \mbox{Group} 
& \mbox{Type} & \mbox{Dimension}\\\hline
\mbox{SO}_{2m{+}1}(k) & B_m & 2m^2+m\\
\mbox{Sp}_{2m}(k) & C_m & 2m^2+m\\
\mbox{SO}_{2m}(k) & D_m & 2m^2-m\\\hline \end{array}\]
\end{abs}

\begin{abs} {\bf Tangent spaces and the Lie algebra.} \label{subsec1tang} 
Let $(\bX,A)$ be an affine variety over $k$. Then the \nm{tangent space} 
$T_x(\bX)$ of $\bX$ at a point $x\in \bX$ is the set of all $k$-linear maps 
$D\colon A\rightarrow k$ such that $D(fg)=f(x)D(g)+g(x)D(f)$. (Such linear
maps are called {\em derivations}.) Clearly, $T_x(\bX)$ is a subspace of the
vector space of all linear maps from $A$ to $k$. Any $D\in T_x(\bX)$ is 
uniquely determined by 
its values on a set of algebra generators of $A$. Hence, since $A$ is 
finitely generated, we have $\dim T_x(\bX)<\infty$. If $\bX'\subseteq \bX$ 
is a closed subvariety, we have a natural inclusion $T_x(\bX')\subseteq 
T_x(\bX)$ for any $x\in \bX'$. For example, we can identify $T_x(\bkn)$
with $k^n$ for all $x\in \bkn$ and so, if $\bX\subseteq \bkn$ is a Zariski 
closed subset, we have $T_x(\bX)\subseteq k^n$ for all $x\in \bX$ (see 
\cite[1.4.10]{mybook}). 
More generally, any morphism $\varphi \colon \bX \rightarrow \bY$ between
affine varieties over $k$ naturally induces a linear map $d_x\varphi \colon 
T_x(\bX) \rightarrow T_{\varphi(x)}(\bY)$ for any $x \in \bX$, called the 
{\em differential} of $\varphi$ at~$x$.  (See \cite[\S 1.4]{mybook}.)

Now let $\bG$ be a linear algebraic group and denote $L(\bG):=T_1(\bG)$,
the tangent space at the identity element of $\bG$. Then
\[ L(\bG)=L(\bG^\circ) \qquad \mbox{and} \qquad \dim \bG=\dim L(\bG);\]
see \cite[1.5.2]{mybook}. Furthermore, there is a Lie product $[\;,\;]$ on 
$L(\bG)$ which can be defined as follows. Consider a realisation of $\bG$ 
as a closed subgroup of $\GL_n(k)$ for some $n\geq 1$. We have a natural
isomorphism of $L(\GL_n(k))$ onto $M_n(k)$, the vector space of all 
$n \times n$-matrices over $k$; see \cite[1.4.14]{mybook}. Hence we obtain
an embedding $L(\bG) \subseteq M_n(k)$ where $M_n(k)$ is endowed with
the usual Lie product $[A,B]=AB-BA$ for $A,B\in M_n(k)$. Then one shows that
$[L(\bG),L(\bG)] \subseteq L(\bG)$ and so $[\;,\;]$ restricts to a Lie 
product on $L(\bG)$; see \cite[1.5.3]{mybook}. (Of course, there is also an 
intrinsic description of $L(\bG)$ in terms of the algebra of regular 
functions on $\bG$ which shows, in particular, that the product does not 
depend on the choice of the realisation of $\bG$; see \cite[1.5.4]{mybook}.)
\end{abs}

\begin{abs} {\bf Quotients.}\label{subsec1quot} Let $\bG$ be a linear
algebraic group and $\bH$ be a closed normal subgroup. We have the abstract 
factor group $\bG/\bH$ and we would certainly like to know if this can also
be viewed as an algebraic group. More generally, let $\bX$ be an affine
variety and $\bH$ be a linear algebraic group such that we have a morphism
$\bH\times \bX \rightarrow \bX$ which defines an action of $\bH$ on $\bX$.
The question of whether we can view the set of orbits $\bX/\bH$ as an 
algebraic variety leads to ``geometric invariant theory''; in general, these 
are quite delicate matters. Let us begin by noting  that there is a natural 
candidate for the algebra of functions on the orbit set $\bX/\bH$: If $A$ 
is the algebra of regular functions on $\bX$, then 
\[A^\bH:=\{f\in A\mid f(h.x)=f(x) \mbox{ for all $h\in \bH$ and all
$x\in \bX$}\}\]
can naturally  be regarded as an algebra of $k$-valued functions on $\bX/\bH$.
However, the three properties in \ref{subsec11} will not be satisfied in
general. There are two particular situations in which this is the
case, and these will be sufficient for most parts of this book; these two
situations are:
\begin{itemize}
\item $\bH$ is a finite group, or 
\item $\bX=\bG$ is an algebraic group and $\bH$ is a closed normal
subgroup (acting by left multiplication).
\end{itemize}
(For the proofs, see \cite[5.25]{Fog} or \cite[2.5.12]{mybook} in the first
case, and \cite[2.26]{Fog} or \cite[\S 5.5]{Spr} in the second case.) Now
let us assume that ($\bX/\bH,A^\bH)$ is an affine variety. Then, first of 
all, the natural map $\bX\rightarrow \bX/\bH$ is a morphism of affine 
varieties; furthermore, we have the following universal property:

{\em If $\varphi \colon \bX \rightarrow \bY$ is any morphism of affine
varieties which is constant on the orbits of $\bH$ on $\bX$, then there
is a unique morphism $\bar{\varphi} \colon \bX/\bH\rightarrow \bY$
such that $\varphi$ is the composition of $\bar{\varphi}$ and the natural
map $\bX\rightarrow \bX/\bH$.}

(Indeed, if $B$ is the algebra of regular functions on $\bY$, then the
induced algebra homomorphism $\varphi^*\colon B\rightarrow A$ has image
in $A^\bH$, hence it factors through an algebra homomorphism
$\bar{\varphi}^*\colon B \rightarrow A^\bH$ for a unique morphism
$\bar{\varphi}\colon \bX/\bH\rightarrow \bY$.)

For example, if we are in the second of the above two cases, then the
universal property shows that the induced multiplication and inversion 
maps on $\bG/\bH$ are morphisms of affine varieties. Thus, $\bG/\bH$ is 
an affine algebraic group. 
\end{abs}

\begin{abs} {\bf Algebraic groups in positive characteristic.}
\label{subsec14}
The finite groups that we shall study in this book are obtained as 
\[ \bG^F:=\{g\in \bG \mid F(g)=g\}\]
where $F\colon \bG \rightarrow \bG$ are certain bijective endomorphisms
with finitely many fixed points, the so-called {\em Steinberg maps}. (This
will be discussed in detail in Section~\ref{sec:steinberg}.) Such maps $F$ 
will only exist if $k$ has prime characteristic. So we
will usually assume that $p$ is a prime number and $k=\overline{\F}_p$ is 
an algebraic closure of the field $\F_p=\Z/p\Z$. Now, algebraic geometry
over fields with positive characteristic is, in some respects, more tricky 
than algebraic geometry over $\C$, say (because of the inseparability 
of certain field extensions; see also \ref{subsec1wrong} below). However, 
some things are actually easier. For example, using an embedding of $\bG$ 
into some $\GL_n(k)$ as in \ref{subsec12}, we see that every element
$g\in\bG$ has finite order. Thus, we can define $g$ to be \nm{semisimple} 
if the order of $g$ is prime to $p$; we define $g$ to be \nm{unipotent}
if the order of $g$ is a power of $p$. Then, clearly, any $g\in \bG$ has 
a unique decomposition
\[ g=us=su \qquad \mbox{where $s\in \bG$ is semisimple and
$u\in \bG$ is unipotent},\]
called the \nm{Jordan decomposition of elements}. (The proof in
characteristic $0$ certainly requires more work; see \cite[\S 2.4]{Spr}.) 
Another example: An algebraic group $\bG$ is called a \nm{torus} if 
$\bG$ is isomorphic to a direct product of a finite number of copies of 
the multiplicative group $\bkm$. Then $\bG$ is a torus if and only if $\bG$ 
is connected, abelian and consists entirely of elements of order prime 
to~$p$; see \cite[3.1.9]{mybook}. (To formulate this in characteristic 
$0$, one would need the general definition of semisimple elements.)
\end{abs}

\begin{abs} {\bf Some things that go wrong in positive characteristic.}
\label{subsec1wrong} Here we collect a few items which show that, when 
working over a field $k=\overline{\F}_p$ as above, things may not work
as one might hope or expect. The first item is:
\begin{itemize}
\item {\em A bijective homomorphism of algebraic groups $\varphi
\colon \bG_1 \rightarrow \bG_2$ need not be an isomorphism.}
\end{itemize}
The standard example is the Frobenius map $\overline{\F}_p \rightarrow 
\overline{\F}_p$, $x \mapsto x^p$. (Note that, over $\C$, a bijective 
homomorphism between connected algebraic groups is an isomorphism; see 
\cite[11.1.16]{GoWa}.) A useful criterion is given as follows (see 
\cite[2.3.15]{mybook}):
\begin{itemize}
\item {\em A bijective homomorphism of algebraic groups $\varphi\colon \bG_1
\rightarrow \bG_2$ is an isomorphism if $\bG_1,\bG_2$ are connected and if
the differential $d_1\varphi \colon T_1(\bG_1) \rightarrow T_1(\bG_2)$ 
between the tangent spaces is an isomorphism}.
\end{itemize}
The next item concerns the Lie algebra of an algebraic group. Let 
$\bG$ be a linear algebraic group and $\bU,\bH$ be closed subgroups of
$\bG$. As already noted in \ref{subsec1tang}, we have natural inclusions 
of $L(\bU)$, $L(\bH)$ and $L(\bU\cap \bH)$ into $L(\bG)$. It is 
always true that $L(\bU\cap \bH) \subseteq L(\bU)  \cap L(\bH)$. 
\begin{itemize}
\item {\em When considering the intersection of closed subgroups $\bU,\bH$
of an algebraic group $\bG$, it is not always true that $L(\bU\cap \bH)=
L(\bU) \cap L(\bH)$.}
\end{itemize}
A good example to keep in mind is as follows. Let $\bG=\GL_n
(\overline{\F}_p)$, $\bH=\SL_n(\overline{\F}_p)$ and $\bZ$ be the center of 
$\bG$ (the scalar matrices in $\bG$). Then $\bZ,\bH$ are closed subgroups of 
$\bG$. As in \ref{subsec1tang}, we can identify $L(\bG)=M_n(k)$; then 
$L(\bH)$ consists of all matrices of trace $0$ and $L(\bZ)$ consists of all 
scalar matrices. (For these facts see, for example, \cite[\S 1.5]{mybook}). 
Assume now that $p$ divides $n$. Then, clearly, $L(\bZ) \subseteq L(\bH)$,
whereas $\bZ\cap \bH$ is finite and so $L(\bZ\cap \bH)=L((\bZ\cap 
\bH)^\circ)=\{0\}$. (This phenomenon can not happen in characteristic~$0$; 
see \cite[6.12]{Bor} or \cite[12.5]{Hum}.) Closely related to the above 
item is the next item: semidirect products. Let $\bG$ be an algebraic group 
and $\bU,\bH$ be closed subgroups such that $\bU$ is normal, $\bG=\bU.\bH$ 
and $\bU\cap\bH =\{1\}$. Following \cite[1.11]{Bor}, we say that $\bG$ is the 
\nm{semidirect product (of algebraic groups)} of $\bU,\bH$ if the natural 
map $\bU \times \bH\rightarrow \bG$ given by multiplication is an
isomorphism of affine varieties. If this holds, we have an inverse
isomorphism $\bG \rightarrow \bU \times \bH$ and the second projection
will induce an isomorphism of algebraic groups $\bG/\bU\cong \bH$.
\begin{itemize}
\item {\em In the definition of semidirect products of algebraic
groups, the assumption that $\bU \times \bH\rightarrow \bG$ is an 
isomorphism of affine varieties can not be omitted.}
\end{itemize}
Take again the above example where $\bG=\GL_n(\overline{\F}_p)$, $\bH=
\SL_n(\overline{\F}_p)$ and $\bZ$ is the center of $\bG$ (the scalar 
matrices in $\bG$). Assume now that $n=p$. Then $\bZ,\bH$ are closed 
connected normal subgroups such that $\bG=\bZ.\bH$ and $\bZ\cap\bH=\{1\}$. 
However, this is not a semidirect product of algebraic groups! For, if it 
were, then we would have an induced isomorphism $\SL_p(\overline{\F}_p)=
\bH\cong \bG/\bZ=\PGL_p(\overline{\F}_p)$ which does not exist, as we
will see later in Example~\ref{rootdatPGL}. 
\end{abs}

\begin{abs} {\bf The unipotent radical.} \label{subsec14a} 
Let $\bG$ be a linear algebraic group over $k=\overline{\F}_p$, where $p$
is a prime number. We can now also define the \nm{unipotent radical} 
$R_u(\bG)\subseteq \bG$, as follows. An abstract subgroup of $\bG$ is 
called {\em unipotent} if all of its elements are unipotent. Since every 
element in $\bG$ has finite order, one easily sees that the product of two 
normal unipotent subgroups is again a normal unipotent subgroup of $\bG$. 
If $\bG$ is finite, then this immediately shows that there is a unique 
maximal normal unipotent subgroup in $\bG$. (In the theory of finite groups, 
this is denoted $O_p(\bG)$.) In the general case, we define 
\[ R_u(\bG)\,:=\,\mbox{subgroup of $\bG$ generated by all $\bU \in 
\cS_{\text{unip}}(\bG)$},\]
where $\cS_{\text{unip}}(\bG)$ denotes the set of all closed connected 
normal unipotent subgroups of $\bG$. It is clear that $R_u(\bG)$ is an 
abstract normal subgroup. By the criterion in \ref{subsec13}, $R_u(\bG)$ is 
a closed connected subgroup of $\bG$; furthermore, $R_u(\bG)= \bU_1 \ldots 
\bU_n$ for some $n\geq 1$ and $\bU_1, \ldots, \bU_n\in 
\cS_{\text{unip}}(\bG)$. As already remarked before, this product will 
consist of unipotent elements. Thus, $R_u(\bG)$ is the unique maximal 
closed connected normal unipotent subgroup of $\bG$. (The analogous 
definition also works when $k$ is an arbitrary algebraically closed field, 
using the slightly more complicated characterisation of unipotents 
elements in that case.)
\begin{center}
{\em We say that $\bG$ is \nm{reductive} if $R_u(\bG)=\{1\}$.}
\end{center}
(Thus, connected reductive groups can be regarded as analogues of finite 
groups $G$ with $O_p(G)=\{1\}$.) These are the groups that we will be 
primarily concerned with. In an arbitrary algebraic group $\bG$, we always 
have the closed connected normal subgroups $R_u(\bG)\subseteq \bG^\circ
\subseteq \bG$, and $\bG/R_u(\bG)$ will be reductive. Note also that, 
clearly, we have the implication 
\[\mbox{$\bG$ simple} \quad \Rightarrow \quad \mbox{$\bG$ reductive (and
connected)}.\]
Here, we say that $\bG$ is a \nm{simple algebraic group} if it is 
connected, non-abelian and if it has no closed connected normal
subgroups other than $\{1\}$ and $\bG$ itself. (So, for example, $\SL_n(k)$
is a simple algebraic group, although in general it is not simple as an 
abstract group; $\GL_n(k)$ is reductive, but not simple.) 

Note that, even if one is mainly interested in studying a simple group 
$\bG$, one will also have to look at subgroups with a geometric origin, 
like Levi subgroups or centralisers of semisimple elements. These subgroups 
tend to be reductive, not just simple. For example, if $\bG$ is connected, 
reductive and $s\in \bG$ is a semisimple element, then the centralizer 
$C_{\bG}(s)$ will be a closed reductive (not necessarily connected or 
simple) subgroup; see \cite[3.5.4]{Ca2}.
\end{abs}

\begin{abs} {\bf Characters and co-characters of tori.}\label{subsec17} The
simplest examples of connected reductive algebraic groups are tori, and it
will be essential to understand some basic constructions with them. First,
a general definition. A homomorphism of algebraic groups $\lambda\colon 
\bG\rightarrow \bkm$ will be called a {\em character} of $\bG$. The set 
$X=X(\bG)$ of all characters of $\bG$ is an abelian group (which we write 
additively), called the \nm{character group} of $\bG$. Similarly, a 
homomorphism of algebraic groups $\nu\colon \bkm\rightarrow \bG$ will be 
called a {\em co-character} of $\bG$. If $\bG$ is abelian, then the set 
$Y=Y(\bG)$ of all co-characters of $\bG$ also is an abelian group (written 
additively), called the \nm{co-character group} of $\bG$. 
Now let $\bG=\bT$ be a torus over $k$; recall that this means that $\bT$ is 
isomorphic to a direct product of a finite number of copies of $\bkm$.
It is an easy exercise to show that every homomorphism of algebraic groups 
of the multiplicative group $\bkm$ into itself is given by $\xi \mapsto 
\xi^n$ for a well-defined $n\in \Z$. Thus, we have $X(\bkm)=
Y(\bkm) \cong \Z$ and this yields
\[X(\bT)\cong Y(\bT) \cong \Z^r\qquad\mbox{where} \qquad
\bT \cong \bkm \times \ldots \times \bkm \quad 
\mbox{($r$ factors)}.\]
Hence, $X(\bT)$ and $Y(\bT)$ are free abelian groups of the same
finite rank. Furthermore, we obtain a natural bilinear pairing
\[\langle \;,\;\rangle \colon X(\bT) \times Y(\bT) \rightarrow \Z,\]
defined by the condition that $\lambda(\nu(\xi))=\xi^{\langle \lambda,
\nu\rangle}$ for all $\lambda\in X(\bT)$, $\nu \in Y(\bT)$
and $\xi\in \bkm$. This pairing is a \nm{perfect pairing}, that is, 
it induces group isomorphisms 
\begin{alignat*}{2}
X(\bT)&\;\stackrel{\sim}{\longrightarrow}\;\Hom(Y(\bT),\Z), \qquad 
&\lambda \mapsto\bigl(\nu&\mapsto \langle \lambda,\nu\rangle\bigr),\\ 
Y(\bT)&\;\stackrel{\sim}{\longrightarrow}\;\Hom(X(\bT),\Z), \qquad &
\nu\mapsto\bigl(\lambda&\mapsto \langle \lambda,\nu\rangle\bigr),
\end{alignat*}
(see \cite[3.6]{MaTe}). The pair $(X(\bT),Y(\bT))$, together with the 
above pairing, is the simplest example of a so-called ``root datum'', which 
will be considered in more detail in Section~\ref{sec:rootdata}. The 
assignment $\bT \leadsto X(\bT)$ has the following fundamental property: if 
$\bT'$ is another torus over $k$, then we have a natural bijection 
\[\{\mbox{homomorphisms of algebraic groups $\bT\rightarrow 
\bT'$}\}\:\; \stackrel{1{-}1}{\longleftrightarrow} \;\;\Hom(X(\bT'),
X(\bT))\]
where, on the right hand side, $\Hom$ just stands for homomorphisms
of abstract abelian groups. The correspondence is defined by sending
a homomorphism of algebraic groups $f\colon \bT\rightarrow \bT'$ to the 
map $\varphi\colon X(\bT') \rightarrow X(\bT)$, $\chi'\mapsto \chi'\circ f$. 
For future reference, we state the following basic properties of this
correspondence:
\begin{itemize}
\item[(a)] $f\colon \bT\rightarrow \bT'$ is a closed embedding (that is, 
an isomorphism onto a closed subgroup of $\bT'$) if and only if 
$\varphi\colon X(\bT')\rightarrow X(\bT)$ is surjective; in this case, we 
have a canonical isomorphism $\ker(\varphi)\cong X(\bT'/f(\bT))$. 
\item[(b)] $f\colon \bT\rightarrow \bT'$ is surjective if and only
if $\varphi\colon X(\bT')\rightarrow X(\bT)$ is injective; in this case, we
have a canonical isomorphism $X(\bT)/\varphi(X(\bT'))\cong X(\ker(f))$ 
(induced by restriction of characters from $\bT$ to $\ker(f)$).
\end{itemize}
See \cite[Chap.~III, \S 8]{Bor} and \cite[\S 2.6]{St74} for proofs and
further details. Furthermore, by \cite[\S 3.1]{Ca2}, $\bT$ can be 
recovered from $X(\bT)$ through the isomorphism
\begin{itemize}
\item[(c)] $\;\;\bT \;\stackrel{\sim}{\longrightarrow} \Hom(X(\bT),
k^\times), \qquad t\mapsto (\lambda \mapsto \lambda(t))$. 
\end{itemize}
(Here again, $\Hom$ just stands for abstract homomorphisms of abelian 
groups.) 
\end{abs}


\begin{abs} {\bf Weight spaces.} \label{subsec1weights} Characters of tori 
play a major role in the following context. Let $\bG$ be a linear algebraic
group and $V$ be a finite-dimensional vector space over $k$. Note that $V$ 
is an affine variety with algebra of regular functions given by the 
subalgebra generated by the dual space $V^*=\Hom(V,k)\subseteq\cA(V,k)$. 
Assume that we have a \nm{representation} of $\bG$ on $V$, that is, we are 
given a morphism of affine varieties $\bG\times V \rightarrow V$ which 
defines a linear action of $\bG$ on $V$. Let $\bT \subseteq \bG$ be a maximal 
torus. (Any torus of maximum dimension is maximal.) For each character 
$\lambda\in X(\bT)$ we define the subspace 
\[ V_\lambda:=\{ v\in V\mid t.v=\lambda(t)v \mbox{ for all $t\in \bT$}\}.\]
Let $\Psi(\bT,V)$ be the set of all $\lambda\in X(\bT)$ such that $V_\lambda
\neq \{0\}$. Since $\bT$ consists of pairwise commuting semisimple elements, 
we have
\[ V=\bigoplus_{\lambda \in \Psi(\bT,V)} V_\lambda\]
(see \cite[3.1.5]{mybook}); in particular, this shows that $\Psi(\bT,V)$
is finite. The characters in $\Psi(\bT,V)$ are called \nm{weights} and the 
corresponding subspaces $V_\lambda$ called \nm{weight spaces} (relative
to $\bT$). Now, we always have the \nm{adjoint representation} of $\bG$ on its 
Lie algebra $L(\bG)$, defined as follows. For $g \in \bG$, consider the inner
automorphism $\gamma_g$ of $\bG$ defined by $\gamma_g(x)=gxg^{-1}$. Taking 
the differential, we obtain a linear map $d_1\gamma_g \colon L(\bG) 
\rightarrow L(\bG)$, which is a vector space isomorphism. Hence, we obtain 
a linear action of $\bG$ on $L(\bG)$ such that $g.v=d_1(\gamma_g)(v)$ for
all $g\in \bG$ and $v\in L(\bG)$. (The corresponding map $\bG \times L(\bG) 
\rightarrow L(\bG)$ indeed is a representation; see, for example, 
\cite[10.3]{Hum}.) Then the finite set 
\[R:=\Psi(\bT, L(\bG))\setminus \{0\}\quad \subseteq \;X(\bT)\]
is called the set of \nm{roots} of $\bG$ {\em relative to} $\bT$; we have
the \nm{root space decomposition}
\[L(\bG)=L(\bG)_0\oplus \bigoplus_{\alpha\in R} L(\bG)_\alpha.\]
This works in complete generality, for any algebraic group $\bG$. If $\bG$ 
is connected and reductive, then it is possible to obtain much more precise 
information about the root space decomposition. It turns out that then
\[L(\bG)_0=L(\bT), \quad R=-R \quad \mbox{and} \quad 
\dim L(\bG)_\alpha=1 \quad \mbox{for all $\alpha\in R$}.\] 
So, in this case, the picture is analogous to that in the theory of complex 
semisimple Lie algebras and, quite surprisingly, it shows that some crucial 
aspects of the theory do not depend on the underlying field! This fundamental
result, first proved in the {\em S\'eminaire Chevalley} \cite{Ch05}, will be 
discussed in more detail in Section~\ref{sec:chevalley}.
\end{abs}

\begin{abs} {\bf General structure of connected reductive algebraic groups.} 
\label{subsec15} Let $\bG$ be a connected linear algebraic group. Denote by 
$\bZ= \bZ(\bG)$ the center of $\bG$. Then we have $\bZ^\circ=R_u(\bZ) 
\times \bS$ where $\bS$ is a torus; see \cite[3.5.3]{mybook}. Since 
$R_u(\bZ)$ is a characteristic subgroup of $\bZ$ and $\bZ$ is a 
characteristic subgroup of $\bG$, we see that $R_u(\bZ)$ is normal in $\bG$. 
Hence, if $\bG$ is reductive, then $\bZ^\circ$ is a torus. In this case, the 
above-mentioned results about the root space decomposition lead to the 
following product decomposition of $\bG$ (see \cite[\S 8.4]{MaTe}, 
\cite[\S 8.1]{Spr}):
\[ \bG=\bZ^\circ.\bG_1 \ldots \bG_n\quad \mbox{where $\bG_1,\ldots,\bG_n$ 
are closed normal simple subgroups}\]
and $\bG_i$, $\bG_j$ pairwise commute with each other for $i \neq j$;
furthermore, this decomposition of $\bG$ has the following properties.
\begin{itemize}
\item The subgroups $\{\bG_1,\ldots,\bG_n\}$ are uniquely determined in the
sense that every closed normal simple subgroup of $\bG$ is equal to some 
$\bG_i$. 
\item We have $\bG_1\ldots \bG_n\,=\,\bGder\,:=\,$ commutator (or derived)
subgroup of $\bG$.
\end{itemize}
(Recall from \ref{subsec14a} that simple algebraic groups are assumed
to be connected and non-trivial; note also that the commutator subgroup of 
a connected algebraic group always is a closed connected normal subgroup; 
see \cite[2.4.7]{mybook}.) A connected
reductive algebraic group $\bG$ will be called \nm{semisimple} if 
$\bZ^\circ=\{1\}$ (or, equivalently, if the center of $\bG$ is finite). 
Thus, in the above setting, $\bGder$ is semisimple.

The above product decomposition can be used to prove general statements
about connected reductive algebraic groups by a reduction to simple
algebraic groups; see, for example, Lemma~\ref{Mreductord},
Theorem~\ref{multfree}.
\end{abs}

\begin{abs} {\bf Algebraic $BN$-pairs (or Tits systems).} \label{subsec16}
The concept of $BN$-pairs has been introduced by Tits \cite{Tits}, and it 
has turned out to be extremely useful. It applies to connected algebraic
groups and to finite groups, and it allows to give uniform proofs of 
many results, instead of going through a large number of case--by--case 
proofs. Recall that two subgroups $B,N$ in an arbitrary (abstract) group $G$ 
form a \nms{$BN$-pair}{BN-pair} (or a \nm{Tits system}) if the following 
conditions are satisfied.
\begin{itemize}
\item[(BN1)] $G$ is generated by $B$ and $N$.
\item[(BN2)] $H:=B \cap N$ is normal in $N$ and the quotient $W:=N/H$ is a
finite group generated by a set $S$ of elements of order~$2$.
\item[(BN3)] $n_sB n_s\neq B$ if $s\in S$ and $n_s$ is a
representative of $s$ in $N$.
\item[(BN4)] $n_sBn\subseteq Bn_snB\cup BnB$ for any $s\in S$
and $n\in N$.
\end{itemize}
The group $W$ is called the corresponding \nm{Weyl group}. We have a 
\nm{length function} on $W$, as follows. We set $l(1)=0$. If $w\neq 1$, we 
define $l(w)$ to be the length of a shortest possible expression of $w$ as a 
product of generators in~$S$. (Note that we don't have to take into 
account inverses, since $s^2=1$ for all $s\in S$.) Thus, any $w\in W$ can
be written in the form $w=s_1\cdots s_p$ where $p=l(w)$ and $s_i\in S$ 
for all~$i$. Such an expression (which is by no means unique) will be 
called a \nm{reduced expression} for~$w$. 

Furthermore, for any $w\in W$, we set $C(w):=Bn_wB$ where $n_w\in N$ is a 
representative of $w$ in $N$. Since any two representatives of $w$ lie in 
the same coset of $H\subseteq B$, we see that $C(w)$ does not depend on the 
choice of the representative. The double cosets $C(w)$ are called \nm{Bruhat 
cells} of $G$. Then the above axioms imply the fundamental \nm{Bruhat 
decomposition} (see \cite[Chap.~IV, n$^\circ$ 2.3]{bour}):
\[ G=\coprod_{w\in W} Bn_wB.\]
As Lusztig \cite{Lu10} writes, by allowing one to reduce many questions 
about $G$ to questions about the Weyl group $W$, the Bruhat decomposition 
is indispensible for the understanding of both the structure and 
representations of $G$. A key role in this context will be played by the 
Iwahori-Hecke algebra (introduced in \cite{Iw64}); this is 
a deformation of the group algebra of $W$ whose definition is 
based on the Bruhat decomposition. (We will come back to this in
a later section on Hecke algebras.)

Now let $\bG$ be a linear algebraic group over $k$ and let $\bB,\bN$ be closed 
subgroups of $\bG$ which form a $BN$-pair. Following \cite[\S 2.5]{Ca2}, we 
shall say that this is an \nm{algebraic $BN$-pair} if $\bH=\bB\cap \bN$ is 
abelian and consists entirely of semisimple elements, and we have an 
abstract semidirect product decomposition $\bB=\bU.\bH$ where $\bU$ is a 
closed normal unipotent subgroup of $\bB$ such that $\bU \cap \bH=\{1\}$. 
(If $\bB$ is connected, then this is automatically a semidirect product 
of algebraic groups as in \ref{subsec1wrong}; see \cite[6.3.5]{Spr}.) We 
do not assume that $\bG$ is connected, so the definition can apply in 
particular to finite algebraic groups. We now have:
\end{abs}

\begin{prop} \label{algbnpair} Let $\bG$ be a linear algebraic group and 
$\bB, \bN$ be subgroups which form an algebraic $BN$-pair in $\bG$, where 
$\bB= \bU.\bH$ as above. Assume that $\bH$, $\bU$ are connected and that 
$C_{\bG}(\bH)=\bH$. Then the following hold. 
\begin{itemize}
\item[(a)] $\bG$ is connected and reductive. 
\item[(b)] $\bB$ is a Borel subgroup (that is, a maximal closed connected
solvable subgroup of $\bG$); we have $\bB=N_{\bG}(\bU)$ and $[\bB,\bB]=
\bU=R_u(\bB)$.
\item[(c)] $\bH$ is a maximal torus of $\bG$ and we have $\bN=N_{\bG}(\bH)$.
\end{itemize}
\end{prop}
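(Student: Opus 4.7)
The plan is to first establish that $\bG$ is connected, then promote $\bB$ to a Borel subgroup (which is the crux), and finally deduce the remaining structural statements from the standard theory of Borels and maximal tori together with the hypothesis $C_{\bG}(\bH) = \bH$.

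For the connectedness in (a), I would use the Bruhat decomposition $\bG = \coprod_{w \in W} \bB n_w \bB$, which is a purely abstract consequence of the axioms (BN1)--(BN4). Since $\bU$ and $\bH$ are connected, $\bB = \bU.\bH$ is connected as the image of $\bU \times \bH$ under multiplication, so $\bB \subseteq \bG^\circ$. To place each $n_s$ ($s \in S$) in $\bG^\circ$, consider the irreducible set $n_s\bB n_s = n_s \bB n_s^{-1} \cdot n_s^2$, which has the same dimension as $\bB$; by (BN4) it is contained in the disjoint union $\bB \cup \bB n_s\bB$, and by (BN3) combined with the dimension equality it cannot lie in $\bB$, so by irreducibility it lies in $\bB n_s \bB$. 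Projecting onto the finite quotient $\bG/\bG^\circ$ yields $\pi(n_s)^2 = \pi(n_s)$, hence $\pi(n_s) = 1$. Since $\bN$ is generated by $\bH$ and the $n_s$ (because $\bN/\bH = W$ is generated by $S$), we have $\bN \subseteq \bG^\circ$, and then $\bG = \langle \bB, \bN\rangle = \bG^\circ$.

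Next I want to show that $\bB$ is a Borel subgroup. Note $\bB$ is closed, connected, and solvable, being an extension of the abelian torus $\bH$ by the unipotent group $\bU$. Let $\bB_0$ be a Borel subgroup containing $\bB$. The purely abstract parabolic subgroup theorem for BN-pairs says that every subgroup of $\bG$ containing $\bB$ has the form $\bP_I = \bigsqcup_{w \in W_I}\bB n_w \bB$ with $W_I = \langle I\rangle$ for some $I \subseteq S$, so $\bB_0 = \bP_I$. The main obstacle is to rule out $I \neq \emptyset$, i.e.\ to show that the rank-one subgroup $\bP_{\{s\}} = \bB \cup \bB n_s \bB$ is not solvable. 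I would argue that $\bP_{\{s\}}$ contains both $\bU$ and its distinct conjugate $n_s \bU n_s^{-1}$, so $\bM := \langle \bU, n_s\bU n_s^{-1}\rangle$ is a closed connected subgroup of $\bP_{\{s\}}$ (using the generation criterion in \ref{subsec13}); direct commutator calculation in the abstract BN-pair shows that if $\bP_{\{s\}}$ were solvable, then $\bU$ would normalize $n_s \bU n_s^{-1}$, forcing $\bU \cdot n_s \bU n_s^{-1}$ to be a closed connected unipotent subgroup. Then $(\bU \cdot n_s \bU n_s^{-1}).\bH$ would be a closed proper overgroup of $\bB$ inside $\bP_{\{s\}}$, contradicting the parabolic description (which has only $\bB$ and $\bP_{\{s\}}$). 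Hence $\bB_0 = \bB$ and $\bB$ is Borel.

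The remaining statements then fall out. Since $\bB$ is a Borel, $R_u(\bB)$ is the maximal closed connected unipotent subgroup of $\bB$; as $\bU \subseteq R_u(\bB)$ and $\bB = \bU.\bH$ with $\bH$ a torus, we get $\bU = R_u(\bB)$, and $[\bB,\bB] = \bU$ follows from the semidirect product structure together with the action of $\bH$ on $\bU$ having no trivial weight (forced by $C_\bG(\bH) = \bH$). The hypothesis $C_{\bG}(\bH) = \bH$ also shows $\bH$ is a maximal torus: any torus containing $\bH$ would centralize $\bH$ and hence lie in $\bH$. For $\bB = N_{\bG}(\bU)$, one observes that $\bB \subseteq N_{\bG}(\bU)$ is a closed overgroup, hence a parabolic $\bP_I$; if $I \neq \emptyset$, some $n_s$ would normalize $\bU$, contradicting the rank-one analysis that $n_s \bU n_s^{-1} \neq \bU$. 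For reductivity in (a): $R_u(\bG) \subseteq \bB$ lies in every Borel, hence in $\bU$; being $\bN$-invariant it lies in $\bU \cap n_s \bU n_s^{-1}$, which is trivial by the same rank-one argument. Finally, for (c), $\bN \subseteq N_{\bG}(\bH)$ is clear; conversely, writing $g \in N_{\bG}(\bH)$ via Bruhat as $b_1 n_w b_2$ with $b_i \in \bB$ and using $C_{\bG}(\bH) \cap \bB = \bH$ forces $g \in \bN$.
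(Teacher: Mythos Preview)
The paper does not give a proof of this proposition at all; it simply refers the reader to Carter's book and the author's own textbook. Your outline follows essentially the route taken in those references (connectedness via Bruhat, then $\bB$ is a Borel via the parabolic classification, then read off the rest using $C_{\bG}(\bH)=\bH$), and the architecture is sound. Two of the steps, however, do not go through as you have written them.

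First, in showing that no $\bP_{\{s\}}$ can be solvable, the implication ``$\bP_{\{s\}}$ solvable $\Rightarrow$ $\bU$ normalises $n_s\bU n_s^{-1}$'' is unjustified; solvability alone does not force one unipotent subgroup to normalise another. The argument that actually works uses the hypothesis $C_{\bG}(\bH)=\bH$ at this very point: if $\bB_0\supseteq\bB$ is a Borel with $\bB_0=\bP_I$ and $s\in I$, then $n_s\in\bB_0$ normalises $\bH$. But $\bH$ is a \emph{maximal} torus of the connected solvable group $\bB_0$ (anything larger centralises $\bH$ and hence equals $\bH$), and in a connected solvable group the normaliser of a maximal torus equals its centraliser (write $\bB_0=(\bB_0)_u\rtimes\bH$ and note that a commutator $[u,t]$ with $u$ unipotent normalising $\bH$ lies in $\bH\cap(\bB_0)_u=\{1\}$). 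Hence $n_s\in C_{\bB_0}(\bH)\subseteq C_{\bG}(\bH)=\bH$, contradicting $s\neq 1$ in $W$.

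Second, for reductivity you claim that $\bU\cap n_s\bU n_s^{-1}$ is trivial. This is false already in $\GL_3(k)$, where for a simple reflection that intersection is a one-dimensional root subgroup. What you need is that $R_u(\bG)$, being normal in $\bG$, lies in $\bigcap_{w\in\bW} n_w\bU n_w^{-1}$, and it is this \emph{full} intersection that is trivial; one way to see this is to take $w=w_0$ the longest element and use the sharp Bruhat decomposition (or equivalently, to show that the kernel $\bigcap_g g\bB g^{-1}$ of the $BN$-pair meets $\bU$ trivially). Your sketch for $\bN=N_{\bG}(\bH)$ is also a bit thin: to push the Bruhat argument through you need $N_{\bB}(\bH)=\bH$, which is again the connected-solvable normaliser--centraliser fact combined with $C_{\bG}(\bH)=\bH$.
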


(See \cite[\S 2.5]{Ca2} and \cite[3.4.6, 3.4.7]{mybook}.) As in 
\cite[3.4.5]{mybook}, a $BN$-pair satisfying the conditions in 
Proposition~\ref{algbnpair} will be called a \nms{reductive 
$BN$-pair}{reductive BN-pair}. 

Much more difficult is the converse of the above result, which comes about 
as the culmination of a long series of arguments. Namely, if $\bG$ is a 
connected reductive algebraic group, then $\bG$ has a reductive $BN$-pair 
in which $\bB$ is a Borel subgroup and $\bN$ is the normaliser of a maximal 
torus contained in $\bB$. (We will discuss this in more detail in 
Section~\ref{sec:chevalley}.) For our purposes here, the realisation of 
connected reductive algebraic groups in terms of algebraic $BN$-pairs as 
above is sufficient for many purposes.  For example, if $\bG$ is a 
``classical group'' as in \ref{subsecclassic}, then 
algebraic $BN$-pairs as above are explicitly described in 
\cite[\S 1.7]{mybook}. In these cases, one can always find an algebraic 
$BN$-pair in which $\bB$ consists of upper triangular matrices and $\bH$ 
consists of diagonal matrices. See also the relevant chapters in \cite{Go}, 
\cite{Go2}, \cite{Go3}.

\section{Root data} \label{sec:rootdata}

We now introduce abstract root data  and prove some basic properties of them.
As we shall see in later sections, these form the combinatorial skeleton of
 connected reductive algebraic groups, that is, they capture those features 
which do not depend on the underlying field $k$. (A reader who wishes to see 
a much more systematic discussion of root data is referred to 
\cite[Expos\'e~XXI]{sga33}.)

\begin{abs} \label{Mabs21} Let $X,Y$ be free abelian groups of
the same finite rank; assume that there is a bilinear pairing 
$\langle \;,\;\rangle \colon X \times Y \rightarrow \Z$ which is
perfect, that is, it induces group isomorphisms $Y\cong
\Hom(X,\Z)$ and $X\cong\Hom(Y,\Z)$ (as in \ref{subsec17}). Furthermore,
let $R\subseteq X$ and $R^\vee \subseteq Y$ be finite subsets. Then 
the quadruple $\cR= (X,R, Y, R^\vee)$ is called a \nm{root datum} if
the following conditions are satisfied. 
\begin{itemize}
\item[(R1)] There is a bijection $R \rightarrow R^\vee$, $\alpha\mapsto 
\alpha^\vee$, such that $\langle \alpha,\alpha^\vee\rangle=2$ for all 
$\alpha\in R$.
\item[(R2)] For every $\alpha \in R$, we have $2\alpha \not\in R$.
\item[(R3)] For $\alpha\in R$, we define endomorphisms $w_\alpha 
\colon X \rightarrow X$ and $w_\alpha^\vee \colon Y \rightarrow Y$ by 
\[w_\alpha(\lambda)=\lambda-\langle \lambda,\alpha^\vee\rangle \alpha 
\qquad \mbox{and}\qquad w_\alpha^\vee(\nu)=\nu-\langle \alpha,\nu
\rangle \alpha^\vee\]
for all $\lambda \in X$ and $\nu \in Y$. Then we require that
$w_\alpha(R)=R$ and $w_\alpha^\vee(R^\vee)=R^\vee$ for all 
$\alpha\in R$.
\end{itemize}
We shall see in \ref{Mcartan0a} that the concept of root data is, in a very
precise sense, an enhancement of the more traditional concept of root 
systems (related to finite reflection groups; see \cite{bour}). First, we
need some preparations.

The defining formula immediately shows that $w_\alpha^2=\id_X$ and 
$(w_\alpha^\vee)^2=\id_{Y}$. Hence, we have $w_\alpha\in \Aut(X)$ 
and $w_\alpha^\vee \in \Aut(Y)$ for all $\alpha\in R$. We set
\[\bW:=\langle w_\alpha \mid \alpha\in R\rangle \subseteq \Aut(X) \qquad 
\mbox{and} \qquad \bW^\vee:=\langle w_\alpha^\vee \mid \alpha\in R
\rangle \subseteq \Aut(Y);\]
these groups are called the \nmi{Weyl groups}{Weyl group} of $R$ and 
$R^\vee$, respectively\footnote{For the time being, we keep a separate 
notation for these two Weyl groups; in Remark~\ref{MidentW}, we will 
identify them using the isomorphism in Lemma~\ref{Mweylfinite1}(a).}. 
By (R3), we have an action of $\bW$ on $R$ and an action of $\bW^\vee$ 
on $R^\vee$. 
\end{abs}

%

\begin{abs} \label{Mhomrootdata} Let $\cR=(X,R,Y, R^\vee)$ and $\cR'=
(X',R',Y', R'^\vee)$ be root data. Let $\varphi\colon X' \rightarrow 
X$ be a group homomorphism. The corresponding {\em transpose map}
$\varphi^\trp \colon Y \rightarrow Y'$ is uniquely defined by the condition 
that 
\[ \langle \varphi(\lambda'), \nu\rangle=\langle \lambda', \varphi^\trp(\nu)
\rangle'\qquad\mbox{for all $\lambda' \in X'$ and $\nu\in Y$},\]
where $\langle\;, \;\rangle$ is the bilinear pairing for $\cR$ and 
$\langle \;,\;\rangle'$ is the bilinear pairing for $\cR'$. We say that 
$\varphi$ is a \nm{homomorphism of root data} if $\varphi$ maps $R'$
bijectively onto $R$ and $\varphi^\trp$ maps $R^\vee$ bijectively onto
$R'^\vee$. It then follows automatically that $\varphi^\trp(\varphi
(\beta)^\vee)=\beta^\vee$ for all $\beta\in R'$; see \cite[XXI, 
6.1.2]{sga33}. If $\varphi$ is a bijective homomorphism of root data, 
we say that $\cR$ and $\cR'$ are {\em isomorphic}.
\end{abs}

\begin{lem} \label{Mweylfinite1} Let $\cR=(X,R,Y, R^\vee)$ be a
root datum. 
\begin{itemize}
\item[(a)] There is a unique group isomorphism $\delta\colon \bW 
\stackrel{\sim}{\rightarrow} \bW^\vee$ such that $\delta(w_\alpha)=
w_\alpha^\vee$ for all $\alpha\in R$; we have 
\[ \langle w^{-1}(\lambda),\nu\rangle=\langle \lambda,\delta(w)(\nu)
\rangle \quad \mbox{for all $w\in \bW$, $\lambda\in X$, $\nu \in Y$}.\]
\item[(b)] The quadruple $(Y, R^\vee,X,R)$ also is a root datum,
with pairing $\langle\;, \;\rangle^*\colon Y \times X \rightarrow \Z$ 
defined by $\langle \nu,\lambda\rangle^*:= \langle \lambda, \nu
\rangle$ for all $\nu \in Y$ and 
$\lambda\in X$.
\item[(c)] For any $\lambda\in X$ and $w\in \bW$, we have $\lambda-
w(\lambda)\in \Z R$.
\end{itemize}
\end{lem}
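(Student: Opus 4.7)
The key identity is the transpose relation $\langle w_\alpha(\lambda), \nu\rangle = \langle \lambda, w_\alpha^\vee(\nu)\rangle$, proved by expanding both sides via the formulas in (R3); both reduce to $\langle \lambda, \nu\rangle - \langle \lambda, \alpha^\vee\rangle\langle \alpha, \nu\rangle$. Since the pairing is perfect, every $w \in \Aut(X)$ has a unique transpose $w^\trp \in \Aut(Y)$ satisfying $\langle w(\lambda), \nu\rangle = \langle \lambda, w^\trp(\nu)\rangle$, and transposition is an anti-automorphism $\Aut(X) \to \Aut(Y)$. I define $\delta\colon \bW \to \Aut(Y)$ by $\delta(w) := (w^{-1})^\trp$; this is a group homomorphism that sends $w_\alpha$ to $(w_\alpha^{-1})^\trp = w_\alpha^\trp = w_\alpha^\vee$ (using $w_\alpha^2 = \id$). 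Its image is therefore generated by the $w_\alpha^\vee$ and equals $\bW^\vee$. Injectivity is immediate from perfectness: $\delta(w) = \id$ forces $\langle w^{-1}(\lambda), \nu\rangle = \langle \lambda, \nu\rangle$ for all $\lambda, \nu$, hence $w = \id$. Uniqueness of $\delta$ holds because $\bW$ is generated by the $w_\alpha$, and the stated compatibility identity $\langle w^{-1}(\lambda),\nu\rangle = \langle \lambda,\delta(w)(\nu)\rangle$ is the definition of transpose.

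\textbf{Part (b).} I verify the axioms for $(Y, R^\vee, X, R)$ with the reversed pairing $\langle \nu, \lambda\rangle^* := \langle \lambda, \nu\rangle$, which is perfect since the original is. Axiom (R1) holds via the bijection $\alpha^\vee \mapsto \alpha$, giving $\langle \alpha^\vee, \alpha\rangle^* = 2$. For (R3), a direct substitution into the reflection formula shows that the reflection of the dual datum attached to $\alpha^\vee$ is exactly $w_\alpha^\vee$ on $Y$ and $w_\alpha$ on $X$; these preserve $R^\vee$ and $R$ respectively by (R3) for $\cR$. The genuine obstacle is (R2) for the dual: that $2\alpha^\vee \notin R^\vee$ for every $\alpha \in R$. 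Suppose for contradiction that $\beta^\vee = 2\alpha^\vee$ for some $\alpha, \beta \in R$. Pairing with $\alpha$ and $\beta$ yields $\langle \alpha, \beta^\vee\rangle = 4$ and $\langle \beta, \alpha^\vee\rangle = 1$, and (R2) for $\cR$ forces $\alpha, \beta$ linearly independent in $X$. A short calculation on the plane they span shows that $M := w_\beta w_\alpha$ satisfies $M^k(\alpha) = (1-2k)\alpha + 4k\beta$ for every $k \in \Z$ (the characteristic polynomial is $(t-1)^2$ and $M \neq \id$, so $M$ is non-trivial unipotent). Thus $\{M^k(\alpha)\mid k \in \Z\}$ consists of infinitely many distinct elements, all lying in $R$ by (R3), contradicting the finiteness of $R$.

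\textbf{Part (c).} I argue by induction on the smallest $n$ for which $w = w_{\alpha_1}\cdots w_{\alpha_n}$ with $\alpha_i \in R$. The case $n=0$ is trivial, and $n = 1$ is $\lambda - w_\alpha(\lambda) = \langle \lambda, \alpha^\vee\rangle \alpha \in \Z R$. For the step, write $w = w_{\alpha_1} w'$ and decompose
\[ \lambda - w(\lambda) = \bigl(\lambda - w_{\alpha_1}(\lambda)\bigr) + w_{\alpha_1}\bigl(\lambda - w'(\lambda)\bigr); \]
the first term lies in $\Z R$ by the base case, and by the induction hypothesis $\lambda - w'(\lambda)\in\Z R$, to which we apply $w_{\alpha_1}$; this preserves $\Z R$ since $w_{\alpha_1}$ permutes $R$ by (R3). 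Thus $\lambda - w(\lambda) \in \Z R$. The only substantive step in the entire lemma is the infinite-order orbit argument used to establish (R2) for the dual in part (b); parts (a) and (c) are essentially formal consequences of the definition of transpose and of (R1), (R3).
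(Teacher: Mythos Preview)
Your proof is correct, and for parts (a) and (c) it is essentially identical to the paper's: the paper also defines $\delta(w)=(w^{-1})^\trp$ after checking $w_\alpha^\trp=w_\alpha^\vee$, and for (c) it likewise reduces to the case $w=w_\alpha$ and uses that $\bW$ is generated by these reflections.

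The interesting difference is in part (b). The paper's entire proof reads ``This is a straightforward verification.'' You, by contrast, isolate axiom (R2) for the dual---that $2\alpha^\vee\notin R^\vee$---as the one non-formal step and give a genuine argument: assuming $\beta^\vee=2\alpha^\vee$ you compute $\langle\alpha,\beta^\vee\rangle=4$, $\langle\beta,\alpha^\vee\rangle=1$, rule out proportionality via (R2) for $R$, and then show $w_\beta w_\alpha$ is unipotent with infinite orbit on $\alpha$, contradicting $|R|<\infty$. This is correct and, at this point in the paper's logical development, necessary: the positive-definite invariant form and the classification of rank-two systems (which would make the product $\langle\alpha,\beta^\vee\rangle\langle\beta,\alpha^\vee\rangle=4$ force proportionality) are only established \emph{after} this lemma, in \ref{Mcartan0a}. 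So the paper's ``straightforward'' is either a forward reference in disguise or an oversight, and you have filled the gap honestly. Your observation that (R2) for the dual carries the only real content of the lemma is exactly right.
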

The root datum in (b) is called the \nm{dual root datum} of $\cR$.

\begin{proof} (a) For any group homomorphism $\varphi\colon X\rightarrow X$, 
consider its transpose $\varphi^\trp\colon Y\rightarrow Y$, as defined above.
Clearly, we have $\id_X^\trp=\id_{Y}$ and $(\varphi \circ \psi)^\trp=
\psi^\trp \circ \varphi^\trp$ if $\psi\colon X \rightarrow X$ is a further 
group homomorphism. Thus, $\bW^\trp:=\{w^\trp \mid w\in \bW\}$ is a subgroup 
of $\Aut(Y)$ and the map $\delta\colon \bW \rightarrow \bW^\trp$, 
$w\mapsto (w^{-1})^\trp$, is an isomorphism. Now, using the defining 
formulae in (R3), one immediately checks that
\[ \langle w_\alpha(\lambda),\nu\rangle=\langle \lambda,w_\alpha^\vee
(\nu) \rangle \quad \mbox{for all $\alpha\in R$, $\lambda \in X$, 
$\nu  \in Y$}.\]
Hence, we have $w_\alpha^\trp=w_\alpha^\vee$ for all $\alpha\in R$ and
so $\bW^\trp=\bW^\vee$. This yields (a).

(b) This is a straightforward verification.

(c) The defininig formula shows that this is true if $w=w_\alpha$ for
$\alpha\in R$. But then it also follows in general, since $\bW$ is
generated by the $w_\alpha$ ($\alpha\in R$).
\end{proof}

\begin{lem} \label{Mweylfinite2} Let $\cR=(X,R,Y,R^\vee)$ be a
root datum. We set $X_0:=\{\lambda \in X\mid \langle \lambda,
\alpha^\vee\rangle =0 \mbox{ for all $\alpha\in R$}\}$. Then 
\[X_0 \cap \Z R=\{0\}\qquad \mbox{and}\qquad |X/(X_0+\Z R)|<\infty.\]
Consequently, $\bW$ is a finite group and the action of $\bW$ on $R$ is 
faithful (that is, if $w\in \bW$ is such that $w(\alpha)=\alpha$ for all
$\alpha\in R$, then $w=1$).
\end{lem}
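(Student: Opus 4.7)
The plan is to introduce a $\bW$-invariant positive semi-definite symmetric bilinear form on $V := X\otimes_\Z\R$, descend it to a positive-definite form on an appropriate quotient, and analyze $R$ via the resulting classical root system. I would set
\[ B(\lambda,\mu) := \sum_{\alpha\in R} \langle\lambda,\alpha^\vee\rangle\,\langle\mu,\alpha^\vee\rangle \]
and verify first that $B$ is symmetric and positive semi-definite with radical exactly $V_0 := X_0\otimes\R$, and that it is $\bW$-invariant (using Lemma~\ref{Mweylfinite1}(a) and the fact that $\delta(\bW)=\bW^\vee$ permutes $R^\vee$, so reindexing the defining sum yields $B(w\lambda,w\mu)=B(\lambda,\mu)$). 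Since $V_1 := \R R$ is $\bW$-stable, $B|_{V_1}$ has radical $V_1\cap V_0$ and descends to a positive-definite, $\bW$-invariant form on $V_1' := V_1/(V_1\cap V_0)$.

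Next I would show that the images $\bar\alpha\in V_1'$ form a classical root system. Each $\bar\alpha$ is non-zero because $\langle\alpha,\alpha^\vee\rangle=2$, and the induced map $\bar w_\alpha$ on $V_1'$ is a $B$-preserving involution fixing a hyperplane and negating $\bar\alpha$, so it must coincide with the orthogonal reflection in $\bar\alpha$; this yields the identity $\langle\beta,\alpha^\vee\rangle = 2B(\bar\beta,\bar\alpha)/B(\bar\alpha,\bar\alpha)\in\Z$. Hence $\bar R := \{\bar\alpha:\alpha\in R\}$ is a root system in $V_1'$ whose Weyl group $\bar\bW$---the image of $\bW$ in $GL(V_1')$---is therefore finite.

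The crux is to show $V_1\cap V_0 = \{0\}$. First I would establish that $R\to\bar R$ is injective: if $\bar\alpha=\bar\beta$ for $\alpha\neq\beta$, the reflection identity gives $\langle\beta,\alpha^\vee\rangle=\langle\alpha,\beta^\vee\rangle=2$, and a short induction shows $(w_\alpha w_\beta)^k(\alpha) = (2k+1)\alpha - 2k\beta \in R$ for all $k\geq 0$, contradicting the finiteness of $R$. I would then lift a simple system $\bar\Delta$ of $\bar R$ to $\Delta = \{\alpha_1,\ldots,\alpha_r\}\subseteq R$; the non-singularity of the Cartan matrix $(\langle\alpha_i,\alpha_j^\vee\rangle)$ (a classical root-system fact) gives $\Q\Delta\cap V_0 = \{0\}$. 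For any $\alpha\in R$, writing $\bar\alpha=\bar w(\bar\alpha_k)$ for some $\bar w\in\bar\bW$ and $\alpha_k\in\Delta$, lifting $\bar w$ to $w_\Delta$ in the subgroup $\bW_\Delta := \langle w_{\alpha_j}:\alpha_j\in\Delta\rangle$ (using that $\bar\bW$ is generated by simple reflections), and applying the injectivity of $R\to\bar R$, one concludes $\alpha = w_\Delta(\alpha_k)\in\Z\Delta$. Hence $V_1 = \Q\Delta$, so $V_1\cap V_0 = \Q\Delta\cap V_0 = \{0\}$, i.e., $X_0\cap\Z R = \{0\}$.

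Applying the same result to the dual root datum $\cR^\vee$ (Lemma~\ref{Mweylfinite1}(b)) gives $V_1^\vee\cap V_0^\vee = \{0\}$, which by the perfect pairing is equivalent to $V_0+V_1 = V$; hence $|X/(X_0+\Z R)|<\infty$. For faithfulness, any $w\in\bW$ fixing every $\alpha\in R$ fixes $V_1$ pointwise, and since $w$ also fixes $V_0$ pointwise, it fixes $V = V_0+V_1$, whence $w = \id$. Finiteness of $\bW$ is then immediate from its faithful action on the finite set $R$. The main obstacle is the step $V_1\cap V_0=\{0\}$: both the iterated-reflection proof of the injectivity of $R\to\bar R$ and the lifting of simple-system data from $\bar R$ back to $R$ rely essentially on the finiteness of $R$.
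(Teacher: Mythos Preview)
Your proof is correct, but it takes a more circuitous route than the paper's. Both approaches start from the same data---your form $B$ satisfies $B(x,y)=\langle x,f(y)\rangle$ where $f\colon X_\Q\to Y_\Q$, $f(x)=\sum_{\alpha}\langle x,\alpha^\vee\rangle\,\alpha^\vee$, is the map the paper uses---but the paper avoids the quotient $V_1/(V_1\cap V_0)$ entirely. Instead it shows by a short direct computation (exploiting $f\circ w_\beta=w_\beta^\vee\circ f$) that $2f(\beta)=\langle\beta,f(\beta)\rangle\,\beta^\vee$ with $\langle\beta,f(\beta)\rangle=\sum_\alpha\langle\beta,\alpha^\vee\rangle^2>0$ for every $\beta\in R$. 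This immediately gives $f(\Q R)=\Q R^\vee$, and since $X_{0,\Q}\subseteq\ker f$, one obtains $X_{0,\Q}\cap\Q R=\{0\}$ without any appeal to simple systems, non-singularity of Cartan matrices, or the iterated-reflection argument $(w_\alpha w_\beta)^k(\alpha)=(2k+1)\alpha-2k\beta$. A dimension count (using $\dim\Q R=\dim\Q R^\vee$, from duality) then gives $X_\Q=X_{0,\Q}\oplus\Q R$ directly.

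The payoff of the paper's approach is self-containedness: it does not import any facts from the classical theory of root systems, which is appropriate since the paper establishes that $R$ \emph{is} a root system only in the paragraph following this lemma. Your approach effectively anticipates that step, which is fine logically but front-loads machinery (simple systems, Weyl-group transitivity, Cartan-matrix invertibility) that the paper has not yet set up. One minor slip: you write $V_1=\Q\Delta$ but $V_1=\R R$, so this should read $V_1=\R\Delta$; the argument is unaffected since the Cartan matrix is non-singular over~$\R$ as well.
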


\begin{proof} Let us extend scalars from $\Z$ to $\Q$. We denote $X_\Q=
\Q\otimes_\Z X$ and $Y_\Q=\Q\otimes_\Z Y$. Then $\langle\;,\;
\rangle$ extends to a non-degenerate $\Q$-bilinear form on $X_\Q \times 
Y_\Q$ which we denote by the same symbol. Since $X$, $Y$ are free
$\Z$-modules, we can naturally regard $X$ as a subset of $X_\Q$ and $Y$ 
as a subset of $Y_\Q$. Similarly, we can regard $\bW$ as a subgroup of
$\GL(X_\Q)$ and $\bW^\vee$ as a subgroup of $\GL(Y_\Q)$. So, in order 
to show the statements about $X_0$ and $\Z R$, it is sufficient to show that
\[X_\Q=X_{0,\Q} \oplus \Q R\quad \mbox{where} \quad X_{0,\Q}:=\{x\in X_\Q  
\mid \langle x,\alpha^\vee\rangle=0 \mbox{ for all $\alpha \in R$}\}.\]
For this purpose, following \cite[XXI, \S 1.2]{sga33}, we consider 
the linear map 
\[f\colon X_\Q\rightarrow Y_\Q, \qquad x\mapsto \sum_{\alpha\in R} 
\langle x,\alpha^\vee\rangle\,\alpha^\vee.\]
Let $\beta\in R$. Using (R3), Lemma~\ref{Mweylfinite1}(a) and the fact
that $(w_\beta^\vee)^2=\id_Y$, we obtain 
\[ \bigl(f\circ w_\beta\bigr)(x)=\sum_{\alpha\in R} \langle w_\beta(x),
\alpha^\vee \rangle\,\alpha^\vee=\sum_{\alpha\in R} \langle x,w_\beta^\vee
(\alpha^\vee)\rangle\,\alpha^\vee=(w_\beta^\vee\circ f)(x)\]
for all $x\in X_\Q$. This identity in turn implies that, for any $\beta
\in R$, we have:
\begin{align*}
f(\beta) &= -f(w_\beta(\beta))=-w_\beta^\vee(f(\beta))=
-\sum_{\alpha\in R} \langle \beta,\alpha^\vee\rangle \,w_\beta^\vee
(\alpha^\vee)\\
&=-\sum_{\alpha\in R} \langle \beta,\alpha^\vee\rangle \,
\bigl(\alpha^\vee-\langle \beta, \alpha^\vee\rangle \,\beta^\vee
\bigr)=-f(\beta)+\Bigl(\sum_{\alpha \in R} \langle \beta, \alpha^\vee
\rangle^2\Bigr)\,\beta^\vee.
\end{align*}
Noting that $\langle \beta,f(\beta)\rangle=\sum_{\alpha \in R} \langle 
\beta, \alpha^\vee \rangle^2$, we deduce that 
\[ 2f(\beta)=\langle \beta,f(\beta)\rangle\, \beta^\vee
\quad \mbox{and} \quad \langle \beta,f(\beta)\rangle>0
\qquad \mbox{for all $\beta\in R$}.\]
This shows that $f(\Q R)=\Q R^\vee$ and so $\dim \Q R\geq \dim 
\Q R^\vee$. By the symmetry expressed in Lemma~\ref{Mweylfinite1}(b),
the reverse inequality also holds and so $\dim \Q R=\dim \Q R^\vee$.
Thus, $f$ restricts to an isomorphism $f\colon \Q R
\stackrel{\sim}{\rightarrow} \Q R^\vee$. Now, we clearly have $X_{0,
\Q}\subseteq \ker(f)$, whence $X_{0,\Q} \cap \Q R=\{0\}$. Since 
$\langle\;,\; \rangle$ extends to a non-degenerate bilinear form on 
$X_\Q \times Y_\Q$, we have $\dim X_\Q=\dim X_{0,\Q} +\dim \Q R^\vee$. 
Since also $\dim \Q R^\vee=\dim \Q R$, we conclude that $X_\Q=
X_{0,\Q}\oplus\Q R$, as desired. 

Now we show that the action of $\bW$ on $R$ is faithful. Let $w\in \bW$
be such that $w(\alpha)=\alpha$ for all $\alpha\in R$. Then $w$ acts
as the identity on the subspace $\Q R\subseteq X_\Q$. On the other hand,
the defining equation shows that all $w_\alpha$, $\alpha\in R$, act as
the identity on $X_{0,\Q}$, so $\bW$ is trivial on $X_{0,\Q}$. Hence, 
$w=1$ since $X_\Q=X_{0,\Q}+\Q R$. Since $R$ is finite, it follows 
that $\bW$ must be finite, too.
\end{proof}

\begin{abs} \label{Mcartan0a} Let $\cR=(X,R,Y, R^\vee)$ be a
root datum. As in the above proof, we extend scalars from $\Z$ to $\Q$
and set $X_\Q=\Q\otimes_\Z X$. Following \cite[Chap.~VI, \S 1, 
Prop.~3]{bour}, we define a symmetric bilinear form $(\;,\;)\colon X_\Q 
\times X_\Q \rightarrow \Q$ by
\[ (x,y):=\sum_{\alpha \in R} \langle x, \alpha^\vee\rangle \langle y,
\alpha^\vee\rangle  \qquad \mbox{for all $x,y\in X_\Q$}.\]
Using (R3) and Lemma~\ref{Mweylfinite1}(a), we see that 
$(\;,\;)$ is $\bW$-invariant, that is, we have $(w(x),w(y))=(x,y)$ for 
all $w\in \bW$ and all $x,y\in X_\Q$. Clearly, we have $(x,x)\geq 0$ for 
all $x\in X_\Q$; furthermore, $(\beta,\beta)>0$ for all $\beta \in R$ 
(since $\langle \beta, \beta^\vee\rangle=2>0$). By a standard argument
(see \cite[Chap.~VI, \S 1, Lemme~2]{bour}), this yields that 
\[2\frac{(\alpha,\beta)}{(\beta,\beta)}=\langle \alpha,\beta^\vee
\rangle\in \Z \qquad \mbox{for all $\alpha,\beta\in R$}.\]
We claim that the restriction of $(\;,\;)$ to $\Q R\times \Q R$ is 
positive-definite. Indeed, assume that $(x,x)=0$ where $x\in \Q R$.
Then $0=(x,x)=\sum_{\alpha\in R} \langle x, \alpha^\vee\rangle^2$ and 
so $\langle x,\alpha^\vee\rangle=0$ for all $\alpha\in R$. Hence, 
Lemma~\ref{Mweylfinite2} shows that $x=0$, as desired.

Thus, we see that $R$ is a crystallographic root system in the subspace 
$\Q R$ of $X_\Q$; see \cite[Chap.~VI, \S 1, D\'ef.~1]{bour}. The Weyl 
group of $R$ is $\bW$;  see Lemma~\ref{Mweylfinite2}. Furthermore, $R$ 
is {\em reduced}, in the sense that
\[ R\cap \Q \alpha=\{\pm \alpha\} \qquad \mbox{for all $\alpha\in R$}.\]
(This is an easy consequence of (R2); see \cite[14.7]{Bor}.) 
Similarly, $R^\vee$ is a reduced crystallographic root system in 
$\Q R^\vee$, by the symmetry in Lemma~\ref{Mweylfinite1}(b).
\end{abs}

\begin{abs} \label{Mcartan0} Keeping the above notation, we now recall some 
standard results on root systems (see, e.g., \cite[App.~A]{MaTe}). 
There is a subset $\Pi\subseteq R$ such that:
\begin{itemize}
\item[(a)] $\Pi$ is linearly independent in $\Q R$ and
\item[(b)] every $\alpha\in R$ can be written as $\alpha=\sum_{\beta\in\Pi}
x_\beta \,\beta$ where $x_\beta\in \Q$ and either $x_\beta\geq 0$ for all 
$\beta \in \Pi$ or $x_\beta\leq 0$ for all $\beta\in \Pi$. 
\end{itemize}
We call $\Pi$ a \nm{base} for $R$. The corresponding set of \nm{positive
roots} $R^+\subseteq R$ consists of those $\alpha\in R$ which can be 
written as $\alpha=\sum_{\beta\in \Pi} x_\beta\, \beta$ where $x_\beta
\in \Q$ and $x_\beta\geq 0$ for all $\beta\in \Pi$. The roots in $R^-:=
-R^+$ are called the corresponding {\em negative roots}. Furthermore, if 
(a) and (b) hold, then we also have:
\begin{itemize}
\item[(c)] For every $\alpha\in R$, there exists some $w\in\bW$ such that 
$w(\alpha)\in \Pi$.
\item[(d)] Every $\alpha\in R$ is a $\Z$-linear combination of the
roots in the base $\Pi$. (That is, the coefficients $x_\beta$ in (b) are 
always integers.)
\item[(e)] $\bW$ is a Coxeter group, with generators $\{w_\beta \mid 
\beta\in\Pi\}$ and defining relations $(w_{\beta}w_{\gamma})^{m_{\beta
\gamma}}=1$ for all $\beta,\gamma\in \Pi$, where $m_{\beta\gamma}\geq 1$ 
is the order of  $w_{\beta}w_{\gamma}\in\bW$; furthermore, we have
$4\cos^2(\pi/m_{\beta\gamma})=\langle \gamma,\beta^\vee\rangle
\langle \beta,\gamma^\vee\rangle$ for all $\beta,\gamma\in \Pi$.
\end{itemize}
Finally, any two bases of $R$ can be transformed into each other by a 
unique element of $\bW$. In particular, $r:=|\Pi|$ is well-defined and called 
the \nm{rank} of $R$; furthermore, writing $\Pi=\{\beta_1,\ldots,\beta_r\}$,
the matrix
\[ C:=\bigl(\langle \beta_j,\beta_i^\vee\rangle\bigr)_{1\leq i,j\leq r}\]
is uniquely determined by $\cR$ (up to reordering the rows and columns); 
it is called the \nm{Cartan matrix} of $\cR$. We say that two root data 
$\cR$, $\cR'$ have the same \nm{Cartan type} if the corresponding Cartan 
matrices are the same (up to choosing a bijection between the associated 
bases $\Pi$, $\Pi'$). Thus, $\cR$ and $\cR'$ have the same Cartan type if 
and only if $R\subseteq X_{\Q}$ and $R'\subseteq X_{\Q}'$ are isomorphic
root systems (see \cite[Chap.~VI, n$^\circ$ 1.5]{bour}).

We associate with $C$ a \nm{Dynkin diagram}, defined as follows. It has 
vertices labelled by the elements in $\Pi=\{\beta_1,\ldots,\beta_r\}$. If 
$i\neq j$ and $|\langle \beta_j,\beta_i^\vee\rangle|\geq |\langle \beta_i,
\beta_j^\vee \rangle|$, then the corresponding vertices are joined by 
$|\langle \beta_j, \beta_i^\vee\rangle|$ lines; furthermore, these lines are 
equipped with an arrow pointing towards the vertex labeled by $\beta_i$ if 
$|\langle \beta_j, \beta_i^\vee \rangle|>1$. (Note that, in this case, we
automatically have $\langle \beta_i,\beta_j^\vee\rangle=-1$ by (e).) 

We say that $C$ is an \nm{indecomposable Cartan matrix} if the associated
Dynkin diagram is a connected graph; otherwise, we say that $C$ is
{\em decomposable}. Clearly, any Cartan matrix can be expressed as a block 
diagonal matrix with diagonal blocks given by indecomposable Cartan 
matrices. The classification of indecomposable Cartan matrices is 
well-known (see \cite[Chap.~VI, \S 4]{bour}); the corresponding Dynkin 
diagrams are listed in Table~\ref{Mdynkintbl}. (The Cartan matrices of
type $A_n$, $B_n$, $C_n$, $G_2$, $F_4$ are printed explicitly in
Examples~\ref{MsuzukiB}, \ref{rootdatGL}, \ref{isogBC} below.) See Kac 
\cite[Chap.~4]{Kac} for a somewhat different approach to this classification. 
\end{abs}

\begin{table}[htbp] 
\caption{Dynkin diagrams of indecomposable Cartan matrices} \label{Mdynkintbl} 
\begin{center}
\begin{picture}(345,160)
\put( 10, 25){$E_7$}
\put( 40, 25){\circle*{5}}
\put( 38, 30){$1$}
\put( 40, 25){\line(1,0){20}}
\put( 60, 25){\circle*{5}}
\put( 58, 30){$3$}
\put( 60, 25){\line(1,0){20}}
\put( 80, 25){\circle*{5}}
\put( 78, 30){$4$}
\put( 80, 25){\line(0,-1){20}}
\put( 80, 05){\circle*{5}}
\put( 85, 03){$2$}
\put( 80, 25){\line(1,0){20}}
\put(100, 25){\circle*{5}}
\put( 98, 30){$5$}
\put(100, 25){\line(1,0){20}}
\put(120, 25){\circle*{5}}
\put(118, 30){$6$}
\put(120, 25){\line(1,0){20}}
\put(140, 25){\circle*{5}}
\put(138, 30){$7$}

\put(190, 25){$E_8$}
\put(220, 25){\circle*{5}}
\put(218, 30){$1$}
\put(220, 25){\line(1,0){20}}
\put(240, 25){\circle*{5}}
\put(238, 30){$3$}
\put(240, 25){\line(1,0){20}}
\put(260, 25){\circle*{5}}
\put(258, 30){$4$}
\put(260, 25){\line(0,-1){20}}
\put(260, 05){\circle*{5}}
\put(265, 03){$2$}
\put(260, 25){\line(1,0){20}}
\put(280, 25){\circle*{5}}
\put(278, 30){$5$}
\put(280, 25){\line(1,0){20}}
\put(300, 25){\circle*{5}}
\put(298, 30){$6$}
\put(300, 25){\line(1,0){20}}
\put(320, 25){\circle*{5}}
\put(318, 30){$7$}
\put(320, 25){\line(1,0){20}}
\put(340, 25){\circle*{5}}
\put(338, 30){$8$}

\put( 10, 59){$G_2$}
\put( 40, 60){\circle*{5}}
\put( 38, 66){$1$}
\put( 40, 58){\line(1,0){20}}
\put( 40, 60){\line(1,0){20}}
\put( 40, 62){\line(1,0){20}}
\put( 46, 57.5){$>$}
\put( 60, 60){\circle*{5}}
\put( 58, 66){$2$}

\put(100, 60){$F_4$}
\put(125, 60){\circle*{5}}
\put(123, 65){$1$}
\put(125, 60){\line(1,0){20}}
\put(145, 60){\circle*{5}}
\put(143, 65){$2$}
\put(145, 58){\line(1,0){20}}
\put(145, 62){\line(1,0){20}}
\put(151, 57.5){$>$}
\put(165, 60){\circle*{5}}
\put(163, 65){$3$}
\put(165, 60){\line(1,0){20}}
\put(185, 60){\circle*{5}}
\put(183, 65){$4$}

\put(230, 80){$E_6$}
\put(260, 80){\circle*{5}}
\put(258, 85){$1$}
\put(260, 80){\line(1,0){20}}
\put(280, 80){\circle*{5}}
\put(278, 85){$3$}
\put(280, 80){\line(1,0){20}}
\put(300, 80){\circle*{5}}
\put(298, 85){$4$}
\put(300, 80){\line(0,-1){20}}
\put(300, 60){\circle*{5}}
\put(305, 58){$2$}
\put(300, 80){\line(1,0){20}}
\put(320, 80){\circle*{5}}
\put(318, 85){$5$}
\put(320, 80){\line(1,0){20}}
\put(340, 80){\circle*{5}}
\put(338, 85){$6$}

\put( 10,110){$D_n$}
\put( 10,100){$\scriptstyle{n \geq 3}$}
\put( 40,130){\circle*{5}}
\put( 45,130){$1$}
\put( 40,130){\line(1,-1){21}}
\put( 40, 90){\circle*{5}}
\put( 46, 85){$2$}
\put( 40, 90){\line(1,1){21}}
\put( 60,110){\circle*{5}}
\put( 58,115){$3$}
\put( 60,110){\line(1,0){30}}
\put( 80,110){\circle*{5}}
\put( 78,115){$4$}
\put(100,110){\circle*{1}}
\put(110,110){\circle*{1}}
\put(120,110){\circle*{1}}
\put(130,110){\line(1,0){10}}
\put(140,110){\circle*{5}}
\put(137,115){$n$}

\put(210,110){$C_n$}
\put(210,100){$\scriptstyle{n \geq 2}$}
\put(240,110){\circle*{5}}
\put(238,115){$1$}
\put(240,108){\line(1,0){20}}
\put(240,112){\line(1,0){20}}
\put(246,107.5){$>$}
\put(260,110){\circle*{5}}
\put(258,115){$2$}
\put(260,110){\line(1,0){30}}
\put(280,110){\circle*{5}}
\put(278,115){$3$}
\put(300,110){\circle*{1}}
\put(310,110){\circle*{1}}
\put(320,110){\circle*{1}}
\put(330,110){\line(1,0){10}}
\put(340,110){\circle*{5}}
\put(337,115){$n$}

\put( 10,150){$A_n$}
\put( 10,140){$\scriptstyle{n \geq 1}$}
\put( 40,150){\circle*{5}}
\put( 38,155){$1$}
\put( 40,150){\line(1,0){20}}
\put( 60,150){\circle*{5}}
\put( 58,155){$2$}
\put( 60,150){\line(1,0){30}}
\put( 80,150){\circle*{5}}
\put( 78,155){$3$}
\put(100,150){\circle*{1}}
\put(110,150){\circle*{1}}
\put(120,150){\circle*{1}}
\put(130,150){\line(1,0){10}}
\put(140,150){\circle*{5}}
\put(137,155){$n$}

\put(210,150){$B_n$}
\put(210,140){$\scriptstyle{n \geq 2}$}
\put(240,150){\circle*{5}}
\put(238,155){$1$}
\put(240,148){\line(1,0){20}}
\put(240,152){\line(1,0){20}}
\put(246,147.5){$<$}
\put(260,150){\circle*{5}}
\put(258,155){$2$}
\put(260,150){\line(1,0){30}}
\put(280,150){\circle*{5}}
\put(278,155){$3$}
\put(300,150){\circle*{1}}
\put(310,150){\circle*{1}}
\put(320,150){\circle*{1}}
\put(330,150){\line(1,0){10}}
\put(340,150){\circle*{5}}
\put(338,155){$n$}
\end{picture}
\end{center}
\footnotesize $\;\;$ (This labeling will be used throughout
this book; it is the same as in {\sf CHEVIE} \cite{chv}, \cite{Chv}. 
Note that $B_2=C_2$ and $D_3=A_3$, up to re-labeling the 
vertices.)$\;\;\qquad\qquad\qquad$
\end{table}

We have the following general characterisation of Cartan matrices.

\begin{prop}[Cf.\ \protect{\cite[Chap.~VI, \S 4]{bour}}] \label{Mcartan} 
Let $S$ be a finite set and $C=(c_{st})_{s,t\in S}$ be a matrix with 
integer entries. Then $C$ is the \nm{Cartan matrix} of a reduced 
crystallographic root system if and only if the following conditions hold:
\begin{itemize}
\item[(C1)] We have $c_{ss}=2$ and, for $s\neq t$ we have $c_{st}\leq 0$;
furthermore, $c_{st}\neq 0$ if and only if $c_{ts}\neq 0$.
\item[(C2)] For $s,t\in S$, let $m_{st}\in \Z_{\geq 1}$ be defined by
the condition that $c_{st}c_{ts}=4\cos^2(\pi/m_{st})$. (Thus, we have
$m_{ss}=1$ and $m_{st}\in \{2,3,4,6\}$ for $s\neq t$.) Then the matrix
$(-\cos(\pi/m_{st}))_{s,t\in S}$ is positive-definite.
\end{itemize}
\end{prop}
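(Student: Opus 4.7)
The plan is to treat the two directions separately, with sufficiency considerably more involved than necessity.

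For necessity, suppose $C$ is the Cartan matrix of a reduced crystallographic root system $R$ with base $\Pi=\{\beta_s\}_{s\in S}$, so that $c_{st}=\langle\beta_t,\beta_s^\vee\rangle$. Then $c_{ss}=2$ by (R1), and for $s\neq t$ the integer $c_{st}$ is non-positive by standard properties of bases recalled in \ref{Mcartan0}. The identity $c_{st}c_{ts}=4\cos^2(\pi/m_{st})$ from \ref{Mcartan0}(e) then forces $c_{st}=0$ if and only if $c_{ts}=0$, giving (C1). For (C2), I would use the positive-definite symmetric form $(\,,\,)$ on $\Q R$ constructed in \ref{Mcartan0a}. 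From the relation $\langle\beta,\gamma^\vee\rangle=2(\beta,\gamma)/(\gamma,\gamma)$, the normalized vectors $v_s:=\beta_s/\sqrt{(\beta_s,\beta_s)}$ satisfy $(v_s,v_t)^2=c_{st}c_{ts}/4=\cos^2(\pi/m_{st})$, and the sign $c_{st}\leq 0$ for $s\neq t$ fixes $(v_s,v_t)=-\cos(\pi/m_{st})$. Hence the matrix in (C2) is the Gram matrix of the linearly independent set $\{v_s\}$ with respect to a positive-definite form, so it is itself positive-definite.

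For sufficiency, given $C$ satisfying (C1)--(C2), I would construct a root system with Cartan matrix $C$ following \cite[Chap.~VI, \S 4]{bour}. Let $V$ be a real vector space with basis $\{\alpha_s\}_{s\in S}$. Using (C1), choose positive reals $\epsilon_s$ with $\epsilon_s c_{st}=\epsilon_t c_{ts}$ for all $s,t$ (the ratios are prescribed on each connected component of the Dynkin graph and are consistent by connectivity). Define a symmetric bilinear form $B$ on $V$ by $B(\alpha_s,\alpha_t):=\epsilon_s c_{st}/2$. Rescaling $\tilde\alpha_s:=\alpha_s/\sqrt{\epsilon_s}$ and using $c_{st}\leq 0$ for $s\neq t$, a direct check gives $B(\tilde\alpha_s,\tilde\alpha_t)=-\cos(\pi/m_{st})$, so (C2) forces $B$ to be positive-definite. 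I then define reflections $w_s\in O(V,B)$ by $w_s(v):=v-2B(v,\alpha_s)B(\alpha_s,\alpha_s)^{-1}\alpha_s$; these satisfy $w_s(\alpha_t)=\alpha_t-c_{st}\alpha_s$, and a short planar computation in the span of $\alpha_s,\alpha_t$ gives $(w_sw_t)^{m_{st}}=1$. Since $B$, read off in the basis $\{\tilde\alpha_s\}$, is the canonical bilinear form of the abstract Coxeter system with matrix $(m_{st})$, the classical finiteness criterion for Coxeter groups (positive-definiteness of that form) ensures that $W:=\langle w_s\mid s\in S\rangle\subseteq O(V,B)$ is finite. I then set $R:=W\cdot\{\alpha_s\}$, and define $\alpha_s^\vee\in V^*$ by $\langle v,\alpha_s^\vee\rangle:=2B(v,\alpha_s)/B(\alpha_s,\alpha_s)$, extending to coroots of general roots $W$-equivariantly.

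The technical heart will be verifying that $R$ is then a reduced crystallographic root system with base $\{\alpha_s\}$ and Cartan matrix $C$. Axioms (R1), (R2), and reducedness follow from $B$-orthogonality properties together with finiteness of $W$. The main obstacle is the crystallographic condition $\langle\beta,\gamma^\vee\rangle\in\Z$ for all $\beta,\gamma\in R$: by $W$-equivariance it reduces to $\gamma=\alpha_s$, and is then proved by induction on the Coxeter length of $w$ with $\beta=w(\alpha_t)$, starting from the integer initial values $\langle\alpha_t,\alpha_s^\vee\rangle=c_{st}\in\Z$ and using that each reflection $w_{s'}$ preserves the lattice $\sum_t\Z\alpha_t$ (since $w_{s'}(\alpha_t)=\alpha_t-c_{s't}\alpha_{s'}$). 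Once this is established, $\Pi=\{\alpha_s\}$ is visibly a base of $R$, and its Cartan matrix equals $C$ as required.
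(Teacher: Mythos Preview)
The paper does not actually supply a proof of this proposition: it is stated with a reference to Bourbaki and left at that. The surrounding material in \ref{Mcartan0a}--\ref{Mcartan0} gives the necessity direction implicitly, and the proof of Lemma~\ref{Mcartan2} essentially carries out the sufficiency construction you describe (symmetrize, get a positive-definite form via (C2), conclude the reflection group is finite, take the orbit of the base). So your approach is the standard one and matches what the paper relies on, just made explicit.

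There is one point where you are too quick. You write that the symmetrizing constants $\epsilon_s$ exist because ``the ratios are prescribed on each connected component of the Dynkin graph and are consistent by connectivity''. Consistency along a path is clear, but consistency around a \emph{cycle} is not automatic from (C1) alone: if the Dynkin graph contained a cycle, the product of the ratios $c_{st}/c_{ts}$ around it could fail to be~$1$. What actually rules out cycles is (C2): a cycle in the Coxeter graph with all $m_{st}\geq 3$ forces the matrix $(-\cos(\pi/m_{st}))$ to have a nontrivial kernel or negative direction (the smallest case, a $3$-cycle with $m=3$, already has determinant~$0$), contradicting positive-definiteness. So (C2) $\Rightarrow$ the graph is a forest $\Rightarrow$ symmetrizability. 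The paper's proof of Lemma~\ref{Mcartan2} sidesteps this by assuming $C$ is already known to be a Cartan matrix and invoking the classification (``no closed paths in the Dynkin diagram''), which would be circular here; you need the direct argument above. Once that is in place, your proof goes through.
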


\begin{rem} \label{Mcartanfund} Let $C=(c_{st})_{s,t\in S}$ be a Cartan 
matrix. Let $\Omega$ be the free abelian group with basis $\{\omega_s \mid 
s\in S\}$. Let $\Z C \subseteq \Omega$ be the subgroup generated by the 
columns of $C$, that is, by all vectors of the form $\sum_{s \in S} c_{st} 
\omega_s$ for $t \in S$. Then 
\[\Lambda(C):=\Omega/\Z C\]
is called the \nm{fundamental group} of $C$. (This agrees with the 
definitions in \cite[9.14]{MaTe} or \cite[8.1.11]{Spr}, for example.) If $C$ 
is indecomposable, then the groups $\Lambda(C)$ are easily computed and
listed in Table~\ref{Mfundgrp}. 
\end{rem}

\begin{table}[htbp] 
\caption{Fundamental groups of indecomposable Cartan matrices} 
\label{Mfundgrp} \begin{center}
$\begin{array}{cl} \hline \mbox{Type of $C$} & \Lambda(C) \\ \hline
A_{n-1} & \; \Z/n\Z\\
B_n,C_n & \;\Z/2\Z\\
D_{n}  & \left\{\begin{array}{cl} \Z/2\Z \oplus \Z/2\Z & \mbox{ ($n$ 
even)}\\ \Z/4\Z & \mbox{ ($n$ odd)} \end{array}\right.\\
G_2,F_4,E_8  & \;\{0\}\\
E_6  & \;\Z/3\Z\\
E_7  & \;\Z/2\Z \\ \hline
\end{array}$
\end{center}
\end{table}

In \ref{Mhomrootdata}, we have defined what it means for two root data to
be isomorphic. We shall also need the following, somewhat more general
notion. 

\begin{defn} \label{MdefisogR} Let $\cR=(X,R,Y,R^\vee)$ and $\cR'=(X',
R',Y',R'^\vee)$ be root data. We fix an integer $p$ such that either $p=1$ 
or $p$ is a prime number. Then a group homomorphism $\varphi \colon X' 
\rightarrow  X$ is called a \nms{$p$-isogeny of root data}{p-isogeny of 
root data} if there exist a bijection $R\rightarrow R'$, $\alpha \mapsto 
\alpha^\dagger$, and positive integers $q_\alpha>0$, each an integral 
power of~$p$, such that $\varphi$ and its transpose $\varphi^\trp \colon 
Y \rightarrow Y'$ satisfy the following conditions.
\begin{itemize}
\item[(I1)] $\varphi$ and $\varphi^\trp$ are injective.
\item[(I2)] We have $\varphi(\alpha^\dagger)=q_\alpha\,\alpha$ and
$\varphi^\trp(\alpha^\vee)=q_\alpha\, (\alpha^\dagger)^\vee$ for all 
$\alpha \in R$.
\end{itemize}
The conditions (I1) and (I2) appear in \cite[\S 18.2]{Ch05}; following 
Chevalley, we call the numbers $\{q_\alpha\}$ the \nm{root exponents} of 
$\varphi$. Note that $\alpha\mapsto \alpha^\dagger$ and the numbers 
$\{q_\alpha\}$ are uniquely determined by $\varphi$ (since $R$ is reduced).

Let $\bW\subseteq \Aut(X)$ be the Weyl group of $R$. Then one easily sees 
that, for any $\alpha \in R$ and $w\in \bW$, we have $q_{w(\alpha)}=
q_\alpha$; see \cite[9.6.4]{Spr}. Hence, by \ref{Mcartan0}(c), the map 
$\alpha \mapsto q_\alpha$ is determined by its values on a base of $R$. 
We also see that $\varphi$ is an isomorphism of root data if and only if 
$\varphi$ is a bijective isogeny where $q_\alpha=1$ for all $\alpha\in R$. 
Finally note that if $p=1$, then $q_\alpha=1$ for all $\alpha\in R$.
\end{defn}

A simple example of a $p$-isogeny of a root datum into itself is given by
$\varphi\colon X\rightarrow X$, $\lambda \mapsto p\lambda$ (scalar 
multiplication with~$p$); this will be continued in Example~\ref{canfrob2a}.

\begin{rem} \label{MdefisogR1} Keep the notation in the above
definition. Let $\bW\subseteq \Aut(X)$ be the Weyl group of $\cR$ 
and $\bW'\subseteq \Aut(X')$ be the Weyl group of $\cR'$. Then one easily
sees that a $p$-isogeny $\varphi \colon X'\rightarrow X$ induces a unique 
group isomorphism 
\[\sigma \colon \bW \rightarrow \bW' \quad \mbox{such that} \quad 
\varphi\circ \sigma(w)=w\circ \varphi \quad (w\in \bW).\]
We have $\sigma(w_\alpha)=w_{\alpha^\dagger}$ for all $\alpha\in R$
where $w_\alpha\in \bW$ is the reflection corresponding to $\alpha\in R$
and $w_{\alpha^\dagger}\in \bW'$ is the reflection corresponding to
$\alpha^\dagger\in R'$. (See \cite[\S 18.3]{Ch05} for further details; see
also \ref{Mdefmatrixisog} below.) In particular, if $\varphi\colon X
\rightarrow X$ is a $p$-isogeny of $\cR$ into itself, then $\varphi\circ
\bW=\bW\circ \varphi$ and so $\varphi$ normalizes $\bW\subseteq \Aut(X)$.
\end{rem}

\begin{rem} \label{Misosimple} Let $\cR=(X,R,Y,R^\vee)$ and $\cR'=(X',R',
Y',R'^\vee)$ be root data. Let us fix a base $\Pi$ of $R$ and a base
$\Pi'$ of $R'$; see \ref{Mcartan0}. Let $\varphi\colon X' \rightarrow X$
be a group homomorphism which defines a $p$-isogeny of root data. Then (I2) 
shows that $\Pi^\dagger:=\{\alpha^\dagger \mid \alpha\in \Pi\}$ also is a 
base of $R'$, where $\alpha\mapsto \alpha^\dagger$ denotes the bijection
$R\rightarrow R'$ associated with $\varphi$. As already mentioned in
\ref{Mcartan0}, there exists a unique $w$ in the Weyl group $\bW'$ of 
$\cR'$ such that $\Pi^\dagger=w(\Pi')$. Now $w\in\bW' \subseteq \Aut(X')$ 
certainly is an isomorphism of $\cR'$ into itself. Hence, the composition 
$\varphi':=\varphi\circ w\colon X'\rightarrow X$ will also be a 
$p$-isogeny of root data and the bijection $R\rightarrow R'$ associated 
with $\varphi'$ will map $\Pi$ onto $\Pi'$. This shows that, replacing 
$\varphi$ by $\varphi \circ w$ for a suitable $w\in \bW'$ if necessary, we 
can always assume that the bijection $R\rightarrow R'$ will preserve the 
given bases $\Pi\subseteq R$ and $\Pi'\subseteq R'$.
\end{rem}

\begin{rem} \label{MidentW} Let $\cR=(X,R,Y,R^\vee)$ be a root datum
and $\bW\subseteq \Aut(X)$ be the corresponding Weyl group. Let $\Pi$ be a 
base of $R$. By \ref{Mcartan0}(e), $\bW$ is a Coxeter group with 
generating set $S=\{w_{\alpha}\mid \alpha\in\Pi\}$. We denote by $l\colon
\bW \rightarrow \Z_{\geq 0}$ the corresponding length function. To 
unify the notation, we shall use $S$ as an indexing set for $\Pi$, that 
is, $\Pi=\{\alpha_s\mid s\in S\}$ where $\alpha_s$ is the root of the
reflection~$s$. 

Using the isomorphism $\delta$ in Lemma~\ref{Mweylfinite1}(a), we can 
identify $\bW^\vee=\bW=\langle S\rangle$. Under this identification, 
$\bW$ will act on both $X$ and $Y$, and we have
\[ \langle w^{-1}.\lambda,\nu\rangle=\langle \lambda, w.\nu\rangle
\qquad \mbox{for all $w\in \bW$, $\lambda \in X$, $\nu \in Y$}.\]
Recall that the action of $s\in S$ on $X$ is given by $s.\lambda=\lambda
-\langle\lambda,\alpha_s^\vee\rangle \alpha_s$ for all $\lambda \in X$;
the action of $s\in S$ on $Y$ is given by $s.\nu=\nu -\langle\alpha_s,
\nu\rangle \alpha_s^\vee$ for all $\nu\in Y$.
\end{rem}

We now turn to the question of actually constructing root data
and $p$-isogenies for a Cartan matrix $C$. The key idea is contained
in the following remark.

\begin{rem} \label{Mcorbrulu1} Let $\cR=(X,R,Y, R^\vee)$ be a root datum
and let $\Pi$ be a base of $R$. Let $C$ be the corresponding Cartan matrix; 
see \ref{Mcartan0}. Let $\{\lambda_i\mid i\in I\}$ be a $\Z$-basis of $X$, 
where $I$ is some finite indexing set. (We have $|I|\geq |\Pi|$.) Let 
$\{\nu_i\mid i\in I\}$ be the corresponding dual basis of $Y$ (with 
respect to the pairing $\langle \;,\;\rangle$). Then we have unique 
expressions
\[ \beta=\sum_{i\in I} a_{\beta,i} \lambda_i \quad \mbox{and}\quad
\beta^\vee=\sum_{i\in I} \breve{a}_{\beta,i} \nu_i \qquad \mbox{for all
$\beta \in \Pi$}, \]
where $a_{\beta,i}\in \Z$ and $\breve{a}_{\beta,i}\in\Z$ for all 
$i\in I$. Consequently, we have a factorisation
\[ C=\breve{A}\cdot A^{\text{tr}} \quad \mbox{where} \quad
A=(a_{\beta,i})_{\beta\in \Pi,i \in I} \quad \mbox{and}\quad
\breve{A}=(\breve{a}_{\beta,i})_{\beta\in \Pi,i \in I}.\]
Note that all of $R \subseteq X$ and $R^\vee \subseteq Y$ are uniquely 
determined by $C,A$ and $\breve{A}$. (This follows from \ref{Mcartan0}(c) 
and the fact that $\bW=\langle w_\beta\mid \beta\in \Pi\rangle$;
see \ref{Mcartan0}(e).) We also note that the dual root system $(Y,R^\vee,
X,R)$ (see Lemma~\ref{Mweylfinite1}) has Cartan matrix $C^{\text{tr}}$, 
with corresponding factorisation $C^{\text{tr}}=\breve{B} \cdot 
B^{\text{tr}}$ where $B=\breve{A}$ and $\breve{B}=A$.
\end{rem}

Following \cite[\S 2.1]{BruLu}, we shall now reverse this argument and
show that {\em  every} factorisation of a Cartan matrix $C$ as above leads 
to a root datum. 

\begin{abs} \label{Mcartan1} Let $S$ be a finite set and $C=
(c_{st})_{s,t\in S}$ be a Cartan matrix, that is, a matrix satisfying 
the conditions in Proposition~\ref{Mcartan}. Let $I$ be another finite 
index set, with $|I|\geq |S|$, and assume that we have a factorisation
\[ C=\breve{A}\cdot A^{\text{tr}} \quad \mbox{where} \quad
A=(a_{s,i})_{s\in S,i \in I} \quad \mbox{and}\quad
\breve{A}=(\breve{a}_{s,i})_{s\in S,i \in I};\]
here, $A,\breve{A}$ are matrices with integer coefficients and both have
size $|S| \times |I|$. Let $X$ be a free abelian group with $\Z$-basis 
indexed by $I$, say $\{\lambda_i \mid i\in I\}$; also let $Y$ be a 
free abelian group with $\Z$-basis labelled by $I$, say $\{\nu_i \mid 
i\in I\}$. We define a bilinear pairing $\langle \;,\;\rangle \colon X 
\times Y \rightarrow \Z$ such that $\{\lambda_i \mid i\in I\}$ and $\{\nu_i
\mid i\in I\}$ are dual bases to each other. Then $C=\bigl(\langle \alpha_t,
\alpha_s^\vee\rangle\bigr)_{s,t \in S}$, where we set
\[ \alpha_s:=\sum_{i\in I} a_{s,i} \lambda_i \qquad \mbox{and}\qquad
\alpha_s^\vee:=\sum_{i\in I} \breve{a}_{s,i} \nu_i \qquad \mbox{for all
$s\in S$}.\]
For $s \in S$, we define endomorphisms $w_s \colon X \rightarrow X$ 
and $w_s^\vee \colon Y \rightarrow Y$ by
\[w_s(\lambda)=\lambda-\langle \lambda,\alpha_s^\vee\rangle \alpha_s 
\qquad \mbox{and}\qquad w_s^\vee(\nu)=\nu-\langle \alpha_s,\nu
\rangle \alpha_s^\vee\]
for all $\lambda \in X$ and $\nu \in Y$. Then $w_s^2=\id_X$ and
$(w_s^\vee)^2=\id_{Y}$. Hence, we have $w_s\in \Aut(X)$ and 
$w_s^\vee \in \Aut(Y)$ for all $s\in S$. Let 
\[W:=\langle w_s \mid s\in S\rangle \subseteq \Aut(X) \qquad \mbox{and}
\qquad W^\vee:=\langle w_s^\vee \mid s\in S\rangle \subseteq \Aut(Y).\]
Finally, let $R:=\{w(\alpha_s) \mid w \in W,s\in S\}$ and
$R^\vee:=\{w^\vee(\alpha_s^\vee) \mid w^\vee \in W^\vee, s\in S\}$.
\end{abs}

\begin{lem} \label{Mcartan2} The quadruple $\cR:=(X,R,Y, R^\vee)$ 
in \ref{Mcartan1} is a root datum with Cartan matrix $C$,
where $\{\alpha_s \mid s \in S\}$ is a base of $R$ and 
$\{\alpha_s^\vee \mid s \in S\}$ is a base of $R^\vee$. Furthermore, we have
$W=\bW$ and $W^\vee=\bW^\vee$ (with the notation of \ref{Mabs21}). The 
bijection $R\rightarrow R^\vee$, $\alpha\mapsto \alpha^\vee$, is determined 
as follows. If $w\in \bW$ and $s\in S$ are such that $\alpha=w(\alpha_s)$, 
then $\alpha^\vee=\delta(w)(\alpha_s^\vee)$ (with $\delta$ as in 
Lemma~\ref{Mweylfinite1}(a)). 
\end{lem}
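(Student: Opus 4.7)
The plan is to compare the data built in \ref{Mcartan1} with the classical reduced crystallographic root system $R_0$ attached to $C$ by Proposition~\ref{Mcartan} (with base $\{\alpha_s^0\mid s\in S\}$, coroots $\{(\alpha_s^0)^\vee\}$, Weyl group $W_0$), and transport its structure into $(X,Y,\langle\,,\,\rangle)$. Everything hinges on the invertibility of $C$: because the construction yields $(\langle\alpha_t,\alpha_s^\vee\rangle)=C$, invertibility of $C$ forces $\{\alpha_s\mid s\in S\}$ to be $\Q$-linearly independent in $X_\Q$, and the analogous statement for $\{\alpha_s^\vee\}\subseteq Y_\Q$ follows from $C^\trp$ being invertible.

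First I would produce a group isomorphism $\delta\colon W\stackrel{\sim}{\to} W^\vee$ with $\delta(w_s)=w_s^\vee$ by the transpose construction of Lemma~\ref{Mweylfinite1}(a): a direct check from the defining formulae gives $w_s^\trp=w_s^\vee$, so $w\mapsto(w^{-1})^\trp$ restricts to such a $\delta$, and the adjunction
\[
\langle w^{-1}(\lambda),\nu\rangle=\langle \lambda,\delta(w)(\nu)\rangle\qquad (w\in W,\ \lambda\in X,\ \nu\in Y)
\]
drops out for free. Using the linear independence above, the $\Q$-linear map $\pi\colon\sum_s\Q\alpha_s^0\to X_\Q$, $\alpha_s^0\mapsto\alpha_s$, is injective; and since $\pi\circ w_{\alpha_s^0}$ and $w_s\circ\pi$ agree on the generators $\alpha_t^0$ (both send $\alpha_t^0$ to $\alpha_t-c_{st}\alpha_s$), the prescription $w_{\alpha_s^0}\mapsto w_s$ extends uniquely to an isomorphism $W_0\stackrel{\sim}{\to} W$ making $\pi$ equivariant. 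Consequently $\pi$ restricts to a bijection $R_0\stackrel{\sim}{\to} R:=W\cdot\{\alpha_s\}$, so $R$ is finite with base $\{\alpha_s\}$, satisfies $R=-R$, and (R2) is inherited from $R_0$ being reduced. The same argument applied to the dual factorisation $C^\trp=A\cdot\breve A^\trp$ (cf.\ Remark~\ref{Mcorbrulu1}) yields an injection $\pi^\vee$ on the coroot side and a bijection $R_0^\vee\stackrel{\sim}{\to} R^\vee$ with base $\{\alpha_s^\vee\}$.

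Finally I would define $\alpha\mapsto\alpha^\vee$ on $R$ by the prescribed formula $w(\alpha_s)\mapsto\delta(w)(\alpha_s^\vee)$. The main obstacle is well-definedness: if $w_1(\alpha_s)=w_2(\alpha_t)$ in $X$, injectivity of $\pi$ pushes this equality back to $R_0$, the classical $W_0$-equivariance of the duality $R_0\to R_0^\vee$ yields $w_1((\alpha_s^0)^\vee)=w_2((\alpha_t^0)^\vee)$ in $R_0^\vee$, and injectivity of $\pi^\vee$ transports the equality to $\delta(w_1)(\alpha_s^\vee)=\delta(w_2)(\alpha_t^\vee)$ in $Y$. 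Once this is in place, (R1) is immediate from the adjunction: $\langle w(\alpha_s),\delta(w)(\alpha_s^\vee)\rangle=\langle\alpha_s,\alpha_s^\vee\rangle=2$. For (R3) one observes that $w_\alpha=ww_sw^{-1}\in W$ preserves $R$, and likewise $w_{\alpha^\vee}=\delta(w)w_s^\vee\delta(w)^{-1}\in W^\vee$ preserves $R^\vee$; this simultaneously gives $\bW=W$ and $\bW^\vee=W^\vee$, while the description of $\alpha\mapsto\alpha^\vee$ is the definition itself.
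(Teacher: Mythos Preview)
Your argument is correct, and the overall shape is the same as the paper's: establish linear independence of $\{\alpha_s\}$ and $\{\alpha_s^\vee\}$, build $\delta\colon W\to W^\vee$ via transpose, define $\alpha^\vee:=\delta(w)(\alpha_s^\vee)$ for $\alpha=w(\alpha_s)$, and then verify (R1)--(R3). The genuine difference is in how well-definedness of $\alpha\mapsto\alpha^\vee$ is secured.

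The paper works \emph{intrinsically}: it puts the symmetrized $W$-invariant positive-definite form $(\;,\;)$ on $V=\sum_s\Q\alpha_s$ (using that $C$ is symmetrizable and (C2)), and introduces the explicit linear map
\[
f\colon V\to V^\vee,\qquad f(\alpha_s)=\tfrac{1}{2}(\alpha_s,\alpha_s)\,\alpha_s^\vee,
\]
which one checks satisfies $w_s^\vee\circ f=f\circ w_s$. Then for $\alpha=w(\alpha_s)$ one computes $f(\alpha)=\tfrac{1}{2}(\alpha_s,\alpha_s)\,\delta(w)(\alpha_s^\vee)$; since $f(\alpha)$ depends only on $\alpha$ and $R^\vee$ is reduced, the element $\delta(w)(\alpha_s^\vee)\in R^\vee$ is uniquely determined by $\alpha$. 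This route also delivers the identity $\langle\beta,\alpha^\vee\rangle=2(\alpha,\beta)/(\alpha,\alpha)$ for free, from which (R1) and (R3) follow directly.

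You instead fix an \emph{external} model $R_0$ of the root system of type $C$ (via Proposition~\ref{Mcartan}) and transport its $W_0$-equivariant duality $R_0\to R_0^\vee$ through the injections $\pi,\pi^\vee$; well-definedness is then inherited from the classical theory. This is equally valid and arguably tidier conceptually, but it shifts more weight onto the black box $R_0$ (including the $W_0$-equivariance of $\beta\mapsto\beta^\vee$), whereas the paper's map $f$ makes the mechanism visible inside $(X,Y)$ itself. Either way, your check that $w_\alpha=ww_sw^{-1}$ and the adjunction argument for $\langle\alpha,\alpha^\vee\rangle=2$ are exactly right.
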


\begin{proof} Let $V\subseteq \Q\otimes_{\Z} X$ be the subspace spanned
by $\{\alpha_s\mid s\in S\}$. Then $w_s(\alpha_t)=\alpha_t-c_{st}\alpha_s$
for all $t \in S$. So $w_s(V) \subseteq V$ for all $s\in S$. Let $W_R
\subseteq \GL(V)$ be the group generated by the restrictions $w_s \colon
V \rightarrow V$; then $R=\{w(\alpha_s) \mid w \in W_R, s\in S\}\subseteq V$.
Thus, we are in the setting of \cite[\S 1.1]{GePf}.

The matrix $C$ is {\em symmetrizable}, that is, there exist positive
numbers $\{d_s \mid s\in S\}$ such that $(d_s c_{st})_{s,t\in S}$ is a
symmetric matrix. (This easily follows from the fact that there are no
closed paths in the Dynkin diagram of $C$; see also \cite[\S 4.6]{Kac}.) 
Then we can define a $W_R$-invariant symmetric bilinear form on $V$ by 
$(\alpha_s, \alpha_t)=d_sc_{st}/2$ for $s,t\in S$. The $W_R$-invariance
implies that
\[c_{st}=2\frac{(\alpha_s,\alpha_t)}{(\alpha_s,\alpha_s)} \qquad
\mbox{for all $s,t \in S$};\]
see \cite[1.3.2]{GePf}. By (C2), this form is positive-definite; furthermore,
each $w_s\colon V \rightarrow V$ is an orthogonal reflection with root
$\alpha_s$. So $W_R$ and $R$ are finite; see \cite[Chap.~V, \S 8]{bour} or
\cite[1.3.8]{GePf}. In fact, $R$ is a root system in $V$ with Weyl group
$W_R$, with $\{\alpha_s\mid s\in S\}$ as base and $C$ as Cartan matrix; see 
\cite[1.1.10]{GePf}. Also note that $R$ is reduced, that is, (R2) holds; 
see \cite[1.3.7]{GePf}.  Let $R^+$ be the set of positive roots in $R$ 
defined by the base $\{\alpha_s\mid s \in S\}$. 

Similarly, let $V^\vee\subseteq \Q\otimes_{\Z} Y$ be the 
subspace spanned by $\{\alpha_s^\vee\mid s\in S\}$. Then
$w_s^\vee(\alpha_t^\vee)=\alpha_t^\vee-c_{ts}\alpha_s^\vee$
for all $t \in S$. Let $W_{R^\vee}\subseteq \GL(V^\vee)$ be the group 
generated by the restrictions $w_s^\vee \colon V^\vee \rightarrow V^\vee$. 
Again, we are in the setting of \cite[\S 1.1]{GePf} (with respect to
$C^{\text{tr}}$,  the transpose of $C$) and so we can repeat the previous 
argument. Consequently, $W_{R^\vee}$ is also finite; furthermore,
$R^\vee$ is a reduced root system in $V^\vee$ with Weyl group 
$W_{R^\vee}$, with $\{\alpha_s^\vee\mid s\in S\}$ as base
and $C^{\text{tr}}$ as Cartan matrix. Let $(R^\vee)^+$ be the set of 
positive roots in $R^\vee$ defined by the base $\{\alpha_s^\vee
\mid s \in S\}$. 

Next, we define a linear map $f\colon V \rightarrow V^\vee$ by 
$f(\alpha_s)=\frac{1}{2}(\alpha_s,\alpha_s)\alpha_s^\vee$ for all 
$s \in S$. (This is analogous to the definition in the proof of 
Lemma~\ref{Mweylfinite2}.) Clearly, $f$ is bijective. One immediately
checks that $w_s^\vee\circ f=f\circ w_s$ for all $s\in S$. So the map 
$w\mapsto f\circ w\circ f^{-1}$ defines a group isomorphism $\delta\colon 
W_R\stackrel{\sim}{\rightarrow} W_{R^\vee}$ such that $\delta(w_s)=
w_s^\vee$ for all $s\in S$. Consequently, we can define a bijection 
$R \rightarrow R^\vee$, $\alpha\mapsto \alpha^\vee$, as follows. 
First, let $\alpha\in R^+$. By definition, $\alpha=w(\alpha_s)$ for some 
$w\in W$, $s\in S$. Then 
\begin{center}
$f(\alpha)=f(w(\alpha_s))=\delta(w)(f(\alpha_s))=
\frac{1}{2}(\alpha_s,\alpha_s)\delta(w)(\alpha_s^\vee)$.
\end{center}
Since $\alpha_s^\vee\in R^\vee$, we have $\delta(w)(\alpha_s^\vee)
\in R^\vee$ by definition. Thus, $f(\alpha) \in V^\vee$ is a 
positive scalar multiple of some element of $R^\vee$; since 
$R^\vee$ is reduced, there is a unique positive root with this property,
denoted $\alpha^\vee$, and the above computation shows that $\alpha^\vee=
\delta(w)(\alpha_s^\vee)$. The definition of $\alpha^\vee$ for negative 
$\alpha$ is analogous; we then have $(-\alpha)^\vee=-\alpha^\vee$
for all $\alpha \in R$. Consequently, we obtain a map $R\rightarrow 
R^\vee$, $\alpha \mapsto \alpha^\vee$, which is easily seen to be 
bijective. Once this is established, the maps $w_\alpha\colon 
X\rightarrow X$ and $w_\alpha^\vee\colon Y\rightarrow Y$ 
are defined for any $\alpha \in R$. The $W_R$-invariance of $(\;,\;)$ 
implies that
\[\langle \beta,\alpha^\vee\rangle=2\frac{(\alpha,\beta)}{(\alpha,\alpha)}
\qquad \mbox{for all $\alpha,\beta \in R$}.\]
In particular, we obtain $\langle \alpha,\alpha^\vee\rangle=2$; thus,
(R1) holds. Finally, we show that (R3) holds. For each $\alpha\in R$, 
we claim that $w_\alpha(R)=R$. First one notices that $w_\alpha(V)
\subseteq V$. But the above formula for $\langle \beta,\alpha^\vee\rangle$ 
shows that the restriction $w_\alpha \colon V \rightarrow V$ is the 
orthogonal reflection with root $\alpha$. Hence, since $W_R$ is the Weyl 
group of $R$, we have $w_\alpha(R)=R$, as desired. The argument for 
$w_\alpha^\vee$ is analogous.
\end{proof}

In view of Remark~\ref{Mcorbrulu1}, the above construction yields all root 
data up to isomorphism. Thus, given a Cartan matrix $C$, we can think of 
the various root data of Cartan type $C$ simply as factorisations $C=
\breve{A}\cdot A^{\text{tr}}$, where $A,\breve{A}$ are integer matrices 
of the same size. (This observation, in this explicit form, appears in 
\cite{BruLu}. It is also implicit in \cite[\S 1]{Lu89}, 
\cite[\S\S 1--3]{Lu09d}.)

\begin{exmp} \label{expadjsc} Let $C$ be a Cartan matrix. We have just seen
that any factorisation $C=\breve{A}\cdot A^{\text{tr}}$ as in \ref{Mcartan1}
gives rise to a root datum $\cR=(X,R,Y,R^\vee)$. Obviously, there are two 
natural choices for such a factorisation, namely, 
\begin{itemize}
\item either $A$ is the identity matrix and, hence, $\breve{A}=C$;
\item or $\breve{A}$ is the identity matrix and, hence, $A=C^{\text{tr}}$.
\end{itemize}
In the first case, we denote the corresponding root datum by
$\cR=\cR_{\text{ad}}(C)$. We have $X=\Z R$ in this case; any root datum 
satisfying $X=\Z R$ will be called a \nm{root datum of adjoint type}. 
In the second case, we denote the corresponding root datum 
by $\cR= \cR_{\text{sc}}(C)$. We have $Y=\Z R^\vee$ in this case; 
any root datum satisfying $Y=\Z R^\vee$ will be called a \nm{root datum 
of simply-connected type}. 

Thus, $\cR_{\text{ad}}(C)$ and $\cR_{\text{sc}}(C)$ may be regarded as
the standard models of root data of adjoint type and simply-connected 
type, respectively. (See also Example~\ref{Misoadsc}.) The relevance of 
these notions will be become clearer when we consider semisimple algebraic 
groups in Section~\ref{sec:semisimple}.
\end{exmp}

\begin{exmp} \label{MdirprodR} There is an obvious notion of direct product
of root data. Indeed, if $\cR_i=(X_i,R_i, Y_i,R_i^\vee)$ for $i=1,\ldots,n$ 
are root data, then we obtain a new root datum $\cR=(X,R,Y,R^\vee)$
as follows. We set 
\begin{alignat*}{2}
X&:=X_1\oplus \ldots \oplus X_n, \qquad & R&:=\dot{R}_1\cup \ldots \cup 
\dot{R}_n,\\ Y&:=Y_1\oplus \ldots \oplus Y_n, \qquad &
R^\vee&:= \dot{R}_1^\vee\cup \ldots \cup \dot{R}_n^\vee, 
\end{alignat*}
where, for each $i$, we let $\dot{R}_i\subseteq X$ denote the image
of $R_i$ under the natural embedding $X_i\hookrightarrow X$; similarly,
$\dot{R}_i^\vee\subseteq Y$ denotes the image of $R_i^\vee$ under the 
natural embedding $Y_i\hookrightarrow Y$. Furthermore, the perfect 
bilinear pairings for the various $\cR_i$ define a unique perfect bilinear 
pairing for $\cR$ in a natural way. Also note that, if $\Pi_i$ is a base 
of $R_i$ for $i=1,\ldots,n$, then $\Pi:=\dot{\Pi}_1\cup \ldots \cup 
\dot{\Pi}_n$ is a base of $R$. 

In terms of the matrix language of \ref{Mcartan1}, the situation 
is described as follows. Each $\cR_i$ is determined by a factorisation 
$C_i=\breve{A}_i\cdot A_i^{\text{tr}}$ where $C_i$ is the Cartan matrix of 
$R_i$ with respect to a base $\Pi_i$ of $R_i$. Then $\cR$ is determined by
the factorisation $C=\breve{A}\cdot A^{\text{tr}}$, where $C$, $A$ and 
$\breve{A}$ are block diagonal matrices with diagonal blocks given by $C_i$, 
$A_i$ and $\breve{A}_i$, respectively. The matrix $C$ is the Cartan matrix 
of $R$ with respect to the base $\Pi=\dot{\Pi}_1\cup \ldots \cup 
\dot{\Pi}_n$. 
\end{exmp}

We now translate the conditions in Definition~\ref{MdefisogR} into the
matrix language of Remark~\ref{Mcorbrulu1}. This will be an extremely 
efficient tool for constructing isogenies, as it reduces the conditions 
to be checked to the verification of simple matrix identities. 

\begin{abs} \label{Mdefmatrixisog} Let $\cR=(X,R,Y,R^\vee)$ and 
$\cR'=(X',R', Y',R'^\vee)$ be root data. Assume that $X$ and $X'$ have
the same rank and that $R$ and $R'$ have bases indexed by the same set $S$.
Denote these bases by $\Pi=\{\alpha_s\mid s\in S\}$ and $\Pi'=\{\beta_s\mid 
s\in S\}$, respectively. Let $C$ and $C'$ be the corresponding Cartan 
matrices. Let us also fix a $\Z$-basis $\{\lambda_i\mid i\in I\}$ of $X$ 
and a $\Z$-basis $\{\lambda_j' \mid j\in J\}$ of $X'$. Then $\cR$ and 
$\cR'$ are determined by factorisations as in Remark~\ref{Mcorbrulu1}: 
\begin{align*}
C&=\breve{A}\cdot A^{\text{tr}} \quad \mbox{where} \quad
A=(a_{s,i})_{s\in S,i \in I} \quad \mbox{and}\quad
\breve{A}=(\breve{a}_{s,i})_{s\in S,i \in I},\\
C'&=\breve{B}\cdot B^{\text{tr}} \quad \mbox{where} \quad
B=(b_{s,j})_{s\in S,j \in J} \quad \mbox{and}\quad
\breve{B}=(\breve{b}_{s,j})_{s\in S,j \in J}.
\end{align*}
(Here, $|I|=|J|$, since $X$, $X'$ have the same rank.)
Now giving a linear map $\varphi\colon X'\rightarrow X$ is the same as 
giving a matrix $P=(p_{ij})_{i\in I,j\in J}$ with integer coefficients: 
\[ \varphi(\lambda_j')=\sum_{i\in I} p_{ij} \lambda_i \qquad
\mbox{for all $j\in J$}.\]
Assume now that $\varphi\colon X'\rightarrow X$ is a linear map which 
is ``base preserving'', in the sense that there is a permutation 
$S\rightarrow S$, $s\mapsto s^\dagger$, such that 
\[\varphi(\beta_{s^\dagger})=q_s\alpha_s \qquad \mbox{where $0\neq 
q_s\in \Z$ for all $s\in S$}.\]
We encode this in a monomial matrix $P^\circ= (p_{st}^\circ)_{s,t
\in S}$ where $p_{ss^\dagger}^\circ=q_s$ for $s\in S$. Let $p=1$ or 
$p$ be a prime number and assume that $\varphi$ is a $p$-isogeny. Then
the conditions in Definition~\ref{MdefisogR} immediately imply that the 
following conditions hold.
\begin{itemize}
\item[(MI1)] $P^\circ$ is a monomial matrix whose non-zero entries are all 
powers of~$p$.
\item[(MI2)] $P$ is invertible over $\Q$; furthermore, $P\cdot B^{\text{tr}}
 =A^{\text{tr}}\cdot P^\circ$ and $P^\circ \cdot \breve{B}=\breve{A}\cdot 
P$.
\end{itemize}
Conversely, it is straightforward to check that {\em any} pair of integer
matrices $(P,P^\circ)$ satisfying (MI1) and (MI2) defines a $p$-isogeny of
root data. The argument is similar to the proof of Lemma~\ref{Mcartan2}; 
let us just briefly sketch it. Let $\varphi \colon X'\rightarrow X$ be the
linear map with matrix $P$. Condition (I1) holds since $P$ is invertible 
over $\Q$. Since $P^\circ$ is monomial, there is a permutation 
$S\rightarrow S$, $s\mapsto s^\dagger$, such that 
\[q_s:=p_{ss^\dagger}^\circ \neq 0\qquad \mbox{for all $s\in S$}.\] 
Then (MI2) means that $\varphi(\beta_{s^\dagger})=q_s\alpha_s$ and
$\varphi^\vee(\alpha_s^\vee)=q_s\beta_{s^\dagger}^\vee$ for all $s\in S$.
Thus, (I2) holds for simple roots and coroots. To see that (I2) holds 
for all roots and coroots, note that (MI2) implies that $C\cdot P^\circ 
= P^\circ \cdot C'$. Consequently, using \ref{Mcartan0}(e) 
for $\bW$ and for $\bW'$, there is a unique group isomorphism 
\[ \sigma\colon \bW \rightarrow \bW' \qquad \mbox{such that}\qquad 
w_{\alpha_s}\mapsto w_{\beta_{s^\dagger}} \qquad (s\in S).\]
Using Lemma~\ref{Mweylfinite2}, one shows that this implies that 
\[\varphi\circ \sigma(w)=w \circ \varphi\qquad\mbox{for all 
$w\in \bW$}.\]
We can now define a bijection $R\rightarrow R'$, $\alpha\mapsto
\alpha^\dagger$, with the required properties, as follows. Let $\alpha 
\in R$ and write $\alpha=w(\alpha_s)$ for some $w\in \bW$ and $s\in S$. 
Then we set $\alpha^\dagger:=\sigma(w)(\beta_{s^\dagger})\in R'$. Now, we 
have  
\[ \varphi(\alpha^\dagger)=\varphi(\sigma(w)(\beta_{s^\dagger}))=w(\varphi
(\beta_{s^\dagger}))=q_s w(\alpha_s)=q_s \alpha.\]
Since $\varphi$ is injective, $\alpha^\dagger$ is uniquely determined by 
$\alpha$ (and does not depend on the choice of $w$ and $s$); furthermore,
the first of the two identities in (I2) holds, where $q_\alpha=q_s$. The 
argument for the second identity is similar, using the bijection 
$R\rightarrow R^\vee$ (see Lemma~\ref{Mcartan2}). Thus, (I2) is seen to 
hold for all roots and coroots.
\end{abs}


\begin{exmp}[Cf.\ \protect{\cite[\S\S 21.5, 22.4, 23.7]{Ch05}}] 
\label{MsuzukiB}  
Let $C=(c_{ij})_{1\leq i,j\leq r}$ ($r=2$ or $4$) be a Cartan matrix of 
type $C_2$, $G_2$ or $F_4$; see Table~\ref{Mdynkintbl}. Explicitly:
\[C_2:\; \left(\begin{array}{rr} 2 & -1 \\-2 & 2 \end{array}\right),\qquad
G_2:\; \left(\begin{array}{rr} 2 & -1 \\-3 & 2 \end{array}\right),\qquad
F_4:\;\left(\begin{array}{rrrr} 2 & -1 & 0 & 0 \\-1 & 2 & -1 & 0 \\
0 & -2 & 2 & -1 \\ 0 & 0 & -1 & 2  \end{array}\right).\]
(Note that $C_2=B_2$ up to relabeling the two vertices of the Dynkin 
diagram.) We set $p=2$ if $C$ is of type $C_2$ or $F_4$, and $p=3$ if $C$ is 
of type $G_2$. Let us consider the corresponding root datum $\cR=
\cR_{\text{ad}}(C)=(X,R,Y,R^\vee)$ as in Example~\ref{expadjsc}; we have
$C=\breve{A}\cdot A^{\text{tr}}$, where $A$ is the identity matrix and 
$C=\breve{A}$. For any $m\geq 0$, we define two matrices $P_m^\circ$ and 
$P_m$ as follows:
\begin{align*}
C_2 &: \qquad\qquad  P_m=P_m^\circ:=\left(\begin{array}{cc} 0 & 2^{m} \\
2^{m+1} & 0 \end{array}\right),\\
G_2 &: \qquad\qquad  P_m=P_m^\circ:=\left(\begin{array}{cc} 0 & 3^{m} \\
3^{m+1} & 0 \end{array}\right),\\
F_4 &: \qquad P_m=P_m^\circ:=\left(\begin{array}{cccc} 0 & 0 & 0 & 2^{m} \\
0 & 0 & 2^m & 0 \\ 0 & 2^{m+1} & 0 & 0 \\ 2^{m+1} & 0 & 0 & 0
\end{array}\right).
\end{align*}
Now, in the setting of \ref{Mdefmatrixisog}, let $C'=C$, $B=A$, 
$\breve{B}=\breve{A}$. Then $P_m,P_m^\circ$ satisfy (MI1), (MI2) and, hence, 
the pair $(P_m,P_m^\circ)$ defines a group homomorphism $\varphi_m\colon 
X\rightarrow X$ which is a $p$-isogeny of $\cR$ into iself, such that 
$\varphi_m^2=p^{2m+1}\, \id_X$. 

See \cite[12.3, 12.4]{Ca1} for a more detailed discussion of these 
''exceptional'' isogenies; they give rise to the finite Suzuki and Ree
groups (see Example~\ref{Msuzuki2}). Another instance of such an
''exceptional'' isogeny will be considered in Example~\ref{isogBC}.
\end{exmp}

\begin{lem} \label{Misorootdat} Let $C$ be a Cartan matrix. Let 
$\cR=(X,R,Y,R^\vee)$ and $\cR'=(X',R',Y',R'^\vee)$ be root data which are
both of Cartan type $C$. Choosing bases for $R,R'$ and for $X,X'$, let
$C=\breve{A}\cdot A^\trp=\breve{B}\cdot B^\trp$ be the corresponding
factorisations as in Remark~\ref{Mcorbrulu1} (where $A,\breve{A}$
correspond to $\cR$ and $B,\breve{B}$ correspond to $\cR'$). Then
$\cR,\cR'$ are isomorphic root data if and only if there exist
square matrices $P,P^\circ$ with integer coefficients such that
$P^\circ$ is a permutation matrix, $P$ is invertible over $\Z$ and
we have $P\cdot B^\trp=A^\trp \cdot P^\circ$, $P^\circ \cdot \breve{B}=
\breve{A}\cdot P$.
\end{lem}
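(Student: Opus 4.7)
The plan is to apply the matrix encoding of $p$-isogenies from \ref{Mdefmatrixisog} in the special case $p=1$, combined with the base-adjustment procedure of Remark~\ref{Misosimple}. Observe that for a $1$-isogeny every root exponent $q_\alpha$ must equal $1$, so the monomial matrix $P^\circ$ of (MI1) has all nonzero entries equal to $1$ and is therefore a permutation matrix; conversely, a permutation matrix trivially satisfies (MI1) with $p=1$. Thus the lemma is a refinement of \ref{Mdefmatrixisog} obtained by further demanding that the underlying map be an isomorphism of groups, which corresponds to strengthening ``$P$ invertible over $\Q$'' to ``$P$ invertible over $\Z$''.

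For the ``if'' direction, I would start from the given matrices $P, P^\circ$. Since the entries of $P^\circ$ are $0$ or $1$, condition (MI1) holds with $p=1$, and the hypotheses give (MI2) verbatim. By \ref{Mdefmatrixisog}, the pair $(P,P^\circ)$ defines a $1$-isogeny $\varphi \colon X'\rightarrow X$. Because $P$ is invertible over $\Z$, $\varphi$ is an isomorphism of free abelian groups. The construction in \ref{Mdefmatrixisog} then produces a bijection $R\rightarrow R'$, $\alpha\mapsto \alpha^\dagger$, with $\varphi(\alpha^\dagger)=q_\alpha\alpha$; since all $q_\alpha=1$, the map $\varphi$ maps $R'$ bijectively onto $R$ as sets in $X$, and the analogous statement holds for $\varphi^\trp$ on coroots. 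Thus $\varphi$ is a homomorphism of root data in the sense of \ref{Mhomrootdata}, and being bijective, it is an isomorphism $\cR'\cong\cR$.

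For the ``only if'' direction, I would start with any isomorphism $\psi\colon X'\rightarrow X$ of root data. Such a $\psi$ is in particular a $1$-isogeny (all $q_\alpha=1$). By Remark~\ref{Misosimple} applied to $\psi$, there exists $w\in\bW'$ such that $\varphi:=\psi\circ w$ is still an isomorphism of root data and the associated bijection $R\rightarrow R'$ sends the chosen base $\Pi$ of $R$ onto the chosen base $\Pi'$ of $R'$. Thus $\varphi$ is a base-preserving $1$-isogeny, so the matrix dictionary of \ref{Mdefmatrixisog} applied to $\varphi$ in the chosen $\Z$-bases $\{\lambda_i\}$ of $X$ and $\{\lambda_j'\}$ of $X'$ produces matrices $P$ (the matrix of $\varphi$) and $P^\circ$ (the monomial matrix encoding $\beta_{s^\dagger}\mapsto q_s\alpha_s$) satisfying the identities of (MI2); since every $q_s=1$, the matrix $P^\circ$ is a permutation matrix, and since $\varphi$ is an isomorphism of free $\Z$-modules of the same rank, $P\in\GL_{|I|}(\Z)$.

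The main point to verify carefully is that Remark~\ref{Misosimple} does give a base-preserving representative for an arbitrary isomorphism of root data, which holds because isomorphisms are $1$-isogenies and the cited remark applies to any $p$-isogeny; once this bookkeeping is in place the argument is a pure translation through the dictionary of \ref{Mdefmatrixisog}, with no further computation needed.
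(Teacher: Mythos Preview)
Your proof is correct and follows essentially the same approach as the paper: both recognise that an isomorphism of root data is precisely a bijective $p$-isogeny with all $q_\alpha=1$, invoke Remark~\ref{Misosimple} to make it base-preserving, and then read off the conditions on $(P,P^\circ)$ via the matrix dictionary of \ref{Mdefmatrixisog}. The paper's proof is more terse, but the underlying argument is identical to yours.
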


\begin{proof} Recall from Definition~\ref{MdefisogR} that a $p$-isogeny 
$\varphi\colon X' \rightarrow X$ is an isomorphism of root data if and 
only if $\varphi$ is bijective and $q_\alpha=1$ for all $\alpha\in R$
Furthermore,  replacing $\varphi$ by $\varphi \circ w$ for some 
$w \in \bW'$ (the Weyl group of $\cR'$) if necessary, we may assume that
$\varphi$ sends the chosen base of $R'$ to the chosen base of $R$ (see 
Remark~\ref{Misosimple}). Hence, in the setting of \ref{Mdefmatrixisog}, 
$\varphi$ is an isomorphism if and only if $P^\circ$ is a permutation 
matrix and $P$ is invertible over $\Z$. 
\end{proof}

\begin{exmp} \label{Misoadsc} Assume that $C$ is a Cartan matrix of
type $G_2$, $F_4$ or $E_8$. Then $C$ is invertible over $\Z$ and, hence,
$\cR_{\text{ad}}(C)$ and $\cR_{\text{sc}}(C)$ are isomorphic root data. 
Indeed, $\cR_{\text{ad}}(C)$ corresponds to the factorisation
$C=\breve{A}\cdot A^\trp$ where $A$ is the identity matrix, while 
$\cR_{\text{sc}}(C)$ corresponds to the factorisation $C=\breve{B}\cdot 
B^\trp$ where $\breve{B}$ is the identity matrix. Then the conditions in 
Lemma~\ref{Misorootdat} hold, where $P=C^{-1}$ and $P^\circ$ is the 
identity matrix.
\end{exmp}

\section{Chevalley's classification theorems} \label{sec:chevalley}
Throughout this section, let $k$ be an algebraically closed field and 
$\bG$ be a linear algebraic group over $k$. We can now explain how one can 
naturally attach to $\bG$ a root datum, when $\bG$ is connected reductive.

\begin{abs} \label{subsec1roots} Assume that $\bG$ is connected reductive.
Let $\bT\subseteq \bG$ be a maximal torus, $X=X(\bT)$ and $L(\bG)$ be the 
Lie algebra. Recall from \ref{subsec1weights} that there is a finite subset
$R\subseteq X$ and a corresponding root space decomposition of $L(\bG)$:
\[L(\bG)=L(\bG)_0\oplus \bigoplus_{\alpha\in R} L(\bG)_\alpha.\]
As already mentioned in \ref{subsec1weights}, we have $L(\bG)_0=L(\bT)$, 
$R=-R$ and $\dim L(\bG)_\alpha=1$ for all $\alpha\in R$; in particular, 
\[\dim \bG= \dim L(\bG)=\dim \bT+|R|.\] 
The roots can be directly characterised in terms of $\bG$, as follows.
Let $\alpha\in X$. Then $\alpha$ is a root if and only if there exists
a homomorphism of algebraic groups $u_\alpha\colon \bkp\rightarrow \bG$
such that $u_\alpha$ is an isomorphism onto its image and we have 
\[tu_\alpha(\xi)t^{-1}=u_\alpha(\alpha(t)\xi)\qquad \mbox{for all $t \in \bT$ 
and $\xi\in k$}. \]
Thus, $\bU_\alpha:=\{u_\alpha(\xi)\mid \xi \in k\}\subseteq\bG$ is a 
one-dimensional closed connected unipotent subgroup normalized by $\bT$. It
is uniquely determined by $\alpha$ and called the \nm{root subgroup} 
corresponding to $\alpha$. Conversely, every one-dimensional 
closed connected unipotent subgroup normalized by $\bT$ is equal to 
$\bU_\alpha$ for some $\alpha\in R$.  We have 
\[\bG=\langle \bT, \bU_\alpha \mid \alpha \in R\rangle.\]
Now consider also the co-character group $Y=Y(\bT)$; we wish to define
a finite subset $R^\vee \subseteq Y$. Recall from \ref{subsec17} that 
$X$, $Y$ are free abelian groups of the same (finite) rank and that there 
is a natural pairing $\langle \;,\; \rangle \colon X \times Y \rightarrow 
\Z$. The Weyl group of $\bG$ with respect to $\bT$ is defined as 
$\bW(\bG,\bT):=N_{\bG}(\bT)/\bT$. Since $N_{\bG}(\bT)$ acts on $\bT$ by 
conjugation, we have induced actions of $\bW(\bG,\bT)$ on $X$ and on $Y$ via
\begin{align*}
(w.\lambda)(t)&= \lambda(\dot{w}^{-1}t\dot{w}) 
\qquad (\lambda\in X,\; t\in \bT),\\ (w.\nu)(\xi)&= 
\dot{w}\nu(\xi) \dot{w}^{-1} \qquad (\nu\in Y,\; \xi\in \bkm),
\end{align*}
where, for any $w\in\bW(\bG,\bT)$, we denote by $\dot{w}$ a representative 
in $N_{\bG}(\bT)$. Using these actions, we can identify $\bW(\bG,\bT)$ with 
subgroups of $\Aut(X)$ and of $\Aut(Y)$. Now let $\alpha\in R$. Then 
$\bG_\alpha:=C_{\bG}(\ker(\alpha)^\circ)=\langle \bT,\bU_\alpha,
\bU_{-\alpha}\rangle$ is a closed connected reductive subgroup of $\bG$;
its Weyl group $\bW(\bG_\alpha,\bT):=N_{\bG_\alpha}(\bT)/\bT$ has 
order~$2$. Let $w_\alpha\in \bW(\bG_\alpha,\bT)$ be the non-trivial
element and $\dot{w}_\alpha$ be a representative of $w_\alpha$ in 
$N_{\bG_\alpha}(\bT) \subseteq N_{\bG}(\bT)$. Then there exists a unique 
$\alpha^\vee\in Y$ such that 
\[w_\alpha.\lambda=\lambda-\langle \lambda,\alpha^\vee\rangle \alpha
\qquad \mbox{for all $\lambda\in X$}.\] 
Following, e.g., \cite[1.2.8]{Con14}, this element $\alpha^\vee$ can also 
be determined as follows. The maps $u_{\pm \alpha} \colon \bkp \rightarrow 
\bU_{\pm \alpha}$ can be chosen such that the assignment 
\[ \left(\begin{array}{cc} 1 & \xi \\ 0 & 1 \end{array}\right)\mapsto 
u_\alpha(\xi),\qquad  \left(\begin{array}{cc} 1 & 0 \\ \xi & 1
\end{array}\right)\mapsto u_{-\alpha}(\xi)\qquad (\xi \in k)\]
defines a homomorphism of algebraic groups $\varphi_\alpha \colon 
\SL_2(k) \rightarrow \bG$. Then we have 
\[ \alpha^\vee(\xi)=\varphi_\alpha \left(\begin{array}{cc} \xi & 0 \\ 
0 & \xi^{-1} \end{array} \right)\in \bT \qquad \mbox{for all $\xi\in\bkm$}.\]
Thus, we obtain a well-defined finite subset $R^\vee= \{\alpha^\vee\mid 
\alpha\in R\} \subseteq Y$; we have
\[ \bW(\bG,\bT)=\langle w_\alpha \mid \alpha\in R\rangle.\] 
Complete proofs of the above statements can be 
found in \cite{Bor}, \cite{Hum}, \cite{Spr} and, of course, the original 
source \cite{Ch05}. A thorough guide through this argument, with indications 
of the proofs and many worked-out examples, can be found in 
\cite[\S 8]{MaTe}. (See also \cite{All09}, \cite[Chap.~II]{Jan}.)

With this notation, we can now state the following result which shows that 
we are exactly in the situation described by Proposition~\ref{algbnpair}.
\end{abs}

\begin{thm} \label{MrootdatumG} The quadruple $\cR=(X(\bT),R, Y(\bT), 
R^\vee)$ in \ref{subsec1roots} is a root datum as defined in \ref{Mabs21}, 
with Weyl group $\bW(\bG,\bT)$ (identified with a subgroup of $\Aut(X(\bT))$ 
as above). Furthermore, let 
$R^+\subseteq R$ be the set of positive roots with respect to a base 
$\Pi\subseteq R$. Then $\bB:=\langle \bT,\bU_\alpha \mid \alpha \in R^+ 
\rangle \subseteq \bG$ is a Borel subgroup and $\bB, N_{\bG}(\bT)$ form 
a \nm{reductive $BN$-pair} in $\bG$ where $C_{\bG}(\bT)=\bT= \bB\cap 
\bN_{\bG}(\bT)$. 
\end{thm}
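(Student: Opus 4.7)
The plan is to verify the axioms of \ref{Mabs21} and of \ref{subsec16} by unpacking the geometric data already assembled in \ref{subsec1roots}; the guiding principle throughout is that conjugation by $N_\bG(\bT)$ permutes the set $\{\bU_\alpha\mid \alpha\in R\}$ of root subgroups in accordance with the $\bW(\bG,\bT)$-action on $X(\bT)$.

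For the root datum axioms, first I would check (R1) by a direct $\SL_2$-computation. Since $\alpha^\vee(\xi)=\varphi_\alpha(\mathrm{diag}(\xi,\xi^{-1}))$ and $\varphi_\alpha$ is a homomorphism of algebraic groups, one obtains $\alpha^\vee(\xi)\,u_\alpha(\eta)\,\alpha^\vee(\xi)^{-1}=u_\alpha(\xi^2\eta)$; comparison with the defining relation $tu_\alpha(\eta)t^{-1}=u_\alpha(\alpha(t)\eta)$ forces $\alpha(\alpha^\vee(\xi))=\xi^2$, hence $\langle\alpha,\alpha^\vee\rangle=2$. For (R2), suppose $2\alpha\in R$; then $\bU_{2\alpha}$ would sit inside $\bG_\alpha:=C_\bG(\ker(\alpha)^\circ)=\langle \bT,\bU_\alpha,\bU_{-\alpha}\rangle$ as a $\bT$-stable weight subgroup of weight $2\alpha$, but the image of $\varphi_\alpha$ shows that the only non-zero $\bT$-weights on $L(\bG_\alpha)$ are $\pm\alpha$, a contradiction. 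For (R3), the conjugate $\dot{w}_\alpha\bU_\beta\dot{w}_\alpha^{-1}$ is again a one-dimensional closed connected unipotent subgroup normalized by $\bT$, on which $\bT$ acts by the character $w_\alpha(\beta)$; hence $w_\alpha(\beta)\in R$. The dual statement $w_\alpha^\vee(R^\vee)=R^\vee$ follows from the $\varphi_\beta$-description of $\beta^\vee$ combined with $\delta(w_\alpha)=w_\alpha^\vee$ from Lemma~\ref{Mweylfinite1}(a).

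Next I would construct $\bB$. Setting $\bU:=\langle \bU_\alpha\mid \alpha\in R^+\rangle$, the Chevalley commutator formula $[\bU_\alpha,\bU_\beta]\subseteq \prod_{i,j\geq 1,\,i\alpha+j\beta\in R}\bU_{i\alpha+j\beta}$ for non-proportional $\alpha,\beta\in R^+$, combined with the criterion in \ref{subsec13}, shows that $\bU$ is closed, connected and unipotent and that multiplication yields a birational isomorphism $\prod_{\alpha\in R^+}\bU_\alpha\to\bU$. Hence $\bB:=\bT.\bU$ is closed, connected and solvable. Any strictly larger closed connected solvable subgroup would have to contain some $\bU_{-\alpha}$ with $\alpha\in R^+$, and therefore the non-solvable $\bG_\alpha$; so $\bB$ is a Borel. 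The root space decomposition gives $L(C_\bG(\bT))=L(\bT)$, hence $C_\bG(\bT)^\circ=\bT$; combined with the connectedness of the centralizer of a torus in a connected reductive group, this yields $C_\bG(\bT)=\bT$, and therefore $\bB\cap N_\bG(\bT)=\bT$.

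Finally, for the $BN$-pair axioms: (BN1) follows because the $\bU_{-\alpha}$ are $N_\bG(\bT)$-conjugates of the $\bU_\alpha$, so $\bG=\langle \bT,\bU_\alpha\mid \alpha\in R\rangle$ is generated by $\bB$ and $N_\bG(\bT)$. (BN2) is immediate from \ref{Mcartan0}(e), which presents $\bW(\bG,\bT)$ as a Coxeter group on the simple reflections $\{w_\alpha\mid \alpha\in\Pi\}$. (BN3) reduces to the observation that $\dot{w}_\alpha\bU_\alpha\dot{w}_\alpha^{-1}=\bU_{-\alpha}\not\subseteq \bB$ for $\alpha\in\Pi$. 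The main obstacle is (BN4), which I would handle by the classical case analysis on the sign of $l(w_\alpha w)-l(w)$, using the factorisation $\bU=\bU_\alpha\cdot(\bU\cap \dot{w}_\alpha\bU\dot{w}_\alpha^{-1})$ together with the commutator formula above. Once (BN1)--(BN4) are in hand, the reductive $BN$-pair conditions of \ref{subsec16} hold by inspection: $\bT$ is abelian and consists of semisimple elements, $\bU$ is a closed connected normal unipotent subgroup of $\bB$, $\bU\cap\bT=\{1\}$, and $\bB=\bU.\bT$.
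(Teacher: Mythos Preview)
The paper does not actually prove this theorem; its ``proof'' is a list of citations to Malle--Testerman, Springer, and Borel, reflecting that the statement packages the core structure theory of reductive groups developed at length in those sources. Your plan is essentially an outline of the standard argument found in those references, so in that sense you and the paper are aligned, and the verification of (R1)--(R3) and of (BN1)--(BN3) proceeds just as you describe.

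Two cautions, however. First, several of your inputs are not derived in \ref{subsec1roots} and are themselves substantial pieces of the theory being cited: the Chevalley commutator formula for $[\bU_\alpha,\bU_\beta]$, and the connectedness of the centraliser of a torus in a connected group (needed to pass from $C_\bG(\bT)^\circ=\bT$ to $C_\bG(\bT)=\bT$). Second, your maximality argument for $\bB$ has a gap: from $\bB'\supsetneq\bB$ with $\bT\subseteq\bB'$ you infer that $\bB'$ contains some $\bU_{-\alpha}$, but this relies on the fact that a closed connected subgroup containing $\bT$ is generated by $\bT$ and the root subgroups it contains, which is again part of the structure theory one is trying to establish (and in positive characteristic does not follow simply from the Lie-algebra weight decomposition). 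The textbook treatments typically establish that $\bB$ is a Borel by a different route, for instance via completeness of $\bG/\bB$ or as a consequence of the Bruhat decomposition. So your plan is a faithful roadmap through the cited sources rather than a self-contained argument from the facts listed in \ref{subsec1roots}, which is precisely the status the paper itself gives this result.
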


\begin{proof} In its essence, this is due to Chevalley \cite{Ch05},
but the notion of $BN$-pairs was not yet available at that time. A proof 
of the fact that $\cR$ is a root datum can be found, for example, in 
\cite[9.11]{MaTe}, \cite[7.4.3]{Spr}. The $BN$-pair axioms are shown in 
\cite[14.15]{Bor}, \cite[11.16]{MaTe}. For the equality 
$C_{\bG}(\bT)=\bT$, see \cite[8.13]{MaTe} or \cite[7.6.4]{Spr}. 
\end{proof}


\begin{thm} \label{MrootdatumG1} Assume that $\bG$ is connected reductive.
Then $\bG$ acts transitively (by 
simultaneous conjugation) on the set of all pairs $(\bT,\bB)$ where 
$\bT\subseteq \bG$ is a maximal torus and $\bB\subseteq \bG$ is a Borel 
subgroup such that $\bT \subseteq \bB$. In particular, the root data (as in 
Theorem~\ref{MrootdatumG}) with respect to any two maximal tori of $\bG$ 
are isomorphic in the sense of \ref{Mhomrootdata}.
\end{thm}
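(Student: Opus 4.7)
The plan is to establish the transitivity statement first and then observe that the root data isomorphism follows almost formally from conjugation of tori.

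For the transitivity, I would proceed in two steps. First I would show that any two Borel subgroups of $\bG$ are conjugate. The standard route is to apply the Borel fixed point theorem: the quotient variety $\bG/\bB_1$ is projective (complete), and the connected solvable group $\bB_2$ acts on it by left translation. The fixed point theorem produces $g\bB_1 \in \bG/\bB_1$ with $\bB_2 g\bB_1 = g\bB_1$, i.e.\ $g^{-1}\bB_2 g \subseteq \bB_1$; by the maximality built into the definition of Borel subgroups, $g^{-1}\bB_2 g = \bB_1$. Second, given a Borel subgroup $\bB$, any two maximal tori of $\bB$ are conjugate under $\bB$ itself. This is a purely solvable-group statement: in the connected solvable group $\bB$, the unipotent radical $R_u(\bB)=[\bB,\bB]$ is normal and $\bB/R_u(\bB)$ is a torus, so all maximal tori of $\bB$ are complements to $R_u(\bB)$ and are conjugate under $R_u(\bB)$ (the Lie--Kolchin/conjugacy theorem for connected solvable groups).

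Combining: starting from two pairs $(\bT_1,\bB_1)$ and $(\bT_2,\bB_2)$, the first step gives $g\in\bG$ with $g\bB_1g^{-1}=\bB_2$, so $g\bT_1g^{-1}\subseteq \bB_2$; the second step gives $b\in\bB_2$ with $b(g\bT_1g^{-1})b^{-1}=\bT_2$. Then $x:=bg$ simultaneously conjugates $(\bT_1,\bB_1)$ to $(\bT_2,\bB_2)$. (Note every maximal torus lies in some Borel, since $\overline{C_{\bG}(\bT_i)}$ is solvable and hence contained in a Borel, so pairs $(\bT_i,\bB_i)$ with $\bT_i\subseteq\bB_i$ do exist for arbitrary given maximal tori.)

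For the root-datum statement, pick Borels $\bB_1\supseteq \bT_1$ and $\bB_2\supseteq \bT_2$ and, as above, choose $x\in\bG$ with $x\bT_1x^{-1}=\bT_2$ and $x\bB_1x^{-1}=\bB_2$. Conjugation by $x$ is an isomorphism of algebraic groups $\operatorname{Inn}(x)\colon \bT_1\stackrel{\sim}{\to}\bT_2$, which by functoriality induces a group isomorphism $\varphi\colon X(\bT_2)\stackrel{\sim}{\to}X(\bT_1)$, $\lambda\mapsto \lambda\circ \operatorname{Inn}(x)$, and its transpose $\varphi^{\trp}\colon Y(\bT_1)\stackrel{\sim}{\to}Y(\bT_2)$ is also an isomorphism (compatible with the perfect pairings by construction). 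One then checks that $\varphi$ sends the root system of $(\bG,\bT_2)$ bijectively onto that of $(\bG,\bT_1)$: this is because $\operatorname{Inn}(x)$ carries the root subgroup $\bU_\alpha$ of $(\bG,\bT_1)$ to a one-dimensional closed connected unipotent subgroup normalized by $\bT_2$, hence to some root subgroup $\bU_{\alpha'}$ of $(\bG,\bT_2)$, and the defining identity $t u_\alpha(\xi)t^{-1}=u_\alpha(\alpha(t)\xi)$ translates into $\alpha'=\varphi^{-1}(\alpha)$. The analogous check for coroots follows from the $\operatorname{SL}_2$-homomorphisms $\varphi_\alpha$ of \ref{subsec1roots}: $\operatorname{Inn}(x)\circ\varphi_\alpha=\varphi_{\alpha'}$, which forces $\varphi^{\trp}(\alpha^\vee)=(\alpha')^\vee$. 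Hence $\varphi$ is a homomorphism of root data in the sense of \ref{Mhomrootdata}, and being bijective it is an isomorphism.

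The main obstacle is really the first step, the conjugacy of Borel subgroups, which requires the Borel fixed point theorem and completeness of $\bG/\bB$; all other ingredients (conjugacy of maximal tori in solvable groups, functoriality of $X$ and $Y$, and the characterizations of $\bU_\alpha$ and $\alpha^\vee$ recalled in \ref{subsec1roots}) feed in cleanly once that is available.
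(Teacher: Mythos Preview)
Your proposal is correct and follows precisely the route the paper points to: the paper's own proof is only a citation to Borel \cite[10.6, 11.1]{Bor} (with a remark that the root-datum isomorphism is then ``clear''), and what you have written is exactly the standard argument behind those references---Borel's fixed point theorem for the conjugacy of Borel subgroups, the structure theory of connected solvable groups for the conjugacy of maximal tori inside a given Borel, and then transport of structure for the root data. One small stylistic point: your parenthetical justification that every maximal torus lies in a Borel via $\overline{C_{\bG}(\bT_i)}$ is unnecessarily roundabout; it suffices to note that $\bT_i$ itself is connected and solvable, hence contained in some maximal closed connected solvable subgroup.
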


\begin{proof} The conjugacy results are due to Borel \cite[10.6, 
11.1]{Bor}; see also \cite[6.2.7, 6.3.5]{Spr}. A somewhat more elementary 
proof of the conjugacy of Borel subgroups is given in \cite{St1a}. (See
also \cite[\S 3.4]{mybook}.) Once these conjugacy results are shown, the
assertion about the isomorphism between root data is clear.
\end{proof}

\begin{rem} \label{MrootdatumG2} Let $\bG$, $\bT$, $\cR$, $\bB$ as in 
Theoren~\ref{MrootdatumG}; let $\bW:=\bW(\bG,\bT)$. Then the set of 
{\em all} Borel subgroups of $\bG$ containing $\bT$ is described as 
follows. Let $\bB_1$ be any Borel subgroup of $\bG$ containing $\bT$. 
By Theorems~\ref{MrootdatumG}, \ref{MrootdatumG1} and the $BN$-pair 
axioms, there is a unique $w\in \bW$ such that $\bB_1= \dot{w}^{-1}\bB 
\dot{w}$. Now, the base $\Pi\subseteq R$ used to define $\bB$ is 
transformed under $w$ to a new base $\Pi_1$ of $R$. Consequenty, we have 
$\bB_1=\langle \bT,\bU_\alpha\mid \alpha \in R_1^+\rangle$ where $R_1^+ 
\subseteq R$ is the set of positive roots with respect to $\Pi_1$. Further 
recall from \ref{Mcartan0} that any two bases of $R$ can be transformed 
into each other by a unique element of $\bW$. Thus, we obtain bijective 
correspondences
\[\{\mbox{Borel subgroups containing $\bT$}\}\quad
\stackrel{1{-}1}{\longleftrightarrow}\quad \bW \quad 
\stackrel{1{-}1}{\longleftrightarrow}\quad \{\mbox{bases of $R$}\}.\]
\end{rem}

\begin{rem} \label{Msemisimple} Assume that $\bG$ is connected reductive.
In \ref{subsec15} we have defined $\bG$ to be semisimple if $|\bZ|<\infty$ 
where $\bZ=\bZ(\bG)$ denotes the center of $\bG$; alternatively, $\bG$ is 
semisimple if and only if $\bG=\bGder$. We also have the following 
characterisation in terms of the root datum $\cR=(X,R,Y,R^\vee)$ (with
respect to a maximal torus $\bT\subseteq \bG$). By \cite[8.17(h)]{MaTe}, 
\cite[8.1.8]{Spr}, we have 
\begin{equation*}
\bZ=\{t\in \bT \mid \alpha(t)=1\mbox{ for all $\alpha\in R$}\} \tag{a}
\end{equation*}
and the isomorphism $\bT\cong \Hom(X(\bT),k^\times)$ in \ref{subsec17} 
restricts to an isomorphism 
\begin{equation*}
\bZ\cong \Hom(X(\bT)/\Z R,\,k^\times). \tag{b}
\end{equation*}
Thus, we obtain the equivalences: 
\begin{equation*}
|\bZ|<\infty \quad \Longleftrightarrow \quad |X/\Z R|<\infty\quad
\Longleftrightarrow \quad |Y/\Z R^\vee|<\infty.\tag{c}
\end{equation*}
If we consider the factorisation $C=\breve{A} \cdot A^{\text{tr}}$
determined by $\cR$ as in Remark~\ref{Mcorbrulu1}, then $\bG$ is semisimple 
if and only if $A,\breve{A}$ are square matrices. 
\end{rem}

\begin{rem} \label{MdirprodG1} Assume that $\bG$ is connected reductive.
As in \ref{subsec15}, we have $\bG=\bZ^\circ.\bGder$; furthermore, $\bGder=
\bG_1 \ldots \bG_n$ where $\bG_1, \ldots,\bG_n$ are the closed normal 
simple subgroups of $\bG$; they commute pairwise with each other. These 
subgroups have the following description in terms of the root datum $\cR
=(X,R,Y,R^\vee)$ (with respect to a maximal torus $\bT\subseteq \bG$)
and the corresponding root subgroups $\bU_\alpha$ ($\alpha\in R$).
First note that 
\[\bGder=\langle \bU_\alpha\mid \alpha\in R\rangle,\] 
see \cite[8.21]{MaTe}. Now let $C$ be the Cartan matrix of the root system 
$R$, with respect to a base $\Pi$ of $R$. Then $C$ can be expressed as a 
block diagonal matrix where the diagonal blocks are indecomposable Cartan 
matrices, $C_1,\ldots,C_n$ say. (Thus, $C_1, \ldots,C_n$ correspond to the 
connected components of the Dynkin diagram of $C$.) Let $\Pi=\Pi_1 \sqcup
\ldots\sqcup \Pi_n$ be the corresponding partition of $\Pi$. Then we also 
have $R=R_1\sqcup\ldots \sqcup R_n$ where $R_i$ consists of all roots in 
$R$ which can be expressed as linear combinations of simple roots in 
$\Pi_i$. Then we have
\[\bG_i= \langle \bU_\alpha \mid \alpha\in R_i\rangle \subseteq \bG\qquad
\mbox{for $i=1,\ldots,n$}.\]
A maximal torus of $\bG_i$ is given by $\bT_i:=\bG_i\cap \bT$ where $\bT$ is
a fixed maximal torus of $\bG$. (See \cite[Chap.~IV, \S 11]{Bor},
\cite[\S 8.4]{MaTe}, \cite[\S 8.1]{Spr} for further details.) 
\end{rem}

Before continuing with the general theory, we give three concrete examples.
We shall see that the point of view in \ref{Mcartan1}, where root data
are described in terms of factorisations of Cartan matrices, provides
a particularly efficient and convenient way of encoding the information
involved in these examples.  

\begin{exmp} \label{rootdatGL} Let $\bG=\GL_n(k)$. Let $\bB\subseteq\bG$ 
be the subgroup of all upper triangular matrices and $\bN\subseteq 
\bG$ the subgroup of all monomial matrices. It is well-known that these 
groups form a $BN$-pair; see \cite[Chap.~IV, n$^\circ$ 2.2]{bour}.
For further details see \cite[1.6.10, 3.4.5]{mybook}, where it is also 
shown that this is an algebraic $BN$-pair satisfying the conditions 
in Proposition~\ref{algbnpair}; in particular, $\bG$ is connected reductive. 
Let us describe the root datum of $\bG$ with respect to the maximal torus 
$\bT=\bB\cap \bN$ consisting of all diagonal matrices in $\bG$. 

It will be convenient to introduce some notation concerning matrices.
For $1\leq i \leq n-1$, let $n_i$ be the matrix which is obtained by
interchanging the $i$-th and the $(i+1)$-th row in the identity matrix, 
which we denote by $I_n$. More generally, if $w\in\SG_n$ is any 
permutation, let $n_w$ be the matrix which is obtained by permuting 
the rows of $I_n$ as specified by $w$. (Thus, if $\{e_1,\ldots,e_n\}$
denotes the standard basis of $k^n$, then $n_w(e_i)=e_{w(i)}$ for 
$1\leq i\leq n$; we have $n_{ww'}=n_wn_{w'}$ for all $w,w'\in \SG_n$.) 
Then $\bN=\{hn_w\mid h \in \bT, w\in \SG_n\}$ and so we have
an exact sequence
\[ \{1\}\rightarrow \bT \rightarrow \bN\rightarrow \SG_n
\rightarrow \{1\},\]
where $\bN \rightarrow \SG_n$ sends $n_w$ to $w$. Next, for $1\leq i,j
\leq n$ let $E_{ij}$ be the ``elementary'' matrix with coefficient $1$ at 
the position $(i,j)$ and $0$ otherwise. We define
\[ \bU_{ij}:=\{ I_n+\xi E_{ij} \mid \xi \in k\} \qquad \mbox{where
$1\leq i,j\leq n$, $i\neq j$}.\]
All of these are one-dimensional, closed connected subgroups of $\bG$. 
Finally, if $\xi_1,\ldots,\xi_n$ are non-zero elements of $k$, 
we denote by $h(\xi_1,\ldots,\xi_n)\in\bT$ the diagonal matrix with
$\xi_1,\ldots,\xi_n$ along the diagonal. Then the map 
\[ (\bkm)^n \rightarrow \bT,\qquad (\xi_1,\ldots,\xi_n)\mapsto
h(\xi_1,\ldots,\xi_n),\]
certainly is an isomorphism of algebraic groups. Hence $X=X(\bT)$
is the free abelian group with basis $\lambda_1,\ldots,\lambda_n$ where
$\lambda_i(h(\xi_1,\ldots,\xi_n))=\xi_i$ for all $i$. 

Each subgroup $\bU_{ij}$ is normalised by $\bT$. Let $u_{ij}\colon \bkp
\rightarrow \bG$ be the homomorphism given by $u_{ij}(\xi)=I_n+\xi E_{ij}$
for $\xi\in \bkp$. Then $\bU_{ij}$ is the image of this homomorphism, 
$u_{ij}$ is an isomorphism onto its image and we have  
\[ tu_{ij}(\xi)t^{-1}=u_{ij}(\xi_i\xi_j^{-1}\xi) \qquad \mbox{where
$t=h(\xi_1,\ldots,\xi_n)\in \bT$ and $\xi\in \bkp$}.\]
Hence, $\alpha_{ij} :=\lambda_i-\lambda_j\in X$ is a root and $\bU_{ij}$ 
is the corresponding root subgroup. To see that these are all the roots,
we can use the formula $\dim \bG=\dim \bT+|R|$ in \ref{subsec1roots}. Thus, 
since $\dim \bG=n^2$ and $\dim \bT= n$, we conclude that $R=\{\alpha_{ij} 
\mid 1\leq i,j\leq n,i\neq j\}$ is the root system of $\bG$ with respect
to $\bT$. We also see that 
\[\Pi:=\{\alpha_{i,i+1}=\lambda_i-\lambda_{i+1} \mid 1\leq i\leq n-1\}
\subseteq R\]
is a base of $R$ and that $\bB$ is the Borel subgroup associated with 
this base (as in Remark~\ref{MrootdatumG2}). Now consider coroots. The 
dual basis of $Y=Y(\bT)$ is given by the co-characters 
$\nu_j\colon \bkm\rightarrow \bT$ such that $\nu_j(\xi)$ is the 
diagonal matrix with coefficient $\xi$ at position $j$, and coefficient $1$ 
otherwise. Now, for $i\neq j$, we have a unique embedding of algebraic
groups $\varphi_{ij} \colon \SL_2(k) \hookrightarrow \bG$ such that 
\[ \varphi_{ij}\left(\begin{array}{cc} 1 & \xi \\ 0 & 1 \end{array}\right)
=u_{ij}(\xi) \quad \mbox{and} \quad \varphi_{ij}\left(\begin{array}{cc} 
1 & 0 \\ \xi & 1 \end{array}\right)=u_{ji}(\xi)\quad \mbox{for all $\xi 
\in k$}.\] 
Hence, $\varphi_{ij}$ satisfies the condition in \ref{subsec1roots} and so
we obtain $\alpha_{ij}^\vee\in Y$ such that 
\[\alpha_{ij}^\vee(\xi)=\varphi_{ij}\left(\begin{array}{cc} \xi & 0 
\\ 0 & \xi^{-1} \end{array} \right)\in\bT\]
is the diagonal matrix with coefficient $\xi$ at position $i$ and 
coefficient $\xi^{-1}$ at position $j$.  Thus, we have 
$R^\vee=\{\alpha_{ij}^\vee=\nu_i-\nu_j\mid 1\leq i,j\leq n,i\neq j\}$. 
We also see that 
\[\Pi^\vee:=\{\alpha_{i,i+1}^\vee=\nu_i-\nu_{i+1} \mid 1\leq i\leq n-1\}
\subseteq R^\vee\]
is a base of $R^\vee$. The corresponding Cartan matrix $C=(c_{ij})_{1\leq i,
j\leq n-1}$  is given by 
\[ C=\left(\begin{array}{rrrrrrr} 2 & -1 & 0 & \ldots & & 0 \\
-1 & 2 & -1 & 0 & \ldots & 0 \\
0 & -1 & 2 & -1 & \ddots & \vdots \\
\vdots & \ddots & \ddots & \ddots & \ddots & 0\\
0 & \ldots & 0 & -1 & 2 & -1 \\
0 & & \ldots &  0 & -1 & 2 \end{array}\right).\]
Thus, $C$ is of type $A_{n-1}$. The factorisation in Remark~\ref{Mcorbrulu1} 
is given by
\[ C=\breve{A}\cdot A^{\text{tr}} \quad \mbox{where} \quad A=\breve{A}=
\left(\begin{array}{rrrrrrr} 1 & -1 & 0 & \ldots & & 0 \\
0 & 1 & -1 & 0 & \ldots & 0 \\
\vdots & \ddots & \ddots & \ddots & \ddots & \vdots\\
0 & \ldots & 0 & 1 & -1 & 0 \\
0 & & \ldots &  0 & 1 & -1 \end{array}\right).\]
(Here, $A=\breve{A}$ has $n-1$ rows and $n$ columns.)
\end{exmp}

\begin{exmp} \label{rootdatSL} Let $n \geq 2$ and $\bG'=\SL_n(k)$, the 
\nm{special linear group}. We keep the notation $\bG=\GL_n(k)$, $\bU_{ij}$,
$\bB$, $\bN$, $\bT$, $X=X(\bT)$, $Y=Y(\bT)$ from the previous example. 
Then an algebraic $BN$-pair satisfying 
the conditions in Proposition~\ref{algbnpair} is given by the subgroups 
$\bB':=\bB\cap \bG'$ and $\bN':= \bN\cap \bG'$; see  
\cite[Chap.~IV, \S 2, Exercise~10]{bour}, \cite[1.6.11, 3.4.5]{mybook}. Let
us describe the root datum of $\bG'$ with respect to the maximal torus
$\bT'=\bT\cap \bG'$. Let 
\[X'=X(\bT')\qquad \mbox{and}\qquad Y'=Y(\bT').\]
For $1\leq i,j\leq n$, $i\neq j$, the subgroup $\bU_{ij}$ of $\bG$ is 
already contained in $\bG'$. So, if $\alpha_{ij}'$ denotes the restriction 
of $\alpha_{ij}\in X$ to $\bT'$, then $\alpha_{ij}' \in X'$ and 
$\alpha_{ij}'$ is a root of $\bG'$ with corresponding root subgroup 
$\bU_{ij} \subseteq \bG'$. Since $\dim \bG'=n^2-1$ and $\bT'=n-1$,
it follows as above that $R'=\{\alpha_{ij}' \mid 1\leq i, j\leq n,i 
\neq j\}$ is the root system of $\bG'$ with respect to $\bT'$ and that
\[\Pi'=\{\alpha_{i,i+1}'\mid 1\leq i\leq n-1\} \mbox{ is a base for $R'$}.\]
Furthermore, the image of the embedding $\varphi_{ij}\colon \SL_2(k)
\hookrightarrow \bG$ is clearly contained in $\bG'$. Consequently, any 
coroot $\alpha_{ij}^\vee \in Y$ also is a coroot in $Y'$. Thus, we have 
$R'^\vee=R^\vee=\{\alpha_{ij}^\vee\mid 1\leq i,j\leq n,i\neq j\}$ and
\[\Pi'^\vee=\{\alpha_{i,i+1}^\vee\mid 1\leq i\leq n-1\} \mbox{ is a base 
for $R'^\vee$}.\]
In particular, we obtain the same Cartan matrix $C$ of type $A_{n-1}$
as in Example~\ref{rootdatGL}. Now note that we have an isomorphism of 
algebraic groups 
\[(\bkm)^{n-1}\rightarrow\bT', \qquad (\xi_1,\ldots,\xi_{n-1})\mapsto 
h\bigl(\xi_1,\ldots,\xi_{n-1},(\xi_1\cdots \xi_{n-1})^{-1}\bigr).\]
(Its inverse is given by sending $h(\xi_1,\ldots,\xi_n)\in \bT'$ to
$(\xi_1,\ldots,\xi_{n-1})\in (\bkm)^{n-1}$.) 
Hence, if we define co-characters $\nu_j'\colon \bkm\rightarrow \bT'$
(for $1\leq j\leq n-1$) such that $\nu_j'(\xi)$ is the diagonal matrix with
$\xi$ at position $j$ and $\xi^{-1}$ at position~$n$, then $\{\nu_1',\ldots,
\nu_{n-1}'\}$ is a $\Z$-basis of $Y'$. But then $\Pi'^\vee$ also is a 
$\Z$-basis of $Y'$. If we consider the corresponding dual basis of $X'$, 
then the factorisation in Remark~\ref{Mcorbrulu1} is given by
\[ C=\breve{A}\cdot A^{\text{tr}} \qquad \mbox{where $\breve{A}=I_{n-1}$ and
$A=C^{\text{tr}}$}.\]
Thus, $\bG'$ is semisimple and the root datum of $\SL_n(k)$ is of 
simply-connected type (see Example~\ref{expadjsc}).
\end{exmp}

\begin{exmp} \label{rootdatPGL} Let $\bG=\GL_n(k)$ and $\bZ\subseteq \bG$
be the center of $\bG$, consisting of all non-zero scalar matrices. Assume
that $n\geq 2$ and let $\bar{\bG}=\PGL_n(k):=\bG/\bZ$, the \nm{projective 
linear group}. (This is a linear algebraic group as discussed in 
\ref{subsec1quot}.) Let us denote the canonical map $\bG\rightarrow 
\bar{\bG}$ by $g\mapsto \bar{g}$. In particular, we obtain subgroups 
$\bar{\bB}$ and $\bar{\bN}$ in $\bar{\bG}$ which form a $BN$-pair since 
$\bZ\subseteq \bB$; see \cite[Chap.~IV, \S 2, Exercise~2]{bour}. One easily 
checks that this is an algebraic $BN$-pair satisfying the conditions in 
Proposition~\ref{algbnpair}. Let us describe the root datum of $\bar{\bG}$ 
with respect to the maximal torus $\bar{\bT}$ of $\bar{\bG}$. Let 
\[\bar{X}=X(\bar{\bT})\qquad \mbox{and}\qquad \bar{Y}=
Y(\bar{\bT}).\]
For every root $\alpha$ of $\bG$, we clearly have $\bZ\subseteq 
\ker(\alpha)$. So, using the universal property of quotients, there is 
a well-defined $\bar{\alpha}\in \bar{X}$ such that $\alpha(t)=
\bar{\alpha}(\bar{t})$ for all $t\in \bT$. Now, for $1\leq i,j \leq n$, 
$i\neq j$, the image $\bar{\bU}_{ij}$ of the subgroup $\bU_{ij}\subseteq
\bG$ in $\bar{\bG}$ is still closed, connected, isomorphic to $\bkp$ and 
normalised by $\bar{\bT}$. Hence, $\bar{\alpha}_{ij}$ is a root with 
corresponding root subgroup $\bar{\bU}_{ij}\subseteq \bar{\bG}$. As above, 
it follows that $\bar{R}= \{\bar{\alpha}_{ij} \mid 1\leq i,j\leq n,
i\neq j\}$ is the root system of $\bar{\bG}$ with respect to $\bar{\bT}$ and 
that
\[\bar{\Pi}=\{\bar{\alpha}_{i,i+1}\mid 1\leq i\leq n-1\} \mbox{ is a 
base for $\bar{R}$}.\]
On the other hand, we obtain homomorphisms of algebraic groups
$\bar{\varphi}_{ij}\colon \SL_2(k)\rightarrow \bar{\bG}$, simply by 
composing $\varphi_{ij}\colon\SL_2(k)\hookrightarrow \bG$ with the canonical
map $\bG\rightarrow \bar{\bG}$. Thus, every coroot $\alpha^\vee$ of $\bG$ 
determines a coroot $\bar{\alpha}^\vee\in \bar{Y}$. Consequently, we
have $\bar{R}^\vee=\{\bar{\alpha}_{ij}^\vee \mid 1\leq i,j\leq n,i\neq j\}$ 
and
\[\bar{\Pi}^\vee=\{\bar{\alpha}_{i,i+1}^\vee\mid 1\leq i\leq n-1\} 
\mbox{ is a base for $\bar{R}^\vee$}.\]
In particular, we obtain the same Cartan matrix $C$ of type $A_{n-1}$
as in Example~\ref{rootdatGL}. Now consider the homomorphism of algebraic
groups 
\[\bT\rightarrow (\bkm)^{n-1},\qquad h(\xi_1,\ldots,\xi_n)\mapsto
(\xi_1\xi_n^{-1},\ldots,\xi_{n-1}\xi_n^{-1}).\]
It has $\bZ$ in its kernel so there is an induced homomorphism of 
algebraic groups $\bar{\bT} \rightarrow (\bkm)^{n-1}$. The latter 
homomorphism is an isomorphism: its inverse is given by sending $(\xi_1,
\ldots,\xi_{n-1})\in(\bkm)^{n-1}$ to the image of $h(\xi_1,\ldots,\xi_{n-1},
1)\in\bT$ in $\bar{\bT}$. It follows that 
\[\{\bar{\alpha}_{i,n} \mid 1\leq i\leq n-1\} \quad \mbox{is a $\Z$-basis 
of $\bar{X}$}.\]
But then $\bar{\Pi}$ also is a $\Z$-basis of $\bar{X}$. If we consider the 
corresponding dual basis of $\bar{Y}$, then the factorisation in 
Remark~\ref{Mcorbrulu1} is given by
\[ C=\breve{A}\cdot A^{\text{tr}} \qquad \mbox{where $\breve{A}=C$ and
$A=I_{n-1}$}.\]
Thus, $\PGL_n(k)$ is semisimple and the root datum of $\PGL_n(k)$ is of 
adjoint type (see Example~\ref{expadjsc}). In particular, we see that
the root data of $\PGL_n(k)$ and $\SL_n(k)$ are not isomorphic. (In the 
former, $\Z R=X$; in the latter, $\Z R\neq X$.) So, by
Theorem~\ref{MrootdatumG1}, $\PGL_n(k)$ and $\SL_n(k)$ are not isomorphic 
as algebraic groups. (This question was raised at the end of 
\ref{subsec1wrong}.) 
\end{exmp}

Now, a key feature of the whole theory is the fact that a connected reductive 
algebraic group is uniquely determined by its root datum up to isomorphism.
This follows from a more general result, the so-called \nm{isogeny 
theorem}. As a preparation, we cite the following general results concerning 
surjective homomorphisms of algebraic groups, which will be useful at
several places below.

\begin{abs} \label{Mconnredcomp} 
Let $f\colon \bG\rightarrow \bG'$ be a surjective homomorphism of 
connected algebraic groups over $k$. Then we have the following 
\nm{preservation results}. 
\begin{itemize}
\item[(a)] $f$ maps a Borel subgroup of $\bG$ onto a Borel subgroup of
$\bG'$, and all Borel subgroups of $\bG'$ arise in this way; a similar 
statement holds for maximal tori. (See \cite[11.14]{Bor}.)
\item[(b)] $f$ maps the unipotent radical of $\bG$ onto the unipotent 
radical of $\bG'$; in particular, if $\bG$ is reductive, then so is $\bG'$.
(See \cite[14.11]{Bor}.)
\item[(c)] If $\bG$ is reductive, then $f$ maps the center of $\bG$ onto 
the center of $\bG'$. (This follows from (a) and the fact that the 
center of a reductive group is the intersection of all its maximal tori;
see \cite[11.11]{Bor}.)
\item[(d)] Assume that $\bG,\bG'$ are reductive. Let $\bT$ be a maximal 
torus of $\bG$; by (a), $\bT':=f(\bT)$ is a maxmial torus of $\bG'$. 
Then $f$ induces a surjective homomorphism $\bW(\bG,\bT) \rightarrow 
\bW(\bG',\bT')$, and this is an isomorphism if $\ker(f)$ is contained 
in the center of $\bG$. (See \cite[11.20]{Bor}.)
\end{itemize}
\end{abs}

\begin{abs} \label{Mabs23} Let $\bG$ and $\bG'$ be connected and reductive. 
Let $f \colon \bG \rightarrow \bG'$ be an \nm{isogeny}, that is, a
surjective homomorphism of algebraic groups such that $\ker(f)$ is finite.
Note that then $\ker(f)$ is automatically contained in the center of $\bG$.
Further to the properties in \ref{Mconnredcomp}, $f$ also preserves the 
root and coroot structure of $\bG$. More precisely, this works as follows, 
where we refer to \cite[\S 18.2]{Ch05}, \cite[\S 11]{MaTe}, 
\cite[\S 9.6]{Spr}, \cite[\S 1]{St2} for further details. 

Let $\bT$ be a maximal torus of $\bG$; then $\ker(f) \subseteq \bT$ and  
$\bT'=f(\bT)$ is a maximal torus of $\bG'$. Let $(X,R,Y,R^\vee)$ and $(X',
R',Y',R'^\vee)$ be the corresponding root data. The map $f$ induces a 
homomorphism $\varphi \colon X' \rightarrow X$ such that $\varphi(\lambda')=
\lambda' \circ f|_{\bT}$ for all $\lambda' \in X'$. Then it follows that 
$\varphi$ is a $p$-isogeny of root data as in Definition~\ref{MdefisogR}, 
where $p$ is the \nm{characteristic exponent} of~$k$. (Recall that the 
characteristic exponent of $k$ is $1$ in case $\mbox{char}(k)=0$ and is 
equal to $\mbox{char}(k)$ otherwise.) The numbers $\{q_\alpha\mid \alpha
\in R\}$ and the bijection $R\rightarrow R'$, $\alpha \mapsto 
\alpha^\dagger$, in (I2) come about as follows.

Let $\alpha \in R$ and consider the corresponding root subgroup $\bU_\alpha 
\subseteq \bG$; see \ref{subsec1roots}. Then $f(\bU_\alpha)$ is a 
one-dimensional closed connected unipotent subgroup of $\bG'$ normalised 
by $\bT'$. Hence, there is a well-defined $\alpha^\dagger\in R'$ such that 
$f(\bU_\alpha)$ equals the root subgroup $\bU_{\alpha^\dagger}'$ in $\bG'$. 
Let $u_\alpha\colon\bkp\rightarrow \bU_\alpha$ and $u_{\alpha^\dagger}'
\colon \bkp\rightarrow \bU_{\alpha^\dagger}'$ be the corresponding 
isomorphisms. Then the map $f\colon \bU_\alpha \rightarrow 
\bU_{\alpha^\dagger}'$ has the following form. There is some $c_\alpha \in 
k^\times$ such that 
\[ f(u_\alpha(\xi))=u_{\alpha^\dagger}'(c_\alpha \xi^{q_\alpha})\qquad
\mbox{for all $\xi \in \bkp$}.\]
In this situation, the numbers $\{q_\alpha\}$ will also be called the 
\nm{root exponents} of~$f$. The above discussion shows that an isogeny of
connected reductive groups induces a $p$-isogeny of root data. Conversely, 
we have the following fundamental result.
\end{abs}

\begin{thm}[Isogeny Theorem]\label{thmiso} Let $\bG$ and $\bG'$ be connected
reductive algebraic groups over $k$, let $\bT\subseteq \bG$ and $\bT'
\subseteq \bG'$ be maximal tori, and let $\varphi \colon X(\bT')\rightarrow
X(\bT)$ be a $p$-isogeny of their root data (see Definition~\ref{MdefisogR}),
where $p$ is the characteristic exponent of $k$. Then there exists an 
isogeny $f \colon \bG \rightarrow \bG'$ which maps $\bT$ onto $\bT'$ and 
induces $\varphi$. It $f'\colon \bG\rightarrow \bG'$ is another isogeny
with these properties, then there exists some $t\in \bT$ such that
$f'(g)=f(tgt^{-1})$ for all $g\in\bG$. 
\end{thm}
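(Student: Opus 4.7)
The plan is to construct $f$ on a set of generators of $\bG$ obtained from Chevalley's presentation of connected reductive groups, verify the defining relations hold on the $\bG'$ side, and deduce uniqueness by analyzing the only remaining freedom. By Remark~\ref{Misosimple}, after composing $\varphi$ with an element of the Weyl group $\bW'$ of $\cR'$, I may assume $\varphi$ sends a chosen base $\Pi'\subseteq R'$ bijectively onto a chosen base $\Pi\subseteq R$. This fixes the bijection $\alpha\mapsto\alpha^\dagger$ between $\Pi'$ and $\Pi$ (writing it in the reverse direction to match (I2)) and the root exponents $\{q_\alpha\}_{\alpha\in\Pi}$, each a power of~$p$.

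For existence, I first define $f$ on the torus: the injective homomorphism $\varphi\colon X(\bT')\to X(\bT)$ corresponds under the contravariant equivalence in \ref{subsec17} to a homomorphism $f_\bT\colon \bT\to \bT'$ of algebraic groups. Because $\varphi$ is injective, \ref{subsec17}(b) gives that $f_\bT$ is surjective; because $\varphi^\trp$ is also injective, a dual argument yields $|\ker(f_\bT)|<\infty$. Next, for each $\alpha\in\Pi$, I fix parametrizations $u_\alpha\colon\bkp\to \bU_\alpha$ in $\bG$ and $u_{\alpha^\dagger}'\colon\bkp\to\bU'_{\alpha^\dagger}$ in $\bG'$ and set
\begin{equation*}
f(u_\alpha(\xi)):=u_{\alpha^\dagger}'(\xi^{q_\alpha}) \qquad (\xi\in\bkp).
\end{equation*}
The task is then to extend $f_\bT$ and these $|\Pi|$ maps to a well-defined homomorphism $\bG\to\bG'$; once this is done, the image is connected and contains $\bT'$ together with all simple root subgroups of $\bG'$, which generate $\bG'$, so $f$ is surjective, and $\ker(f)\cap\bT$ is finite by construction, whence $\ker(f)$ is finite (it lies in $C_\bG(\bT)=\bT$ by \ref{Mconnredcomp}(c) applied suitably).

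The main obstacle is verifying that the partial definition respects the Chevalley--Steinberg relations that present $\bG$: the torus--root conjugation $tu_\alpha(\xi)t^{-1}=u_\alpha(\alpha(t)\xi)$, the rank-one $\SL_2$ relations in $\varphi_\alpha(\SL_2(k))$, and above all the commutator formula
\begin{equation*}
[u_\alpha(\xi),u_\beta(\eta)]=\prod_{\substack{i,j>0 \\ i\alpha+j\beta\in R}} u_{i\alpha+j\beta}\bigl(c_{ij}^{\alpha\beta}\,\xi^i\eta^j\bigr)
\end{equation*}
for $\alpha,\beta\in\Pi$ non-proportional. Compatibility with torus conjugation reduces to the identity $\alpha^\dagger(f_\bT(t))=\varphi(\alpha^\dagger)(t)=\alpha(t)^{q_\alpha}$, which holds by (I2). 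The commutator relations are the delicate point: one must match the structure constants $c_{ij}^{\alpha\beta}$ in $\bG$ with those in $\bG'$ after raising arguments to the $q_\alpha$-th and $q_\beta$-th powers. In the simply-laced case all $q_\alpha$ are equal and the verification is straightforward via the Frobenius map on the common structure constants. The genuine difficulty occurs in types $B_n,C_n,F_4$ when $p=2$ and in $G_2$ when $p=3$, where long and short simple roots carry different exponents; here one invokes the explicit numerical coincidences among the Chevalley constants (all of form $\pm 1, \pm 2, \pm 3$) that force the identities to hold modulo $p$ precisely because $q_\alpha/q_\beta$ is an integral power of~$p$. This is carried out in detail in \cite[\S\S 18.2--18.4]{Ch05}, with a streamlined treatment in \cite{St2}.

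For uniqueness, suppose $f,f'\colon \bG\to\bG'$ both induce $\varphi$. They induce the same $f_\bT\colon\bT\to\bT'$ by construction, so they agree on $\bT$. For each $\alpha\in\Pi$, both map $\bU_\alpha$ into $\bU'_{\alpha^\dagger}$, so there exist $c_\alpha,c_\alpha'\in k^\times$ with $f(u_\alpha(\xi))=u'_{\alpha^\dagger}(c_\alpha\xi^{q_\alpha})$ and similarly for $f'$. For $t\in\bT$, the map $g\mapsto f(tgt^{-1})$ is another isogeny inducing $\varphi$, and its scalar on $\bU_\alpha$ is $c_\alpha\cdot \alpha(t)^{q_\alpha}$. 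Thus it suffices to find $t\in\bT$ with $\alpha(t)^{q_\alpha}=c_\alpha'/c_\alpha$ for every $\alpha\in\Pi$. Since the elements of $\Pi\subseteq X(\bT)$ are linearly independent over $\Q$, the homomorphism $\bT\to(\bkm)^{|\Pi|}$ sending $t\mapsto(\alpha(t))_{\alpha\in\Pi}$ is surjective by \ref{subsec17}(b); combined with the divisibility of $k^\times$ (since $k$ is algebraically closed), the required $t$ exists. After replacing $f$ by $g\mapsto f(tgt^{-1})$, the two maps agree on the generators $\bT$ and $\bU_\alpha$ ($\alpha\in\Pi$), hence on all of $\bG$.
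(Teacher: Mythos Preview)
The paper does not give its own proof of this theorem; it only cites \cite{St2} and \cite[\S 18.2]{Ch05}. Your outline follows precisely those references, so in spirit you are aligned with the paper.

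That said, your write-up contains a genuine gap that recurs twice. You assert that $\bT'$ together with the \emph{simple} root subgroups $\bU'_{\alpha^\dagger}$ ($\alpha\in\Pi$) generate $\bG'$, and later that $\bT$ together with the $\bU_\alpha$ ($\alpha\in\Pi$) generate $\bG$. This is false: these generate only the Borel subgroup. What you need is that the rank-one subgroups $\bG_\alpha=\langle \bT,\bU_\alpha,\bU_{-\alpha}\rangle$ for $\alpha\in\Pi$ generate $\bG$ (compare the paper's proof of Theorem~\ref{thmiso1}). In the existence part you must therefore also define $f$ on each $\bU_{-\alpha}$; this is implicit in your mention of ``the rank-one $\SL_2$ relations'', but it is not what you actually wrote. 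In the uniqueness part, agreement of $f$ and $f'$ on $\bT$ and on the $\bU_\alpha$ ($\alpha\in\Pi$) does not by itself force agreement on $\bG$; you must argue that once they agree on $\bT$ and on $\bU_\alpha$, the $\SL_2$ relations inside $\bG_\alpha$ force agreement on $\bU_{-\alpha}$ as well (concretely: the element $u_\alpha(1)u_{-\alpha}(-1)u_\alpha(1)$ normalises $\bT$ and its image is then determined, which pins down the scalar on $\bU_{-\alpha}$).

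A minor point: your justification that $\ker(f)$ is finite via ``\ref{Mconnredcomp}(c) applied suitably'' is not the right reference. The clean argument is a dimension count: since $\varphi$ and $\varphi^\trp$ are both injective, $\bT$ and $\bT'$ have the same rank, and the bijection $R\leftrightarrow R'$ gives $|R|=|R'|$, so $\dim\bG=\dim\bT+|R|=\dim\bT'+|R'|=\dim\bG'$; surjectivity of $f$ then forces $\dim\ker(f)=0$.
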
 

See \cite{St2} for a recent, quite short proof of this fundamental result 
which, for semisimple groups, is one of the main results of the 
{\em S\'eminaire Chevalley}; see \cite[\S 18.2]{Ch05}. As a first 
consequence, we have:

\begin{cor}[Isomorphism Theorem] \label{Mremiso1} In the setting of
Theorem~\ref{thmiso}, assume that $\varphi$ is bijective. Then $f \colon 
\bG\rightarrow \bG'$ is an isomorphism of algebraic groups.
\end{cor}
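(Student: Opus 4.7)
The plan is to apply Theorem~\ref{thmiso} a second time, now to the inverse map $\varphi^{-1}$. Since by hypothesis $\varphi$ is a bijective $p$-isogeny of root data (equivalently, an isomorphism of root data, so in particular all root exponents $q_\alpha$ equal~$1$), one verifies directly from Definition~\ref{MdefisogR} that $\varphi^{-1}\colon X(\bT)\rightarrow X(\bT')$ is again a $p$-isogeny of root data: one takes the inverse bijection $R\rightarrow R'$, $\alpha\mapsto\alpha^\dagger$, and again all root exponents equal~$1$. Theorem~\ref{thmiso} then yields an isogeny $g\colon \bG'\rightarrow \bG$ mapping $\bT'$ onto $\bT$ and inducing $\varphi^{-1}$.

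Next, the composite $g\circ f\colon \bG\rightarrow\bG$ is an isogeny that maps $\bT$ to $\bT$ and induces $\varphi^{-1}\circ\varphi=\id_{X(\bT)}$; the identity morphism $\id_\bG$ is also an isogeny with this property. By the uniqueness clause in Theorem~\ref{thmiso} (applied with $\bG'=\bG$ and $f=\id_\bG$), there exists $t\in \bT$ such that $(g\circ f)(x)=txt^{-1}$ for all $x\in \bG$. Thus $g\circ f=\Inn(t)$, which is an automorphism of $\bG$. Setting $h:=\Inn(t^{-1})\circ g\colon \bG'\rightarrow \bG$, which is a morphism of algebraic groups, we obtain $h\circ f=\id_\bG$, so $h$ is a left inverse of $f$ as a morphism.

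Finally, the surjectivity of $f$ combined with $h\circ f=\id_\bG$ forces $f\circ h=\id_{\bG'}$: for any $y\in\bG'$ choose $x\in\bG$ with $f(x)=y$, and compute $(f\circ h)(y)=f(h(f(x)))=f(x)=y$. Hence the morphisms $f\circ h$ and $\id_{\bG'}$ agree on all $k$-points of the affine variety $\bG'$, and therefore coincide as morphisms. Consequently $h$ is a two-sided morphism inverse to $f$, and $f$ is an isomorphism of algebraic groups.

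I expect the only potential friction is the verification that $\varphi^{-1}$ genuinely satisfies Definition~\ref{MdefisogR} (requiring a short unpacking of $q_\alpha=1$), and the passage from a one-sided to a two-sided morphism inverse. Both are formal: the core of the argument is the symmetric application of Theorem~\ref{thmiso} to $\varphi$ and $\varphi^{-1}$ together with its uniqueness assertion, which reduces the problem to recognising that $\Inn(t)$ is an automorphism.
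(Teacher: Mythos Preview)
Your proof is correct and follows essentially the same route as the paper's: apply Theorem~\ref{thmiso} to $\varphi^{-1}$ to obtain $g\colon\bG'\to\bG$, use the uniqueness clause to get $g\circ f=\iota_t$ for some $t\in\bT$, replace $g$ by $\iota_t^{-1}\circ g$ to obtain a left inverse of $f$, and then use surjectivity of $f$ to upgrade it to a two-sided inverse. The paper's argument is identical in structure and detail, including the initial observation that bijectivity of $\varphi$ forces $q_\alpha=1$ so that $\varphi^{-1}$ is again a $p$-isogeny.
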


\begin{proof} We use the notation in \ref{Mabs23}. First note that, since 
$\varphi$ is bijective, we must have $q_\alpha=1$ for all $\alpha\in R$. 
Hence, the inverse map $\varphi^{-1}\colon X\rightarrow X'$ also defines 
an isogeny of root data. By Theorem~\ref{thmiso} there exist isogenies 
$f \colon \bG\rightarrow \bG'$ and $g\colon \bG'\rightarrow \bG$ 
corresponding to $\varphi$ and $\varphi^{-1}$. Then $g \circ f$ induces the 
identity isogeny of the root datum of $\bG$ and hence equals the inner 
automorphism $\iota_t$ for some $t\in \bT$. Thus $g'\circ f=\id_{\bG}$ 
with $g'=\iota_t^{-1} \circ g$, and then $f\circ g'\circ f=f$ and 
$f\circ g'=\id_{\bG'}$ because $f$ is surjective. Hence $f$ is an 
isomorphism with $g'$ as its inverse. 
\end{proof}

The general theory is completed by the following existence result.

\begin{thm}[Existence Theorem] \label{thmexi}  Let $\cR=(X,R,Y, R^\vee)$ 
be a root datum. Then there exists a connected reductive algebraic
group $\bG$ over $k$ and a maximal torus $\bT\subseteq \bG$ such that 
$\cR$ is isomorphic to the root datum of $\bG$ relative to~$\bT$. 
\end{thm}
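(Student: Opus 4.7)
The plan is to reduce the existence problem to constructing, for each indecomposable Cartan matrix $C_0$, a single connected simply-connected simple algebraic group over $k$ with root datum $\cR_{\text{sc}}(C_0)$; once these building blocks are in hand, any reductive $\bG$ realising an arbitrary root datum $\cR$ is produced by forming a direct product with a torus and taking an appropriate central quotient.

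\textbf{Reduction step.} Let $\cR=(X,R,Y,R^\vee)$ be given, decompose its Cartan matrix $C$ into indecomposable blocks $C_1,\ldots,C_n$ (see \ref{Mcartan0}), and set $r_0 := \text{rank}(X) - \text{rank}(\Z R)$. Assuming we have simply-connected simple groups $\bG_i$ realising $\cR_{\text{sc}}(C_i)$, form the connected reductive group
\[\bH := \bG_1 \times \cdots \times \bG_n \times (\bkm)^{r_0}.\]
By Example \ref{MdirprodR} together with Remark \ref{MdirprodG1}, its root datum $\cR_\bH$ with respect to a natural maximal torus $\bT_\bH$ is the direct product of the $\cR_{\text{sc}}(C_i)$ and a rank-$r_0$ pure torus datum; in particular $\cR_\bH$ has the same Cartan matrix and the same rank as $\cR$. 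Working with the factorisation description of Remark \ref{Mcorbrulu1}, one produces a canonical injective homomorphism $\varphi \colon X \hookrightarrow X(\bT_\bH)$ with finite cokernel, compatible with the bases and sending $R$ bijectively onto $R_\bH$; via \ref{subsec17}(a) and the duality \ref{subsec17}(c) this corresponds to a finite subgroup $K \subseteq \bT_\bH$, and the fact that $\varphi$ preserves the root sets shows, using the central characterisation \ref{Msemisimple}(a), that $K$ is central in $\bH$. Set $\bG := \bH/K$ via the quotient construction of \ref{subsec1quot}. The preservation results of \ref{Mconnredcomp}, combined with \ref{subsec17}(a),(b), then identify the root datum of $\bG$ relative to $\bT_\bH/K$ with $\cR$.

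\textbf{The hard step.} All of the real work is concentrated in constructing simply-connected simple groups for each indecomposable Cartan type. For the classical types one has the matrix constructions $\SL_{n+1}(k)$ for $A_n$ (Example \ref{rootdatSL}), $\Spin_{2n+1}(k)$ for $B_n$, $\Sp_{2n}(k)$ for $C_n$, and $\Spin_{2n}(k)$ for $D_n$, with root data verified in the spirit of \ref{subsecclassic}. For the five exceptional types $G_2,F_4,E_6,E_7,E_8$ one follows Chevalley's construction \cite[Expos\'es~21--25]{Ch05}: pick a complex simple Lie algebra $\mathfrak{g}$ of the prescribed type, extract a Chevalley $\Z$-form $\mathfrak{g}_\Z$ via a Chevalley basis, choose a suitable faithful representation lattice $V_\Z$, base-change to $V_k := k \otimes_\Z V_\Z$, and define $\bG \subseteq \GL(V_k)$ as the subgroup generated by the one-parameter root subgroups obtained by exponentiating the nilpotent operators attached to the root vectors $e_\alpha$. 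Proving that this abstract subgroup is in fact a connected simple algebraic group, identifying its maximal torus and root subgroups via the Cartan subalgebra of $\mathfrak{g}_\Z$, and checking that the resulting root datum is exactly $\cR_{\text{sc}}(C_0)$ for the correct choice of $V_\Z$ is the principal obstacle of the proof and the substance of Chevalley's existence theorem.

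Finally, the independence of the constructed $\bG$ from the various auxiliary choices (factorisations, bases, representation lattices in the exceptional case), up to isomorphism, follows immediately from the Isomorphism Theorem \ref{Mremiso1}.
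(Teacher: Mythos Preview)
The paper does not give a proof of this theorem; it only cites references (Chevalley \cite{Chev} and Steinberg's lectures \cite[\S 5]{St} for the semisimple case, \cite[\S 10.1]{Spr} and \cite[Expos\'e~XXV]{sga33} for the reduction from arbitrary reductive to semisimple). Your outline follows the same broad shape, and your identification of Chevalley's construction as the hard step is correct.

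There is, however, a genuine gap in your reduction step when $\operatorname{char}(k)=p>0$. You propose to realise $\bG$ as $\bH/K$ for a finite central $K\subseteq\bT_\bH$ determined by the finite cokernel of $\varphi\colon X\hookrightarrow X(\bT_\bH)$. But $X(\bT_\bH/K)=\{\lambda\in X(\bT_\bH):\lambda|_K=1\}$, and since any finite subgroup of a torus over $k=\overline{\F}_p$ has order prime to $p$, this sublattice automatically contains every $\lambda$ whose image in $X(\bT_\bH)/\varphi(X)$ has $p$-power order; so whenever that cokernel has $p$-torsion you recover a lattice strictly larger than $\varphi(X)$ and hence the wrong root datum. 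Concretely, for $\cR=\cRad(C)$ with $C$ of type $A_{p-1}$ your construction gives $\bH=\SL_p(k)$ with $X(\bT_\bH)=\Omega$, $\varphi(X)=\Z R$, and $\Omega/\Z R\cong\Z/p\Z$; but the center of $\SL_p(k)$ in characteristic $p$ is trivial, so $K=\{1\}$ and $\bH/K=\SL_p(k)\not\cong\PGL_p(k)$ (cf.\ the end of Example~\ref{rootdatPGL}). This is precisely why the referenced proofs do not reduce all the way to the simply-connected case: Chevalley's construction, as in \cite[\S 5]{St}, produces a semisimple group for \emph{every} intermediate lattice $\Z R\subseteq L\subseteq\Omega$ directly, by choosing a faithful representation whose weight lattice is exactly $L$, and the remaining passage from semisimple to reductive must be handled more carefully than by a bare central quotient.
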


For semisimple groups, this is originally due to Chevalley; see \cite{Chev}
and the comments in \cite[\S 24]{Ch05}. See \cite{Ca1}, 
\cite[\S 5, Theorem~6]{St} where this is explained in detail, following 
and extending Chevalley's original approach. The general case can be 
reduced to this one; see \cite[\S 10.1]{Spr} and \cite[Expos\'e~XXV]{sga33}.
Only recently, Lusztig \cite{Lu09c} found a new approach to the general 
case based on the theory of ``canonical bases'' of quantum groups. 

\begin{exmp} \label{Mclassif1} Let us see what the above results mean
in the simplest non-trivial case where $\cR=(X,R,Y,R^\vee)$ is a root 
datum of Cartan type $A_1$. Let $\bG$ be a corresponding connected reductive
algebraic group over $k$. Now, since $C=(2)$ is the Cartan matrix in this
case, $\cR$ is determined by an equation 
\[2=\sum_{1\leq i \leq d} \breve{a}_ia_i \qquad \mbox{where $d=
\mbox{rank}\,X=\mbox{rank}\,Y$ and $a_i,\breve{a}_i \in \Z$ for all $i$};\] 
see Remark~\ref{Mcorbrulu1}. Up to isomorphism (where isomorphisms are 
determined by an invertible matrix $P$ over $\Z$ as in
Lemma~\ref{Misorootdat}), there are three possible cases:
\begin{itemize}
\item[(1)] $(a_1,\ldots,a_d)=(2,0,\ldots,0)$ and $(\breve{a}_1,\ldots,
\breve{a}_d)=(1,0,\ldots,0)$, in which case $\bG\cong \SL_2(k) \times
(\bkm)^{d-1}$.
\item[(2)] $(a_1,\ldots,a_d)=(1,0,\ldots,0)$ and $(\breve{a}_1,\ldots,
\breve{a}_d)=(2,0,\ldots,0)$, in which case $\bG\cong \PGL_2(k) \times
(\bkm)^{d-1}$.
\item[(3)] $d\geq 2$, $(a_1,\ldots,a_d)=(1,1,0,\ldots,0)$ and $(\breve{a}_1,
\ldots, \breve{a}_d)=(1,1,0,\ldots,0)$, in which case $\bG\cong \GL_2(k) 
\times (\bkm)^{d-2}$.
\end{itemize}
This is contained in \cite[2.2]{St98}; we leave it as an exercise to the
reader. In particular, for $d=1$ (that is, $\bG$ semisimple), we have 
either $\bG\cong \SL_2(k)$ or $\bG\cong \PGL_2(k)$.
\end{exmp}

Besides its fundamental importance for the classification of connected 
reductive algebraic groups, the Isogeny Theorem is an indispensible tool 
for showing the existence of homomorphisms with prescribed properties. Here 
are the first examples. 

\begin{exmp} \label{Mopposite} Let $\bG$ be a connected reductive algebraic
group over $k$ and $\bT$ be a maximal torus of $\bG$. Let $(X,R,Y,R^\vee)$ 
be the corresponding root datum. Then there exists an 
automorphism of algebraic groups $\tau\colon \bG\rightarrow \bG$ such that 
\[\tau(t)=t^{-1} \quad    (t\in \bT) \qquad\mbox{and}\qquad 
\tau(\bU_\alpha)=\bU_{-\alpha} \quad (\alpha\in R).\]
Indeed, $\varphi \colon X\rightarrow X$, $\lambda \mapsto 
-\lambda$, certainly is a $p$-isogeny, where $q_\alpha=1$ for all 
$\alpha\in R$. Hence, since $\varphi$ is bijective, 
Corollary~\ref{Mremiso1} shows that there exists an automorphism 
$\tau\colon \bG\rightarrow \bG$ such that $\tau(\bT)=\bT$ and such 
that $\varphi$ is the map induced by $\tau$ on $X$. Now, as discussed in
\ref{subsec17}, there is a natural bijection between group homomorphisms 
of $X$ into itself and algebraic homomorphisms of $\bT$ into itself. Under 
this bijection, $\varphi$ clearly corresponds to the map $t\mapsto t^{-1}$ 
on $\bT$. Hence, $\tau$ has the required properties.
\end{exmp}

\begin{exmp} \label{canfrob2a} Let $p$ be a prime number and $\bG$ be a
connected reductive algebraic group over $k=\overline{\F}_p$. Let $\bT$ 
be a maximal torus of $\bG$ and $(X,R,Y, R^\vee)$ be the corresponding 
root datum. Then $\varphi\colon X\rightarrow X$, $\lambda \mapsto p\lambda$,
certainly is a $p$-isogeny of root data, where $q_\alpha=p$ for all $\alpha
\in R$. Hence, by Theorem~\ref{thmiso}, there exists an isogeny $F_p\colon 
\bG \rightarrow \bG$ such that $F_p(\bT)=\bT$ and such that $F_p$ induces
$\varphi$ on $X$. Arguing as in the previous example, it follows that
\[F_p(\bU_\alpha)=\bU_\alpha \quad (\alpha \in R) \qquad \mbox{and} \qquad 
F_p(t)=t^p \quad (t \in \bT).\]
For example, $F_p \colon \GL_n(k)\rightarrow \GL_n(k)$, $(a_{ij}) 
\mapsto (a_{ij}^p)$, is an isogeny satisfying the above conditions. 

We shall see in Section~\ref{sec:steinberg} that the fixed point set of 
$\bG$ under $F_p$ is a finite group. More generally, we shall consider 
isogenies $F\colon \bG \rightarrow \bG$ such that $F^d=F_p^m$ for some 
$d,m\geq 1$. The finite groups arising as fixed point sets of connected 
reductive groups under such isogenies are the {\em finite groups of
Lie type}; see Definition~\ref{mydeffr}.
\end{exmp}
 
\begin{exmp} \label{MdirprodG2} Let $\cR_i=(X_i,R_i, Y_i,R_i^\vee)$ (for 
$i=1,\ldots,n$) be root data. Let $\cR=(X,R,Y,R^\vee)$ be the direct sum
of these root data; see Example~\ref{MdirprodR}. For $i=1,\ldots,n$, let
$\bG_i$ be a connected reductive algebraic group with root datum 
isomorphic to $\cR_i$ (relative to a maximal torus $\bT_i\subseteq\bG_i$). 
Then, using Corollary~\ref{Mremiso1}, one easily sees that the direct 
product $\bG:=\bG_1\times \ldots \times \bG_n$ has root datum isomorphic 
to $\cR$ (relative to the maximal torus $\bT:=\bT_1\times\ldots \times 
\bT_n$ of $\bG$). 
\end{exmp}

\begin{exmp} \label{isounitary} Let $\bG=\GL_n(k)$, with root datum
$\cR=(X,R,Y,R^\vee)$ as in Example~\ref{rootdatGL}. It is given by a 
factorisation $C=\breve{A} \cdot A^{\text{tr}}$ where $C=(c_{ij})_{1\leq i,
j\leq n-1}$ is the Cartan matrix of type $A_{n-1}$ and $A=\breve{A}$ is a 
certain matrix of size $(n-1)\times n$. Then, by the procedure in 
\ref{Mdefmatrixisog}, we obtain an isogeny $\varphi\colon X \rightarrow X$ 
via the pair of matrices $(P, P^\circ)=(J_n,J_{n-1})$ where, for any 
$m\geq 1$, we set 
\[J_m:=\left(\begin{array}{cccc} 0 & \cdots & 0 & 1 \\ \vdots &\dddots 
& \dddots & 0 \\ 0 & 1 &\dddots & \vdots \\ 1 & 0 & \cdots & 0 \end{array}
\right) \in M_m(k).\]
Then $\varphi$ has order~$2$. So there is a corresponding automorphism
of algebraic groups $\gamma \colon \GL_n(k) \rightarrow \GL_n(k)$ which 
maps the maximal torus $\bT$ into itself and induces $\varphi$ on $X$. 
Concretely, the map given by
\[\gamma\colon\GL_n(k) \rightarrow \GL_n(k), \qquad g \mapsto 
J_n(g^{\text{tr}})^{-1}J_n, \]
is an automorphism with this property.
\end{exmp}

\begin{rem} \label{Mcentriso} Let $f \colon \bG\rightarrow\bG'$ be an 
isogeny of connected reductive algebraic groups over $k$. In the
setting of \ref{Mabs23}, let $\{q_\alpha\mid \alpha \in R\}$ be the root 
exponents of~$f$. Following \cite[9.6.3]{Spr}, we say that $f$ is a 
\nm{central isogeny} if $q_\alpha=1$ for all $\alpha\in R$. The 
terminology is justified as follows. Consider the corresponding 
homomorphism of Lie algebras $d_1 f\colon L(\bG) \rightarrow L(\bG')$. 
Then, by \cite[22.4]{Bor}, $f$ is a central isogeny if and only if the 
kernel of $d_1f$ is contained in the center of $L(\bG)$. For example,
the isogeny in Example~\ref{Mopposite} is central while that in 
Example~\ref{canfrob2a} is not.
\end{rem}

There are extensions of the Isogeny Theorem to the case where we
consider homomorphisms whose kernel is still central but not finite:
We shall only formulate the following version here. (This will be needed, 
for example, in Section~\ref{sec:regemb}.) 

\begin{abs} \label{Mdefcentralhom} Let $\bG$, $\bG'$ be connected 
reductive algebraic groups over $k$, and $f\colon \bG \rightarrow \bG'$ be 
a homomorphism of algebraic groups. 

(a) Following \cite[Chap.~I, 3.A]{Bo3}, we say that $f$ is 
an \nm{isotypy} if $\ker(f)\subseteq \bZ(\bG)$ and $\bGder'\subseteq 
f(\bG)$. If this is the case, then we have $\bG'= f(\bG).\bZ(\bG')$, 
$f(\bGder)=\bGder'$ and $f$ restricts to an isogeny $\bGder \rightarrow 
\bGder'$.

(b) Now let $\bT\subseteq \bG$ and $\bT' \subseteq \bG'$ be maximal tori 
such that $f(\bT)\subseteq \bT'$. Then $f$ induces a group homomorphism 
$\varphi \colon X(\bT')\rightarrow X(\bT)$, $\lambda \mapsto\lambda \circ 
f|_{\bT}$. In analogy to Remark~\ref{Mcentriso}, we say that $f$ is a 
\nm{central isotypy} if $\varphi$ is a homomorphism of root data as in 
\ref{Mhomrootdata}. (Note that, as pointed out in the remarks following 
\cite[II, Prop.~1.14]{Jan}, a central isotopy is automatically an isotypy.)
\end{abs}

\begin{thm}[Extended Isogeny Theorem; see \protect{\cite[II, 1.14, 1.15]{Jan},
\cite[\S 5]{St2}}] \label{thmiso1} Let $\bG$ and $\bG'$ be connected 
reductive algebraic groups over $k$, let $\bT\subseteq \bG$ and $\bT' 
\subseteq \bG'$ be maximal tori, and let $\varphi\colon X(\bT') \rightarrow 
X(\bT)$ be a homomorphism of their root data (see \ref{Mhomrootdata}). Then 
there exists a central isotypy $f \colon \bG \rightarrow \bG'$ such 
that $f(\bT)\subseteq \bT'$ and $f$ induces $\varphi$. 
Furthermore, the following hold.
\begin{itemize}
\item[(a)] If $f'\colon \bG\rightarrow \bG'$ is another central isotypy 
inducing $\varphi$, then there exists some $t\in \bT$ such that 
$f'(g)=f(tgt^{-1})$ for all $g\in\bG$.
\item[(b)] If $f|_{\bT}\colon \bT\rightarrow \bT'$ is an 
isomorphism, then so is $f\colon \bG\rightarrow \bG'$. 
\end{itemize}
\end{thm}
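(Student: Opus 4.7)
The plan is to reduce Theorem \ref{thmiso1} to the (standard) Isogeny Theorem \ref{thmiso} by constructing $f$ separately on the maximal torus $\bT$ and on the derived subgroup $\bGder$, and then gluing the two pieces together. First, by the functorial correspondence recalled in \ref{subsec17}(c), the group homomorphism $\varphi\colon X(\bT')\to X(\bT)$ determines a unique homomorphism of algebraic groups $f_\bT\colon \bT \to \bT'$ satisfying $\lambda'(f_\bT(t))=\varphi(\lambda')(t)$ for all $t\in\bT$ and $\lambda'\in X(\bT')$.

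Next, I would produce an isogeny $f_1\colon \bGder \to \bGder'$ inducing $\varphi$ on the semisimple part. By Remark \ref{Msemisimple}, the groups $\bGder$ and $\bGder'$ are semisimple with maximal tori $\bT\cap\bGder$ and $\bT'\cap\bGder'$, whose character lattices are canonical quotients of $X(\bT)$ and $X(\bT')$. Since $\varphi$ maps $R'$ bijectively onto $R$ and $\varphi^\trp$ maps $R^\vee$ bijectively onto $R'^\vee$, passing to these quotients $\varphi$ induces an \emph{isomorphism} of the root data of $\bGder'$ and $\bGder$ (i.e.\ a $p$-isogeny with all root exponents $q_\alpha=1$). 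Applying Corollary \ref{Mremiso1} to this induced isomorphism produces an isogeny $f_1$ that maps $\bT\cap \bGder$ into $\bT'\cap\bGder'$ and sends each root subgroup $\bU_\alpha$ isomorphically onto $\bU_{\varphi^{-1}(\alpha)}'$.

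I would then define $f\colon \bG\to \bG'$ on $\bG=\bZ(\bG)^\circ\cdot \bGder$ by $f(zg):=f_\bT(z)\,f_1(g)$. The main obstacle is to show this is well-defined and a morphism of algebraic groups: the overlap $\bZ(\bG)^\circ\cap \bGder$ is a finite subgroup of $\bT\cap \bGder$, and $f_\bT$ and $f_1$ must agree on it. By construction, both restrictions induce the same homomorphism on characters of $\bT\cap \bGder$ (the one coming from $\varphi$), so by \ref{subsec17}(c) they coincide on the whole torus $\bT\cap \bGder$ and hence on the overlap; the uniqueness part of Theorem \ref{thmiso} lets me absorb any residual ambiguity into an inner automorphism by a well-chosen element of $\bT$. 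The universal property of quotients in \ref{subsec1quot} then upgrades the set-theoretic formula to a morphism on $\bZ(\bG)^\circ\cdot\bGder/(\bZ(\bG)^\circ\cap\bGder)\cong \bG$. That $f$ is a central isotypy is automatic: $f$ induces $\varphi$ on $X(\bT)$ by construction, and $\bGder'=f_1(\bGder)\subseteq f(\bG)$.

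For part (a), if $f,f'$ are two such central isotypies inducing $\varphi$, their restrictions to $\bGder$ are isogenies inducing the same isomorphism of root data, so by Theorem \ref{thmiso} they differ by conjugation by some $t\in \bT\cap \bGder$; meanwhile their restrictions to $\bT$ induce the same character map and so coincide by \ref{subsec17}(c). Combining these gives $f'(g)=f(tgt^{-1})$ on all of $\bG$. For part (b), if $f|_\bT$ is an isomorphism, then $\varphi$ is bijective, hence an isomorphism of root data. Since $\bZ(\bG)\subseteq \bT$ and $\ker(f)\subseteq \bZ(\bG)$, the assumption forces $\ker(f)=\{1\}$, and surjectivity follows from the isotypy property combined with $f(\bT)=\bT'\supseteq \bZ(\bG')^\circ$. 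Now $f$ is a bijective isogeny realizing an isomorphism of root data, so Corollary \ref{Mremiso1} (applied to $f$ and its inverse constructed from $\varphi^{-1}$) shows that $f$ is an isomorphism of algebraic groups.
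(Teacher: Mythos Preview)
Your overall strategy---build $f$ on $\bT$ and on $\bGder$ separately, then glue---is different from the paper's approach (which constructs $f$ on each rank-one subgroup $\bG_\alpha=\langle\bT,\bU_\alpha,\bU_{-\alpha}\rangle$ via \cite[II,\S1.13]{Jan} and then extends to all of $\bG$ using Steinberg's extension theorem \cite[5.3]{St2}). Your route is natural, but as written it has two genuine gaps.

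First, the claim that $\varphi$ induces an \emph{isomorphism} of the root data of $\bGder'$ and $\bGder$ is false in general. Take $\bG=\SL_2$, $\bG'=\PGL_2\times\bkm$, and $\varphi$ the map corresponding to the obvious central isotypy $\SL_2\to\PGL_2\hookrightarrow\PGL_2\times\bkm$. The induced map on the derived character lattices is multiplication by~$2$ on $\Z$, which is injective but not surjective. What you actually get is a central $p$-isogeny of root data (all $q_\alpha=1$), so you should invoke Theorem~\ref{thmiso} rather than Corollary~\ref{Mremiso1}; then $f_1\colon\bGder\to\bGder'$ is only an isogeny, not an isomorphism. Your parenthetical ``(i.e.\ a $p$-isogeny with all root exponents $q_\alpha=1$)'' suggests you may have had this in mind, but the citation and the word ``isomorphism'' are wrong.

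Second, and more seriously, the gluing step fails as stated. You assert that $(\bZ(\bG)^\circ\times\bGder)/(\bZ(\bG)^\circ\cap\bGder)\cong\bG$ as algebraic groups, but this is precisely the phenomenon warned against in \ref{subsec1wrong}: for $\bG=\GL_p(k)$ with $\mbox{char}(k)=p$, the multiplication map $\bZ^\circ\times\SL_p\to\GL_p$ is bijective (the intersection is trivial) but \emph{not} an isomorphism, because its differential is not surjective ($L(\bZ^\circ)\subseteq L(\SL_p)$ when $p\mid n$). So the universal property of quotients does not give you a morphism $\bG\to\bG'$. The fix is to glue over $\bT$ rather than $\bZ^\circ$: the multiplication map $\bT\times\bGder\to\bG$ is always separable, since the root space decomposition gives $L(\bT)+L(\bGder)=L(\bG)$, and you have already checked that $f_\bT$ and $f_1$ agree on $\bT\cap\bGder$. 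With these two corrections your argument goes through; parts (a) and (b) are fine.
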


\begin{proof} Let $\Pi$ be a base of the root system $R\subseteq X(\bT)$. 
For $\alpha\in \Pi$, consider the subgroup $\bG_\alpha=\langle \bT,
\bU_\alpha,\bU_{-\alpha} \rangle\subseteq \bG$ defined in 
\ref{subsec1roots}. Then $\bG_\alpha\cap \bG_\beta=\bT$ for $\alpha\neq 
\beta$ in $\Pi$. As in \cite[II, \S 1.13]{Jan}, one sees that there 
exists a map 
\[f\colon \bigcup_{\alpha \in \Pi} \bG_\alpha\rightarrow \bG'\]
which is a homomorphism on each $\bG_\alpha$ and is such that $f$ maps $\bT$
into $\bT'$ and induces $\alpha$. Now, $\bU_\alpha$ and $\bU_{-\beta}$ 
certainly commute with each other for all $\alpha\neq \beta$ in $\Pi$ (by 
Chevalley's commutator relations; see \cite[11.8]{MaTe}). Hence, by 
\cite[Theorem~5.3]{St2}, $f$ extends to a homomorphism of algebraic groups
from $\bG$ to $\bG'$. The uniqueness statement in (a) is proved as in the
case of Theorem~\ref{thmiso}; see \cite[\S 3]{St2}. Finally, (b) holds by 
\cite[II, \S 1.15]{Jan}.
\end{proof}

\section{Frobenius maps and Steinberg maps} \label{sec:steinberg}

We assume in this section that $k=\overline{\F}_p$ is an algebraic closure
of the finite field with $p$ elements, where $p$ is a prime number. We 
consider a particular class of isogenies in this context, the so-called 
{\em Steinberg maps}. This will be treated in some detail, where one aim 
is to work out explicitly some useful characterisations of Steinberg maps 
in terms of isogenies of root data. In particular, in 
Proposition~\ref{Mgenfrob}, we recover the set-up in Example~\ref{canfrob2a}. 
We also establish a precise characterisation of Frobenius maps among
all Steinberg maps; see Proposition~\ref{Mgenfrob3}. 

\begin{defn} \label{Mabs31} Let $\bX$ be an affine variety over $k$. Let 
$q$ be a power of $p$ and $\F_q \subseteq k$ be the finite field with $q$ 
elements. We say that $\bX$ has an $\F_q$-\nm{rational structure} (or that 
$\bX$ is \nm{defined over $\F_q$}) if there exists some $n\geq 1$ and an 
isomorphism of affine varieties $\iota\colon \bX\rightarrow \bX'$ where 
$\bX'\subseteq \bkn$ is Zariski closed and stable under the 
\nm{standard Frobenius map}
\[F_q\colon \bkn\rightarrow \bkn, \qquad (\xi_1,\ldots,\xi_n)\mapsto 
(\xi_1^q,\ldots,\xi_n^q).\]
In this case, there is a unique morphism of affine varieties 
$F\colon \bX\rightarrow \bX$ such that $\iota \circ F=F_q\circ\iota$; it
is called the \nm{Frobenius map} corresponding to the $\F_q$-rational
structure of $\bX$. Note that $F_q$ is a bijective morphism whose fixed point
set is $\F_q^n$. Consequently, $F$ is a bijective morphism such that 
\[|\bX^F|<\infty \qquad\mbox{where}\qquad \bX^F:=\{x\in\bX\mid F(x)=x\}.\]
\end{defn}

\begin{exmp} \label{MconcreteF} A Zariski closed subset $\bX \subseteq \bkn$ 
is called $\F_q$-{\em closed} if $\bX$ is defined by a set of polynomials in 
$\F_q[T_1,\ldots,T_n]$. If this holds, then $\bX$ is stable under $F_q$ 
and so $\bX$ has an $\F_q$-rational structure, as defined above; the fixed 
point set $\bX^{F_q}$ consists precisely of those $x\in \bX$ which have 
all their coordinates in $\F_q$. Conversely, if $F_q(\bX)\subseteq \bX$,
then $\bX$ is $\F_q$-closed. (The proof uses the fact that $k\supseteq 
\F_q$ is a separable field extension; see \cite[4.1.6]{mybook}, 
\cite[AG.14.4]{Bor}. In general, the discussion of rational structures 
is much more complicated.)
\end{exmp}

\begin{rem} \label{frobclosed} Let $\bX$ be an affine variety over $k$
and assume that $\bX$ is defined over $\F_q$, with Frobenius map $F\colon 
\bX \rightarrow \bX$. Here are some basic properties of $F$. First note 
that $F^2,F^3,\ldots$ are also Frobenius maps. Furthermore, for any 
$x\in \bX$, we have $F^m(x)=x$ for some $m\geq 1$. Hence,
\[\bX=\bigcup_{m\geq 1} \bX^{F^m} \quad \mbox{where}\quad
|\bX^{F^m}|<\infty \quad \mbox{ for all $m\geq 1$}.\]
(Note that every element of $k$ lies in a finite subfield of $k$.) 
Finally, it is also clear that, if $\bX'\subseteq \bX$ is a closed subset 
such that $F(\bX') \subseteq \bX'$, then $\bX'$ is defined over $\F_q$, 
with Frobenius map given by the restriction of $F$ to $\bX'$. 
\end{rem} 

\begin{rem} \label{MintrinsF} Let $\bX$ be an affine variety over $k$
and let $A$ be the algebra of regular functions on $\bX$. There is an
intrinsic characterisation of Frobenius maps in terms of $A$, as follows.
A morphism $F\colon  \bX\rightarrow \bX$ is the Frobenius map corresponding
to an $\F_q$-rational structure of $\bX$ if and only if the following
two conditions hold for the associated algebra homomorphism $F^*
\colon A\rightarrow A$:
\begin{itemize}
\item[(a)] $F^*$ is injective and $F^*(A)=\{a^q\mid a\in A\}$. 
\item[(b)] For each $a\in A$, there exists some $e\geq 1$ such that
$(F^*)^e(a)=a^{q^e}$.
\end{itemize}
One easily sees that (a) and (b) hold for the standard Frobenius map
$F_q\colon \bkn\rightarrow \bkn$. This implies that (a) and (b) hold for
any $F_q$-stable closed subset $\bX\subseteq \bkn$ as in 
Example~\ref{MconcreteF}. The converse requires a bit more work; see 
\cite[\S 4.1]{mybook} or \cite[Chap.~II]{Sr} for details. 
One advantage of this characterisation of Frobenius maps is, for example, 
that it provides an easy proof of the following statement:
\begin{itemize}
\item[(c)] If $F$ is a Frobenius map (with respect to $\F_q$, as above) and 
$\gamma\colon \bX \rightarrow \bX$ is an automorphism of affine varieties 
of finite order which commutes with $F$, then $\gamma \circ F$ also is a 
Frobenius map on $\bX$ (with respect to $\F_q$, same $q$).
\end{itemize}
(See, for example, \cite[Exercise~4.4]{mybook}.) The above characterisation 
is also equivalent to the definition of an ``abstract affine algebraic 
$(\F_q,k)$-set'' in \cite{cart}.
\end{rem}

In the sequel, $\bG$ will always be a linear algebraic group over $k=
\overline{\F}_p$. 

\begin{abs} \label{Mfrobgroups} Assume that, as an affine variety, $\bG$ 
is defined over $\F_q$ with corresponding Frobenius map $F$. Then we say 
that $\bG$ (as an algebraic group) is defined over $\F_q$ if $F$ is a 
group homomorphism. In this case, the set of fixed points $\bG^F$ is a 
finite group. There is a more concrete description, similar to 
Definition~\ref{Mabs31}. We have the standard 
Frobenius map 
\[F_q\colon \GL_n(k) \rightarrow \GL_n(k), \qquad (a_{ij}) \mapsto
(a_{ij}^q).\]
Then $\bG$ is defined over $\F_q$ if and only if there is a homomorphism
\[ \iota\colon \bG\rightarrow \GL_n(k)\qquad \mbox{(for some $n\geq 1$)}\]
of algebraic groups such that $\iota$ is an isomorphism onto its image and 
the image is stable under $F_q$; in this case, the corresponding Frobenius 
map $F\colon \bG \rightarrow \bG$ is defined by the condition that $\iota
\circ F=F_q\circ \iota$. (See \cite[4.1.11]{mybook} for further details.) 
In particular, if $\bG\subseteq \GL_n(k)$ is a closed subgroup defined
by a collection of polynomials with coefficients in $\F_q$, then $F_q$
restricts to a Frobenius map on $\bG$.
\end{abs}

\begin{exmp} \label{Msplittorus} Let $\bT\subseteq \bG$ be an abelian 
subgroup consisting of semisimple elements (e.g., a torus). We claim that 
there always exists {\it some} Frobenius map $F\colon \bG\rightarrow \bG$ 
(with respect to an $\F_q$-rational structure on $\bG$) such that $\bT$ is 
$F$-stable and $F(t)=t^q$ for all $t\in\bT$. 

Indeed, we can realise $\bG$ as a closed subgroup $\bG \subseteq \GL_n(k)$ 
for some $n\geq 1$. Since $\bT$ consists of commuting semisimple elements, 
we can assume that then $\bT$ consists of diagonal matrices. Now, the 
defining ideal of $\bG$ (as an algebraic subset of $\GL_n(k)$) is generated 
by a finite set of polynomials with coefficients in $k$. So there is some 
$q=p^m$ ($m\geq 1$) such that all these coefficients lie in $\F_q$. Then 
$\bG$ is stable under the standard Frobenius map $F_q$ on $\GL_n(k)$. So 
$F_q$ restricts to a Frobenius map $F\colon \bG\rightarrow \bG$. Since
any $t\in \bT$ is a diagonal matrix, we have $F(t)=t^q$. 
\end{exmp}

\begin{defn} \label{mydeffr} Let $F\colon \bG\rightarrow\bG$ be an 
endomorphism of algebraic groups. Then $F$ is called a \nm{Steinberg map} 
if some power of $F$ is the Frobenius map with respect to an 
$\F_q$-rational structure on 
$\bG$, for some power $q$ of $p$. Note that, in this case, $F$ is a 
bijective homomorphism of algebraic groups and $\bG^F$ is a finite group. 
If $\bG$ is connected and reductive, then $\bG^F$ will be called a 
\nm{finite group of Lie type} or a \nm{finite reductive group}. 
\end{defn}

The following result is the key tool to pass from 
properties of $\bG$ to properties of the finite group $\bG^F$; we shall see 
numerous applications in what follows.

\begin{thm}[Lang \protect{\cite{Lang}}, Steinberg 
\protect{\cite[10.1]{St68}}] \label{langst} Assume that $\bG$ is connected 
and let $F\colon \bG \rightarrow \bG$ be a Steinberg map (or, more 
generally, any endomorphism such that $|\bG^F|<\infty$). Then the map
$\cL\colon \bG \rightarrow \bG$, $g\mapsto g^{-1}F(g)$, is surjective. 
\end{thm}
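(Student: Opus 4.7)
The plan is to realise $\cL$ as the orbit map at $1$ for a natural right action of $\bG$ on itself, and then show every orbit is dense in $\bG$ --- which, by irreducibility, forces there to be only one orbit. Define $a \cdot g := g^{-1} a F(g)$ for $a, g \in \bG$. Since $F$ is a homomorphism, a direct check gives $(a \cdot g_1) \cdot g_2 = a \cdot (g_1 g_2)$, so this is a right action. The orbit of $a$ is $C_a := \{g^{-1} a F(g) : g \in \bG\} = \cL_a(\bG)$, where $\cL_a \colon g \mapsto g^{-1} a F(g)$; in particular $C_1 = \cL(\bG)$. A direct manipulation shows $\cL_a(g_1) = \cL_a(g_2)$ iff $g_2 g_1^{-1} \in \bG^{\psi_a}$ where $\psi_a := \Inn(a) \circ F$; hence the nonempty fibers of $\cL_a$ are the cosets of the stabiliser $\bG^{\psi_a}$.

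First I would prove $\bG^{\psi_a}$ is finite for every $a \in \bG$. Differentiating $\psi_a$ at $1$ gives $d_1 \psi_a = \operatorname{Ad}(a) \circ d_1 F$ on $L(\bG)$. Since $F$ is a Steinberg map, some power $F^N$ is a Frobenius map; because $x \mapsto x^q$ has zero differential in characteristic $p$, we have $(d_1 F)^N = d_1(F^N) = 0$, so $d_1 F$ is nilpotent. Differentiating the identity $F \circ \Inn(a) = \Inn(F(a)) \circ F$ at $1$ gives $d_1 F \circ \operatorname{Ad}(a) = \operatorname{Ad}(F(a)) \circ d_1 F$, and iterating this yields
\[ \bigl(\operatorname{Ad}(a) \circ d_1 F\bigr)^N = \operatorname{Ad}\bigl(a\, F(a) \cdots F^{N-1}(a)\bigr) \circ (d_1 F)^N = 0. \]
Hence $\operatorname{Ad}(a) \circ d_1 F$ is nilpotent and $d_1 \psi_a - \id$ is invertible on $L(\bG)$. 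Since $L(\bG^{\psi_a}) \subseteq \ker(d_1 \psi_a - \id) = 0$, the closed subgroup $\bG^{\psi_a}$ has dimension $0$ and is therefore finite.

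All nonempty fibers of $\cL_a$ are now $0$-dimensional. Applying the morphism fact quoted at the end of \ref{subsec11} to $\cL_a \colon \bG \to \overline{C_a}$, the generic fiber dimension equals $\dim \bG - \dim \overline{C_a}$, so $\dim \overline{C_a} = \dim \bG$ and hence $\overline{C_a} = \bG$ by irreducibility; moreover $C_a$ contains a nonempty open subset $U_a$ of $\bG$, which is then open and dense. Any two nonempty open subsets of the irreducible $\bG$ meet, so $U_a \cap U_1 \neq \emptyset$ yields a point of $C_a \cap C_1$; as orbits partition $\bG$, this forces $C_a = C_1$ for every $a \in \bG$, so $a \in C_1 = \cL(\bG)$ and $\cL$ is surjective.

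The main obstacle is the finiteness of $\bG^{\psi_a}$. For Steinberg maps the nilpotency argument above works directly; in the broader setting where only $|\bG^F| < \infty$ is assumed, the nilpotency of $d_1 F$ itself is not automatic and requires the additional analysis carried out in \cite[\S 10]{St68}.
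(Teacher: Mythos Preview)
Your proof is correct and starts from the same twisted-conjugation action $a\cdot g = g^{-1}aF(g)$ as the paper, but the two arguments diverge from there. For finiteness of the stabiliser $\bG^{\psi_a}$, the paper (following M\"uller \cite{Mu03}) argues purely group-theoretically: choosing $m$ with $F^m$ Frobenius and $F^m(a)=a$, and letting $r$ be the order of $aF(a)\cdots F^{m-1}(a)\in\bG$, one computes $\psi_a^{mr}=F^{mr}$, so $\bG^{\psi_a}\subseteq\bG^{F^{mr}}$ is finite. You instead use tangent spaces: $(d_1F)^N=d_1(F^N)=0$ for $F^N$ Frobenius makes $d_1\psi_a$ nilpotent, hence $d_1\psi_a-\id$ is invertible and $L(\bG^{\psi_a})=0$. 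For the conclusion, the paper picks a \emph{closed} orbit $\Omega$ (such an orbit always exists) and uses $\dim\Omega=\dim\bG-\dim\mbox{Stab}_{\bG}(x)$ to get $\Omega=\bG\ni 1$; you instead show every orbit contains a nonempty open subset of $\bG$, so any two orbits meet and hence coincide. Your route is essentially the classical Lang--Steinberg argument via the differential of the Lang map; the paper's version has the merit of avoiding tangent spaces and differentials entirely, while yours makes the geometric mechanism (zero-dimensional fibres force a full-dimensional image) explicit. Both proofs, as written, cover only the Steinberg-map case, and you rightly flag that the bare hypothesis $|\bG^F|<\infty$ requires the additional work in \cite[\S 10]{St68}.
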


\begin{proof} If $F$ is a Steinberg map (and this is the case that we are
mainly interested in), then \cite{Mu03} gives a quick proof, as follows. 
The group $\bG$ acts on itself (on the right) where $g\in \bG$ 
sends $x\in \bG$ to $g^{-1}xF(g)$. Any action of an algebraic group on an 
affine variety has a closed orbit; see \cite[2.5.2]{mybook}. Let $\Omega$ 
be such a closed orbit and let $x\in \Omega$. Since $\bG$ is connected, 
it will be sufficient to show that $\dim \bG=\dim \Omega$ (because then 
$\bG=\Omega$ and so $1\in \Omega$). We have $\dim \Omega= \dim \bG-
\dim \mbox{Stab}_{\bG}(x)$ (see \cite[2.5.3]{mybook}), so it will be 
sufficient to show that $\mbox{Stab}_{\bG}(x)$ is finite. Now, an element 
$g\in\bG$ belongs to this stabiliser if and only if $g^{-1}xF(g)=x$, which 
is equivalent to $f(g)=g$, where $f(g):=xF(g)x^{-1}$. Let $m\geq 1$ be such 
that $F^m$ is a Frobenius map and $F^m(x)=x$ (see
Remark~\ref{frobclosed}). Let $r\geq 1$ be the order of the element 
$xF(x)F^2(x)\cdots F^{m-1}(x)\in \bG$. Then $f^{mr}(g) =F^{mr}(g)$ for 
all $g\in \bG$. So $f^{mr}(g)=g$ has only finitely many solutions in $\bG$, 
hence $f(g)=g$ has only finitely many solutions in $\bG$.
\end{proof}
 
For various parts of the subsequent discussion it would be 
sufficient to work with endomorphisms of $\bG$ whose fixed point set is
finite. However, we will just formulate everything in terms of Steinberg
maps, as defined above. We note that the discussion in \cite[11.6]{St68} 
in combination with Proposition~\ref{Mgenfrob} below implies that an 
endomorphism of a {\em simple} algebraic group with a finite fixed point 
set is automatically a Steinberg map; see also Example~\ref{Mdiffexmp} below.

Here is the prototype of an application of the above theorem. 

\begin{prop} \label{Mfrobconj0} Assume that $\bG$ is connected  and let 
$F\colon \bG \rightarrow \bG$ be a Steinberg map. Let $X$ be an abstract 
set on which $\bG$ acts transitively; let $F'\colon X\rightarrow X$ be a 
map such that $F'(g.x)=F(g).F'(x)$ for all $g\in\bG$ and $x\in X$. Then 
there exists some $x_0\in X$ such that $F'(x_0)=x_0$. 
\end{prop}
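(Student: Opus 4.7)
The plan is to reduce the existence of an $F'$-fixed point to a surjectivity statement about a Lang-type map, and then invoke Theorem~\ref{langst}. The idea is that since the $\bG$-action on $X$ is transitive, any putative fixed point has the form $g.x$ for a chosen base point $x\in X$; requiring it to be $F'$-fixed translates, via the equivariance $F'(g.x)=F(g).F'(x)$, into an algebraic condition on $g$ of exactly the shape $g^{-1}F(g)=\text{something prescribed}$.

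More concretely, I would first fix an arbitrary point $x\in X$. By the transitivity of the $\bG$-action, there exists $h\in\bG$ with $F'(x)=h.x$. For any $g\in\bG$, the equivariance hypothesis gives
\[ F'(g.x)=F(g).F'(x)=F(g).(h.x)=\bigl(F(g)h\bigr).x. \]
Hence $g.x$ is $F'$-fixed as soon as $g^{-1}F(g)h\in \bG$ stabilises $x$, and it is enough to arrange the stronger equality $g^{-1}F(g)=h^{-1}$.

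Now the connectedness of $\bG$ and the assumption that $F$ is a Steinberg map allow me to apply Theorem~\ref{langst}: the Lang map $\cL\colon\bG\to\bG$, $g\mapsto g^{-1}F(g)$, is surjective. Therefore there exists $g\in\bG$ with $g^{-1}F(g)=h^{-1}$, i.e.\ $F(g)h=g$. Setting $x_0:=g.x$, the computation above yields
\[ F'(x_0)=\bigl(F(g)h\bigr).x=g.x=x_0, \]
which is the desired fixed point.

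There is essentially no obstacle: the whole content sits in Lang--Steinberg, which has already been established, together with the transitivity of the action (used to express $F'(x)$ as $h.x$) and the equivariance hypothesis (used to pull $F'$ through the action of $g$). It is worth noting that one does not need the $\bG$-action on $X$ to be free, nor $X$ to carry any variety structure: the argument is purely set-theoretic once the surjectivity of $\cL$ is available.
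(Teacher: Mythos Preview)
Your proof is correct and follows essentially the same route as the paper's: pick a base point, use transitivity to write $F'(x)$ as a translate of $x$, and solve the resulting Lang equation via Theorem~\ref{langst}. The only difference is cosmetic (the paper writes $F'(x)=g^{-1}.x$ and then solves $g=h^{-1}F(h)$, which is your argument with the roles of $g$ and $h$ swapped and an inverse absorbed into the notation).
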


\begin{proof} Take any $x\in X$. Since $\bG$ acts transitively, we have 
$F'(x)=g^{-1}.x$ for some $g\in \bG$. By Theorem~\ref{langst}, we can 
write $g=h^{-1}F(h)$. Then one immediately checks that $x_0:=h.x$ is 
fixed by~$F'$.  
\end{proof}

\begin{exmp} \label{Mfrobconj} Assume that $\bG$ is connected  and let 
$F\colon \bG \rightarrow \bG$ be a Steinberg map. Let $C$ be a conjugacy 
class of $\bG$ such that $F(C)=C$. Then $\bG$ acts transitively on $C$ by 
conjugation; let $F'$ be the restriction of $F$ to $C$. Applying 
Proposition~\ref{Mfrobconj0} yields that there exists an element $x\in C$ 
such that $F(x)=x$.

Similarly, there exists a pair $(\bT,\bB)$ where $\bT$ is an $F$-stable 
maximal torus of $\bG$ and $\bB$ is an $F$-stable Borel subgroup such that
$\bT\subseteq \bB$. (Just note that, by Theorem~\ref{MrootdatumG1}, all 
these pairs are conjugate in $\bG$ and, by \ref{Mconnredcomp}(a), $F$ 
preserves the set of all these pairs.) An $F$-stable maximal torus of
$\bG$ which is contained in an $F$-stable Borel subgroup of $\bG$ will
be called a \nm{maximally split} torus. 
\end{exmp}

\begin{prop}[Cf.\ \protect{\cite[10.10]{St68}}] \label{canfrob11} Let 
$\bG$ be connected reductive and $F\colon \bG \rightarrow \bG$ be a 
Steinberg map. Then all maximally split tori of $\bG$ are $\bG^F$-conjugate.
More precisely, all pairs $(\bT,\bB)$ consisting of an $F$-stable
maximal torus $\bT$ and an $F$-stable Borel subgroup $\bB$ such that 
$\bT\subseteq\bB$ are conjugate in $\bG^F$.
\end{prop}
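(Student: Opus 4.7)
The plan is to reduce the conjugacy statement to the Lang--Steinberg theorem (Theorem~\ref{langst}) applied to a maximal torus, via the strategy behind Proposition~\ref{Mfrobconj0}. The first assertion follows from the ``more precisely'' part: if $\bT_1,\bT_2$ are two maximally split tori, each lies in some $F$-stable Borel subgroup $\bB_i$, and conjugating the enriched pairs $(\bT_i,\bB_i)$ by an element of $\bG^F$ in particular conjugates $\bT_1$ to $\bT_2$. So I focus on the pairs.

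Let $X$ denote the set of pairs $(\bT,\bB)$ with $\bT$ a maximal torus contained in a Borel subgroup $\bB$. By Theorem~\ref{MrootdatumG1}, $\bG$ acts transitively on $X$ by simultaneous conjugation, and by the preservation results \ref{Mconnredcomp}(a), $F$ permutes $X$ compatibly with this action. Take two $F$-stable elements $x_1=(\bT_1,\bB_1)$ and $x_2=(\bT_2,\bB_2)$ of $X$, and choose any $g\in\bG$ with $gx_1g^{-1}=x_2$. Applying $F$ and using that both $x_1,x_2$ are $F$-stable yields $F(g)x_1F(g)^{-1}=x_2$, whence $h:=g^{-1}F(g)$ lies in the stabilizer $\mathrm{Stab}_{\bG}(x_1)$.

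The key technical step is to identify this stabilizer with $\bT_1$. Any element stabilizing $\bB_1$ lies in $N_{\bG}(\bB_1)=\bB_1$ (self-normalization of Borel subgroups), and the Bruhat decomposition for the reductive $BN$-pair of Theorem~\ref{MrootdatumG} forces $\bB_1\cap N_{\bG}(\bT_1)=\bT_1$, since the only Weyl group coset meeting $\bB_1$ is the identity coset. Hence $h\in\bT_1$. Now apply the Lang--Steinberg theorem to the connected torus $\bT_1$ (with the Steinberg map obtained by restricting $F$; its fixed point set, being contained in $\bG^F$, is finite). We obtain $t\in\bT_1$ with $h=t^{-1}F(t)$. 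Setting $g':=gt^{-1}$, a direct check shows $F(g')=F(g)F(t)^{-1}=g\,h\,F(t)^{-1}=gt^{-1}=g'$, and $g'$ still conjugates $x_1$ to $x_2$ because $t\in\bT_1\subseteq\bB_1$ fixes $x_1$. Thus $g'\in\bG^F$ is the desired conjugating element.

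The main obstacle in carrying this out cleanly is the stabilizer identification $\mathrm{Stab}_{\bG}(\bT_1,\bB_1)=\bT_1$; the rest is a standard application of Lang--Steinberg to a connected $F$-stable subgroup, analogous in spirit to the argument already sketched for existence in Example~\ref{Mfrobconj}.
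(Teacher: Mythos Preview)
Your proof is correct and follows essentially the same approach as the paper: conjugate the two pairs by some $g\in\bG$, observe that $g^{-1}F(g)$ lies in $N_{\bG}(\bB_1)\cap N_{\bG}(\bT_1)=\bB_1\cap N_{\bG}(\bT_1)=\bT_1$, then apply Lang--Steinberg to the connected torus $\bT_1$ to correct $g$ to an element of $\bG^F$. The only cosmetic difference is that the paper cites the identity $\bB\cap N_{\bG}(\bT)=\bT$ directly from Theorem~\ref{MrootdatumG} rather than deducing it from the Bruhat decomposition.
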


\begin{proof} Let $(\bT, \bB)$ and $(\bT_1,\bB_1)$ be two pairs as above. 
By Theorem~\ref{MrootdatumG1}, there exists some $x\in \bG$ such that 
$x\bB x^{-1}=\bB_1$ and $x\bT x^{-1}= \bT_1$. Since $\bB, \bB_1$ are 
$F$-stable, this implies that $x^{-1}F(x)\in N_{\bG}(\bB)=\bB$, where the 
last equality holds by \cite[11.16]{Bor} or \cite[6.4.9]{Spr}. Similarly, 
since $\bT,\bT_1$ are $F$-stable, we have $x^{-1}F(x)\in N_{\bG}(\bT)$. 
Hence, $x^{-1}F(x)\in \bB\cap N_{\bG}(\bT)=\bT$ (see 
Theorem~\ref{MrootdatumG}). Applying Theorem~\ref{langst} to the 
restriction of $F$ to $\bT$, we obtain an element $t\in \bT$ such that 
$x^{-1}F(x)=t^{-1}F(t)$. Then $g:=xt^{-1} \in \bG^F$ and $g$ simultaneously 
conjugates $\bB$ to $\bB_1$ and $\bT$ to $\bT_1$.
\end{proof}

The following result deals with a subtletly concerning Steinberg maps:
A surjective homomorphism of algebraic groups will not necessarily
induce a surjective map on the level of the fixed point sets unter 
Steinberg maps.  But one can say precisely what happens in this situation:

\begin{prop}[Cf.\ \protect{\cite[4.5]{St68}}] 
\label{Msameorder} Let $f\colon \bG\rightarrow \bG'$ be a surjective 
homomorphism of connected algebraic groups such that $\bK:=\ker(f)$ is
contained in the center of $\bG$. Let $F\colon \bG \rightarrow \bG$ and 
$F' \colon \bG' \rightarrow \bG'$ be Steinberg maps such that $F'\circ f=
f\circ F$. We denote $G=\bG^F$ and $G'=\bG'^{F'}$. Then the following hold. 
\begin{itemize}
\item[(a)] Let $\cL\colon \bG \rightarrow \bG$, $g\mapsto g^{-1} 
F(g)$. Then $\cL(\bK)$ is a normal subgroup of $\bK$.
\item[(b)] $f(G) \subseteq G'$ is a normal subgroup and
$G'/f(G) \cong \bK/\cL(\bK)$. In particular, if $\bK$ is connected,
then $\cL(\bK)=\bK$ and $f(G)=G'$.
\item[(c)] If $\bK$ is finite (that is, $f$ is an isogeny), then $|G|=|G'|$.
\end{itemize}
\end{prop}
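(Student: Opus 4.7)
The plan is to exploit the centrality of $\bK$ (which makes $\bK$ abelian and turns $\cL|_\bK$ into a group homomorphism) together with the Lang--Steinberg theorem applied to the various connected groups in sight.

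For (a), I would first observe that $F(\bK)\subseteq \bK$: if $k\in \bK$, then $f(F(k))=F'(f(k))=F'(1)=1$. Since $\bK\subseteq \bZ(\bG)$, the group $\bK$ is abelian, and for $k_1,k_2\in\bK$ one computes
\[\cL(k_1k_2)=(k_1k_2)^{-1}F(k_1)F(k_2)=k_1^{-1}F(k_1)\cdot k_2^{-1}F(k_2)=\cL(k_1)\cL(k_2),\]
using commutativity. Hence $\cL|_{\bK}\colon\bK\to\bK$ is a homomorphism and $\cL(\bK)$ is a subgroup of the abelian group $\bK$, thus normal.

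For (b), the inclusion $f(G)\subseteq G'$ is immediate from $F'\circ f=f\circ F$. To see that $f(G)$ is normal in $G'$, pick $g'\in G'$ and $x\in G$; since $f$ is surjective, write $g'=f(h)$ for some $h\in\bG$. The condition $F'(g')=g'$ gives $k:=h^{-1}F(h)\in\bK$, and then $F(hxh^{-1})=F(h)xF(h)^{-1}=hk^{-1}xkh^{-1}=hxh^{-1}$ by centrality of $k$, so $g'f(x)g'^{-1}=f(hxh^{-1})\in f(G)$. The key step is to define
\[\phi\colon G'\longrightarrow \bK/\cL(\bK),\qquad g'=f(h)\longmapsto h^{-1}F(h)\,\cL(\bK).\]
I would check that $\phi$ is well defined (if $h_1=hk$ with $k\in\bK$, then $h_1^{-1}F(h_1)=h^{-1}F(h)\cdot\cL(k)$) and is a homomorphism (again using centrality of the element $h^{-1}F(h)\in\bK$). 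Surjectivity of $\phi$ comes from applying Theorem~\ref{langst} to $\bG$: given $k\in\bK$ write $k=h^{-1}F(h)$, then $f(h)\in G'$ maps to $k\,\cL(\bK)$. For the kernel, $\phi(g')=1$ means $h^{-1}F(h)=k^{-1}F(k)$ for some $k\in\bK$, and a direct calculation shows $hk^{-1}\in G$ with $f(hk^{-1})=g'$; conversely $f(G)\subseteq\ker(\phi)$ is obvious. Thus $G'/f(G)\cong\bK/\cL(\bK)$. If $\bK$ is connected, then since $\bK^F\subseteq\bG^F$ is finite, Theorem~\ref{langst} applies to the restriction $F|_{\bK}$ and gives $\cL(\bK)=\bK$, hence $f(G)=G'$.

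For (c), assume $\bK$ is finite. From the exact sequence $1\to \bK^F\to G\xrightarrow{f} f(G)\to 1$ we get $|G|=|\bK^F|\cdot|f(G)|$, and part (b) gives $|G'|=|f(G)|\cdot|\bK/\cL(\bK)|$. Since $\cL|_\bK$ is a homomorphism with kernel $\bK^F$, and $\bK$ is finite, $|\cL(\bK)|=|\bK|/|\bK^F|$, so $|\bK/\cL(\bK)|=|\bK^F|$, and therefore $|G|=|G'|$. The main obstacle throughout is keeping track of well-definedness of $\phi$ and verifying that all relevant computations really do use only the centrality of $\bK$ rather than any finiteness or connectedness assumption; once that is in place the rest is a clean diagram chase combined with two invocations of Lang--Steinberg.
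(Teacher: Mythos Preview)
Your proof is correct and follows essentially the same approach as the paper. The paper invokes Steinberg's general exact-sequence lemma \cite[4.5]{St68} to obtain $\{1\}\to\bK^F\to G\to G'\stackrel{\delta}{\to}(\cL(\bG)\cap\bK)/\cL(\bK)\to\{1\}$ and then applies Lang--Steinberg to replace $\cL(\bG)$ by $\bG$; your map $\phi$ is precisely the connecting map $\delta$, and you are simply unpacking and verifying the content of Steinberg's lemma directly in this situation rather than quoting it.
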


\begin{proof} First note that $\bK$ is $F$-stable. Since $\bK$ is 
contained in the center of $\bG$, this implies (a).
Now Steinberg \cite[4.5]{St68} states a general result 
(about arbitrary groups) which shows that $f\colon \bG \rightarrow \bG'$
induces a long exact sequence 
\[\{1\} \longrightarrow \bK^F \longrightarrow G
\stackrel{f}{\longrightarrow} G' \stackrel{\delta}{\longrightarrow}
(\cL(\bG)\cap \bK)/\cL(\bK) \longrightarrow \{1\},\]
where $\delta$ is given as follows. Let $g'\in G'$ and choose $g\in 
\bG$ such that $f(g)=g'$. Then $g^{-1}F(g)\in \bK$ and $\delta(g')$ is the 
image of $g^{-1}F(g)$ in $(\cL(\bG)\cap \bK)/\cL(\bK)$. 

Since $\bG$ is connected, we have $\cL(\bG)=\bG$ by Theorem~\ref{langst};
this yields (b). Now $\ker(\cL|_\bK)=\{z\in \bK\mid z^{-1}F(z)=1\}=\bK^F=
\ker (f|_G)$. So, if $\bK$ is finite, then $|\bK/\cL(\bK)|=|\ker(f|_G)|$ 
and, hence, $|G|=|f(G)| |\bK/\cL(\bK)|$. But, by (b), $\bK/\cL(\bK)$ and 
$G'/f(G)$ have the same order and so $|G|=|G'|$, that is, (c) holds.
\end{proof}

\begin{lem}[Cf.\ \protect{\cite[10.9]{St68}}] \label{canfrob0a} Assume that
$\bG$ is connected and let $F\colon \bG \rightarrow \bG$ be a Steinberg map.
Let $y\in \bG$ and define $F'\colon \bG\rightarrow \bG$ by $F'(g)=
yF(g)y^{-1}$ for all $g\in \bG$. Then $F'$ is a Steinberg map and 
we have $\bG^{F'} \cong \bG^F$. 

Furthermore, if $F$ is a Frobenius map corresponding
to an $\F_q$-rational structure, then so is $F'$ (with the same
$q$). 
\end{lem}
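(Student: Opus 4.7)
The plan is to reduce the whole statement to the observation that $F'$ is conjugate to $F$ by an inner automorphism of $\bG$, then transport all structure along that inner automorphism. The key step is to write $F' = \iota_x \circ F \circ \iota_x^{-1}$ for some $x \in \bG$, where $\iota_x$ denotes conjugation by $x$. A direct expansion shows this amounts to finding $x$ with $y = xF(x)^{-1}$. The map $\cL'\colon \bG \to \bG$, $x \mapsto F(x)x^{-1}$, equals $\sigma \circ \cL \circ \sigma$, where $\sigma(g) = g^{-1}$ and $\cL(x) = x^{-1}F(x)$ is the Lang map; since $\sigma$ is bijective and $\cL$ is surjective by Theorem~\ref{langst} (using that $\bG$ is connected and $F$ is a Steinberg map, so $|\bG^F|<\infty$), the map $\cL'$ is also surjective. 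Applied to $y^{-1}$, this produces $x\in\bG$ with $F(x)x^{-1}=y^{-1}$, i.e., $xF(x)^{-1}=y$. Substituting back gives
\[
F'(g) = yF(g)y^{-1} = xF(x)^{-1}F(g)F(x)x^{-1} = x F(x^{-1}gx)x^{-1} = (\iota_x \circ F \circ \iota_x^{-1})(g).
\]

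Given this, the isomorphism $\bG^{F'} \cong \bG^F$ is immediate: $g \in \bG^{F'}$ iff $F(\iota_x^{-1}(g)) = \iota_x^{-1}(g)$, so $\bG^{F'} = x\bG^F x^{-1}$, and conjugation by $x$ is the desired group isomorphism. For the Steinberg property, iterate to get $F'^n = \iota_x \circ F^n \circ \iota_x^{-1}$ for all $n \geq 1$. Choose $n$ so that $F^n$ is the Frobenius map attached to an $\F_{q^n}$-rational structure, given by an isomorphism $\kappa\colon \bG \to \bG_0 \subseteq \bk^N$ with $\bG_0$ stable under the standard Frobenius $F_{q^n}$ and $\kappa \circ F^n = F_{q^n} \circ \kappa$. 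Set $\kappa' := \kappa \circ \iota_x^{-1}$. Then $\kappa'$ is again an isomorphism onto the same $F_{q^n}$-stable closed subset $\bG_0 \subseteq \bk^N$, and
\[
\kappa' \circ F'^n = \kappa \circ \iota_x^{-1} \circ \iota_x \circ F^n \circ \iota_x^{-1} = \kappa \circ F^n \circ \iota_x^{-1} = F_{q^n} \circ \kappa \circ \iota_x^{-1} = F_{q^n} \circ \kappa',
\]
so $F'^n$ is the Frobenius map for the new $\F_{q^n}$-rational structure defined by $\kappa'$. Hence $F'$ is a Steinberg map. The very same calculation with $n=1$ proves the final assertion: if $F$ itself is the Frobenius map for an $\F_q$-structure via $\kappa$, then $F'$ is the Frobenius map for the $\F_q$-structure via $\kappa' = \kappa \circ \iota_x^{-1}$, with the same $q$.

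The only real subtlety is verifying surjectivity of $\cL'$ and keeping careful track of the direction of the inner automorphism; everything else is a routine unwinding. I expect the main pitfall to be sign/inversion bookkeeping in the identity $y = xF(x)^{-1}$, which is why I would state and verify it explicitly before carrying out the intertwining.
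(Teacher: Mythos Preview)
Your proof is correct and shares the same key first step as the paper: use the Lang--Steinberg theorem to write $F' = \iota_x^{\pm 1} \circ F \circ \iota_x^{\mp 1}$ for some $x \in \bG$, from which the isomorphism $\bG^{F'} \cong \bG^F$ is immediate. The paper uses $y = x^{-1}F(x)$ and obtains $F' = \iota_x^{-1}\circ F\circ\iota_x$; you use the mirror-image $y = xF(x)^{-1}$, which is harmless.

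The second half differs mildly. The paper observes that, since some power $F^m$ is a Frobenius map, one can choose $m$ with $F^m(x)=x$ (every point is fixed by some power of a Frobenius map), and then $F'^m = F^m$ \emph{on the nose}; this makes $F'$ a Steinberg map instantly. For the Frobenius assertion, the paper invokes the intrinsic characterisation of Frobenius maps via $F^*(A)=\{a^q\mid a\in A\}$ (Remark~\ref{MintrinsF}), noting that this property is preserved under conjugation by an algebra automorphism. Your route---transporting the $\F_q$-rational structure along $\iota_x^{-1}$ by setting $\kappa'=\kappa\circ\iota_x^{-1}$---is equally valid and arguably more transparent, since it stays with the original Definition~\ref{Mabs31} and avoids the intrinsic characterisation entirely. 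The paper's trick of getting $F'^m=F^m$ literally is a bit slicker for the Steinberg part; your transport-of-structure argument handles both the Steinberg and Frobenius conclusions uniformly.
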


\begin{proof} Since $\bG$ is connected, Theorem~\ref{langst} shows that
we can write $y=x^{-1}F(x)$ for some $x\in \bG$. Then $F'(g)=x^{-1}F(xg
x^{-1})x$ for all $g\in \bG$. Thus, we have $F'=\iota_x^{-1} \circ F \circ 
\iota_x$ where $\iota_x$ denotes the inner automorphism of $\bG$ defined 
by~$x$. This formula shows that $F'(g)=g$ if and only if $xgx^{-1}\in 
\bG^F$. Hence, conjugation with $x$ defines a group isomorphism $\bG^{F'}
\cong \bG^F$.

Now we show that $F'$ is a Steinberg map. For $m\geq 1$, we have $F'^m(g)
=x^{-1}F^m(xgx^{-1})x$ for all $g\in \bG$. By Remark~\ref{frobclosed} (and 
the definition of Steinberg maps), there exists some $m\geq 1$ such that 
$F^m(x)=x$. For this $m$, we have $F'^m(g)= F^m(g)$ for all $g\in \bG$. 
Thus, $F'$ is a Steinberg map. 

Finally, assume that $F$ is a Frobenius map. We use the characterisation in
Remark~\ref{MintrinsF} to show that $F'$ is a Frobenius map. Since
$F'$ is the conjugate of $F$ by an automorphism of $\bG$, we have that
$F'^*$ is the conjugate of $F^*$ by an algebra automorphism of $A$. 
This implies that $F'^*$ is injective and $F'^*(A)=\{a^q\mid a\in A\}$. 
On the other hand, we have $F'^m=F^m$. So, if $a\in A$ and $e\geq 1$ are 
such that $(F^*)^e(a)=a^{q^e}$, then $(F'^*)^{em}(a)=(F^*)^{em}(a)=
a^{q^{em}}$, as required.
\end{proof}

\begin{lem} \label{canfrob12} Assume that $\bG$ is connected reductive. 
Let $F\colon \bG\rightarrow \bG$ be a Steinberg map and $\bT$ be an
$F$-stable maximal torus of $\bG$. Let $F'\colon \bG \rightarrow \bG$ be 
a further isogeny of $\bG$ such that $F'(\bT)=\bT$. If $F$ and $F'$ induce 
the same map on $X(\bT)$, then there exists some $y\in \bT$ such that
$F'(g)=yF(g)y^{-1}$ for all $g\in \bG$. In particular, the conclusions of
Lemma~\ref{canfrob0a} apply to $F'$. 
\end{lem}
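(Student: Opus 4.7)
The plan is to deduce the result directly from the uniqueness part of the Isogeny Theorem (Theorem~\ref{thmiso}). First I would observe that the hypotheses put us in exactly the situation covered by that uniqueness statement: both $F$ and $F'$ are isogenies $\bG\to\bG$ sending $\bT$ to $\bT$, so each of them induces a $p$-isogeny of the root datum $(X(\bT),R,Y(\bT),R^\vee)$ via the construction recalled in \ref{Mabs23}. The assumption is that the induced homomorphisms $X(\bT)\to X(\bT)$ coincide, and since $F$ is a Steinberg map (hence honestly an isogeny giving a $p$-isogeny of root data), this common homomorphism $\varphi$ is the same $p$-isogeny for both.

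Next, I would apply the uniqueness clause of Theorem~\ref{thmiso} to the pair $(F,F')$, producing some $t\in\bT$ with
\[F'(g)=F(tgt^{-1})\qquad \text{for all } g\in\bG.\]
Since $F$ is a group homomorphism, this rewrites as $F'(g)=F(t)F(g)F(t)^{-1}$. Setting $y:=F(t)$ gives the desired formula $F'(g)=yF(g)y^{-1}$. Here $y\in\bT$ because $F(\bT)=\bT$ (the inclusion $F(\bT)\subseteq\bT$ is given, and equality holds as $F$ is an isogeny, so $F|_\bT$ has finite kernel and its image is a closed connected subgroup of $\bT$ of the same dimension as $\bT$).

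The final sentence of the lemma is then immediate from Lemma~\ref{canfrob0a} applied to the element $y\in\bT\subseteq\bG$: that lemma says that any map obtained from a Steinberg map by conjugation with an element of $\bG$ is again a Steinberg map (and is even a Frobenius map with the same $q$ if the original is), and that it has an isomorphic fixed-point group. There is really no main obstacle here; the one point requiring a moment's attention is just checking that the uniqueness statement of Theorem~\ref{thmiso} genuinely applies, i.e.\ that $F$ and $F'$ correspond to the same $p$-isogeny $\varphi$, which is exactly what the hypothesis ``$F$ and $F'$ induce the same map on $X(\bT)$'' delivers.
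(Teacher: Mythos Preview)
Your proof is correct and follows essentially the same approach as the paper: both invoke the uniqueness clause of the Isogeny Theorem~\ref{thmiso} to obtain the conjugating element. The paper's proof is just a one-line application of that theorem, while you spell out the intermediate step of writing $F'(g)=F(tgt^{-1})=F(t)F(g)F(t)^{-1}$ and setting $y=F(t)$, which is implicit in the paper's version.
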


\begin{proof} Since $F$, $F'$ induce the same map on $X(\bT)$, 
Theorem~\ref{thmiso} implies that there exists some $y\in \bT$ such 
that $F'(g)=yF(g)y^{-1}$ for all $g\in \bG$. 
\end{proof}

\begin{exmp} \label{canfrob2} 
Assume that $\bG$ is connected reductive and let $\bT \subseteq \bG$ be a 
maximal torus, with associated root datum $\cR=(X,R,Y,R^\vee)$. Let 
$F_p\colon \bG\rightarrow \bG$ be an isogeny as in Example~\ref{canfrob2a}, 
such that 
\[F_p(\bU_\alpha)=\bU_\alpha \quad (\alpha \in R) \qquad \mbox{and} \qquad 
F_p(t)=t^p \quad (t \in \bT).\]
(Note that $F_p$ is only unique up to composition with inner automorphisms
defined by elements of $\bT$.) Now, if one is willing to appeal to a 
stronger version of Theorem~\ref{thmexi} (involving fields of definition), 
then one can find an $F_p$ as above such that $F_p$ is the Frobenius map 
with respect to an $\F_p$-rational structure on $\bG$; see \cite[16.3.3]{Spr}, 
\cite[Expos\'e~XXV]{sga33}. (Alternatively, one could use Lusztig's 
approach \cite{Lu09c}, as pointed out in \cite[Expos\'e~XXV, 
footnote~1]{sga33}.) If $\bG$ is semisimple, then this is also 
contained in \cite[Theorem~6 (p.~58)]{St}, \cite[Part A, \S 3.3 and 
\S 4.3]{Bo131}. Once some $F_p$ is known to be a Frobenius map, 
Lemma~\ref{canfrob12} shows that {\em any} $F_p$ satisfying the above 
conditions is a Frobenius map. 

In any case, for most of our purposes here, it will be sufficient to know 
that $F_p$ is a Steinberg map; this is easily seen as follows. By
Example~\ref{Msplittorus}, there exists a Frobenius map $F\colon\bG
\rightarrow \bG$ such that $F(t)=t^q$ for all $t\in\bT$, where $q=p^m$ 
for some $m\geq 1$. Then $F$ induces multiplication with $q$ on $X$. 
Hence, $F$ induces the same map on $X$ as $F_p^m$. So Lemma~\ref{canfrob12} 
shows that $F_p$ is a Steinberg map. 
\end{exmp}

\begin{lem} \label{Mgenfrob3a} Let $\bT$ be a torus over $k$ and $F\colon 
\bT\rightarrow \bT$ be the Frobenius map corresponding to an $\F_q$-rational 
structure on $\bT$, where $q$ is a power of~$p$. Then the map induced 
by $F$ on $X=X(\bT)$ is given by $q\psi_0$ where $\psi_0 \colon X 
\rightarrow X$ is an invertible endomorphism of finite order. 
\end{lem}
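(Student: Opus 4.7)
My plan is to use the intrinsic characterization of Frobenius maps given in Remark~\ref{MintrinsF} and work on the algebra of regular functions. Let $A$ denote the algebra of regular functions on $\bT$. Since $\bT$ is a torus, the characters form a $k$-basis of $A$, so $A$ is naturally identified with the group algebra $k[X]$ (written multiplicatively inside $A$, but additively in $X$). The homomorphism $F^*\colon A\rightarrow A$ is a $k$-algebra map which sends characters to characters, so its restriction to $X$ is exactly the map $\varphi\colon X\rightarrow X$ induced by $F$.

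The first and main step is to show divisibility: $\varphi(X)\subseteq qX$. By Remark~\ref{MintrinsF}(a), $F^*(A)=\{a^q\mid a\in A\}$, so for each $\lambda\in X$ we can write $F^*(\lambda)=a^q$ for some $a\in A$. Expanding $a=\sum_{\mu\in X}c_\mu\, e^\mu$ in the group-algebra basis and using that $q$ is a power of the characteristic, Frobenius-linearity gives $a^q=\sum_\mu c_\mu^q\, e^{q\mu}$. Because $F^*(\lambda)$ itself is a single basis element $e^{\varphi(\lambda)}$ and the $e^{q\mu}$'s are linearly independent in $A$, exactly one $c_\mu$ can be nonzero, and we must have $\varphi(\lambda)=q\mu\in qX$. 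Hence $\varphi$ factors uniquely as $\varphi=q\psi_0$ with $\psi_0\in\End(X)$.

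The second step is to show that $\psi_0$ has finite order. Apply Remark~\ref{MintrinsF}(b): for each $\lambda\in X$ there exists $e\geq 1$ with $(F^*)^e(\lambda)=\lambda^{q^e}$, which in additive notation on $X$ reads $\varphi^e(\lambda)=q^e\lambda$. Since $\varphi^e=q^e\psi_0^e$ and $X$ is torsion-free, we get $\psi_0^e(\lambda)=\lambda$. Choosing a finite $\Z$-basis of $X$ and taking a common $e$ that works for all basis elements produces a single integer $e\geq 1$ with $\psi_0^e=\id_X$. In particular $\psi_0$ is invertible of finite order, as required.

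The main obstacle I anticipate is in step one, namely converting the set-theoretic statement $F^*(A)=A^q$ into the arithmetic conclusion $\varphi(X)\subseteq qX$; everything after that is essentially formal. The subtle point is that a general $q$-th power in $A$ is a $k$-linear combination $\sum c_\mu^q\,e^{q\mu}$, not a single monomial, so one must really use that $F^*$ sends the individual character $\lambda$ to another character to collapse the sum to one term. Once $\varphi=q\psi_0$ is established and the second characterization (b) is applied, the finite-order claim follows immediately from the finite generation of $X$.
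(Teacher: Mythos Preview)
Your proof is correct. Both your argument and the paper's rest on Remark~\ref{MintrinsF}(a), but you run it in the opposite direction: the paper starts from $\lambda^q\in F^*(A)$, produces $\lambda^\bullet\in A$ with $F^*(\lambda^\bullet)=\lambda^q$, and then argues from the functional equation $\lambda^\bullet(F(t))=\lambda(t^q)$ that $\lambda^\bullet$ is a character, thereby building $\psi=\psi_0^{-1}$ first. You instead start from $F^*(\lambda)\in A^q$ and use the identification $A\cong k[X]$ together with the Frobenius identity $(\sum c_\mu e^\mu)^q=\sum c_\mu^q e^{q\mu}$ to force $\varphi(\lambda)\in qX$, constructing $\psi_0$ directly. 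For the finite-order step the paper appeals to Example~\ref{Msplittorus} (some power of $F$ is the $q^m$-power map on $\bT$), whereas you invoke Remark~\ref{MintrinsF}(b); these inputs are essentially equivalent. Your route is slightly more computational but self-contained within Remark~\ref{MintrinsF}, while the paper's avoids the group-algebra bookkeeping at the cost of an extra external reference.
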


\begin{proof} (Cf.\ \cite[p.~40]{DiMi2}, \cite[\S I.2.4]{Sat71}.) Let $A$
be the algebra of regular functions on $\bT$. Let $\lambda\in X$. Composing 
$\lambda$ with the inclusion $\bkm\hookrightarrow k$, we can regard
$\lambda$ as a regular function on $\bT$, that is, $\lambda \in A$. 
By Remark~\ref{MintrinsF}, we have $F^*(A)=\{a^q\mid a\in A\}$. Hence, 
$\lambda^q=F^*(\lambda^\bullet)$ for some $\lambda^\bullet \in A$. Then we 
have 
\begin{equation*}
\lambda^\bullet(F(t))=\lambda(t)^q=\lambda(t^q) \qquad \mbox{for all $t\in 
\bT$}.\tag{$*$}
\end{equation*}
Since $F\colon \bT\rightarrow \bT$ is a bijective group homomorphism, 
$\lambda^\bullet$ is uniquely determined by ($*$); furthermore, 
$\lambda^\bullet(\bT)\subseteq \bkm$ and $\lambda^\bullet\colon \bT
\rightarrow \bkm$ is a group homomorphism. Hence, $\lambda^\bullet 
\in X$. We also see that the map $\psi \colon X \rightarrow X$, $\lambda 
\mapsto \lambda^\bullet$, is linear. Finally, ($*$) implies that 
$\bigl(\psi^m(\lambda)\bigr)(F^m(t)) =\lambda(t^{q^m})$ for all $m\geq 1$. 
Now, by Example~\ref{Msplittorus}, we can find some $m\geq 1$ such that 
$F^m(t)=t^{q^m}$ for all $t\in \bT$.  For any such $m$, we then have 
$\psi^m(\lambda)=\lambda$ for all $\lambda \in X$. Hence, $\psi$ is an 
endomorphism of $X$ of order dividing $m$. Setting $\psi_0 :=\psi^{-1}$, 
the map on $X$ induced by $F$ is given by $q\psi_0$.
\end{proof}

We now obtain the following characterisation of Steinberg maps.

\begin{prop} \label{Mgenfrob} Let $\bG$ be connected reductive, $F\colon 
\bG\rightarrow \bG$ be an isogeny and $\bT\subseteq \bG$ be an $F$-stable 
maximal torus. Then the following are equivalent.
\begin{itemize}
\item[(i)] $F$ is a Steinberg map.
\item[(ii)] There exist integers $d,m\geq 1$ such that the map induced by 
$F^d$ on $X=X(\bT)$ is given by scalar multiplication with $p^m$.
\item[(iii)] There is an isogeny $F_p$ as in Example~\ref{canfrob2} such 
that some positive power of $F$ equals some positive power of $F_p$.
\end{itemize}
\end{prop}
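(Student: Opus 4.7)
The plan is to establish the cyclic chain (i)$\Rightarrow$(ii)$\Rightarrow$(iii)$\Rightarrow$(i). The key ingredients are Lemma~\ref{Mgenfrob3a} (structure of Frobenius maps on tori), Lemma~\ref{canfrob12} (turning agreement on $X(\bT)$ into equality of isogenies up to $\bT$-conjugation), and the fact that every element of the torus $\bT$ has finite order prime to $p$. The main obstacle will be (ii)$\Rightarrow$(iii), since matching characters on $X(\bT)$ only yields isogenies that agree up to conjugation by an element of $\bT$, and one must manufacture genuine equality of sufficiently high powers.

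For (i)$\Rightarrow$(ii): by definition some $F^d$ is the Frobenius map of an $\F_q$-rational structure on $\bG$ with $q=p^a$, so its restriction to the $F$-stable torus $\bT$ is a Frobenius map, and Lemma~\ref{Mgenfrob3a} gives that it acts on $X=X(\bT)$ as $q\psi_0$ for some $\psi_0$ of finite order $e$; then $F^{de}$ acts as $p^{ae}\cdot\id$, which is (ii). For (iii)$\Rightarrow$(i): any $F_p$ as in Example~\ref{canfrob2} is already a Steinberg map, so some $F_p^c$ is Frobenius, and then $F^{ac} = (F_p^c)^b$ is a Frobenius map by Remark~\ref{frobclosed}, making $F$ a Steinberg map.

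The main work lies in (ii)$\Rightarrow$(iii). Fixing any isogeny $F_p$ as in Example~\ref{canfrob2} with $F_p(\bT)=\bT$, both $F_p^m$ and $F^d$ induce scalar multiplication by $p^m$ on $X$; hence Lemma~\ref{canfrob12} supplies $y \in \bT$ with $F^d = \iota_y \circ F_p^m$, where $\iota_y$ denotes conjugation by $y$. Since $F_p^m$ acts on $\bT$ as $t\mapsto t^{p^m}$, all iterated images of $y$ are powers of $y$ (and therefore commute), and a short induction yields
\[
F^{kd} = \iota_{z_k}\circ F_p^{km}, \qquad z_k \;=\; y^{1+p^m+p^{2m}+\cdots+p^{(k-1)m}} \;=\; y^{(p^{km}-1)/(p^m-1)}.
\]
It remains to choose $k$ making $z_k = 1$. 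Writing $N$ for the order of $y$ (finite and coprime to $p$ because $y$ lies in a torus over $\overline{\F}_p$), the condition $z_k=1$ is equivalent to $N(p^m-1)\mid p^{km}-1$. Since $M:=N(p^m-1)$ is coprime to $p$, it suffices to take $k$ to be the multiplicative order of $p^m$ in $(\Z/M\Z)^\times$; then $F^{kd}=F_p^{km}$, which is (iii).
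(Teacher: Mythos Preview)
Your proof is correct. The implications (i)$\Rightarrow$(ii) and (iii)$\Rightarrow$(i) are handled exactly as in the paper, but your argument for (ii)$\Rightarrow$(iii) differs genuinely from the paper's. The paper also starts from Lemma~\ref{canfrob12} to obtain $y\in\bT$ with $F^d=\iota_y\circ F_p^m$, but then applies the Lang--Steinberg theorem (Theorem~\ref{langst}) on $\bT$ to the Steinberg map $F_p^m$: writing $y=x^{-1}F_p^m(x)$ with $x\in\bT$ gives $F^d=\iota_x^{-1}\circ F_p^m\circ\iota_x=(\iota_x^{-1}\circ F_p\circ\iota_x)^m$, and $F_p':=\iota_x^{-1}\circ F_p\circ\iota_x$ is again an isogeny as in Example~\ref{canfrob2}. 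Thus the paper achieves $F^d=(F_p')^m$ with the \emph{same} exponents $d,m$ as in (ii), at the cost of replacing $F_p$ by a $\bT$-conjugate. Your approach instead keeps the initial $F_p$ and iterates, exploiting that $F_p^m$ acts on $\bT$ by $t\mapsto t^{p^m}$ to control the accumulating conjugator explicitly, and then kills it by a congruence argument using that $y$ has finite order prime to~$p$. Your route is more elementary in that it avoids Lang--Steinberg entirely for this implication; the paper's route is shorter and gives the sharper conclusion $F^d=(F_p')^m$, which is sometimes convenient in applications where one wants the minimal power of $F$ already to coincide with a power of some $F_p$.
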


\begin{proof} ``(i) $\Rightarrow$ (ii)'' Let $d_1\geq 1$ be such that 
$F^{d_1}$ is a Frobenius map with respect to some $\F_{q_0}$-rational 
structure on $\bG$ where $q_0$ is a power of $p$. Let $\varphi \colon 
X\rightarrow X$ be the map induced by $F$. By Lemma~\ref{Mgenfrob3a}, we 
have $\varphi^{d_1}=q_0\psi_0$ where $\psi_0\colon X\rightarrow X$ has 
finite order, $e\geq 1$ say. Then $\varphi^{d_1e}=q_0^e\,\id_X$.

``(ii) $\Rightarrow$ (iii)'' Assume that the map induced by $F^d$ on $X$ is
given by scalar multiplication with $p^m$. Let $F_p$ be as in 
Example~\ref{canfrob2}. Then $F^d$ and $F_p^m$ induce the same map on $X$ 
and so there is  some $y\in \bT$ such that $F^d(g)=yF_p^m(g)y^{-1}$ for 
all $g\in\bG$; see Lemma~\ref{canfrob12}. By Theorem~\ref{langst}, we can 
write $y=x^{-1}F_p^m(x)$ for some $x\in \bT$. As in the proof of 
Lemma~\ref{canfrob0a}, we have $F^d=\iota_x^{-1}\circ F_p^m \circ \iota_x$.
But then we also have $F^d=(\iota_x^{-1}\circ F_p \circ \iota_x)^m$ and it 
remains to note that $F_p'i:=\iota_x^{-1}\circ F_p \circ \iota_x$ is a map 
satisfying the conditions in Example~\ref{canfrob2}.

``(iii) $\Rightarrow$ (i)'' This is clear by definition, since $F_p$ 
is known to be a Steinberg map (see Example~\ref{canfrob2}).
\end{proof}

\begin{prop} \label{Mdefnq} Assume that $\bG$ is connected. Let $F\colon 
\bG\rightarrow \bG$ be a Steinberg map. Let $q$ be the positive real number
defined by $q^d=q_0$, where $d\geq 1$ is an integer such that $F^d$ is
a Frobenius map with respect to some $\F_{q_0}$-rational structure on $\bG$ 
(where $q_0$ is a power of $p$). Then $q$ does not depend on $d,q_0$.
Furthermore, the following hold for every $F$-stable maximal torus $\bT$ of
$\bG$.
\begin{itemize}
\item[(a)] We have $\det(\varphi)=\pm q^{\operatorname{rank} X}$ where 
$\varphi\colon X(\bT)\rightarrow X(\bT)$ is the linear map induced by $F$. 
\item[(b)] The map induced by $F$ on $X_\R:=\R\otimes_\Z X(\bT)$ is of 
the form $q\varphi_0$ where $\varphi_0\in \GL(X_\R)$ has finite order.
\end{itemize}
\end{prop}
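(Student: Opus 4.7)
The plan is to derive the single identity $\varphi^d = q_0\psi_0$ on $X(\bT)$ (with $\psi_0$ of finite order) for any $F$-stable maximal torus $\bT$, and to read off parts (a), (b), and the well-definedness of $q$ all from it. Since $\bG$ is connected and acts transitively by conjugation on the set of its maximal tori, Proposition~\ref{Mfrobconj0} produces an $F$-stable maximal torus $\bT$. Fix such a $\bT$ and let $\varphi\colon X(\bT)\to X(\bT)$ be the linear map induced by $F$. Because $\bT$ is $F^d$-stable, Remark~\ref{frobclosed} shows that $\bT$ inherits the $\F_{q_0}$-rational structure from $\bG$, with $F^d|_{\bT}$ as its Frobenius map. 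Applying Lemma~\ref{Mgenfrob3a} to this Frobenius on $\bT$ then gives
\[ \varphi^d = q_0\psi_0 \qquad \text{with } \psi_0\in \Aut(X(\bT)) \text{ of finite order}. \]

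With $q:=q_0^{1/d}$ and $\varphi_0:=q^{-1}\varphi\in \GL(X_\R)$, I would compute $\varphi_0^d=q^{-d}\varphi^d=\psi_0$, which has finite order; hence $\varphi_0$ itself has finite order, proving (b). For (a), taking determinants in $\varphi^d=q_0\psi_0$ and using $\det(\psi_0)=\pm 1$ yields $\det(\varphi)^d=\pm q_0^r$, where $r=\operatorname{rank} X$. Since $\det(\varphi)\in\Z$ is nonzero (the isogeny $F$ restricts to a surjection $\bT\to\bT$, so $\varphi$ is injective by the dictionary recalled in \ref{subsec17}), this forces $|\det(\varphi)|=q_0^{r/d}=q^r$, hence $\det(\varphi)=\pm q^r$.

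Well-definedness is then immediate: for two valid pairs $(d_1,q_1)$ and $(d_2,q_2)$, applying the computation above to each gives $|\det(\varphi)|=q_1^{r/d_1}=q_2^{r/d_2}$ for the same integer $|\det(\varphi)|$, which forces $q_1^{1/d_1}=q_2^{1/d_2}$ whenever $r\geq 1$ (the non-unipotent case of interest). Running the same argument for a second $F$-stable maximal torus confirms that the computed $q$ is also independent of the torus, since all maximal tori of the connected group $\bG$ share the common dimension $r$. The main obstacle is essentially bookkeeping: one must correctly identify $\varphi^d$ with the map induced on $X(\bT)$ by the restricted Frobenius $F^d|_{\bT}$, and recognise that the integer $|\det(\varphi)|$ coincides with the \emph{a priori} merely real number $q^r$; once this is set up, Lemma~\ref{Mgenfrob3a} does all the heavy lifting.
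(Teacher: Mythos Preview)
Your argument is correct and follows essentially the same route as the paper: obtain an $F$-stable maximal torus, invoke Remark~\ref{frobclosed} and Lemma~\ref{Mgenfrob3a} to get $\varphi^d=q_0\psi_0$ with $\psi_0$ of finite order, then read off (a) via determinants, (b) via $\varphi_0:=q^{-1}\varphi_\R$, and the well-definedness of $q$ from (a). The paper orders the steps as (a), then (b), and is slightly terser about why $\det(\varphi)^d=\pm q_0^r$ forces $\det(\varphi)=\pm q^r$, but there is no substantive difference; your caveat about $r\geq 1$ is also implicit in the paper's deduction of well-definedness from (a).
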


\begin{proof} The independence of $q$ of $d,q_0$ is clear, once (a) is
established. So let $\bT$ be any $F$-stable maximal torus of $\bG$ 
(which exists by Example~\ref{Mfrobconj}). Let $X=X(\bT)$ and $\varphi
\colon X\rightarrow X$ be the linear map induced by~$F$. 

(a) By Remark~\ref{frobclosed}, the restriction of $F^d$ to $\bT$ is
a Frobenius map with respect to an $\F_{q_0}$-rational structure on $\bT$.
So, by Lemma~\ref{Mgenfrob3a}, we have $\varphi^d=q_0\psi_0$ where $\psi_0 
\colon X\rightarrow X$ has finite order. Then $\det(\varphi)^d=
q_0^{\operatorname{rank} X} \det(\psi_0)$. Since $\det(\varphi)$ is an 
integer and $\det(\psi_0)$ is a root of unity, we must have 
$\det(\varphi)^d=\pm q_0^{\operatorname{rank} X}$ and, hence, 
$\det(\varphi)=\pm q^{\operatorname{rank} X}$. 

(b) Denote by $\varphi_\R$ the canonical extension of $\varphi$ to $X_\R$. 
Then $\varphi_0:=q^{-1} \varphi_\R$ is a linear map such that
$\varphi_0^d= \psi_0$. Hence, $\varphi_\R=q\varphi_0$ where $\varphi_0$
has finite order. 
\end{proof}

Having defined $q$, one may also write $\bG(q)$ instead of $\bG^F$ if
there is no danger of confusion. An alternative characterisation of $q$ 
will be given in Remark~\ref{Morderform1}(a). The defining formula in 
Proposition~\ref{Mdefnq} shows that $q$ is a $d$-th 
root of a prime power. The examples below will show that 
all such roots do actually occur. 

\begin{exmp} \label{Mdiffexmp} This example is just meant to give a simple
illustration of the difference between Steinberg maps and arbitrary isogenies
with a finite fixed point set. Let $q,q'$ be two distinct powers of $p$. Let 
$\bG=\SL_2(k) \times \SL_2(k)$ and define $F\colon \bG\rightarrow \bG$ 
by $F(x,y)=(F_q(x),F_{q'}(y))$ where $F_q$ and $F_{q'}$ denote the standard
Frobenius maps with respect to $q$ and $q'$, respectively. Then $F$ is a
bijective homomorphism of algebraic groups and $\bG^F=\SL_2(q) \times 
\SL_2(q')$ certainly is finite. Let $\bT\cong \bkm$ be the standard 
maximal torus of $\SL_2(k)$. Then $\bT\times \bT$ is an $F$-stable maximal 
torus of $\bG$ and we can identify $X(\bT\times\bT)$ with $\Z^2$. Under 
this identification, the map induced by $F$ is given by $(n,m) \mapsto 
(qn,q'm)$ for all $(n,m)\in \Z^2$. Thus, Proposition~\ref{Mgenfrob}(b) 
shows that $F$ is not a Steinberg map.
\end{exmp}

\begin{exmp} \label{Mquasisplit} 
Assume that $\bG$ is connected reductive. Let $\bT\subseteq \bG$ be a 
maximal torus and $\cR=(X,R,Y,R^\vee)$ be the root datum relative to 
$\bT$. Assume that we have an automorphism $\varphi_0 \colon X \rightarrow 
X$ of finite order such that $\varphi_0(R)=R$ and $\varphi_0^\vee(R^\vee)
=R^\vee$. (In particular, $\varphi_0$ is an isogeny of root data with 
all root exponents equal to~$1$.) Let $q=p^m$ for some $m\geq 1$. Then 
$q\varphi_0$ is a $p$-isogeny and so, by Theorem~\ref{thmiso}, there is a 
corresponding isogeny $F\colon \bG \rightarrow \bG$ such that $F(\bT)=\bT$.
Now $F$ is a Steinberg map by Proposition~\ref{Mgenfrob}; the number $q=p^m$ 
satisfies the conditions in Proposition~\ref{Mdefnq}. If $\bG$ is semisimple,
then $G=\bG^F$ is an untwisted ($\varphi_0=\id_X$) or twisted Chevalley 
group; see Steinberg's lecture notes \cite[\S 11]{St} for further details.
We discuss the various possibilities in more detail in 
Section~\ref{sec:generic}. 

Let us just give one concrete example. Let $\bG=\GL_n(k)$. If $\varphi_0=
\id_X$, then we obtain a Frobenius map $F\colon \GL_n(k)\rightarrow 
\GL_n(k)$ such that $\GL_n(k)^F=\GL_n(q)$, the finite general linear
group over $\F_q$. On the other hand, the bijective isogeny $\varphi_0 
\colon X \rightarrow X$ of order~$2$ defined in Example~\ref{isounitary} 
also satisfies the above conditions. The corresponding isogeny $F'\colon 
\GL_n(k) \rightarrow \GL_n(k)$ is a Steinberg map such that $\GU_n(q):=
\GL_n(k)^{F'}$ is the {\em finite} \nm{general unitary group}.
Similarly, we have $\SL_n(k)^{F}=\SL_n(q)$ and $\SL_n(k)^{F'}=\SU_n(q)$.
\end{exmp}

\begin{exmp} \label{Msuzuki2} Assume that $\bG$ is connected reductive and
that the root datum $\cR=(X,R,Y,R^\vee)$ relative to a maximal 
torus $\bT\subseteq \bG$ is as in Example~\ref{MsuzukiB}, where $p=2$
or $3$. For any $m\geq 0$, we have a $p$-isogeny $\varphi_m$ on $X$ such 
that $\varphi_m^2=p^{2m+1}\,\id_X$. Let $F\colon\bG\rightarrow \bG$ be the 
corresponding isogeny such that $F(\bT)=\bT$. Then 
Proposition~\ref{Mgenfrob} shows that $F$ is a Steinberg map; the number 
$q$ in Proposition~\ref{Mdefnq} is given by $q=\sqrt{p}^{2m+1}$. In these 
cases\footnote{In finite group theory, it is common to write 
${^2\!B}_2(q^2)$ etc., although this is not entirely consistent with the 
general setting of algebraic groups where the notation should be 
${^2\!B}_2(q)$.}, $G=\bG^F$ is the Suzuki group ${^2\!B}_2(q^2)=
{^2\!C}_2(q^2)$, the Ree group ${^2\!G}_2(q^2)$ or the Ree group 
${^2\!F}_4(q^2)$, respectively. See \cite[Chap.~13]{Ca1} or Steinberg's 
lecture notes \cite[\S 11]{St} for further details.
\end{exmp}

\begin{exmp} \label{expdelu1} Let $F\colon \bG\rightarrow \bG$ be the
Frobenius map corresponding to some $\F_{q_0}$-rational structure on $\bG$
where $q_0$ is a power of $p$. Consider the direct product 
$\bG'=\bG\times \cdots \times \bG$ (with $r$ factors) and define a map 
\[F'\colon \bG'\rightarrow \bG', \qquad (g_1,g_2,\ldots,g_r)\mapsto 
(F(g_r),g_1, \ldots,g_{r-1}).\]
Then $F'$ is a homomorphism of algebraic groups and we have $(F')^r(g_1,
\ldots,g_r)=(F(g_1),F(g_2),\ldots,F(g_r))$ for all $g_i \in \bG$. Clearly, 
the latter map is a Frobenius map on $\bG'$. Thus, $F'$ is a Steinberg map.
The number $q$ in Proposition~\ref{Mdefnq} is given by $q=\sqrt[r]{q_0}$.
Note also that we have a group isomorphism
\[ \bG'^{F'} \stackrel{\sim}{\longrightarrow} \bG^F,\qquad (g_1,g_2,
\ldots,g_r) \mapsto g_1.\]
(This example is mentioned in \cite[\S 11]{DeLu}.)
\end{exmp}

We have the following extension of the Isogeny Theorem~\ref{thmiso},
taking into account the presence of Steinberg maps.

\begin{lem} \label{lemcompatiso} In the set-up of Theorem~\ref{thmiso} assume,
in addition, that there are Steinberg maps $F\colon\bG \rightarrow \bG$ and 
$F'\colon\bG' \rightarrow \bG'$ such that $F(\bT)=\bT$, $F'(\bT') =\bT'$ 
and $\Phi\circ \varphi=\varphi \circ \Phi'$ where $\Phi\colon X(\bT) 
\rightarrow X(\bT)$ and $\Phi'\colon X(\bT') \rightarrow X(\bT')$ are the 
maps induced by $F$ and $F'$. Then there exists an isogeny $f\colon \bG 
\rightarrow \bG'$ which maps $\bT$ onto $\bT'$ and induces $\varphi$, and 
such that $f\circ F=F'\circ f$.
\end{lem}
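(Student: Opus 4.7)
The plan is to first invoke the Isogeny Theorem~\ref{thmiso} to get some isogeny $f_0\colon\bG\to\bG'$ mapping $\bT$ onto $\bT'$ and inducing $\varphi$, and then to correct $f_0$ by an inner automorphism coming from $\bT$ so as to force equivariance with respect to $F$ and $F'$. The key observation is that the two compositions
\[
f_1 := F'\circ f_0 \quad\text{and}\quad f_2 := f_0\circ F
\]
are both isogenies $\bG\to\bG'$ mapping $\bT$ onto $\bT'$. Computing the maps they induce on $X(\bT')$, one gets $\varphi\circ\Phi'$ from $f_1$ and $\Phi\circ\varphi$ from $f_2$; these are equal by hypothesis. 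Hence the uniqueness clause of Theorem~\ref{thmiso} supplies an element $t\in\bT$ with
\[
f_0\circ F \;=\; F'\circ f_0\circ \iota_t,
\]
where $\iota_t$ denotes conjugation by $t$ in $\bG$.

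Next, I would look for a correcting element $s\in\bT$ and set $f:=f_0\circ \iota_s$. This $f$ is automatically an isogeny mapping $\bT$ onto $\bT'$ and inducing $\varphi$ (since $\iota_s$ acts trivially on $\bT$). A direct computation using the displayed relation above shows that $f\circ F = F'\circ f$ is equivalent to
\[
f_0(s)^{-1}\,F'(f_0(s)) \;=\; F'(f_0(t))
\]
modulo the center $\bZ(\bG')$; one needs this equation in $\bT'$, and the simplest sufficient condition is to solve it exactly. Writing $u:=f_0(s)\in\bT'$ and $w:=F'(f_0(t))\in\bT'$, the problem reduces to finding $u\in\bT'$ with $u^{-1}F'(u)=w$.

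At this point the main tool is the Lang–Steinberg theorem (Theorem~\ref{langst}): the torus $\bT'$ is connected, and the restriction $F'|_{\bT'}\colon\bT'\to\bT'$ is a Steinberg map (powers of $F'$ are Frobenius maps, and Frobenius maps restrict to closed stable subgroups; cf.\ Remark~\ref{frobclosed}). Hence the Lang map $u\mapsto u^{-1}F'(u)$ on $\bT'$ is surjective, so the desired $u$ exists. Since $f_0|_\bT\colon\bT\to\bT'$ is a surjective isogeny of tori, we may choose $s\in\bT$ with $f_0(s)=u$, and then $f=f_0\circ\iota_s$ has all required properties.

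The main obstacle is really just the bookkeeping in Step~2: one has to be careful that the relation $f_0\circ F=F'\circ f_0\circ\iota_t$ converts into an equation purely inside $\bT'$ (so that Lang–Steinberg is applicable on $\bT'$ rather than some non-connected group), and that after the adjustment by $\iota_s$ the equivariance holds not just modulo the center but on the nose. The computation above shows that choosing $u$ to be an exact solution of $u^{-1}F'(u)=F'(f_0(t))$ (rather than merely a solution modulo $\bZ(\bG')$) is precisely what makes $f\circ F$ and $F'\circ f$ coincide pointwise, since both sides are then seen to act by conjugation by the same element of $\bG'$ on the surjective image $F'(f_0(\bG))=\bG'$.
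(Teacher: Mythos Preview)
Your proposal is correct and follows essentially the same strategy as the paper's proof: obtain an initial isogeny $f_0$ from the Isogeny Theorem, use the uniqueness clause to get an element $t\in\bT$ measuring the failure of equivariance, and then correct by a conjugation determined via Lang--Steinberg applied to $\bT'$. The only cosmetic difference is that you phrase the correction as $f=f_0\circ\iota_s$ on the domain and then lift $u\in\bT'$ back to $s\in\bT$, whereas the paper writes $f(g)=xf_0(g)x^{-1}$ directly with $x\in\bT'$ (your $u$ and the paper's $x$ solve the same Lang equation, since $f_0(F(t))=F'(f_0(t))$ by the defining relation); this extra lifting step is unnecessary but harmless.
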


\begin{proof} By Theorem~\ref{thmiso}, there exists an isogeny $f'\colon \bG
\rightarrow \bG'$ which maps $\bT$ onto $\bT'$ and induces~$\varphi$. Then
$f'\circ F$ and $F'\circ f'$ both induce $\Phi\circ\varphi=\varphi\circ
\Phi'$. Hence, by Theorem~\ref{thmiso}, there exists some $t\in \bT$ such 
that $(F'\circ f')(g)=(f'\circ F)(t^{-1}gt)$ for all $g\in \bG$. Then 
$f'(F(t))\in \bT'$ and so, by Theorem~\ref{langst}, we can write 
$f'(F(t))=x^{-1}F'(x)$ for some $x\in \bT'$. We define $f\colon \bG 
\rightarrow \bG$ by $f(g)=xf'(g)x^{-1}$ for all $g\in \bG$. Then $f$ is 
an isogeny which maps $\bT$ onto $\bT'$ and also induces $\varphi$.  
Furthermore, 
\[(F'\circ f)(g)=F'(x)(f' \circ F)(t^{-1}gt) F'(x)^{-1}=x(f'\circ F)(g) 
x^{-1}=(f\circ F)(g)\]
for all $g\in \bG$, as required.
\end{proof}

\begin{exmp} \label{Mopposite1} Assume that $\bG$ is connected reductive. 
Let $F\colon \bG\rightarrow \bG$ be a Steinberg map and $\bT$ be an 
$F$-stable maximal torus of $\bG$. 

(a) Lemma~\ref{lemcompatiso} immediately shows that an automorphism $\tau
\colon \bG \rightarrow \bG$ as in Example~\ref{Mopposite} can be chosen 
such that we also have $\tau \circ F=F\circ \tau$.

(b) Consider an isogeny $F_p\colon \bG\rightarrow \bG$ as in
Example~\ref{canfrob2} and let $\varphi$ be the map induced on $X=X(\bT)$ 
by $F$. Since $F_p$ is a Steinberg map, Lemma~\ref{lemcompatiso} shows that 
there is an isogeny $F'\colon \bG \rightarrow \bG$ which maps $\bT$ onto 
$\bT$ and induces $\varphi$, and such that $F'\circ F_p=F_p\circ F'$. Since 
$F,F'$ induce the same map on $X$, we have that $F'$ is a Steinberg map
such that $\bG^F \cong \bG^{F'}$; see Lemma~\ref{canfrob12}. (Thus, 
replacing $F$ by $F'$ if necessary, we can always assume that $F\circ F_p=
F_p\circ F$, that is, we are in the setting of \cite[\S 2.1]{LuB}.)
\end{exmp}

\begin{lem} \label{Mgenfrob2} Assume that $\bG$ is connected reductive. 
Let $\bK$ be a closed normal subgroup of $\bG$. Then $\bK$ is reductive and 
$\bar{\bG}:=\bG/\bK$ is connected and reductive. If, furthermore, $F\colon 
\bG \rightarrow \bG$ is a Steinberg map such that $F(\bK)=\bK$, then 
the map $\bar{F}\colon \bar{\bG}\rightarrow \bar{\bG}$ induced by $F$ 
is a Steinberg map.
\end{lem}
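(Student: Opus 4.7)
The plan is to prove the three assertions (reductivity of $\bK$, connectedness and reductivity of $\bar{\bG}$, and the Steinberg property of $\bar{F}$) in turn, with most of the work concentrated in the last.

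First, for $\bK$: by \ref{subsec14a}, $R_u(\bK)$ is the unique maximal closed connected normal unipotent subgroup of $\bK$, so it is characteristic in $\bK$ and in particular stable under all automorphisms. Since $\bK$ is normal in $\bG$, conjugation by any element of $\bG$ restricts to an automorphism of $\bK$, whence $R_u(\bK)$ is also a closed connected normal unipotent subgroup of $\bG$. Reductivity of $\bG$ then forces $R_u(\bK)\subseteq R_u(\bG)=\{1\}$.

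For $\bar{\bG}$: write $\pi\colon \bG\rightarrow \bar{\bG}$ for the quotient morphism (which exists as a morphism of algebraic groups by \ref{subsec1quot}). Then $\bar{\bG}=\pi(\bG)$ is connected as the image of a connected group, and reductivity of $\bar{\bG}$ is immediate from the preservation statement \ref{Mconnredcomp}(b) applied to the surjective homomorphism $\pi$.

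For the Steinberg assertion, the hypothesis $F(\bK)=\bK$ makes $\pi\circ F\colon \bG\rightarrow \bar{\bG}$ constant on left $\bK$-cosets, so by the universal property in \ref{subsec1quot} it factors uniquely as $\bar{F}\circ \pi$ for a morphism of algebraic groups $\bar{F}\colon \bar{\bG}\rightarrow \bar{\bG}$. Using that $F$ is bijective and that $F^{-1}(\bK)=\bK$, one checks directly that $\bar{F}$ is bijective, hence an isogeny of the connected reductive group $\bar{\bG}$. To apply Proposition~\ref{Mgenfrob}, choose an $F$-stable maximal torus $\bT\subseteq \bG$ (which exists by Example~\ref{Mfrobconj}) and set $\bar{\bT}:=\pi(\bT)$; by \ref{Mconnredcomp}(a), $\bar{\bT}$ is a maximal torus of $\bar{\bG}$, and it is visibly $\bar{F}$-stable.

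Surjectivity of $\pi|_{\bT}\colon \bT\rightarrow \bar{\bT}$ together with \ref{subsec17}(b) yields an injection $\iota\colon X(\bar{\bT})\hookrightarrow X(\bT)$, $\mu\mapsto \mu\circ \pi|_{\bT}$, and the relation $\bar{F}\circ \pi=\pi\circ F$ translates into $\iota\circ \bar{\varphi}=\varphi\circ \iota$, where $\varphi$ and $\bar{\varphi}$ denote the maps on $X(\bT)$ and $X(\bar{\bT})$ induced by $F$ and $\bar{F}$. Proposition~\ref{Mgenfrob}(i)$\Rightarrow$(ii) applied to $F$ provides integers $d,m\geq 1$ with $\varphi^d=p^m\,\id_{X(\bT)}$; iterating the intertwining relation and using injectivity of $\iota$ then forces $\bar{\varphi}^d=p^m\,\id_{X(\bar{\bT})}$. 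Proposition~\ref{Mgenfrob}(ii)$\Rightarrow$(i) applied to $\bar{F}$ completes the proof. The only mild obstacle is this last diagram chase, transferring the scalar-multiplication condition from $X(\bT)$ down to $X(\bar{\bT})$; every other ingredient is supplied verbatim by the preceding material.
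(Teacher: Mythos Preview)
Your proof is correct and follows essentially the same route as the paper's: both show $R_u(\bK)$ is normal in $\bG$ to get reductivity of $\bK$, invoke \ref{Mconnredcomp}(b) for reductivity of $\bar{\bG}$, and for the Steinberg property pass to an $F$-stable maximal torus, use the injective map on character groups induced by $\pi$, and transfer the scalar-multiplication condition in Proposition~\ref{Mgenfrob}(ii) from $X(\bT)$ to $X(\bar{\bT})$ via the intertwining relation. The only cosmetic difference is notation.
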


\begin{proof} Since the unipotent radical in a linear algebraic group
is invariant under any automorphism of algebraic groups, it is clear that 
every closed normal subgroup of $\bG$ is reductive. Now consider 
$\bar{\bG}=\bG/\bK$. First recall from \ref{subsec1quot} that $\bar{\bG}$ 
is a linear algebraic group; it is also connected since it is the quotient 
of a connected group. Finally, $\bar{\bG}$ is reductive by 
\ref{Mconnredcomp}(b). 

Now let $F\colon \bG\rightarrow \bG$ be a Steinberg map and assume that 
$F(\bK)\subseteq \bK$. Then we obtain an induced (abstract) group
homomorphism $\bar{F} \colon \bar{\bG} \rightarrow \bar{\bG}$, which is
bijective. By the universal property of quotients, $\bar{F}$ is a 
homomorphism of algebraic groups. Let $\bT\subseteq \bG$ be an $F$-stable
maximal torus. Let $\bar{\bT}$ be the image of $\bT$ in $\bar{\bG}$. By
\ref{Mconnredcomp}(a), $\bar{\bT}$ is a maximal torus of $\bar{\bG}$; we 
also have $\bar{F}(\bar{\bT})=\bar{\bT}$. Let $X=X(\bT)$, $\bar{X}=
X(\bar{\bT})$ and $\varphi \colon \bar{X} \rightarrow X$ be the map 
induced by the canonical map $f\colon \bG \rightarrow \bar{\bG}$; note 
that $\varphi$ is injective. Let $\psi\colon X \rightarrow X$ and 
$\bar{\psi}\colon\bar{X} \rightarrow \bar{X}$ be the maps induced by $F$ 
and $\bar{F}$. Since $\bar{F}\circ f=f\circ F$, we have $\varphi \circ 
\bar{\psi}= \psi \circ \varphi$ and so $\varphi \circ \bar{\psi}^m=\psi^m
\circ \varphi$ for all $m\geq 1$. Now there is some $m\geq 1$ such that 
$\psi^m$ is given by scalar multiplication with a power of~$p$. Since
$\varphi$ is injective, this implies that $\bar{\psi}^m$ is also given by 
scalar multiplication with a power of~$p$. Hence, $\bar{F}$ is a Steinberg 
map by Proposition~\ref{Mgenfrob}. 
\end{proof}

Finally, we address the question of characterising Frobenius maps among 
all Steinberg maps on $\bG$. The results are certainly well-known to the
experts and are contained in more advanced texts on reductive groups (like 
\cite{BoTi}, \cite{Sat71}), where they appear as special cases of  
general considerations of rationality questions. Since in our case the 
rational structures are given by Frobenius maps, one can give more direct 
proofs. The key property is contained in the following result.

\begin{lem} \label{Mgenfrob3b} Let $\bG$ be connected reductive
and $F\colon \bG\rightarrow \bG$ be a Frobenius map with respect to some
$\F_q$-rational structure on $\bG$. Let $\bT$ be an $F$-stable maximal
torus. Then the root exponents of $F$ (relative to $\bT$) are all equal 
to~$q$.
\end{lem}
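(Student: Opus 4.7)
The plan is as follows. Adopt the notation of \ref{Mabs23}, so that $F(u_\alpha(\xi))=u_{\tau(\alpha)}(c_\alpha\xi^{q_\alpha})$ where $\tau\colon R\to R$, $\tau(\alpha)=\alpha^\dagger$, is the attached bijection on roots. By Lemma~\ref{Mgenfrob3a}, applied to the $F$-stable torus $\bT$, the map induced by $F$ on $X=X(\bT)$ is $q\psi_0$ with $\psi_0\in\Aut(X)$ of finite order $e\geq 1$; in particular $F^{e*}=q^e\,\id_X$.

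The crux is the local inequality
\begin{equation*}
q_\alpha\geq q\qquad\text{for every $\alpha\in R$,}
\end{equation*}
which I propose to obtain as follows. Let $A=k[\bG]$, let $x$ be the coordinate on $\bU_\alpha\cong\bkp$, and let $y$ be the coordinate on $\bU_{\alpha^\dagger}$. Lift $y$ to a regular function $f\in A$ (possible since $\bU_{\alpha^\dagger}$ is closed in $\bG$). A direct computation gives $F^*(f)|_{\bU_\alpha}=c_\alpha x^{q_\alpha}$ in $k[x]$. Since $F$ is a Frobenius map, Remark~\ref{MintrinsF} supplies some $g\in A$ with $F^*(f)=g^q$; restricting to $\bU_\alpha$ and writing $h:=g|_{\bU_\alpha}\in k[x]$, one gets $h^q=c_\alpha x^{q_\alpha}$ in $k[x]$. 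In characteristic $p$, the $q$-th power map on $k[x]$ sends $\sum a_ix^i$ to $\sum a_i^qx^{iq}$; comparing, $h$ is forced to be a scalar monomial in $x$ whose degree is $q_\alpha/q$. Hence $q_\alpha/q$ is a non-negative integer, and since $q_\alpha\geq 1$ and $q\geq p\geq 2$, in fact $q_\alpha/q\geq 1$.

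To close the argument, iterate the formula of \ref{Mabs23}: $F^e(u_\alpha(\xi))=u_{\tau^e(\alpha)}(c'\xi^{Q})$ with $Q=\prod_{i=0}^{e-1}q_{\tau^i(\alpha)}$. On the other hand, $F^e$ is also a Frobenius map, and the identity $F^{e*}=q^e\,\id_X$ forces (via the isogeny data attached to $F^e$, using that $R$ is reduced so that $R\cap\Q\alpha=\{\pm\alpha\}$ together with positivity of root exponents) the equalities $\tau^e=\id_R$ and $Q=q^e$. If the $\tau$-orbit of $\alpha$ has size $r\mid e$, this rewrites as $\bigl(\prod_{\beta\in\mathrm{orb}(\alpha)}q_\beta\bigr)^{e/r}=q^e$, i.e., $\prod_{\beta\in\mathrm{orb}(\alpha)}q_\beta=q^r$. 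Since each factor is $\geq q$ by the key inequality and the product of $r$ such factors equals $q^r$, every factor must equal $q$. Hence $q_\alpha=q$ for every $\alpha\in R$.

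The hardest step is the key inequality: one must carefully descend the global Frobenius identity $F^*(A)=A^q$ to a polynomial identity on the coordinate ring of the one-dimensional subvariety $\bU_\alpha$, and use that the extracted $q$-th root in $k[x]$ is again a polynomial to force $q\mid q_\alpha$. Once this divisibility is in hand, the rest is arithmetic.
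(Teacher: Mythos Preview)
Your proof is correct. The restriction argument cleanly gives $q\mid q_\alpha$, and the orbit product identity $\prod_{\beta\in\mathrm{orb}(\alpha)} q_\beta = q^r$ (forced by $\varphi^e=q^e\,\id_X$ together with reducedness of $R$) pins down $q_\alpha=q$.

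The paper takes a more direct route. Rather than extracting a divisibility and then closing it with a global product constraint, it \emph{constructs} the parametrisation $u_{\alpha^\dagger}\colon \bkp\to\bU_{\alpha^\dagger}$ straight from the Frobenius structure: set $u_{\alpha^\dagger}^*:=(F_1^*)^{-1}\circ u_\alpha^*\circ F^*$, where $F_1\colon\bkp\to\bkp$, $\xi\mapsto\xi^q$. Since $u_\alpha^*\colon A\to k[c]$ is surjective and $F^*(A)=A^q$, the composite $u_\alpha^*\circ F^*$ surjects onto $k[c^q]$, and $(F_1^*)^{-1}$ carries this isomorphically to $k[c]$; hence $u_{\alpha^\dagger}$ is an isomorphism onto its image and satisfies $F\circ u_\alpha=u_{\alpha^\dagger}\circ F_1$, i.e.\ $F(u_\alpha(\xi))=u_{\alpha^\dagger}(\xi^q)$. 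This yields $q_\alpha=q$ in one stroke, with no appeal to Lemma~\ref{Mgenfrob3a} or to orbit combinatorics. Your approach, by contrast, separates the local input (the divisibility $q\mid q_\alpha$, coming from the same identity $F^*(A)=A^q$ restricted to one root subgroup) from a global closure via the torus, which makes the logical dependencies transparent but is a step longer.
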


\begin{proof} (Cf.\ \cite[6.2]{BoTi}, \cite[\S II.2.1]{Sat71}.) Let $R$
be the set of roots with respect to $\bT$. Let $\alpha
\in R$ and $u_\alpha\colon \bkp\rightarrow \bG$ be the corresponding
homomorphism with image $\bU_\alpha\subseteq \bG$. We have $F(\bU_\alpha)=
\bU_{\alpha^\dagger}$, where $\alpha^\dagger\in R$; see \ref{Mabs23}. In 
order to identify the root exponents, we need to exhibit a homomorphism 
$u_{\alpha^\dagger}\colon  \bkp\rightarrow \bG$ whose image is 
$\bU_{\alpha^\dagger}$ and such that $u_{\alpha^\dagger}$ is an
isomorphism onto its image. This is done as follows. Let $A$ be the algebra 
of regular functions on $\bG$. The algebra of
regular functions on $\bkp$ is given by the polynomial ring $k[c]$
where $c$ is the identity function on $\bkp$. Since $u_\alpha$ is an
isomorphism onto its image, the induced algebra homomorphism $u_\alpha^*
\colon A\rightarrow k[c]$ is surjective (see \cite[2.2.1]{mybook}). Now
consider the standard Frobenius map $F_1\colon \bkp\rightarrow \bkp$, $\xi 
\mapsto \xi^q$. By Remark~\ref{MintrinsF}, we have $F^*(A)=\{a^q\mid a\in 
A\}$ and $F_1^*(k[c])=k[c^q]$. Hence, the composition $u_\alpha^* \circ F^*$ 
sends $A$ onto $k[c^q]$. Since $F_1^*\colon k[c]\rightarrow k[c^q]$ is an
isomorphism, we obtain an algebra homomorphism $\gamma \colon A \rightarrow
k[c]$ by setting $\gamma:=(F_1^*)^{-1}\circ u_\alpha^* \circ F^*$; note 
that $\gamma$ is surjective. Let $u_{\alpha^\dagger}\colon \bkp\rightarrow 
\bG$ be the morphism of affine varieties such that $u_{\alpha^\dagger}^*=
\gamma$. Then $F\circ u_\alpha= u_{\alpha^\dagger}\circ F_1$ and so 
\[u_{\alpha^\dagger}(\xi^q)=(u_{\alpha^\dagger}\circ F_1)(\xi)=(F\circ 
u_\alpha)(\xi)=F(u_\alpha(\xi)) \qquad \mbox{for all $\xi\in k$}.\]
This shows, first of all, that $u_{\alpha^\dagger}$ is a group homomorphism 
with image $F(\bU_\alpha)=\bU_{\alpha^\dagger}$. Furthermore, since $\gamma$ 
is surjective, $u_{\alpha^\dagger}$ is an isomorphism onto its image (see 
again \cite[2.2.1]{mybook}). For all $t\in \bT$ and $\xi\in k$, we have
\[ F(t)u_{\alpha^\dagger}(\xi^q)F(t)^{-1}=F(tu_\alpha(\xi)t^{-1})=
F(u_\alpha(\alpha(t)\xi))=u_{\alpha^\dagger}(\alpha(t)^q\xi^q),\]
which shows that $\alpha^\dagger(F(t))=\alpha(t)^q$ for all $t\in \bT$, 
as desired.
\end{proof}

In the following result, the proof of the implication ``(iii) $\Rightarrow$ 
(i)'' relies on the fact that $F_p$ as in Example~\ref{canfrob2} is a 
Frobenius map.

\begin{prop} \label{Mgenfrob3} Assume that $\bG$ is connected reductive.
Let $F\colon \bG\rightarrow \bG$ be an isogeny and $\bT\subseteq \bG$ 
be an $F$-stable maximal torus. Let $\varphi$ be the map induced by $F$ 
on $X=X(\bT)$. Then the following conditions are equivalent.
\begin{itemize}
\item[(i)] $F$ is a Frobenius map (corresponding to a rational 
structure on $\bG$ over a finite subfield of $k$).
\item[(ii)] We have $\varphi=p^m\varphi_0$ where $m\in \Z_{\geq 1}$ and 
$\varphi_0 \colon X \rightarrow X$ is an automorphism of finite order
such that $\varphi_0(R)=R$ and $\varphi_0^\vee(R^\vee)=R^\vee$. (In 
particular, $\varphi_0$ is an isogeny of root data with all root exponents
equal to~$1$.)
\item[(iii)] There exists an automorphism of algebraic groups $\gamma\colon 
\bG\rightarrow \bG$ of finite order such that $\gamma(\bT)=\bT$ and some 
$m'\geq 1$ such that $F=\gamma \circ F_p^{m'}=F_p^{m'}\circ \gamma$, where 
$F_p$ is an isogeny as in Example~\ref{canfrob2}.
\end{itemize}
If these conditions hold, then $m$ (as in {\rm (ii)}) equals $m'$ (as in 
{\rm (iii)}) and $F$ is the Frobenius map with respect to an $\F_q$-rational 
structure where $q=p^m$. Furthermore, all root exponents of $F$ are equal 
to~$q$ and $q$ is the number defined in Proposition~\ref{Mdefnq}.
\end{prop}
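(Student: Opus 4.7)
The plan is to prove the cycle $(\mathrm{iii})\Rightarrow(\mathrm{i})\Rightarrow(\mathrm{ii})\Rightarrow(\mathrm{iii})$ and then extract the four supplementary assertions. The implication $(\mathrm{iii})\Rightarrow(\mathrm{i})$ is direct: Example~\ref{canfrob2} tells us $F_p$ is a Frobenius map for an $\F_p$-rational structure on $\bG$, so $F_p^{m'}$ is the Frobenius map for the corresponding $\F_{p^{m'}}$-structure; since $\gamma$ has finite order and commutes with $F_p^{m'}$, Remark~\ref{MintrinsF}(c) gives that $F=\gamma\circ F_p^{m'}$ is a Frobenius map for that same structure. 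For $(\mathrm{i})\Rightarrow(\mathrm{ii})$, Lemma~\ref{Mgenfrob3a} gives $\varphi=q\psi_0$ with $\psi_0\in\Aut(X)$ of finite order, and Lemma~\ref{Mgenfrob3b} shows every root exponent $q_\alpha$ of $F$ equals $q$; comparing $\varphi(\alpha^\dagger)=q\alpha$ with $\varphi=q\psi_0$ yields $\psi_0(\alpha^\dagger)=\alpha$, so $\psi_0$ permutes $R$, and the parallel identity for coroots in Definition~\ref{MdefisogR} gives that $\psi_0^{\vee}$ permutes $R^\vee$. Setting $\varphi_0:=\psi_0$ and $m$ so that $q=p^m$ proves (ii).

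The heart of the argument is $(\mathrm{ii})\Rightarrow(\mathrm{iii})$, and the main obstacle is to lift the abstract finite-order root-datum automorphism $\varphi_0$ to an algebraic automorphism of $\bG$ that is genuinely of finite order (not merely of finite order modulo inner automorphisms by elements of $\bT$). I would first choose a $\varphi_0$-stable base $\Pi$ of $R$---such a base exists because a finite-order automorphism of a root system always stabilises some Weyl chamber---together with the corresponding Borel subgroup $\bB\supseteq\bT$ and, for each $\alpha\in\Pi$, the isomorphism $u_\alpha\colon\bkp\to\bU_\alpha$ from \ref{subsec1roots}. The uniqueness clause of the Isomorphism Theorem (Corollary~\ref{Mremiso1}) produces a unique automorphism $\gamma$ of $\bG$ preserving this pinned data and inducing $\varphi_0$ on $X$; the same uniqueness forces $\gamma^e=\id$, where $e$ is the order of $\varphi_0$. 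Choosing $F_p$ as in Example~\ref{canfrob2} and rescaling the $u_\alpha$ so that $F_p(u_\alpha(\xi))=u_\alpha(\xi^p)$ for every $\alpha\in\Pi$ (with analogous normalisation on the opposite pinning), a direct inspection on $\bT$ and on each simple root subgroup---which together generate $\bG$---gives $\gamma\circ F_p=F_p\circ\gamma$.

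With $\gamma$ and $F_p$ so chosen, $F_p^m\circ\gamma$ stabilises $\bT$ and induces $p^m\varphi_0=\varphi$ on $X$, so Lemma~\ref{canfrob12} provides $y\in\bT$ with $F=\iota_y\circ F_p^m\circ\gamma$ (where $\iota_y$ denotes conjugation by $y$). Since $(F_p^m\circ\gamma)^e=F_p^{me}$ is a Frobenius map of $\bT$, the restriction of $F_p^m\circ\gamma$ to $\bT$ has finite fixed-point set, so Theorem~\ref{langst} yields $z\in\bT$ with $y=z^{-1}(F_p^m\circ\gamma)(z)$. A short manipulation then gives the identity $\iota_y\circ F_p^m\circ\gamma=\iota_z^{-1}\circ F_p^m\circ\gamma\circ\iota_z$, and expanding the right-hand side as $(\iota_z^{-1}\circ F_p\circ\iota_z)^m\circ(\iota_z^{-1}\circ\gamma\circ\iota_z)$ rewrites $F=(F_p')^m\circ\gamma'$ with $F_p':=\iota_z^{-1}\circ F_p\circ\iota_z$ and $\gamma':=\iota_z^{-1}\circ\gamma\circ\iota_z$. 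Because $z\in\bT$ acts trivially on $\bT$ and normalises every root subgroup, $F_p'$ still satisfies the conditions of Example~\ref{canfrob2}; $\gamma'$ is conjugate to $\gamma$ and so still has finite order with $\gamma'(\bT)=\bT$; and the commutation $\gamma\circ F_p=F_p\circ\gamma$ conjugates to $\gamma'\circ F_p'=F_p'\circ\gamma'$, yielding $F=\gamma'\circ(F_p')^m=(F_p')^m\circ\gamma'$ as required by (iii).

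Finally, comparing the maps induced on $X$ by the decomposition in (iii) with the form in (ii) gives $p^{m'}\varphi_0=p^m\varphi_0$, hence $m=m'$; the implication $(\mathrm{iii})\Rightarrow(\mathrm{i})$ then shows that $F$ is a Frobenius map for an $\F_q$-rational structure with $q=p^m$. Lemma~\ref{Mgenfrob3b} gives that all root exponents equal $q$ (alternatively, this follows directly from $\varphi(\alpha^\dagger)=p^m\varphi_0(\alpha^\dagger)=p^m\alpha$). Lastly, the number $q$ of Proposition~\ref{Mdefnq} is characterised by $\det\varphi=\pm q^{\operatorname{rank}X}$, while here $\det\varphi=\det(p^m\varphi_0)=\pm p^{m\cdot\operatorname{rank}X}$ (since $\varphi_0$ has finite order and so $\det\varphi_0=\pm1$), giving $q=p^m$ and completing the proof.
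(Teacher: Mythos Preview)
Your implications $(\mathrm{iii})\Rightarrow(\mathrm{i})$ and $(\mathrm{i})\Rightarrow(\mathrm{ii})$, as well as the supplementary conclusions at the end, are correct and essentially coincide with the paper's treatment. The gap is in $(\mathrm{ii})\Rightarrow(\mathrm{iii})$: the assertion that ``a finite-order automorphism of a root system always stabilises some Weyl chamber'' is false, and the hypothesis of the proposition does not rule out the bad case. Take $\bG=\GL_n(k)$ with $n\geq 3$, let $\bT$ be the diagonal torus, and let $F$ be the Frobenius map $A\mapsto (F_q(A)^{\trp})^{-1}$ giving $\bG^F\cong\GU_n(q)$ (cf.\ Examples~\ref{Mopposite} and~\ref{Mennola1}(b)). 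Then $\bT$ is $F$-stable (though not maximally split) and the induced map on $X$ is $-q\cdot\id_X$, so $\varphi_0=-\id_X$. In type $A_{n-1}$ the map $-\id$ sends every base of $R$ to its negative, hence no base is $\varphi_0$-stable, and your pinning construction cannot even begin.

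The paper sidesteps this by not trying to pin at all. It applies Lemma~\ref{lemcompatiso} with the Steinberg map $F_p^m$ in both slots: since $p^m\cdot\id_X$ commutes with $\varphi_0$, the lemma yields an isogeny $f\colon\bG\to\bG$ with $f(\bT)=\bT$, inducing $\varphi_0$ on $X$, and satisfying $f\circ F_p^m=F_p^m\circ f$. Finite order of $f$ is then obtained indirectly: $f^d$ (with $d$ the order of $\varphi_0$) induces the identity on $X$, so by the uniqueness clause of Theorem~\ref{thmiso} it equals $\iota_t$ for some $t\in\bT$; since every element of $\bT$ has finite order over $\overline{\F}_p$, some power of $f$ is the identity. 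From that point on the Lang--Steinberg conjugation step is exactly the one you wrote. If you wish to salvage the pinning route, you must first decompose $\varphi_0=w\circ\sigma$ with $w\in\bW$ and $\sigma$ base-preserving (as in Remark~\ref{Misosimple}) and then handle the $w$-part separately; but controlling the order of a lift of $w$ reintroduces precisely the difficulty that the paper's use of Lemma~\ref{lemcompatiso} avoids.
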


\begin{proof} ``(i) $\Rightarrow$ (ii)'' Let $F$ be a Frobenius map
corresponding to an $\F_q$-rational structure on $\bG$ where $q=p^m$ for 
some $m\geq 1$. By assumption, $\bT$ is $F$-stable so $\bT$ is also defined
over $\F_q$; see Remark~\ref{frobclosed}. Hence, we can apply 
Lemma~\ref{Mgenfrob3a} and so $\varphi=q\varphi_0$ where $\varphi_0 \colon
X \rightarrow X$ has finite order. Furthermore, using Lemma~\ref{Mgenfrob3b},
one sees that $\varphi_0(\alpha^\dagger)=\alpha$ and $\varphi_0^\vee
(\alpha^\vee)=(\alpha^\dagger)^\vee$ for all $\alpha\in R$. Thus, 
(I1) and (I2) hold for $\varphi_0$, where the root exponents of $\varphi_0$ 
are all equal to~$1$.

``(ii) $\Rightarrow$ (iii)''  Let $F_p\colon \bG\rightarrow \bG$ be as in 
Example~\ref{canfrob2}. Then $F_p^m$ is a Steinberg map 
and the map induced by $F_p^m$ on $X$ is scalar multiplication with $p^m$. 
So, by Lemma~\ref{lemcompatiso}, there exists
an isogeny $f\colon \bG\rightarrow \bG$ which maps $\bT$ onto itself and
induces $\varphi_0$, and such that $f \circ F_p^m=F_p^m\circ f$. 
Now $\varphi_0$ has finite order, say~$d$. Then $f^d$ induces the identity 
on $X$. Hence, by Theorem~\ref{thmiso}, there exists some $t\in \bT$ such 
that $f^d(g)= tgt^{-1}$ for all $g\in\bG$. Since $t$ also has finite order,
we conclude that some positive power of $f^d$ is the identity. Hence, $f$ 
itself has finite order. Now, $F$ and $F':=f\circ F_p^m$ are isogenies 
which induce the same map on $X$. Hence, by Theorem~\ref{thmiso}, there
exists some $y\in \bT$ such that $F'(g)=yF(g)y^{-1}$ for all $g\in \bG$.
As in the proof of Lemma~\ref{canfrob0a}, there exists some $x \in 
\bT$ such that $F'=\iota_x^{-1} \circ F\circ \iota_x$, where $\iota_x$ 
denotes the inner automorphism of $\bG$ defined by $x$. Then 
\[F=\iota_x\circ F' \circ \iota_x^{-1}=(\iota_x \circ f \circ 
\iota_x^{-1}) \circ (\iota_x \circ F_p \circ \iota_x^{-1})^m\]
(and the two factors still commute). Now, since $x \in \bT$, the isogeny
$F_p':=\iota_x \circ F_p \circ \iota_x^{-1}$ also satisfies the conditions 
in Example~\ref{canfrob2}. Furthermore, $\gamma:=\iota_x \circ f \circ 
\iota_x^{-1}$ is an automorphism of finite order such that $\gamma(\bT)=
\bT$. Thus, (c) holds. 

``(iii) $\Rightarrow$ (i)'' As already mentioned in Example~\ref{canfrob2}, 
we can assume that $F_p$ is the Frobenius map corresponding to an 
$\F_p$-rational structure on $\bG$. Then $F_p^m$ is the Frobenius map 
corresponding to an $\F_{q}$-rational structure on $\bG$ where $q=p^m$.
Hence so is $F=\gamma \circ F_p^m$ by Remark~\ref{MintrinsF}(c).

Finally, assume that (i), (ii), (iii) hold. Then the above arguments show 
that $m=m'$. Furthermore, (ii) shows that $\det(\varphi)=\pm 
(p^m)^{\text{rank} X}$ and so $q=p^m$ satisfies the conditions in 
Proposition~\ref{Mdefnq}.
\end{proof}

The following example indicates that Steinberg maps can be much more 
complicated than Frobenius maps.

\begin{exmp} \label{Mstrangexp} (a) In the setting of Example~\ref{expdelu1},
one easily sees that neither the conclusion of Lemma~\ref{Mgenfrob3a} nor
of that of Lemma~\ref{Mgenfrob3b} hold for $F'$.  Hence, although $F$ is
a Frobenius map, the map $F'$ is not.

(b) Let $\bG=\SL_2(k) \times \PGL_2(k)$ and $p=2$. Then $\bG$ is semisimple 
of type $A_1\times A_1$, with Cartan matrix $C=2 I_2$ where $I_2$ denotes 
the identity matrix. The root datum of $\bG$ is determined by the 
factorisation
\[C=\breve{A}\cdot A^{\text{tr}} \quad \mbox{where} \quad  
A=\left(\begin{array}{cc}  2& 0  \\ 0   & 1   \end{array}\right)\quad
\mbox{and}\quad 
\breve{A}=\left(\begin{array}{cc} 1  & 0  \\ 0  & 2   \end{array}\right).\]
For a fixed $m\geq 1$, we define 
\[ P=\left(\begin{array}{cc} 0 & 2^m  \\ 2^m  & 0   \end{array}\right)\quad
\mbox{and}\quad 
P^\circ=\left(\begin{array}{cc} 0 & 2^{m-1}  \\ 2^{m+1} & 0 
\end{array}\right).\]
Then the pair $(P,P^\circ)$ satisfies the conditions in \ref{Mdefmatrixisog}
and so there is a corresponding isogeny $F\colon \bG\rightarrow \bG$, with
root exponents $2^{m+1}$, $2^{m-1}$. Since $P^2=4^mI_2$, we know that $F$ is 
a Steinberg map (see Proposition~\ref{Mgenfrob}). Furthermore, we have $P=2^m
P_0$ where $P_0\in M_2(\Z)$ has order $2$; thus, the conclusion of
Lemma~\ref{Mgenfrob3a} holds for $F$ where $q=2^m$. Note also that the two 
projections (on the first and on the second factor) define isomorphisms of 
finite groups $\bG^F\cong \SL_2(q) \cong \PGL_2(q)$. 

On the other hand, since the root exponents are not all equal, 
Lemma~\ref{Mgenfrob3b} shows that $F$ is not a Frobenius map! One easily 
checks directly that there is no matrix $P_0^\circ$ such that the pair 
$(P_0, P_0^\circ)$ satisfies the conditions in \ref{Mdefmatrixisog}. Thus, 
$P_0$ does not come from an isogeny of root data. 
\end{exmp}

\section{Working with isogenies and root data; examples} 
\label{sec:semisimple}

We now discuss some applications and examples of the theory developed
so far. We start with some basic material about semisimple groups. 
Up until Proposition~\ref{MdirprodG}, $k$ may be any algebraically closed 
field. 

\begin{abs} \label{MlambdaC} Let us fix a Cartan matrix $C=(c_{st})_{s,
t\in S}$. Let $\Lambda(C)$ be the finite abelian group defined in  
Remark~\ref{Mcartanfund}. Then the semisimple algebraic groups with a 
root datum of Cartan type $C$ are classified in terms of subgroups of 
$\Lambda(C)$. This works as follows. We have $\Lambda(C):=\Omega/\Z C$
where $\Omega$ is the free abelian group with basis $\{\omega_s\mid s
\in S\}$ and $\Z C$ is the subgroup generated by $\{\sum_{s\in S}c_{st} 
\omega_s\mid t\in S\}$. Thus, giving a subgroup of $\Lambda(C)$ is the 
same as giving a lattice $L$ such that $\Z C \subseteq L \subseteq \Omega$. 
Such a lattice $L$ is free abelian of the same rank as $\Omega$. We choose a
set of free generators $\{x_s\mid s \in S\}$ of $L$. Since $\Z C\subseteq 
L$, we have unique expressions
\begin{equation*}
\sum_{s\in S}c_{st}\omega_s=\sum_{u\in S} a_{tu}x_u \quad (t\in S),\qquad
\mbox{where} \qquad A=(a_{tu})_{t,u\in S}\tag{$*$}
\end{equation*}
and $A$ is a square matrix with integer coefficients. We also write 
$x_u=\sum_{s
\in S} \breve{a}_{su} \omega_s$ where $\breve{A}=(\breve{a}_{su})_{s,u\in 
S}$ is a square matrix with integer coefficients. Substituting this into 
($*$) and comparing coefficients, we obtain $C=\breve{A}\cdot 
A^{\text{tr}}$. As in \ref{Mcartan1}, such a factorisation of $C$ 
determines a root datum $\cR_L=(X,R,Y,R^\vee)$ of Cartan type $C$, where 
$R$ has a base given by $\alpha_t:=\sum_{s\in S} a_{ts} x_s$ for $t\in S$. 
We have $|X/\Z R|<\infty$ since $A, \breve{A}$ are square matrices. If we 
choose a different set of generators of $L$, say $\{y_t\mid t\in S\}$,
then we obtain another factorisation $C=\breve{B}\cdot B^{\text{tr}}$ 
where $B, \breve{B}$ are square integer matrices. Writing $y_t=\sum_{u\in S}
p_{ut} x_u$ where $P=(p_{ut})_{u, t\in S}$ is invertible over $\Z$, we 
have $PB^{\text{tr}}=A^{\text{tr}}$ and $\breve{B}=\breve{A}P$. Hence, 
the root data defined by $C=\breve{A}\cdot A^{\text{tr}}$ and by $C=
\breve{B}\cdot B^{\text{tr}}$ are isomorphic; see Lemma~\ref{Misorootdat}. 
Thus, every lattice $L$ such that $\Z C \subseteq L \subseteq \Omega$ 
determines a root datum $\cR_L$ as above, which is unique up to isomorphism. 
By Theorem~\ref{thmexi}, there exists a corresponding connected reductive 
algebraic group $\bG_L$ over $k$ (unique up to isomorphism by
Corollary~\ref{Mremiso1}). The group $\bG_L$ is semisimple since 
$|X/\Z R|<\infty$; see Remark~\ref{Msemisimple}.
\end{abs}

\begin{prop} \label{Mssclass} Let $\bG$ be a semisimple algebraic group
over $k$ with root datum $\cR=(X,R,Y,R^\vee)$ (relative to some maximal 
torus of $\bG$). Let $C=(c_{st})_{s,t\in S}$ be the Cartan matrix of $\cR$.
Then there exists a lattice $L$ as in \ref{MlambdaC} such that $\bG\cong 
\bG_L$. We have $X/\Z R \cong L/\Z C$ and, hence, $\bZ(\bG) \cong 
\Hom(L/\Z C,\bkm)$. 
\end{prop}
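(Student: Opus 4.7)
The plan is to realize the desired lattice $L$ as $L=X$ itself, viewed inside the fundamental weight lattice of $R$. Since $\bG$ is semisimple, by Remark~\ref{Msemisimple}(c) both $X/\Z R$ and $Y/\Z R^\vee$ are finite; in particular, fixing a base $\Pi=\{\alpha_s\mid s\in S\}$ of $R$ (so that the Cartan matrix is $C=(c_{st})$ with $c_{st}=\langle\alpha_t,\alpha_s^\vee\rangle$), the simple coroots $\{\alpha_s^\vee\mid s\in S\}$ form a $\Q$-basis of $Y_\Q=\Q\otimes_\Z Y$. I then define fundamental weights $\varpi_s\in X_\Q=\Q\otimes_\Z X$ by $\langle\varpi_s,\alpha_t^\vee\rangle=\delta_{st}$ and set $P:=\bigoplus_{s\in S}\Z\varpi_s\subseteq X_\Q$.

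The inclusion $X\subseteq P$ is immediate: for $\lambda\in X$, each pairing $\langle\lambda,\alpha_s^\vee\rangle$ lies in $\Z$, and $\lambda=\sum_s\langle\lambda,\alpha_s^\vee\rangle\varpi_s$. Also $\Z R\subseteq X$ with $\alpha_t=\sum_s c_{st}\varpi_s$. The map $\Omega\stackrel{\sim}{\to}P$ sending $\omega_s\mapsto\varpi_s$ is a group isomorphism identifying the generator $\sum_s c_{st}\omega_s$ of $\Z C$ with $\alpha_t$, so $\Z C$ is carried onto $\Z R$. Hence, taking $L:=X$ viewed inside $\Omega$ via this identification, I obtain a lattice satisfying $\Z C\subseteq L\subseteq\Omega$ as in \ref{MlambdaC}.

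To prove $\bG\cong\bG_L$, I choose a $\Z$-basis $\{x_s\mid s\in S\}$ of $L=X$ and compare the factorization $C=\breve{A}\cdot A^{\mathrm{tr}}$ produced by \ref{MlambdaC} from this basis with the factorization of $C$ produced by Remark~\ref{Mcorbrulu1} for $\cR$ using the same basis of $X$ and its dual basis of $Y$. On the one hand, $A=(a_{tu})$ records the expansion $\alpha_t=\sum_u a_{tu}x_u$ in both constructions. On the other, $\breve{A}=(\breve{a}_{su})$ records the expansion of $x_u$ in $\{\varpi_s\}$ (from \ref{MlambdaC}, via the identification $\omega_s\leftrightarrow\varpi_s$) versus the expansion of $\alpha_s^\vee$ in the dual basis $\{x_u^*\}$ of $Y$ (from Remark~\ref{Mcorbrulu1}); these two entries both equal $\langle x_u,\alpha_s^\vee\rangle$, because $\{\varpi_s\}$ is $\Q$-dual to $\{\alpha_s^\vee\}$. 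Hence the factorizations coincide, so Lemma~\ref{Misorootdat} (with $P$ and $P^\circ$ both equal to the identity matrix) yields $\cR\cong\cR_L$, and Corollary~\ref{Mremiso1} gives $\bG\cong\bG_L$.

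The remaining assertions are then immediate: the identification $L=X$ sends $\Z C$ to $\Z R$, so $L/\Z C\cong X/\Z R$; combining with Remark~\ref{Msemisimple}(b), $\bZ(\bG)\cong\Hom(X/\Z R,\bkm)\cong\Hom(L/\Z C,\bkm)$. The one point that needs genuine care is the matching of the two $\breve{A}$'s, which relies on the duality $\varpi_s\leftrightarrow\alpha_s^\vee$ built into the definition of the fundamental weights; the rest of the argument is essentially bookkeeping about the embeddings $\Z R\subseteq X\subseteq P\cong\Omega$.
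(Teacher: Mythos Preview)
Your proof is correct and follows essentially the same route as the paper: the paper defines $L$ as the sublattice of $\Omega$ spanned by $x_t=\sum_s\breve{a}_{st}\omega_s$, which is exactly the image of $X$ under your identification $\omega_s\leftrightarrow\varpi_s$ (since $\breve{a}_{st}=\langle x_t,\alpha_s^\vee\rangle$), and then appeals to Corollary~\ref{Mremiso1}. Your version is somewhat more explicit in verifying that the two factorisations $C=\breve{A}\cdot A^{\mathrm{tr}}$ (one from \ref{MlambdaC}, one from Remark~\ref{Mcorbrulu1}) actually coincide, a step the paper leaves implicit.
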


\begin{proof} Let $\Pi$ be a base of $R$; we have $|\Pi|= \mbox{rank}\, X$ 
since $\bG$ is semisimple and, hence, $X/\Z R$ is finite. Also choose
a $\Z$-basis of $X$. By Remark~\ref{Mcorbrulu1}, we have a corresponding
factorisation $C=\breve{A} \cdot A^{\text{tr}}$ where $A$, $\breve{A}$
are {\em square} integral matrices. In particular, we can use $S$ as an 
indexing set for both the rows and the columns of these matrices. Then 
let $L$ be the sublattice of $\Omega$ spanned by the elements
\[x_t:=\sum_{s\in S} \breve{a}_{st} \omega_s\quad (t\in S), \qquad
\mbox{where}\qquad \breve{A}=(\breve{a}_{st})_{s,t\in S}.\]
We have $\Z C\subseteq L\subseteq \Omega$, since $C=\breve{A} \cdot 
A^{\text{tr}}$. Applying the construction in \ref{MlambdaC} to $L$,
we obtain a group $\bG_L$. Then Corollary~\ref{Mremiso1} shows that 
$\bG \cong \bG_L$. Finally, \ref{MlambdaC}($*$) implies that 
$X/R\cong L/\Z C$ and this yields $\bZ(\bG)$; see Remark~\ref{Msemisimple}.
\end{proof}

\begin{exmp} \label{Madjscexmp} Let $C=(c_{st})_{s,t\in S}$ be a Cartan 
matrix and consider the group $\Lambda(C)=\Omega/\Z C$, as above. 
\begin{itemize}
\item[(a)] If $L=\Z C$, then we choose the generators $\{x_s\mid s\in S\}$ 
of $L$ to be the given generators of $\Z C$. So $A$ in \ref{MlambdaC}($*$) 
is the identity matrix and $\breve{A}=C$. Hence, in this case, $\cR_L$ is 
the root datum $\cRad(C)$ in Example~\ref{expadjsc}. The corresponding 
group $\bG_L$ will be denoted by $\bGad$; we have $\bZ(\bGad)=\{1\}$.
\item[(b)] If $L=\Omega$, then we can take $x_s=\omega_s$ for all $s\in S$.
So $A=C^{\text{tr}}$ and $\breve{A}$ is the identity matrix. Hence, in 
this case, $\cR_L$ is the root datum $\cRsc(C)$ in Example~\ref{expadjsc}. 
The corresponding group $\bG_L$ will be denoted by $\bGsc$; we have 
$\bZ(\bGsc)\cong \Hom(\Lambda(C),\bkm)$.
\end{itemize}
The groups  $\bGsc$ and $\bGad$ have some important univeral properties
which will be discussed in further detail below. We shall call $\bGsc$ 
the \nm{semisimple group of simply-connected type} $C$ and $\bGad$ the 
\nm{semisimple group of adjoint type} $C$.
\end{exmp}

\begin{exmp} \label{Mvartypes} Assume that $C$ is an indecomposable
Cartan matrix. The isomorphism types of $\Lambda(C)$ are listed in
Remark~\ref{Mcartanfund}.
\begin{itemize}
\item[(a)] If $C$ is of type $A_{n-1}$, then $\Lambda(C)\cong\Z/n\Z$. 
Hence, for each divisor $d$ of $n$, we have a unique lattice $L_d\subseteq 
\Omega$ such that $|L_d/\Z C|=d$; let $\bG_{(d)}$ be the corresponding 
group. We have $\bG_{(1)}=\bGad\cong \PGL_n(k)$ and $\bG_{(n)}=\bGsc\cong 
\SL_n(k)$; see Examples~\ref{rootdatPGL} and~\ref{rootdatSL}. The 
remaining groups $\bG_{(d)}$ are explicitly constructed in 
\cite[\S 20.3]{Ch05}, as images of $\SL_n(k)$ under certain representations.
\item[(b)] If $C$ is of type $B_n$, $C_n$, $E_6$ or $E_7$, then 
$\Lambda(C)$ is cyclic of prime order. Hence, either $L=\Z C$ or 
$L=\Omega$. So, in this case, the only possible groups are $\bGad$ and 
$\bGsc$. 

In type $B_n$, we have $\bGad\cong \SO_{2n+1}(k)$ and $\bGsc\cong
\Spin_{2n+1}(k)$.

In type $C_n$, we have $\bGad\cong \text{PCSp}_{2n}(k)$ and $\bGsc\cong
\Sp_{2n}(k)$.

\noindent (See the references in \ref{subsecclassic} for the precise 
definitions of these groups.)
\item[(c)] If $C$ is of type $D_n$, then $\Lambda(C)$ has order $4$ and 
there are $3$ (for $n$ odd) or $5$ (for $n$ even) possible lattices $L$. 
We have $\bGad\cong \text{PCO}_{2n}^\circ(k)$ and $\bGsc\cong \Spin_{2n}(k)$.
Using the labelling in Table~\ref{Mdynkintbl}, the group $\SO_{2n}(k)$ 
corresponds to the unique $L$ of index $2$ in $\Omega$ which is 
invariant under the involution of $\Omega$ obtained by exchanging 
$\omega_1$ and $\omega_2$. In terms of our matrix language in 
Section~\ref{sec:rootdata}, the root datum of $\SO_{2n}(k)$ is given
by the factorisation $C=\breve{A}\cdot A^\trp$ where 
\[ A=\breve{A}=\left(\begin{array}{rrrrrrr} 1 & 1 & 0 & \ldots & & 0 \\
-1 & 1 & 0 & 0 & \ldots & 0 \\
0 & -1 & 1 & 0 & \ddots & \vdots \\
\vdots & \ddots & \ddots & \ddots & \ddots & 0\\
0 & \ldots & 0 & -1 & 1 & 0 \\
0 & & \ldots &  0 & -1 & 1 \end{array}\right).\]
(See \cite[Exercise~7.4.7]{Spr}.) If $n$ is even, then there are two 
further lattices of index $2$, which both give rise to the half-spin 
group $\text{HSpin}_{2n}(k)$.
\item[(d)] Finally, if $C$ is of type $G_2$, $F_4$ or $E_8$, then 
$\Lambda(C)=\{0\}$. So, in this case, all semisimple algebraic groups 
over a fixed field $k$ with a root datum of Cartan type $C$ are isomorphic 
to each other; in particular, $\bGsc \cong \bGad$.
\end{itemize}
We refer to \cite{Grove2}, \cite[\S 1.7]{mybook}, \cite[\S 7.4]{Spr}, 
for further details about the various groups of classical type $B_n$, 
$C_n$ and $D_n$.
\end{exmp}

\begin{exmp} \label{isogBC} The Cartan matrices of type $B_n$ and 
$C_n$ are the $n\times n$-matrices given by 
\[\renewcommand{\arraystretch}{1.1} \renewcommand{\arraycolsep}{4pt}
B_n:\;\left(\begin{array}{rrrrrrr} 2 & -2 & 0 & \ldots & & 0 \\
-1 & 2 & -1 & 0 & \ldots & 0 \\
0 & -1 & 2 & -1 & \ddots & \vdots \\
\vdots & \ddots & \ddots & \ddots & \ddots & 0 \\
0 & \ldots & 0 & -1 & 2 & -1 \\
0 & & \ldots &  0 & -1 & 2 \end{array}\right), \quad  C_n:\;
\left(\begin{array}{rrrrrrr} 2 & -1 & 0 & \ldots & & 0 \\
-2 & 2 & -1 & 0 & \ldots & 0 \\
0 & -1 & 2 & -1 & \ddots & \vdots \\
\vdots & \ddots & \ddots & \ddots & \ddots & 0\\
0 & \ldots & 0 & -1 & 2 & -1 \\
0 & & \ldots &  0 & -1 & 2 \end{array}\right),\]
respectively. Let $C$ denote the second matrix, and $C'$ the first. Let 
$P^{\circ}$ be the diagonal matrix of size $n$ with diagonal entries $1,2,2,
\ldots,2$. Then $CP^\circ =P^\circ C'$. Thus, if we also set $P=P^\circ$, 
then the two conditions in \ref{Mdefmatrixisog} are satisfied and so the pair 
$(P,P^\circ)$ defines a $2$-isogeny from $\cR_{\text{sc}}(C)$ to 
$\cR_{\text{sc}}(C')$ (see Example~\ref{expadjsc}). Let 
$k$ be an algebraically closed field of characteristic~$2$. Let $\bGsc$ 
and $\bGsc'$ be the semisimple algebraic groups over $k$ corresponding to 
$\cR_{\text{sc}}(C)$ and $\cR_{\text{sc}}(C')$, respectively. We have
$\bGsc\cong \Spin_{2n+1}(k)$ and $\bGsc'\cong \Sp_{2n}(k)$. Then
Theorem~\ref{thmiso} yields the existence of an isogeny $f\colon 
\bGsc\rightarrow \bGsc'$. This is one of Chevalley's exceptional 
isogenies considered at the end of \cite[\S 23.7]{Ch05}.
\end{exmp}

\begin{exmp} \label{Mcentersc} Let $\bG$ be connected reductive over $k$ 
with root datum $\cR=(X,R,Y,R^\vee)$, relative to some maximal torus 
$\bT$ of $\bG$. Dually to the isomorphism in \ref{subsec17}(c), we have 
a canonical isomorphism (see \cite[\S 3.1]{Ca2})
\[ Y(\bT)\otimes_{\Z} \bkm \stackrel{\sim}{\longrightarrow} \bT,
\qquad \nu\otimes \xi \mapsto \nu(\xi).\]
Hence, if $\{\nu_1,\ldots,\nu_n\}$ is a $\Z$-basis of $Y(\bT)$, then
every element $t\in\bT$ can be written uniquely in the form $t=\nu_1(\xi_1)
\cdots \nu_n(\xi_n)$ where $\xi_1,\ldots,\xi_n\in k^\times$.

Now assume that $\bG$ is semisimple of simply-connected type. Then
$Y(\bT)=\Z R^\vee$. Let $\Pi=\{\alpha_1,\ldots,\alpha_n\}$ be a 
base for $R$ and $\{\alpha_1^\vee,\ldots,\alpha_n^\vee\}$ be the
corresponding base for $R^\vee$. Hence, we have
\[ \bT=\{h(\xi_1,\ldots,\xi_n):=\alpha_1^\vee(\xi_1)\cdots
\alpha_n^\vee(\xi_n)\mid \xi_1,\ldots,\xi_n\in\bkm\}.\]
In this setting, one can now explicitly determine the center of $\bG$
as a subset of $\bT$. Indeed, using Remark~\ref{Msemisimple} and the above 
description of $\bT$, we obtain 
\[\bZ(\bG)=\{h(\xi_1,\ldots,\xi_n)\in \bT\mid \xi_1^{\langle \alpha_j,
\alpha_1^\vee \rangle} \cdots \xi_n^{\langle \alpha_j,\alpha_n^\vee
\rangle}=1 \mbox{ for $1\leq j\leq n$}\}.\]
Now the numbers $c_{ij}=\langle \alpha_j,\alpha_i^\vee\rangle$ ($1\leq 
i,j\leq n$) are just the entries of the Cartan matrix of $\cR$, so
this yields an explicit system of $n$ equations which we need to solve for
$\xi_1,\ldots,\xi_n$. Let us describe this explicitly in all cases, where 
we refer to the labelling of the simple roots in Table~\ref{Mdynkintbl}. 
(This will also be relevant in Section~\ref{sec:regemb}.) 
\begin{itemize}
\item[$A_n$:] $\bG\cong\SL_{n+1}(k)$ and $\bZ(\bG)=\{h(\xi,\xi^2,
\xi^3,\ldots,\xi^n)\mid \xi^{n+1}=1\}$; this is contained in the subtorus 
$\bS:=\{h(\xi, \xi^2,\xi^3,\ldots, \xi^n)\mid \xi\in\bkm\}\subseteq \bT$.
\item[$B_n$:] $\bG\cong\mbox{Spin}_{2n+1}(k)$. If $n\geq 2$ is even, then 
\[ \bZ(\bG)=\{h(1,\xi,1,\xi,1,\xi,1,\ldots)\mid \xi^2=1\}\]
is contained in $\bS:=\{h(1,\xi,1,\xi,1,\xi,1,\ldots)\mid 
\xi\in \bkm\}$. For $n\geq 3$ odd, 
\[ \bZ(\bG)=\{h(\xi,1,\xi,1,\xi,1,\xi,\ldots)\mid \xi^2=1\}\]
is contained in $\bS:=\{h(\xi,1,\xi,1,\xi,1,\xi,\ldots)\mid \xi\in 
\bkm\}$.
\item[$C_n$:] $\bG\cong\mbox{Sp}_{2n}(k)$ and $\bZ(\bG)=\{h(\xi,1,1,1,
\ldots)\mid \xi^2=1\}$; this is contained in $\bS:=\{h(\xi,1,1,1,\ldots)\mid 
\xi\in \bkm\}$.
\item[$D_n$:] $\bG\cong \mbox{Spin}_{2n}(k)$. If $n\geq 4$ is even, then
\[\bZ(\bG)=\{h(\xi_1,\xi_2,1,\xi_1\xi_2,1,\xi_1\xi_2,1,\xi_1\xi_2,\ldots)
\mid \xi_1^2=\xi_2^2=1\}\]
is contained in $\bS:=\{h(\xi_1,\xi_2,1, \xi_1\xi_2,1, 
\xi_1\xi_2,\ldots )\mid \xi_1,\xi_2 \in\bkm\}$. If 
$n\geq 3$ is odd, then 
\[\bZ(\bG)=\{h(\xi,\xi^{-1},\xi^2,1,\xi^2,1,\xi^2,1,\xi^2, \ldots)\mid 
\xi^4=1\}\]
is contained in $\bS:=\{h(\xi,\xi^{-1},\xi^2,1,\xi^2,1,\xi^2,\ldots)
\mid \xi\in\bkm\}$.
\item[$G_2$:] Since $\det(c_{ij})=1$, we have $\bZ(\bG)=\{1\}$; we set 
$\bS:=\{1\}$.
\item[$F_4$:] Since $\det(c_{ij})=1$, we have $\bZ(\bG)=\{1\}$; we set 
$\bS:=\{1\}$.
\item[$E_6$:] $\bZ(\bG)=\{h(\xi,1,\xi^{-1},1,\xi,\xi)\mid \xi^3{=}1\}\subseteq
\bS:=\{h(\xi,1,\xi^{-1},1, \xi,\xi)\mid \xi\in\bkm\}$.
\item[$E_7$:] $\bZ(\bG)=\{h(1,\xi,1,1,\xi,1,\xi)\mid \xi^2{=}1\}\subseteq
\bS:=\{h(1,\xi,1,1, \xi,1,\xi)\mid \xi\in\bkm\}$.
\item[$E_8$:] Since $\det(c_{ij})=1$, we have $\bZ(\bG)=\{1\}$; we set 
$\bS:=\{1\}$.
\end{itemize}
In each case, $\bS$ is a subtorus of $\bT$ such that $\bZ(\bG)
\subseteq \bS$. We have $\dim \bS\leq 1$ except for type $D_n$ with 
$n\geq 4$ even, in which case $\dim \bS=2$. Note that, in order to obtain 
these descriptions, we did not have to rely on explicit realisations of 
groups of classical type as matrix groups: the abstract setting in terms 
of root data has actually been more efficient in this context. 
\end{exmp}

For the construction of isogenies between groups of the same Cartan type, 
the following remarks will be useful. 

\begin{abs} \label{st9160} Let $\bG_1$, $\bG_2$ be connected reductive 
algebraic groups over $k$. For $i=1,2$, let $\cR_i=(X_i,R_i,Y_i,R_i^\vee)$ 
be the corresponding root datum, relative to a maximal torus $\bT_i 
\subseteq \bG_i$. Furthermore, let us choose Borel subgroups $\bB_i
\subseteq \bG_i$ such that $\bT_i \subseteq\bB_i$. By 
Remark~\ref{MrootdatumG2}, this is equivalent to choosing bases $\Pi_i 
\subseteq R_i$. We assume that $X_1,X_2$ have the same rank and that 
$R_1,R_2$ have the same Cartan matrix $C=(c_{st})_{s,t \in S}$. If we 
also choose $\Z$-bases of $X_1$ and $X_2$, then $\cR_1$ and $\cR_2$ 
are determined by factorisations as in Remark~\ref{Mcorbrulu1}:
\[\cR_1:\quad C=\breve{A}_1\cdot A_1^{\text{tr}} \qquad\mbox{and}\qquad 
\cR_2:\quad C=\breve{A}_2\cdot A_2^{\text{tr}},\]
where $A_1$, $A_2$, $\breve{A}_1$, $\breve{A}_2$ are integer matrices, all 
of the same size. Now, this setting gives rise to a correspondence:
\[\left\{\begin{array}{c} \mbox{Isogenies $f\colon \bG_1\rightarrow\bG_2$ 
with} \\ \mbox{$f(\bT_1)=\bT_2$, $f(\bB_1)=\bB_2$}\end{array}\right\}
\quad\leftrightarrow\quad \left\{\begin{array}{c} \mbox{Pairs of integer
matrices $(P,P^\circ)$}\\\mbox{satisfying (MI1), (MI2) in 
\ref{Mdefmatrixisog}} \end{array}\right\}.\]
(Note that, here, the relations in (MI2) read $PA_2^{\text{tr}}=
A_1^{\text{tr}}P^\circ$ and $P^\circ \breve{A}_2=\breve{A}_1P$.)
Indeed, by \ref{Mabs23}, each isogeny of groups on the left determines a 
$p$-isogeny of root data; since $f(\bB_1)=\bB_2$, this $p$-isogeny is
``base-preserving'' as in \ref{Mdefmatrixisog} and, hence, it determines
a unique pair of matrices on the right. Conversely, a pair of matrices on 
the right determines a $p$-isogeny of root data by \ref{Mdefmatrixisog};
by the Isogeny Theorem~\ref{thmiso}, there is a corresponding isogeny 
of groups on the left, which is unique up to inner automorphisms given by 
elements of $\bT_1$. 
\end{abs}

\begin{prop} \label{st916a} Let $\bG$ be semisimple and assume that the 
root datum of~$\bG$ (relative to some maximal torus $\bT$ and some
Borel subgroup $\bB$ containing $\bT$) is of Cartan type~$C$. Let $\bGsc$ 
and $\bGad$ be of Cartan type $C$, as in Example~\ref{Madjscexmp}, relative
to $\tilde{\bT}\subseteq \tilde{\bB}\subseteq \bGsc$ and $\bT'
\subseteq \bB'\subseteq \bGad$. Then there exist central isogenies 
\[ \tilde{f}\colon \bGsc \longrightarrow\bG \qquad \mbox{and}\qquad
f'\colon \bG \longrightarrow \bGad,\]
such that $\tilde{f}(\tilde{\bT})=\bT$, $\tilde{f}(\tilde{\bB})=\bB$,
$f'(\bT)=\bT'$, $f'(\bB)=\bB'$.
\end{prop}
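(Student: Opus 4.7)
The plan is to construct both central isogenies via the matrix formalism of \ref{Mdefmatrixisog}, exploiting the explicit factorisations of the Cartan matrix $C$ that encode the root data of $\bGsc$, $\bGad$, and $\bG$.

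First, I would fix bases. Let $\Pi\subseteq R$, $\tilde\Pi\subseteq \tilde R$, $\Pi'\subseteq R'$ be the bases corresponding to $\bB,\tilde\bB,\bB'$ via Remark~\ref{MrootdatumG2}; all three are indexed by the same set $S$ via the common Cartan matrix $C$. By Proposition~\ref{Mssclass}, the root datum of $\bG$ arises from a factorisation $C=\breve{A}\cdot A^{\text{tr}}$ with $A,\breve{A}$ square integer matrices (both invertible over $\Q$ since $\det C\neq 0$ by the positive-definiteness in Proposition~\ref{Mcartan}(C2)). From Example~\ref{Madjscexmp}, $\bGsc$ corresponds to the factorisation with $\breve{A}_{\operatorname{sc}}=I$, $A_{\operatorname{sc}}^{\text{tr}}=C$, and $\bGad$ to $A_{\operatorname{ad}}=I$, $\breve{A}_{\operatorname{ad}}=C$.

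Next, I would exhibit the matrix pairs defining the isogenies. Using the correspondence \ref{st9160} (with $P^\circ$ the identity matrix in both cases), I take
\[
(P_1,P_1^\circ):=(\breve{A},I) \quad\text{for } \tilde{f},\qquad
(P_2,P_2^\circ):=(A^{\text{tr}},I) \quad\text{for } f'.
\]
The conditions (MI1), (MI2) of \ref{Mdefmatrixisog} are then immediate: for $\tilde{f}\colon\bGsc\to\bG$ one checks $P_1\cdot A^{\text{tr}}=\breve{A}\cdot A^{\text{tr}}=C=C\cdot I=A_{\operatorname{sc}}^{\text{tr}}\cdot P_1^\circ$ and $P_1^\circ\cdot\breve{A}=\breve{A}=I\cdot P_1=\breve{A}_{\operatorname{sc}}\cdot P_1$; and for $f'\colon\bG\to\bGad$ one has $P_2\cdot I=A^{\text{tr}}=A^{\text{tr}}\cdot P_2^\circ$ and $P_2^\circ\cdot C=C=\breve{A}\cdot A^{\text{tr}}=\breve{A}\cdot P_2$. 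Invertibility of $P_1,P_2$ over $\Q$ is guaranteed by $\det(\breve{A})\det(A)=\det C\neq 0$. Since $P_1^\circ=P_2^\circ=I$, all root exponents equal~$1$, so these pairs define $p$-isogenies of root data (for any $p$, in particular for the characteristic exponent of~$k$).

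Then I would invoke the Isogeny Theorem~\ref{thmiso}: each of these base-preserving $p$-isogenies of root data is realised by an isogeny of algebraic groups $\tilde{f}\colon\bGsc\to\bG$ and $f'\colon\bG\to\bGad$, mapping the chosen maximal torus onto the chosen maximal torus. Because the pair $(P,P^\circ)$ respects the chosen bases (this is precisely the content of the base-preserving normalisation in Remark~\ref{Misosimple}), the image of $\tilde\bB=\langle\tilde\bT,\bU_\alpha\mid \alpha\in\tilde R^+\rangle$ under $\tilde{f}$ equals $\langle\bT,\bU_\alpha\mid\alpha\in R^+\rangle=\bB$, and similarly $f'(\bB)=\bB'$. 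The central-isogeny property follows from Remark~\ref{Mcentriso} since all root exponents equal~$1$.

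The only subtle point, and the one I would treat with a little care, is matching Borel subgroups: the Isogeny Theorem only determines the isogeny up to composition with an inner automorphism $\iota_t$ for $t\in\bT$. But since such an inner automorphism fixes both $\bT$ and every Borel subgroup containing $\bT$, the Borel-preservation statement is automatic once the torus-preservation and the base-preserving property of $(P,P^\circ)$ are in place. Beyond that, the proof is a direct verification; there is no genuine obstacle once the matrix identity $C=\breve{A}\cdot A^{\text{tr}}$ from Proposition~\ref{Mssclass} is in hand.
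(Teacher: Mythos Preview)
Your proof is correct and follows essentially the same route as the paper: both use the correspondence in \ref{st9160}, take the matrix pairs $(\breve{A},I)$ and $(A^{\text{tr}},I)$ for the two isogenies, verify (MI1)--(MI2) from the factorisation $C=\breve{A}\cdot A^{\text{tr}}$, and conclude centrality from the root exponents all being~$1$. Your additional remarks on the invertibility of $\breve{A},A$ and on Borel-preservation being automatic under inner automorphisms by elements of $\bT$ are accurate elaborations, but the core argument is identical to the paper's.
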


\begin{proof} First consider $\bGsc$. We place ourselves in the setting of 
\ref{st9160} where $(\bG_1,\bT_1,\bB_1)=(\bGsc,\tilde{\bT},\tilde{\bB})$ 
and $(\bG_2,\bT_2,\bB_2)=(\bG,\bT,\bB)$. The root datum of 
$\bG_1$ is given by a factorisation of $C$ as above where $A_1=
C^{\text{tr}}$ and $\breve{A}_1=I$ (identity matrix). The only extra 
information about the root datum of $\bG_2$ is that, in the factorisation 
$C=\breve{A}_2 \cdot A_2^{\text{tr}}$, both $A_2$, $\breve{A}_2$ are square 
matrices (since $\bG$ is semisimple). In order to find $\tilde{f} \colon 
\bG_1\rightarrow \bG_2$, we need to specify a pair of (square) integral 
matrices $(\tilde{P}, \tilde{P}^\circ)$ where $\tilde{P}A_2^{\text{tr}}=
C\tilde{P}^{\circ}$, $\tilde{P}^\circ \breve{A}_2 =\tilde{P}$ and 
$\tilde{P}^\circ$ is a monomial matrix whose non-zero entries are powers 
of~$p$. (Note that $\tilde{P}$ will be automatically invertible over $\Q$.)
There is a natural choice for such a pair, namely, $(\tilde{P},
\tilde{P}^\circ):=(\breve{A}_2,I)$. Thus, the correspondence in 
\ref{st9160} yields the existence of $\tilde{f}$. The root exponents of 
$\tilde{f}$ (which are the non-zero entries of $\tilde{P}^\circ=I$) are 
all equal to~$1$, hence $\tilde{f}$ is a central isogeny.

Now consider $\bGad$. We argue as above, where now $(\bG_1,\bT_1,\bB_1)=(\bG,
\bT,\bB)$ and $(\bG_2,\bT_2,\bB_2)=(\bGad,\bT',\bB')$. The root datum of 
$\bG_2$ is given by $C=\breve{A}_2\cdot A_2^\trp$ where $A_2=I$ and 
$\breve{A}_2= C$. We need to specify a pair of (square) integral matrices 
$(P',P'^\circ)$ where $P'=A_1^{\text{tr}} P'^{\circ}$, $P'^\circ C=
\breve{A}_1P'$ and $P'^\circ$ is a monomial matrix
whose non-zero entries are powers of~$p$. Again, there is a natural choice 
for such a pair, namely, $(P',P'^\circ):=(A_1^{\text{tr}}, I)$. As above, 
this yields the existence of $f'$.
\end{proof}

\begin{prop}[Steinberg \protect{\cite[9.16]{St68}}] \label{st916} Let
$\bG$ be semisimple and consider central isogenies $\tilde{f}\colon \bGsc
\rightarrow \bG$ and $f'\colon \bG \rightarrow\bGad$ as in 
Proposition~\ref{st916a}. Assume, furthermore, that $F\colon \bG\rightarrow 
\bG$ is an isogeny. Then the following hold.
\begin{itemize}
\item[(a)] The isogeny $F$ lifts to $\bGsc$; more precisely, there exists 
a unique isogeny $\tilde{F}\colon \bGsc\rightarrow \bGsc$ such that 
$F\circ \tilde{f}=\tilde{f}\circ \tilde{F}$.  
\item[(b)] The isogeny $F$ descends to $\bGad$; more precisely, there is a 
unique isogeny $F' \colon \bGad\rightarrow \bGad$ such that $F' \circ f'=
f'\circ F$. 
\end{itemize}
In both cases, the root exponents of $\tilde{F}$ and of $F'$ are equal to 
those of $F$. If, moreover, $F$ is a Steinberg map, then so are $F'$ and 
$\tilde{F}$.
\end{prop}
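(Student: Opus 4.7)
The plan is to use the matrix description of $p$-isogenies from \ref{Mdefmatrixisog} together with the Isogeny Theorem~\ref{thmiso}. As a preliminary reduction, note that by \ref{Mconnredcomp}(a) the isogeny $F$ sends maximal tori to maximal tori and Borel subgroups to Borel subgroups, so by the conjugacy results (of maximal tori, and, within a given maximal torus, of Borels containing it; see Remark~\ref{MrootdatumG2}) there exists $g\in\bG$ such that $\iota_g\circ F$ preserves a fixed pair $\bT\subseteq\bB$. Writing $g=\tilde{f}(\tilde{g})$ for some $\tilde{g}\in\bGsc$ (possible as $\tilde{f}$ is surjective), one has $\tilde{f}\circ\iota_{\tilde{g}}=\iota_g\circ\tilde{f}$ and $f'\circ\iota_g=\iota_{f'(g)}\circ f'$. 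Hence if we can construct $\tilde{F}_0,F'_0$ compatible with $\iota_g\circ F$ via $\tilde{f}$ and $f'$, then $\tilde{F}:=\iota_{\tilde{g}}^{-1}\circ\tilde{F}_0$ and $F':=\iota_{f'(g)}^{-1}\circ F'_0$ work for $F$. Thus we may assume $F(\bT)=\bT$ and $F(\bB)=\bB$.

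Place ourselves now in the setting of \ref{st9160}. Let $C$ be the common Cartan matrix and $C=\breve{A}\cdot A^\trp$ the factorisation for $\bG$'s root datum; since $\bG$ is semisimple, $A$ and $\breve{A}$ are square, and $\breve{A}$ is invertible over $\Q$ (as $\det C\neq 0$). By the proof of Proposition~\ref{st916a}, $\tilde{f}$ corresponds to the matrix pair $(\breve{A},I)$ and $f'$ to $(A^\trp,I)$, while $F$ corresponds to some $(P,P^\circ)$ satisfying (MI1) together with the (MI2) identities $P\cdot A^\trp=A^\trp\cdot P^\circ$ and $P^\circ\cdot\breve{A}=\breve{A}\cdot P$. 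Chaining these yields the key identity
\[
P^\circ\cdot C \;=\; P^\circ\cdot\breve{A}\cdot A^\trp \;=\; \breve{A}\cdot P\cdot A^\trp \;=\; \breve{A}\cdot A^\trp\cdot P^\circ \;=\; C\cdot P^\circ,
\]
so that $P^\circ$ commutes with $C$.

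For $\bGsc$, whose root datum corresponds to $C=I\cdot C^\trp$, the pair $(\tilde{P}_{\tilde{F}},\tilde{P}_{\tilde{F}}^\circ):=(P^\circ,P^\circ)$ clearly inherits (MI1), and its (MI2) conditions reduce to $P^\circ\cdot C=C\cdot P^\circ$ and the trivial $P^\circ\cdot I=I\cdot P^\circ$. By the correspondence of \ref{st9160} this determines an isogeny $\tilde{F}\colon\bGsc\to\bGsc$ preserving the relevant torus and Borel; the compatibility $\tilde{f}\circ\tilde{F}=F\circ\tilde{f}$ translates, via the composition rule for matrix pairs, to $P^\circ\cdot\breve{A}=\breve{A}\cdot P$ (which is (MI2) for $F$) and $P^\circ=P^\circ$, both automatic. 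An entirely analogous argument with $\bGad$'s factorisation $C=C\cdot I^\trp$ and the same pair $(P^\circ,P^\circ)$ yields $F'$ satisfying $F'\circ f'=f'\circ F$.

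Since the root exponents of an isogeny are the non-zero entries of its $\circ$-matrix, $F$, $\tilde{F}$ and $F'$ all have the same matrix $P^\circ$ and hence the same root exponents. If $F$ is a Steinberg map, Proposition~\ref{Mgenfrob}(ii) gives $d,m\geq 1$ with $P^d=p^m I$; using $P=\breve{A}^{-1}P^\circ\breve{A}$ (from (MI2)) we get $(P^\circ)^d=\breve{A}\,P^d\,\breve{A}^{-1}=p^m I$, and Proposition~\ref{Mgenfrob} again shows that $\tilde{F}$ and $F'$ are Steinberg. For uniqueness: $F'$ is forced by the surjectivity of $f'$; for $\tilde{F}$, any two lifts $\tilde{F}_1,\tilde{F}_2$ give a morphism $\tilde{g}\mapsto\tilde{F}_1(\tilde{g})\tilde{F}_2(\tilde{g})^{-1}$ with values in the central subgroup $\ker(\tilde{f})\subseteq\bZ(\bGsc)$, which is therefore automatically a group homomorphism into a finite abelian group; since $\bGsc$ is semisimple and hence equal to its own derived subgroup, this homomorphism is trivial. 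The main obstacle is purely notational — respecting matrix orientations under composition and carrying the reduction $F(\bT)=\bT,\,F(\bB)=\bB$ consistently through $\tilde{f}$ and $f'$; the heart of the argument is the compact identity $P^\circ C=CP^\circ$.
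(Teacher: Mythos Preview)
Your approach is essentially the paper's: reduce to $F(\bT)=\bT$, $F(\bB)=\bB$ by an inner automorphism, then work with the matrix pairs from \ref{Mdefmatrixisog} and identify $(P^\circ,P^\circ)$ as the data for $\tilde F$ and $F'$. The key identity $P^\circ C=CP^\circ$ is exactly what the paper uses (implicitly). Your uniqueness argument via perfectness of $\bGsc$ is a valid alternative to the paper's connectedness argument.

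There is one real gap, however. You write that ``the compatibility $\tilde f\circ\tilde F=F\circ\tilde f$ translates, via the composition rule for matrix pairs, to $P^\circ\cdot\breve A=\breve A\cdot P$\ldots''. But the correspondence in \ref{st9160} is not a bijection: a matrix pair determines an isogeny only up to conjugation by an element of the source torus. So verifying that $\tilde f\circ\tilde F$ and $F\circ\tilde f$ induce the same $p$-isogeny of root data does \emph{not} by itself give the group-level equality. What it gives (by the uniqueness clause of Theorem~\ref{thmiso}) is that $F\circ\tilde f=(\tilde f\circ\tilde F)\circ\iota_{\tilde t}=\tilde f\circ(\tilde F\circ\iota_{\tilde t})$ for some $\tilde t\in\tilde\bT$; then replacing $\tilde F$ by $\tilde F\circ\iota_{\tilde t}$ (which has the same matrix pair and still preserves $\tilde\bT,\tilde\bB$) yields the exact identity. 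The same adjustment is needed for $F'$. The paper does not spell this out either but instead cites the relevant step in the proof of the Isogeny Theorem (\cite[\S 18.4]{Ch05}, \cite[3.2, 3.3]{St2}); you should either cite that or insert the one-line torus-absorption argument above.

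A minor point: the paper's proof allows $\tilde f,f'$ to be arbitrary central isogenies with the properties of Proposition~\ref{st916a} (so $\tilde P^\circ$, $P'^\circ$ are permutation matrices, not necessarily the identity), obtaining $\tilde Q=\tilde P^\circ P^\circ(\tilde P^\circ)^{-1}$. You assume the specific choices $(\breve A,I)$ and $(A^\trp,I)$ from the proof of Proposition~\ref{st916a}, which simplifies to $\tilde Q=P^\circ$. This is fine if the statement is read as referring to those particular isogenies, but the more general formula is needed if ``as in Proposition~\ref{st916a}'' is interpreted loosely.
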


\begin{proof} First note that $F'$, if it exists, is clearly unique. 
The uniqueness of $\tilde{F}$ (if it exists) is shown as follows. Let 
$F_1\colon \bGsc\rightarrow \bGsc$ be another isogeny such that $F\circ 
\tilde{f}=\tilde{f}\circ F_1$. Then the map sending $g\in \bGsc$ to 
$\tilde{F}(g)F_1(g)^{-1}$ is a homomorphism of algebraic groups from 
$\bGsc$ to the center of $\bGsc$. Since $\bGsc$ is connected and the 
center of $\bGsc$ is finite, that map must be constant and so $\tilde{F}(g)
F_1(g)^{-1}=1$ for all $g\in \bGsc$. We now turn to the problem of showing
the existence of $\tilde{F}$ and $F'$.

Let $\bT\subseteq \bB\subseteq \bG$, $\tilde{\bT}\subseteq \tilde{\bB}
\subseteq \bGsc$, $\bT'\subseteq \bB'\subseteq \bGad$ be as in 
Proposition~\ref{st916a}. We consider the corresponding root data of $\bG$, 
$\bGsc$, $\bGad$, and write $X=X(\bT)$, $\tilde{X}=X(\tilde{\bT})$, $X'=
X(\bT')$. Then $\tilde{f}$ induces a $p$-isogeny $\tilde{\varphi}\colon 
X\rightarrow \tilde{X}$ and $f'$ induces a $p$-isogeny $\varphi'\colon X'
\rightarrow X$. Thus, we are in the setting of \ref{st9160}. Now consider 
the isogeny $F\colon \bG \rightarrow \bG$. As already pointed out in the 
proof of \cite[9.16]{St68}, one easily sees that (a) and (b) hold for $F$ 
if and only if (a) and (b) hold for $\iota_g\circ F$, where $\iota_g$ is 
an inner automorphism of $\bG$ (for any $g\in\bG$). Hence, using 
Theorem~\ref{MrootdatumG1} and replacing $F$ by $\iota_g\circ F$ for a 
suitable $g$, we may assume without loss of generality that $F(\bT)=\bT$ 
and $F(\bB)=\bB$. Then $F$ induces a $p$-isogeny $\Phi\colon X\rightarrow 
X$ and, again, we are in the setting of \ref{st9160}. Now, if we can show 
that there exist $p$-isogenies $\tilde{\Phi} \colon \tilde{X} \rightarrow 
\tilde{X}$ and $\Phi'\colon X'\rightarrow X'$ such that 
\[\tilde{\varphi} \circ \Phi= \tilde{\Phi} \circ \tilde{\varphi}\qquad
\mbox{and}\qquad \varphi'\circ \Phi'=\Phi\circ \varphi',\]
then the existence of $\tilde{F}$ and $F'$ follows from a general result 
about isogenies, which can already be found in \cite[\S 18.4]{Ch05} and 
which is a step in the proof of the Isogeny Theorem~\ref{thmiso} (see also 
\cite[3.2, 3.3]{St2}). In order to see how $\tilde{\Phi}$ and $\Phi'$ can 
be constructed, we use the correspondence in \ref{st9160} to describe 
everything in terms of pairs of square integral matrices. (The following
part of the proof is somewhat different from the original proof in
\cite{St68}.)

Let $(\tilde{P}, \tilde{P}^\circ)$ and $(P',P'^\circ)$ be the pairs of 
matrices corresponding to $\tilde{\varphi}$ and $\varphi'$, respectively. 
Furthermore, let $(Q, Q^\circ)$ be the pair of matrices corresponding 
to~$\Phi$. Now recall that the root data of $\bGsc$ and $\bGad$ are given 
by the factorisations $C=I \cdot (C^{\text{tr}})^{\text{tr}}$ and $C=C
\cdot I^{\text{tr}}$, respectively. Let $A$, $\breve{A}$ be square integral 
matrices such that the root datum of $\bG$ is given by the factorisation
$C=\breve{A}\cdot A^{\text{tr}}$. Then the conditions in 
\ref{Mdefmatrixisog} imply that 
\[\tilde{P}=\tilde{P}^\circ\breve{A}, \qquad P'=A^{\text{tr}}P'^\circ,
\qquad QA^{\text{tr}}=A^{\text{tr}}Q^\circ, \qquad 
Q^\circ \breve{A}=\breve{A}Q,\]
where $\tilde{P}^\circ$ and $P'^\circ$ are monomial matrices all of whose
non-zero entries are equal to~$1$ (since $\tilde{f}$ and $f'$ are central). 

Assume first that $\tilde{\Phi}$ exists and let $(\tilde{Q},\tilde{Q}^\circ)$ 
be the corresponding pair of matrices. Since the root datum of $\bGsc$ is 
given by the factorisation $C=I \cdot (C^{\text{tr}})^{\text{tr}}$, the
conditions in \ref{Mdefmatrixisog} imply that $\tilde{Q}=\tilde{Q}^\circ$. 
Since $\tilde{\varphi}\circ\Phi=\tilde{\Phi}\circ\tilde{\varphi}$, we must
have $\tilde{P}Q= \tilde{Q}\tilde{P}$. Using the above relations, we deduce 
that 
\[\tilde{Q}=\tilde{Q}^\circ=\tilde{P}^\circ Q^\circ (\tilde{P}^\circ)^{-1}.\] 
Thus, if $\tilde{\Phi}$ exists, then $(\tilde{Q}, \tilde{Q}^\circ)$ is
determined by $Q^\circ$ and $\tilde{P}^\circ$; in particular, the 
root exponents of $\tilde{\Phi}$ are equal to those of $\Phi$. Conversely, 
it is straightforwad to check that the map $\tilde{\Phi} \colon \tilde{X} 
\rightarrow\tilde{X}$ defined by the matrix $\tilde{Q}$ given by the above 
formula has the required properties. Thus, (a) is proved.

Similarly, assume first that $\Phi'$ exists and let $(Q',Q'^\circ)$ 
be the corresponding pair of matrices. Then one deduces that 
\[Q'=Q'^\circ=(P'^\circ)^{-1} Q^\circ P'^\circ.\] 
Conversely, one checks that the map $\Phi' \colon X' \rightarrow X'$ 
defined by the matrix $Q'$ given by the above formula has the required 
properties. Thus, (b) is proved.

Finally, assume that $F$ is a Steinberg map. Then, using the characterisation
in Proposition~\ref{Mgenfrob}, one easily sees that $F'$ and $\tilde{F}$
are also Steinberg maps. 
\end{proof}


\begin{prop}[Cf.\ \protect{\cite[p.~46]{St}}] \label{MdirprodG} Let $C$ be a 
Cartan matrix and $\bGsc$, $\bGad$ be as in Example~\ref{Madjscexmp}. Assume 
that $C$ is a block diagonal matrix, with diagonal blocks $C_1,\ldots,C_n$.
Then we have
\[ \bGsc=\tilde{\bG}_1 \times \ldots \times \tilde{\bG}_n
\qquad \mbox{and} \qquad \bGad=\bG_1'\times \ldots \times \bG_n',\]
where $\tilde{\bG}_i\subseteq \bGsc$ and $\bG_i'\subseteq \bGad$ are the 
normal subgroups corresponding to the various diagonal blocks $C_i$, as in 
Remark~\ref{MdirprodG1}. For each $i$, the group $\tilde{\bG}_i$ is simple
of simply-connected type $C_i$ and $\bG_i'$ is simple of adjoint type $C_i$.
\end{prop}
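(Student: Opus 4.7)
The plan is to reduce everything to the compatibility between direct products of root data (Example~\ref{MdirprodR}) and direct products of algebraic groups (Example~\ref{MdirprodG2}), and then to use the uniqueness part of the classification (Corollary~\ref{Mremiso1}).

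First I would unpack the combinatorics. Writing $C=\breve{A}\cdot A^{\text{tr}}$ with $A=C^{\text{tr}}$ and $\breve{A}=I$ (the standard simply-connected factorisation from Example~\ref{expadjsc}), the block-diagonal shape of $C$ forces $A$ and $\breve{A}$ to be block-diagonal with blocks $A_i=C_i^{\text{tr}}$ and $\breve{A}_i=I_i$. By the matrix description at the end of Example~\ref{MdirprodR}, this shows
\[
\cR_{\operatorname{sc}}(C)\;\cong\;\cR_{\operatorname{sc}}(C_1)\oplus\cdots\oplus\cR_{\operatorname{sc}}(C_n).
\]
The analogous argument with $A=I$ and $\breve{A}=C$ yields $\cR_{\operatorname{ad}}(C)\cong\cR_{\operatorname{ad}}(C_1)\oplus\cdots\oplus\cR_{\operatorname{ad}}(C_n)$.

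Next I would pass from root data to groups. For each $i$, let $\tilde{\bG}_i$ be a semisimple group with root datum $\cR_{\operatorname{sc}}(C_i)$ (which exists by Theorem~\ref{thmexi}), and set $\bH:=\tilde{\bG}_1\times\cdots\times\tilde{\bG}_n$. By Example~\ref{MdirprodG2}, the root datum of $\bH$ (relative to the product of maximal tori) is the direct sum of the $\cR_{\operatorname{sc}}(C_i)$, hence is isomorphic to $\cR_{\operatorname{sc}}(C)$ by the first paragraph. Corollary~\ref{Mremiso1} then supplies an isomorphism $\bH\stackrel{\sim}{\to}\bGsc$, under which the factors $\tilde{\bG}_i$ become closed normal subgroups of $\bGsc$. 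The argument for $\bGad$ is identical with $\bG_i'$ of adjoint type $C_i$.

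Then I would identify these factors with the subgroups of Remark~\ref{MdirprodG1}. Fix a base $\Pi$ of the root system $R$ of $\bGsc$, split as $\Pi=\Pi_1\sqcup\cdots\sqcup\Pi_n$ according to the blocks $C_i$, so $R=R_1\sqcup\cdots\sqcup R_n$. Under the isomorphism $\bH\cong\bGsc$, each root subgroup $\bU_\alpha$ with $\alpha\in R_i$ sits inside the $i$-th factor $\tilde{\bG}_i$ (because the embedding $\cR_{\operatorname{sc}}(C_i)\hookrightarrow\cR_{\operatorname{sc}}(C)$ identifies $R_i$ with the roots coming from the $i$-th factor). Since $\tilde{\bG}_i$ is generated by its root subgroups (by the first displayed formula in Remark~\ref{MdirprodG1}, applied inside $\tilde{\bG}_i$), we obtain $\tilde{\bG}_i=\langle\bU_\alpha\mid\alpha\in R_i\rangle$, matching the description in Remark~\ref{MdirprodG1}. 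The same argument works for $\bGad$.

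Finally, I need simplicity of each $\tilde{\bG}_i$ and $\bG_i'$. Since $C_i$ is indecomposable, its Dynkin diagram is connected. Applying the product decomposition of \ref{subsec15} (or the refined statement in Remark~\ref{MdirprodG1}) to the semisimple group $\tilde{\bG}_i$ and using that its root system $R_i$ is itself indecomposable, we find that $\tilde{\bG}_i$ has only one simple factor, namely itself; hence $\tilde{\bG}_i$ is simple. The same applies to $\bG_i'$. The simply-connected (resp.\ adjoint) type of each factor is immediate from its root datum $\cR_{\operatorname{sc}}(C_i)$ (resp.\ $\cR_{\operatorname{ad}}(C_i)$), by the definition recalled in Example~\ref{expadjsc}.

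The main technical point I expect to require care is the identification of the product decomposition coming from the root-datum splitting with the intrinsic decomposition of Remark~\ref{MdirprodG1}; everything else is essentially bookkeeping with the matrix formalism of Remark~\ref{Mcorbrulu1}.
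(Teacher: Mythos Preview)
Your approach is correct and is essentially an expanded version of the paper's very terse proof. The paper simply observes that the definition of $\cR_{\operatorname{sc}}(C)$ makes it the direct sum of the $\cR_{\operatorname{sc}}(C_i)$ and then says the assertion ``immediately follows from Remark~\ref{MdirprodG1}'' (and analogously for the adjoint case). You spell out what is packed into that ``immediately'': the passage via Example~\ref{MdirprodG2} and Corollary~\ref{Mremiso1} to realise $\bGsc$ as an external direct product, and then the identification of the external factors with the internal subgroups $\langle \bU_\alpha\mid\alpha\in R_i\rangle$ of Remark~\ref{MdirprodG1}. Your extra paragraph on simplicity (using indecomposability of the $C_i$, which is implicit in the reference to Remark~\ref{MdirprodG1}) is also a point the paper leaves to the reader. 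So there is no genuine difference in strategy, only in level of detail.
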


\begin{proof} The definition of $\cRsc(C)$ shows that this root datum is
the direct sum of the root data $\cRsc(C_1),\ldots,\cRsc(C_n)$. Hence,
the assertion concerning $\bGsc$ immediately follows from 
Remark~\ref{MdirprodG1}. The argument for $\bGad$ is analogous.
\end{proof}

\begin{abs} \label{Mssquot} We shall assume from now on that 
$\bG$ is connected reductive over $k=\overline{\F}_p$ (where $p$ is a 
prime number) and $F\colon \bG\rightarrow \bG$ is a Steinberg map.
Let $\bZ$ be the center of $\bG$ and $\bGder$ be the derived 
subgroup of $\bG$. Clearly, we have $F(\bZ)=\bZ$ and $F(\bGder)=\bGder$. 
Since $\bG= \bZ^\circ.\bGder$ and $\bZ^\circ\cap \bGder$ is finite, 
we obtain isogenies
\[\renewcommand{\arraystretch}{1.2} \begin{array}{ccc}
\bZ^\circ \times  \bGder &\longrightarrow &\bG\\
 (z,g) &\mapsto &zg\end{array}\qquad\mbox{ and }\qquad
\begin{array}{ccc}
\bG &\longrightarrow & \bG/\bGder\times \bG/\bZ^\circ \\ 
g&\mapsto &(g\bGder,\;g\bZ^\circ).\end{array}\]
(Note that these are maps between groups of the same dimension; the
first map is clearly surjective and, hence, has a finite kernel; the
second map has a finite kernel and, hence, is surjective.) Recall from 
\ref{subsec15} that $\bGder$ is semisimple. The group $\bG/\bGder$ is a 
torus. (For, it is connected, abelian and consists of elements of order 
prime to $p$; see \ref{subsec14}.) Furthermore, $\bGss:=\bG/\bZ^\circ$ is 
reductive (see Lemma~\ref{Mgenfrob2}) with a finite center and, hence, is 
semisimple. Using also the isogenies in Proposition~\ref{st916a}, we obtain 
isogenies
\[\bZ^\circ \times (\bGder)_{\text{sc}}\;\longrightarrow\; \bG\;
\longrightarrow\; \bG/\bGder\times (\bGss)_{\text{ad}}.\]
Now, by Lemma~\ref{Mgenfrob2}, we have induced Steinberg maps on $\bGss$ 
and on $\bG/\bGder$. By Proposition~\ref{st916}, there are also induced 
Steinberg maps on $(\bGder)_{\text{sc}}$ and on $(\bGss)_{\text{ad}}$. 
Since all these maps are induced and uniquely determined by $F$, we will 
now simplify our notation and denote all these induced maps by $F$ as 
well. Using Proposition~\ref{Msameorder}(c), we conclude that
$|\bGder^F|=|(\bGder)_{\text{sc}}^F|$, $|\bGss^F|=|(\bGss)_{\text{ad}}^F|$
and 
\[ |\bG^F|= |(\bZ^\circ)^F| |\bGder^F|=|(\bG/\bGder)^F||\bGss^F|.\]
Also note that the natural map $\bG\rightarrow \bGss$ induces a surjective
map $\bG^F\rightarrow \bGss^F$ (since the kernel of $\bG\rightarrow
\bGss$ is connected).
\end{abs}

\begin{rem} \label{Madjquot} By a slight abuse of notation, we shall denote 
$(\bGss)_{\text{ad}}$ simply by $\bGad$. Thus, as above, we obtain a central
isogeny $\bGss\rightarrow \bGad$ which commutes with the action 
of $F$ on both sides. Composing this isogeny with the natural map $\bG
\rightarrow \bGss= \bG/\bZ^\circ$, we obtain a surjective, central 
isotypy
\[\pi_{\text{ad}}\colon \bG\rightarrow \bGad\qquad \mbox{with} \qquad 
\ker(\pi_{\text{ad}})=\bZ,\]
which commutes with the action of $F$ on both sides and which we call an 
\nm{adjoint quotient} of $\bG$. We certainly have an inclusion 
$\pi_{\text{ad}}(\bG^F) \subseteq \bGad^F$ but, in general, equality 
will not hold. By Proposition~\ref{Msameorder}, $\pi_{\text{ad}}(\bG^F)$ 
is a normal subgroup of $\bGad^F$ and we have an isomorphism 
\[ \bGad^F/\pi_{\text{ad}}(\bG^F)\cong \bZ/\cL(\bZ),\]
where $\cL\colon \bG\rightarrow \bG$, $g\mapsto g^{-1}F(g)$. One easily 
sees that $\bZ/\cL(\bZ)=(\bZ/\bZ^\circ)_F$, where the subscript $F$ 
denotes ``$F$-coinvariants'', that is, the largest quotient on which $F$ 
acts trivially. The above isomorphism is explicitly obtained by sending 
$g\in \bGad^F$ to $\dot{g}^{-1}F(\dot{g}) \in \bZ$ where $\dot{g} \in 
\bG$ is any element satisfying $\pi_{\text{ad}}(\dot{g})=g$. 

Also note that each $g\in\bGad^F$ defines an automorphism $\alpha_g\colon
\bG^F \rightarrow \bG^F$, $g_1\mapsto \dot{g} g_1\dot{g}^{-1}$ (where, as
above, $\dot{g} \in \bG$ is such that $\pi_{\text{ad}}(\dot{g})=g$; the 
map $\alpha_g$ obviously does not depend on the choice of~$\dot{g}$). In 
this way, we obtain a group homomorphism 
\[ \bGad^F \rightarrow \mbox{Aut}(\bG^F), \qquad g\mapsto \alpha_g.\]
The automorphisms $\alpha_g$ are called \nm{diagonal automomorphisms} of
$\bG^F$.
\end{rem}


\begin{rem} \label{Msccover} Again, by a slight abuse of notation,
we shall denote $(\bGder)_{\text{sc}}$ simply by $\bGsc$. Thus, as above,
we obtain a central isotypy
\[ \pi_{\text{sc}}\colon \bGsc \rightarrow \bG \qquad \mbox{with} 
\qquad \pi_{\text{sc}}(\bGsc)=\bGder,\]
which commutes with the action of $F$ on both sides and which we call 
a \nm{simply-connected covering} of the derived subgroup of $\bG$. 
We certainly have $\pi_{\text{sc}}(\bGsc^F) \subseteq \bGder^F$ but, again, 
equality will not hold in general. In fact, we have:
\begin{itemize}
\item[(a)] $\pi_{\text{sc}}(\bGsc^F)=\langle u\in \bG^F\mid 
u\mbox{ unipotent} \rangle\subseteq \bGder^F$ (\cite[12.6]{St68}),
and 
\item[(b)] $\bG^F/\pi_{\text{sc}} (\bGsc^F)$ is abelian of order prime 
to $p$ (\cite[1.23]{DeLu}).
\end{itemize} 
Note that $\pi_{\text{sc}}(\bGsc^F)$ is a characteristic subgroup of 
$\bG^F$; furthermore, we have the inclusions $[\bG^F,\bG^F] \subseteq 
\pi_{\text{sc}} (\bGsc^F)\subseteq \bGder^F$ but each of these may be 
strict, as can be seen already in the example where $\bG=\bGder=
\mbox{PGL}_2(k)$. 

Following \cite[p.~45]{St}, \cite[9.1]{St68}, we 
call $\ker(\pi_{\text{sc}})$ the \nm{fundamental group} of~$\bG$. For 
example, if $\bG$ is simple of simply-connected type, then the fundamental 
group of $\bG$ is trivial (but the converse is not necessarily true). If 
$\bG$ is simple of adjoint type, with corresponding Cartan matrix $C$, 
then $\ker(\pi_{\text{sc}}) \cong \Hom(\Lambda(C), \bkm)$ where 
$\Lambda(C)$ is the fundamental group of $C$; see 
Remark~\ref{Mcartanfund} and Example~\ref{Madjscexmp}.
\end{rem}

\begin{abs} \label{MdirprodG3} We keep the above notation. As already stated
in \ref{subsec15}, we have $\bGder=\bG_1 \ldots \bG_n$ where $\bG_1,
\ldots,\bG_n$ are the closed normal simple subgroups of $\bG$. (They
elementwise commute with each other.) Now, one complication of the theory 
arises from the fact that, in general, this product decomposition
is not stable under the Steinberg map $F\colon\bGder \rightarrow \bGder$. 
What happens is the following. Consider the set-up in 
Remark~\ref{MdirprodG1}, with partitions $\Pi=\Pi_1\sqcup
\ldots \sqcup\Pi_n$ and $R=R_1 \sqcup\ldots \sqcup R_n$; then 
$\bG_i=\langle \bU_\alpha \mid \alpha\in R_i\rangle$ for $i=1,\ldots,n$. 
Now, $F$ will permute the simple 
subgroups $\bG_i$ and, correspondingly, the permutation $\alpha \mapsto 
\alpha^\dagger$ of $R$ (induced by $F$) will permute the subsets $R_i$. 
Hence, there is an induced permutation $\rho$ of $\{1,\ldots,n\}$ such that, 
for all $i=1,\ldots,n$, we have $R_{\rho(i)} =\{\alpha^\dagger \mid 
\alpha \in R_i\}$ and 
\begin{equation*}
F(\bG_i)=\langle F(\bU_ \alpha)\mid \alpha\in R_i\rangle=
\langle \bU_{\alpha^\dagger} \mid \alpha\in R_i\rangle=\bG_{\rho(i)}.\tag{a}
\end{equation*}
Following \cite[p.~78]{St68}, we say that $\bGder$ is 
\nms{$F$-simple}{F-simple} if $\rho$ is a cylic permutation (it has a 
single orbit). Thus, by grouping together the factors in the various 
$\rho$-orbits on $\{1,\ldots,n\}$, we can write $\bGder$ as a product of 
various $F$-stable and $F$-simple semisimple groups. An analogous statement 
holds for $\bGss$. Indeed, for each~$i$, let $\bar{\bG}_i$ be the image of 
$\bG_i$ under the canonical map $\bG\rightarrow \bGss$. Then we have $\bGss=
\bar{\bG}_1 \ldots \bar{\bG}_n$ and the induced Steinberg map $F\colon \bGss
\rightarrow \bGss$ permutes the factors $\bar{\bG}_i$ according to the
permutation $\rho$. 

Now consider the isogenies in \ref{Mssquot} and the groups $\bGsc$, 
$\bGad$. By Proposition~\ref{MdirprodG}, we have direct product 
decompositions
\begin{equation*}
\bGsc=\tilde{\bG}_1\times \ldots \times \tilde{\bG}_n\qquad
\mbox{and} \qquad \bGad=\bG_1'\times \ldots \times \bG_n'\tag{b}
\end{equation*}
such that, under the isogeny $\bGsc\rightarrow \bGder$, the 
factor $\tilde{\bG}_i$ is mapped to $\bG_i$ and, under the isogeny
$\bGss\rightarrow \bGad$, the factor $\bar{\bG}_i$ is mapped to $\bG_i'$.
By the compatibility of all of the above isogenies with the various
Steinberg maps involved, it follows that
\begin{equation*}
F(\tilde{\bG}_i)=\tilde{\bG}_{\rho(i)} \qquad \mbox{and}
\qquad F(\bG_i')=\bG_{\rho(i)}' \qquad \mbox{for $i=1,\ldots,n$}.\tag{c}
\end{equation*}
The following result deals with $F$-simple semisimple groups. (If $F$
is a Frobenius map with respect to some $\F_q$-rational structure, then
the construction below is related to the operation ``{\em restriction of 
scalars}''; see \cite[\S 11.4]{Spr}.)
\end{abs}

\begin{lem} \label{Fsimpleorder} Assume that $\bG$ is semisimple and
$F$-simple, as defined in \ref{MdirprodG3}. Then $\bG=\bGder=\bG_1\ldots 
\bG_n$ as above. Assume that this product is an abstract direct product. 
Then $F^n(\bG_1)=\bG_1$ and 
\[\iota\colon\bG_1\rightarrow \bG,\qquad g\mapsto gF(g)\cdots F^{n-1}(g),\] 
is an injective homomorphism of algebraic groups which restricts to an 
isomorphism $\bG_1^{F^n}\cong \bG^{F}$. Furthermore, if $q$ is the positive 
real number attached to $(\bG,F)$ (see \ref{MstdcalC}), then $q^n$ is the 
positive real number attached to $(\bG_1,F^n)$.
\end{lem}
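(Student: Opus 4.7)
The plan is to relabel the simple factors so that $F$ acts as the cyclic shift $\bG_i\mapsto \bG_{i+1}$ (with indices taken modulo $n$), which is possible because $\bG$ is $F$-simple by \ref{MdirprodG3}(a). Then $F^n$ stabilises each factor individually; in particular $F^n(\bG_1)=\bG_1$. The map $\iota$ is a morphism of affine varieties, being a composition of the morphisms $F^i$ restricted to $\bG_1$ with multiplication in $\bG$. It is also a group homomorphism: for $g,h\in \bG_1$, the abstract direct product hypothesis makes the factors $\bG_1,\ldots,\bG_n$ commute elementwise, so the terms $F^i(g)\in \bG_{i+1}$ and $F^j(h)\in \bG_{j+1}$ commute whenever $i\neq j$, yielding
\[ \iota(gh)=\prod_{i=0}^{n-1}F^i(g)F^i(h)=\Bigl(\prod_{i=0}^{n-1}F^i(g)\Bigr)\Bigl(\prod_{i=0}^{n-1}F^i(h)\Bigr)=\iota(g)\iota(h).\]
Injectivity is immediate since the $\bG_1$-component of $\iota(g)$ is $g$ itself.

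Next I would show that $\iota$ restricts to an isomorphism $\bG_1^{F^n}\stackrel{\sim}{\longrightarrow}\bG^F$. For $g\in \bG_1^{F^n}$, commutativity of the factors together with $F^n(g)=g$ gives
\[ F(\iota(g))=F(g)F^2(g)\cdots F^{n-1}(g)\cdot F^n(g)=g\,F(g)\cdots F^{n-1}(g)=\iota(g),\]
so $\iota(g)\in \bG^F$. For surjectivity, take $x\in \bG^F$ and use the abstract direct product to write $x=x_1\cdots x_n$ uniquely with $x_i\in \bG_i$. Since $F(x_i)\in \bG_{i+1}$, the uniqueness of the factor decomposition applied to $F(x)=x$ forces $F(x_i)=x_{i+1}$ cyclically; hence $x_i=F^{i-1}(x_1)$ and $F^n(x_1)=x_1$, so $x=\iota(x_1)$ with $x_1\in \bG_1^{F^n}$.

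For the final claim about $q$, choose $d\geq 1$ so that $F^d$ is a Frobenius map on $\bG$ corresponding to some $\F_{q_0}$-rational structure; then $q=q_0^{1/d}$ by Proposition~\ref{Mdefnq}. The $n$-th power $F^{nd}=(F^d)^n$ is itself a Frobenius map on $\bG$, corresponding to the $\F_{q_0^n}$-rational structure. Since $F^{nd}(\bG_1)=\bG_1$, Remark~\ref{frobclosed} shows that the restriction $(F^n)^d=F^{nd}|_{\bG_1}$ is a Frobenius map on $\bG_1$ with respect to an $\F_{q_0^n}$-rational structure. Applying Proposition~\ref{Mdefnq} to $(\bG_1,F^n)$ then yields the attached positive real number $\tilde q$ via $\tilde q^{d}=q_0^n$, that is, $\tilde q=q^n$.

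The main subtlety I expect lies in keeping track of how the abstract direct product hypothesis enters: it supplies both the elementwise commutativity of the $\bG_i$ needed for $\iota$ to be a homomorphism and the uniqueness of the factor decomposition needed to identify $F$-fixed points with $\iota(\bG_1^{F^n})$. Beyond that, everything reduces to bookkeeping combined with the standard properties of Frobenius maps from Remark~\ref{frobclosed} and the characterisation of $q$ in Proposition~\ref{Mdefnq}; no appeal to the Isogeny Theorem or to the root datum is required.
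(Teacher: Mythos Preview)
Your proof is correct and follows essentially the same approach as the paper: relabel so that $F$ cyclically permutes the factors, use elementwise commutativity of the $\bG_i$ to show $\iota$ is a homomorphism, and use uniqueness of the direct-product decomposition to identify $\bG^F$ with $\iota(\bG_1^{F^n})$. Your treatment of the claim about $q$ is more explicit than the paper's, which simply remarks that it follows immediately from the definition in Proposition~\ref{Mdefnq}; your argument via Remark~\ref{frobclosed} is a correct way to spell this out.
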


\begin{proof} By assumption, $F$ cyclically permutes the factors $\bG_i$.
So we can choose the labelling such that $F^i(\bG_1)=\bG_{i+1}$ for 
$i=1,\ldots, n-1$  and $F^n(\bG_1)=\bG_1$. The map $\iota$ clearly
is a morphism of affine varieties, which is injective since the product
is direct. This map is a group homomorphism because the groups $\bG_i$ 
elementwise commute with each other. For the same reason, we have $\iota
(\bG_1^{F^n}) \subseteq \bG^F$. Since we have a direct product, every 
$g\in \bG$ can be written uniquely as $g=g_1\cdots g_n$ with $g_i \in 
\bG_i$. Then $F(g)=g$ if and only if $F^i(g_1)=g_{i+1}$ for $i=1,\ldots,
n-1$ and $F^n(g_1)=g_1$. Thus, $F(g)=g$ if and only if $g=\iota(g_1)$ 
and $F^n(g_1)=g_1$. The statement concerning $q$, $q^n$ immediately 
follows from the definition of these numbers in Proposition~\ref{Mdefnq}.
\end{proof}

\begin{cor} \label{Mdirprodfin} Assume that $\bG$ is connected reductive;
let $\bG=\bZ^\circ.\bGder$ and $\bGder=\bG_1\ldots\bG_n$, as above. Let 
$F\colon \bG\rightarrow\bG$ be a Steinberg map and $I\subseteq \{1,\ldots,
n\}$ be a set of representatives of the $\rho$-orbits on $\{1,\ldots,n\}$, 
with $\rho$ as in \ref{MdirprodG3}(a). For each $i\in I$, let $n_i$ be the 
length of the corresponding $\rho$-orbit. Then we have isomorphisms
(of abstract finite groups)
\[\bGsc^F\cong\prod_{i\in I}\tilde{\bG}_i^{F^{n_i}}
\qquad \mbox{and} \qquad \bGad^F\cong \prod_{i\in I} \bG_i'^{F^{n_i}},\]
where $\bGsc=\tilde{\bG}_1\times \ldots \times \tilde{\bG}_n$ and 
$\bGad=\bG_1'\times \ldots \times \bG_n'$ are as in \ref{MdirprodG3}(b).
\end{cor}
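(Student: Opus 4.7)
The plan is to decompose each of $\bGsc$ and $\bGad$ as an (abstract) direct product of $F$-stable subgroups indexed by the $\rho$-orbits, and then apply Lemma~\ref{Fsimpleorder} separately to each orbit factor. By Proposition~\ref{MdirprodG}, we have the genuine direct product decompositions
\[\bGsc=\tilde{\bG}_1\times \ldots \times \tilde{\bG}_n \qquad \mbox{and}
\qquad \bGad=\bG_1'\times \ldots \times \bG_n',\]
with each $\tilde{\bG}_j$ simple of simply-connected type and each $\bG_j'$ simple of adjoint type. By \ref{MdirprodG3}(c), the induced Steinberg maps on $\bGsc$ and $\bGad$ permute the factors according to $\rho$.

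First I would group the factors by $\rho$-orbit. For each $i\in I$, let $\mathcal{O}_i=\{i,\rho(i),\ldots,\rho^{n_i-1}(i)\}$ and set
\[\tilde{\bH}_i:=\prod_{j\in \mathcal{O}_i} \tilde{\bG}_j\subseteq \bGsc,
\qquad \bH_i':=\prod_{j\in \mathcal{O}_i} \bG_j'\subseteq \bGad.\]
These are closed, $F$-stable subgroups, since the action of $F$ on the indexing set preserves each orbit $\mathcal{O}_i$. Because the ambient decompositions are direct products as algebraic groups, we have the $F$-equivariant direct product decompositions
\[\bGsc=\prod_{i\in I}\tilde{\bH}_i\qquad \mbox{and}\qquad
\bGad=\prod_{i\in I}\bH_i',\]
and, taking $F$-fixed points in each factor separately,
\[\bGsc^F=\prod_{i\in I}\tilde{\bH}_i^F\qquad \mbox{and}\qquad
\bGad^F=\prod_{i\in I}\bH_i'^F.\]

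Next, I would apply Lemma~\ref{Fsimpleorder} to each $(\tilde{\bH}_i,F)$ and each $(\bH_i',F)$. By construction, $\rho$ restricts to a cyclic permutation of $\mathcal{O}_i$, so $\tilde{\bH}_i$ is semisimple and $F$-simple in the sense of \ref{MdirprodG3}, its factorisation into simple components is an abstract direct product, and the cyclic ordering of the factors is controlled by~$F$. Hence the lemma yields an isomorphism of finite groups $\tilde{\bH}_i^F\cong \tilde{\bG}_i^{F^{n_i}}$ via the map $g\mapsto gF(g)\cdots F^{n_i-1}(g)$. The same applies verbatim to $\bH_i'$, giving $\bH_i'^F\cong \bG_i'^{F^{n_i}}$. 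Substituting these isomorphisms into the displays above yields the two claimed isomorphisms.

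There is no real obstacle: the content of the corollary is essentially bookkeeping, with Lemma~\ref{Fsimpleorder} doing the only nontrivial work (translating $F$-simple fixed points into $F^{n_i}$-fixed points of one factor). The one point that deserves attention is verifying that the grouping into $\tilde{\bH}_i$ and $\bH_i'$ preserves the direct-product structure needed by the hypothesis of Lemma~\ref{Fsimpleorder}; this is immediate from Proposition~\ref{MdirprodG}, which furnishes the genuine direct product decomposition at the level of $\bGsc$ and $\bGad$ (in contrast to $\bGder$ itself, where the factors only commute and may intersect non-trivially in the centre).
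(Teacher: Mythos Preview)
Your proof is correct and is essentially a spelled-out version of the paper's one-line argument, which simply says the result is immediate from \ref{MdirprodG3} and Lemma~\ref{Fsimpleorder}. Your explicit grouping into the $F$-stable orbit factors $\tilde{\bH}_i$, $\bH_i'$ and the observation that Proposition~\ref{MdirprodG} supplies the genuine direct product needed for Lemma~\ref{Fsimpleorder} are exactly the points the paper leaves implicit.
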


\begin{proof} This is immediate from \ref{MdirprodG3} and 
Lemma~\ref{Fsimpleorder}. Also recall our identification in \ref{Mssquot} 
of the various Steinberg maps involved.
\end{proof}

See Example~\ref{Mstrangexp} for a good illustration of the above result;
the group $\bG=\SL_2(k)\times \PGL_2(k)$ (with $\mbox{char}(k)=2$) 
considered there is $F$-simple! In particular, the simple 
factors of an $F$-simple semisimple group need not all be 
isomorphic to each other as algebraic groups. (They are 
isomorphic as abstract groups.)


To complete this section, we discuss another fundamental construction
involving the formalism of root data: ``dual groups''.

\begin{defn}[\protect{\cite[5.21]{DeLu}}; see also 
\protect{\cite[\S 4.3]{Ca2}}, \protect{\cite[8.4]{LuB}}] \label{Mdefdual} 
Consider two pairs $(\bG,F)$ and $(\bG^*,F^*)$ where $\bG,\bG^*$ are 
connected reductive and $F\colon \bG\rightarrow \bG$, $F^*\colon \bG^*
\rightarrow \bG^*$ are Steinberg maps. We say that $(\bG,F)$ and $(\bG^*,F^*)$
are in \nm{duality} if there is a \nm{maximally split} torus $\bT_0
\subseteq \bG$ and a {\it maximally split} torus $\bT_0^*\subseteq \bG^*$ 
such that the following conditions hold, where $\cR=(X,R,Y,R^\vee)$ is the 
root datum of $\bG$ (with respect to $\bT_0)$ and $\cR^*=(X^*,R^*,Y^*,
{R^*}^\vee)$ is the root datum of $\bG^*$ (with respect to $\bT_0^*$):
\begin{itemize}
\item[(a)] There is an isomorphism $\delta\colon X \rightarrow Y^*$ such 
that $\delta(R)={R^*}^\vee$ and 
\[\langle \lambda,\alpha^\vee\rangle=\langle \alpha^*,\delta(\lambda)\rangle
\qquad \mbox{for all $\lambda\in X(\bT_0)$ and $\alpha\in R$},\]
where $\alpha^*\in R$ is defined by $\delta(\alpha)={\alpha^*}^\vee$.
\item[(b)] If $\lambda\colon \bT_0\rightarrow \bk^\times$ (an element of $X=
X(\bT_0)$) and $\nu\colon \bk^\times \rightarrow \bT_0^*$ (an element of 
$Y^*=Y(\bT_0^*)$) correspond to each other under~$\delta$, then 
$\lambda\circ F\colon \bT_0 \rightarrow \bk^\times$ and $F^*\circ \nu
\colon \bk^\times \rightarrow \bT_0^*$ also correspond to each other 
under~$\delta$.
\end{itemize}
The relation of being {\it in duality} is symmetric: the above conditions 
on $\delta\colon X\rightarrow Y^*$ are equivalent to analogous conditions 
concerning a map $\varepsilon\colon Y\rightarrow X^*$ obtained by
transposing $\delta$; see \cite[4.2.2, 4.3.1]{Ca2}. In particular, 
$\delta$ defines an isomorphism of root data between $\cR^*$ and the dual 
root datum considered in Lemma~\ref{Mweylfinite1}(b). Thus, connected 
reductive groups in duality have dual root data. Note also that the
dual of $(\bG^*,F^*)$ can be naturally identified with $(\bG,F)$.
\end{defn}

\begin{abs} \label{Mdefdualmat} It may be worthwhile to reformulate the 
above definition in terms of the matrix language of 
Section~\ref{sec:rootdata}; the following discussion will also show that,
for any given pair $(\bG,F)$, there exists a corresponding dual pair 
$(\bG^*,F^*)$.  

So let $\bG$ be connected reductive and $F\colon \bG \rightarrow \bG$ 
be a Steinberg map. Let $\bT_0\subseteq \bG$ be a maximally split torus 
and $\bB_0\subseteq \bG$ be an $F$-stable Borel subgroup such that 
$\bT_0\subseteq \bB_0$. Let $\cR=(X,R,Y,R^\vee)$ be the root datum of 
$\bG$ with respect to $\bT_0$; let $\Pi$ be the base for $R$ determined
by $\bB_0$ (see Remark~\ref{MrootdatumG2}). As in Remark~\ref{Mcorbrulu1},
we choose a $\Z$-basis of $X$. Then $\cR$ determines a factorisation 
\[ C=\breve{A}\cdot A^{\trp}\qquad\mbox{where}\qquad C=\mbox{ Cartan matrix
of $\cR$}.\]
Furthermore, as discussed in \ref{st9160},
the isogeny $F\colon \bG\rightarrow \bG$ determines a pair of integer 
matrices $(P,P^\circ)$ satisfying the conditions (MI1), (MI2) in 
\ref{Mdefmatrixisog}. Now we notice that $C^\trp$ also is a Cartan matrix
(see Remark~\ref{Mcorbrulu1}). Hence, transposing the above matrix
equation, we obtain a factorisation
\[C^\trp=\breve{B}\cdot B^\trp \qquad \mbox{where} \qquad \breve{B}:=A
\quad\mbox{and}\quad B:=\breve{A}.\]
Furthermore, setting $Q:=P^\trp$ and $Q^\circ:=(P^\circ)^\trp$, the
pair $(Q,Q^\circ)$ satisfies the conditions (MI1), (MI2) with respect
to $C^\trp=\breve{B}\cdot B^\trp$. Now the latter factorisation of
$C^\trp$ determines a root datum $\cR^*$ and a base for its root system
(see Lemma~\ref{Mcartan2}). One easily sees that these are independent
of the choice of the $\Z$-basis for $X$. Hence, going the above argument 
backwards, there exists a connected reductive group $\bG^*$ such that $\cR^*$ 
(together with the base of its root system) is isomorphic to the root 
datum of $\bG^*$ with respect to a maximal torus $\bT_0^*\subseteq \bG^*$ 
and a Borel subgroup $\bB_0^*\subseteq \bG^*$ containing $\bT_0^*$. The pair 
of matrices $(Q,Q^\circ)$ determines an isogeny $F^*\colon \bG^*\rightarrow 
\bG^*$ such that $\bT_0^*$ and $\bB_0^*$ are $F$-stable. Since $F$ is a 
Steinberg map, the characterisation in Proposition~\ref{Mgenfrob} 
immediately shows that $F^*$ also is a Steinberg map. The number $q$ in 
Proposition~\ref{Mdefnq} is the same for $F$ and for $F^*$.
\end{abs}

\begin{rem} \label{Mdualtori} Let $\bT,\bT^*$ be tori and assume
that we are given Steinberg maps $F\colon \bT\rightarrow \bT$ and 
$F^*\colon \bT^*\rightarrow \bT^*$. Then $\bT,\bT^*$ are connected
reductive groups (with empty root systems). Hence, $(\bT,F)$ and 
$(\bT^*,F^*)$ are in duality if and only if there exists an isomorphism 
$\delta\colon X(\bT) \rightarrow Y(\bT^*)$ such that condition (b) in 
Definition~\ref{Mdefdual} holds. For future reference we note that, if
$(\bT,F)$ and $(\bT^*,F^*)$ are in duality as above, then there is a
corresponding (non-canonical) isomorphism of abelian groups
\[ {\bT^*}^{F^*} \stackrel{\sim}{\longrightarrow} \Irr(\bT^F),
\qquad s \mapsto \theta_s,\]
where $\Irr(\bT^F)$ denotes the set of complex irreducible characters 
of $\bT^F$ (which is a group under the tensor product of characters, since 
$\bT^F$ is abelian); see \cite[4.4.1]{Ca2}. The construction of the above 
isomorphism depends on some choices, e.g., the choice of an embedding
$\overline{\F}_p^\times \hookrightarrow \C^\times$. (This will be discussed
later in further detail.)
\end{rem}

We shall have to say more about groups in duality in later sections. It
will be useful and important to know how properties of $\bG$ translate 
or connect to properties of $\bG^*$. The lemma below contains just one 
example. (See Lusztig \cite{Lu09d} where a number of such ``bridges'' 
of a much deeper nature are discussed.)

\begin{lem} \label{Mconncent} Let $(\bG,F)$ and $(\bG^*,F^*)$ be two
pairs in duality, as in Definition~\ref{Mdefdual}. Then the following
conditions are equivalent.
\begin{itemize}
\item[(i)] The center of $\bG$ is connected.
\item[(ii)] The abelian group $X/\Z R$ has  no $p'$-torsion.
\item[(iii)] The fundamental group of $\bG^*$ (see Remark~\ref{Msccover})
is trivial.
\end{itemize}
\end{lem}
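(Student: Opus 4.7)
The plan is to prove (i)$\Leftrightarrow$(ii) directly from Remark~\ref{Msemisimple}(b) and (ii)$\Leftrightarrow$(iii) by computing $\pi_1(\bG^*)$ in terms of lattices in $\Q R^\vee$ and matching it with the torsion of $X/\Z R$ via a duality of finite abelian groups. For (i)$\Leftrightarrow$(ii), write $X/\Z R = \Z^d \oplus T$ with $T$ finite; then by Remark~\ref{Msemisimple}(b),
\[ \bZ(\bG) \cong \Hom(X/\Z R,\, k^\times) \cong (k^\times)^d \times \Hom(T,\,k^\times), \]
where the first factor is the torus $\bZ(\bG)^\circ$ and $\Hom(T,k^\times)$ is the finite component group. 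In characteristic $p$ the group $k^\times$ has no $p$-torsion, so $\Hom(T,k^\times)\cong\Hom(T_{p'},k^\times)\cong T_{p'}$ as abstract groups (the last isomorphism by fixing an embedding of the prime-to-$p$ roots of unity into $k^\times$). Hence $\bZ(\bG)$ is connected iff $T_{p'}=0$ iff $X/\Z R$ has no $p'$-torsion.

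For (ii)$\Leftrightarrow$(iii), I first express $\pi_1(\bG^*)$ via a formula in terms of lattices in $\Q R^\vee$. By the general description of the derived subgroup, applied to $\bG^*$ (root datum $(Y,R^\vee,X,R)$), the maximal torus $\bT_0^*\cap\bG^*_{\mathrm{der}}$ has character group $Y/Y_0$ with $Y_0:=\{y\in Y:\langle R,y\rangle=0\}$, and cocharacter group $\{x\in X:\langle x,Y_0\rangle=0\}$. Using the orthogonal decomposition $Y_\Q=Y_{0,\Q}\oplus\Q R^\vee$ (dual to the one in the proof of Lemma~\ref{Mweylfinite2}), the latter lattice equals $\overline{\Z R}:=X\cap\Q R$, while projection along $Y_{0,\Q}$ identifies $Y/Y_0$ with a sublattice $\bar{Y}\subseteq\Q R^\vee$ satisfying $\Z R^\vee\subseteq\bar{Y}\subseteq P^\vee$, where $P^\vee:=\{z\in\Q R^\vee:\langle R,z\rangle\subseteq\Z\}$ is the coweight lattice of $R$. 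The simply-connected cover $\bG^*_{\mathrm{sc}}$ of $\bG^*_{\mathrm{der}}$ has character group $P^\vee$ (the weight lattice of the root system $R^\vee$), and the central isogeny $\bG^*_{\mathrm{sc}}\to\bG^*_{\mathrm{der}}$ corresponds on characters to the inclusion $\bar{Y}\hookrightarrow P^\vee$, giving $\pi_1(\bG^*)\cong\Hom(P^\vee/\bar{Y},k^\times)$.

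The main obstacle is then to identify $P^\vee/\bar{Y}$ with the dual of $T=\overline{\Z R}/\Z R$ as finite abelian groups. The perfect pairing $X\times Y\to\Z$ restricts to a perfect $\Q$-valued pairing on $\Q R\times\Q R^\vee$ (the summands $X_{0,\Q}$ and $Y_{0,\Q}$ being paired to $0$ with $\Q R^\vee$ and $\Q R$ respectively), under which $\Z R$ and $P^\vee$ are dual lattices by the very definition of $P^\vee$. The crux is to check that $\overline{\Z R}$ and $\bar{Y}$ are also dual: for $\lambda\in\Q R$ one has $\langle\lambda,\bar{y}\rangle=\langle\lambda,y\rangle$ (the $Y_{0,\Q}$-component of $y$ is annihilated by $\lambda$), so the condition $\langle\lambda,\bar{Y}\rangle\subseteq\Z$ is equivalent to $\langle\lambda,Y\rangle\subseteq\Z$, which by perfectness of $X\times Y\to\Z$ characterises $\lambda\in X$, hence $\lambda\in X\cap\Q R=\overline{\Z R}$. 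The induced pairing $(\overline{\Z R}/\Z R)\times(P^\vee/\bar{Y})\to\Q/\Z$ is therefore perfect, so these two finite groups are dual; in particular their $p'$-parts are isomorphic as abstract abelian groups. Since $\Hom(-,k^\times)$ picks out exactly the $p'$-part of a finite abelian group, we conclude that $\pi_1(\bG^*)$ is trivial iff $(P^\vee/\bar{Y})_{p'}=0$ iff $T_{p'}=0$ iff $X/\Z R$ has no $p'$-torsion.
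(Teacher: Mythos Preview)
Your argument is correct. The paper's own proof consists entirely of citations to Carter's book \cite[4.5.1, 4.5.8]{Ca2} and gives no details, so you have supplied a self-contained proof where the paper provides none. The route you take---reading off $\bZ(\bG)$ from Remark~\ref{Msemisimple}(b) for (i)$\Leftrightarrow$(ii), and for (ii)$\Leftrightarrow$(iii) computing $\pi_1(\bG^*)\cong\Hom(P^\vee/\bar Y,k^\times)$ and then exhibiting the lattice duality $\overline{\Z R}/\Z R\cong (P^\vee/\bar Y)^\vee$---is the standard one and is presumably what lies behind the cited results in Carter.

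Two places merit a word of justification, though neither is a gap. First, the description of the root datum of $\bG^*_{\mathrm{der}}$ (character group $Y/Y_0$, cocharacter group $X\cap\Q R$) is not stated anywhere in the present paper, so you are importing a standard fact; it follows from $\bT^*_{\mathrm{der}}=\bT^*\cap\bG^*_{\mathrm{der}}$ together with the identification of $X(\bG^*/\bG^*_{\mathrm{der}})$ with the $\bW$-invariants $Y_0$. Second, the assertion that the map on character lattices induced by $\pi_{\mathrm{sc}}$ is literally the inclusion $\bar Y\hookrightarrow P^\vee$ uses that a $\Z$-linear injection fixing each element of $R^\vee$ (as any central isogeny does) extends to the identity on $\Q R^\vee$.
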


\begin{proof} For the equivalence of (i) and (ii), see \cite[4.5.1]{Ca2}.
The equivalence of (ii) and (iii) is shown in \cite[4.5.8]{Ca2}.
\end{proof}

\begin{exmp} \label{Mdefdualbeisp} (a) Let $\bG=\GL_n(k)$. Then
$\bG$ has Cartan type $A_{n-1}$ and the Cartan matrix $C$ factorises as
$C=\breve{A}\cdot A^\trp$ where $A=\breve{A}$; see Example~\ref{rootdatGL}. 
Hence the discussion in \ref{Mdefdualmat} immediately shows that $\bG$ 
is dual to itself.

(b) Assume that $\bG$ is semisimple of adjoint type. In the setting of 
\ref{Mdefdualmat}, this means that the Cartan matrix factorises as 
$C=\breve{A}\cdot A^\trp$ where $A$ is the identity matrix and 
$\breve{A}=C$. Hence, $C^{\trp}=\breve{B}\cdot B^\trp$ where $\breve{B}$ 
is the identity matrix and $B=C$. Thus, $\bG^*$ is seen to be semisimple 
of simply-connected type. If $C$ is symmetric, then
Proposition~\ref{st916a} yields a central isogeny $\bG^*\rightarrow \bG$.

(c) The examples in (a), (b) seem to indicate that dual groups are related
in quite a strong way. However, as pointed out in the introduction of 
\cite{Lu09d}, dual groups in general are related only through a very weak 
connection (via their root system); in particular there is no direct, 
elementary construction which produces $\bG^*$ from~$\bG$. Perhaps the most 
striking example is the case where $\bG=\mbox{SO}_{2n+1}(k)$. Then $\bG$ 
is simple of adjoint type, with Cartan matrix $C$ of type $B_n$. As in 
(b), $\bG^*$ will be simple of simply-connected type. However, since 
$C^\trp$ has type $C_n$, we see that $\bG^*\cong \mbox{Sp}_{2n}(k)$. If 
$\mbox{char}(k)\neq 2$, then there is not even any abstract non-trivial
group homomorphism between $\bG$ and~$\bG^*$!
\end{exmp}

\section{Generic finite reductive groups} \label{sec:generic}

Recall from Definition~\ref{mydeffr} that a finite group of Lie type 
is a finite group of the form $G=\bG^F$, where $\bG$ is a connected reductive 
algebraic group over $k=\overline{\F}_p$ and $F\colon \bG\rightarrow \bG$ is 
a Steinberg map. Then it is common to speak of the (twisted or untwisted) 
``type'' of $\bG^F$: for example, we say that the finite general linear
groups are of untwisted type $A_{n-1}$, the finite unitary groups are of
twisted type $A_{n-1}$ (denoted ${^2\!}A_{n-1}$; see
Example~\ref{Mquasisplit}), or that the Suzuki groups are of ``very 
twisted'' type $B_2$ (denoted ${^2\!}B_2$; see Example~\ref{Msuzuki2}). Here, 
the superscript (as in ${^2\!}A_{n-1}$) indicates the order of the 
automorphism of the Weyl group of $\bG$ which is induced by $F$; in 
particular, $\bG^F$ is of ``untwisted'' type if $F$ induces the identity 
map on the Weyl group. Using the machinery developed in the previous 
sections, we can now give a somewhat more precise definition, as follows. 

\begin{abs} \label{MstdcalC} Assume that $\bG$ is connected reductive and
let $F\colon \bG \rightarrow \bG$ be a Steinberg map. Then we can 
canonically attach to $\bG$ and $F$ a pair 
\[\cC(\bG,F):=(C,P^\circ)\]
consisting of a Cartan matrix $C=(c_{st})_{s,t \in S}$ and a monomial matrix 
$P^\circ=(p_{st})_{s,t\in S}$ whose non-zero entries are positive powers of 
$p$ and such that $C P^\circ=P^\circ C$. Let us recall how this is done. 
First, we choose a \nm{maximally split} torus $\bT_0\subseteq \bG$. Recall
from Example~\ref{Mfrobconj} that this means that $\bT_0$ is an $F$-stable 
maximal torus of $\bG$ which is contained in an $F$-stable Borel subgroup 
$\bB_0\subseteq \bG$. (By Proposition~\ref{canfrob11}, the pair $(\bT_0,
\bB_0)$ is unique up to conjugation by elements of $\bG^F$.) Let $\cR=(X,
R, Y,R^\vee)$ be the root datum of $\bG$ relative to $\bT_0$ and $\varphi 
\colon X \rightarrow X$ be the $p$-isogeny induced by~$F$. By 
Remark~\ref{MrootdatumG2}, there is a unique base $\Pi$ of $R$ such 
that $\bB_0=\langle \bT_0, \bU_\alpha\mid \alpha\in R^+ \rangle$ where
$R^+$ are the positive roots with respect to~$\Pi$. Let us write 
$\Pi=\{\alpha_s\mid s\in S\}$ and let $C=(c_{st})_{s,t\in S}$ be the 
corresponding Cartan matrix. 

Since $\varphi$ is a $p$-isogeny, there is a permutation $\alpha\mapsto 
\alpha^\dagger$ of $R$ such that $\varphi (\alpha^\dagger)=q_\alpha\,
\alpha$ for all $\alpha\in R$. The fact that $\bB_0$ is $F$-stable implies 
that this permutation leaves $R^+$ invariant. Hence, this permutation will 
also leave the base $\Pi$ invariant and so there is an induced permutation 
$S\rightarrow S$, $s \mapsto s^\dagger$, such that $\alpha_s^\dagger=
\alpha_{s^\dagger}$ for all $s\in S$. Thus, $\varphi$ is ``base preserving''
as in \ref{Mdefmatrixisog} and we have a corresponding monomial matrix
$P^\circ=(p_{st}^\circ)_{s,t \in S}$ whose non-zero entries are given by 
$p_{s s^\dagger}^\circ=q_s:=q_{\alpha_s}$ for all $s\in S$. The condition 
(MI2) implies that $CP^\circ=P^\circ C$, which means that 
\begin{equation*}
q_{t}c_{st}=q_s c_{s^\dagger t^\dagger} \qquad\mbox{for all $s,t\in S$}.
\tag{a}
\end{equation*}  
Since all pairs $(\bT_0,\bB_0)$ as above are conjugate by elements of 
$\bG^F$, the pair $(C, P^\circ)$ is uniquely determined by $\bG,F$ up to 
relabeling the elements of $S$. 

Now consider the Weyl group $\bW$ of $\cR$. Recall from Remark~\ref{MidentW}
that we identify $S$ with a subset of $\bW$ via $s \leftrightarrow 
w_{\alpha_s}$; thus, we have $\bW=\langle S \rangle$. Let $l\colon \bW
\rightarrow \Z_{\geq 0}$ be the corresponding length function. By 
Remark~\ref{MdefisogR1}, the $p$-isogeny $\varphi$ induces a group 
automorphism $\sigma\colon \bW\rightarrow \bW$ such that 
\begin{equation*}
\sigma(s)=s^\dagger \quad (s \in S) \qquad \mbox{and} \qquad 
\varphi\circ \sigma(w)=w \circ \varphi \quad (w\in \bW). \tag{b}
\end{equation*}  
In Theorem~\ref{MrootdatumG}, we have seen that we can naturally identify 
$\bW=N_{\bG}(\bT_0)/\bT_0$. (Under this identification, the reflection 
$w_\alpha \in \bW$ corresponds to the element $\dot{w}_\alpha \in N_{\bG}
(\bT_0)$ in \ref{subsec1roots}.) Since $\bT_0$ and, hence, $N_{\bG}(\bT_0)$ 
are $F$-stable, $F$ naturally induces an automorphism $\sigma_F\colon \bW
\rightarrow \bW$, $g\bT_0\mapsto F(g) \bT_0$ ($g\in N_{\bG}(\bT_0)$). It 
is straighforward to check that all of the above constructions and 
identifications are compatible, that is, we have $\sigma_F(w)=\sigma(w)$ 
for all $w\in\bW$. 


Finally, the numbers $\{q_s\}$ satisfy the following conditions. If 
$S_1,\ldots, S_r$ are the orbits of the permutation $s\mapsto s^\dagger$ 
on $S$, then 
\begin{equation*}
q^{|S_i|}=\prod_{s \in S_i} q_s \qquad (i=1,\ldots,r),\tag{c}
\end{equation*}  
where $q>0$ is the real number defined in Proposition~\ref{Mgenfrob}. 
(This easily follows from the equation $\varphi(\alpha_{s^\dagger})=
q_s \alpha_s$ for all $s\in S$; see \cite[11.17]{St68}.) Hence, we also 
have $q^{|S|}=\prod_{s\in S} q_s$, which provides an alternative
characterisation of $q$. 
\end{abs}

\begin{abs} \label{MstdcalC1} As in \cite[3.1]{LuB}, we say that 
$\dagger$ (or $\sigma\colon \bW \rightarrow \bW$) is {\em ordinary} if the 
following condition is satisfied: whenever $s\neq t$ in $S$ are in the 
same $\dagger$-orbit on $S$, then the order of the product $st$ is $2$ or 
$3$. With this notion, we have the following distinction of cases. The 
group $\bG^F$ is 
\begin{itemize}
\item either ``untwisted'', that is, $\dagger$ is the identity (and 
$q_s=q$ for all $s\in S$); 
\item or ``twisted'', that is, $\dagger$ is not the identity but 
ordinary (as defined above); 
\item or ``very twisted'', otherwise.
\end{itemize}
The typical examples to keep in mind are: the finite general linear groups 
(untwisted), the finite general unitary groups (twisted) and the finite 
Suzuki groups (very twisted). Note that these are notions which depend on 
$\bG$ and $F$ used to define $\bG^F$, not just on the finite group $\bG^F$: 
In Example~\ref{Mstrangexp}, there is a realisation of $\SL_2(q)$ (where 
$q$ is a power of $2$) as a twisted (but not very twisted) group.
\end{abs}

\begin{rem} \label{Luordi} Assume that $F$ is a Frobenius map, with respect 
to some $\F_q$-rational structure on $\bG$. Then, as pointed out by Lusztig 
\cite[3.4.1]{LuB}, the induced automorphism $\sigma\colon \bW \rightarrow 
\bW$ is {\em ordinary} in the sense defined above.

This is seen as follows. Let $s\neq t$ in $S$ be in the same $\dagger$-orbit.
Replacing $F$ by a power of $F$ if necessary, we can assume without loss of 
generality that $t=s^\dagger$. Now, applying $\dagger$ repeatedly to $\{s,
t\}$, we obtain a whole $\dagger$-orbit of pairs $\{s',t'\}$ where $s'\neq 
t'$ are in $S$. Then \ref{MstdcalC}(a) shows that $c_{s't'}\neq 0$ for all 
these pairs.  So, if this $\dagger$-orbit of pairs had more than one element,
then we would obtain a closed path in the Dynkin diagram of $C$, which is 
impossible (see Table~\ref{Mdynkintbl}, p.~\pageref{Mdynkintbl}). Hence, we 
must have $t=s^\dagger$ and $s= t^\dagger$. But then \ref{MstdcalC}(a) 
implies that $q_tc_{st}=q_s c_{s^\dagger t^\dagger}=q_sc_{ts}$. However, by 
Lemma~\ref{Mgenfrob3b}, we have $q_s=q_t=q$. Hence, $c_{st}=c_{ts}$ and 
so $c_{st}\in\{0,-1\}$, which means that $st$ has order $2$ or $3$, as 
required.
\end{rem}

\begin{abs} \label{Mgeneralord} We now have all the ingredients to state the
order formula for $\bG^F$. Since $\bB_0$ is $F$-stable, the unipotent 
radical $\bU_0=R_u(\bB_0)$ is also $F$-stable. Since $\bB_0=\bU_0.\bT_0$ 
where $\bU_0\cap \bT_0=\{1\}$, we obtain
\[|\bG^F|=|\bU_0^F|\cdot |\bT_0^F|\cdot |\bG^F/\bB_0^F|.\]
The factor $|\bT_0^F|$ is evaluated as follows. As in 
Proposition~\ref{Mdefnq}, we extend scalars from $\Z$ to $\R$ and 
consider the induced linear map $\varphi_\R \colon X_\R \rightarrow X_\R$
where $X_\R=\R\otimes_\Z X$. We have $\varphi_\R=q\varphi_0$ where
$\varphi_0\in \GL(X_\R)$ is a linear map of finite order. Then 
\[|\bT_0^F|=\varepsilon_\varphi\,\det(\varphi -\id_X)=\det(q\,\id_{X_\R}-
\varphi_0^{-1}),\]
where $\varepsilon_\varphi\in\{\pm 1\}$ is the sign such that 
$\varepsilon_\varphi\det(\varphi)>0$. In particular, this shows that the 
order of $\bT_0^F$ is obtained from the characteristic polynomial of 
$\varphi_0^{-1}$ by evaluation at $q$. This is discussed in detail in 
\cite[\S 3.3]{Ca2}, \cite[\S 25.1]{MaTe}. 

A further evaluation of the remaining two factors in the above expression
for $|\bG^F|$ leads to the following formula, due to Chevalley \cite{Chev} 
(in the case where $\bG$ is semisimple and $\dagger$ is the identity) and 
Steinberg \cite[\S 11]{St68} (in general). 
\end{abs}

\begin{thm}[Order formula] \label{Morderform} With the above notation, 
we have 
\[|\bG^F|=q^{|R|/2}\det(q\, \id_{X_\R}-\varphi_0^{-1})\sum_{w \in \bW^\sigma}
q^{l(w)},\]
where $\bW^\sigma=\{w\in \bW\mid \sigma(w)=w\}$ (which is a finite 
Coxeter group). When $\bG$ is simple, explicit formulae for the various
possibilities are given as in Table~\ref{Mordertab}.
\end{thm}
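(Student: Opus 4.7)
The plan is to use the factorisation $|\bG^F|=|\bU_0^F|\cdot|\bT_0^F|\cdot|\bG^F/\bB_0^F|$ recorded in \ref{Mgeneralord}, take $|\bT_0^F|=\det(q\,\id_{X_\R}-\varphi_0^{-1})$ as already established there, and then evaluate $|\bU_0^F|=q^{|R|/2}$ and $|\bG^F/\bB_0^F|=\sum_{w\in\bW^\sigma}q^{l(w)}$. Both counts rest on a single arithmetic fact: whenever $\{\alpha_0,\alpha_1,\ldots,\alpha_{\ell-1}\}\subseteq R$ is a $\dagger$-orbit of length $\ell$ (indices mod $\ell$, with $\alpha_{i+1}=\alpha_i^\dagger$), then $q_{\alpha_0}q_{\alpha_1}\cdots q_{\alpha_{\ell-1}}=q^\ell$. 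I would prove this by iterating $\varphi(\alpha_{i+1})=q_{\alpha_i}\alpha_i$ to get $\varphi^\ell(\alpha_0)=\bigl(\prod_i q_{\alpha_i}\bigr)\alpha_0$, then invoking Proposition~\ref{Mdefnq}(b) to write $\varphi_\R=q\varphi_0$ with $\varphi_0$ of finite order; the positive real scalar $\bigl(\prod_i q_{\alpha_i}\bigr)q^{-\ell}$ is an eigenvalue of $\varphi_0^\ell$ on $\alpha_0$, hence a positive root of unity, hence equal to $1$.

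For $|\bU_0^F|$: the unipotent radical $\bU_0=R_u(\bB_0)$ is isomorphic as an affine variety to $\prod_{\alpha\in R^+}\bU_\alpha$ in any total order refining the height ordering on $R^+$ (see \cite[\S 11]{MaTe}), with each $\bU_\alpha\cong\bkp$. Since $\bB_0$ is $F$-stable, $\dagger$ preserves $R^+$, and $F$ restricts to a Steinberg map on $\bU_0$. I would filter $\bU_0$ by closed connected $F$-stable normal subgroups whose successive quotients are products of root subgroups lying in a single $\dagger$-orbit on $R^+$, thereby reducing to counting $F$-fixed points on one orbit $O=\{\alpha_0,\ldots,\alpha_{\ell-1}\}$ at a time. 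Via the parametrisations $u_{\alpha_i}$, the endomorphism $F^\ell$ on $\bU_{\alpha_0}\cong\bkp$ is of Artin--Schreier form $\xi\mapsto c\,\xi^{q_{\alpha_0}\cdots q_{\alpha_{\ell-1}}}=c\,\xi^{q^\ell}$ for some $c\in k^\times$, which has exactly $q^\ell$ distinct solutions since the polynomial is separable. Hence each $\dagger$-orbit $O\subseteq R^+$ contributes $q^{|O|}$ fixed points, and taking the product yields $|\bU_0^F|=q^{|R^+|}=q^{|R|/2}$.

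For $|\bG^F/\bB_0^F|$: since $\bB_0$ is connected, Theorem~\ref{langst} applied to $\bB_0$ (in the style of Proposition~\ref{Mfrobconj0}) gives a bijection $\bG^F/\bB_0^F\stackrel{\sim}{\longrightarrow}(\bG/\bB_0)^F$. The Bruhat decomposition (\ref{subsec16}) writes $\bG/\bB_0=\bigsqcup_{w\in\bW}C_w$ with $C_w=\bB_0n_w\bB_0/\bB_0$ isomorphic as a variety to $\bU_w:=\prod_{\alpha\in R^+\cap w(R^-)}\bU_\alpha$, of dimension $l(w)$. Since $F(\bB_0n_w\bB_0)=\bB_0n_{\sigma(w)}\bB_0$, only cells with $w\in\bW^\sigma$ are $F$-stable. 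Fixing such a $w$ and adjusting $n_w$ by an element of $\bT_0$ (Lang--Steinberg on $\bT_0$) so that $F(n_w)=n_w$, the $F$-action on $\bU_w$ is the restriction of the $F$-action on $\bU_0$; the set $R^+\cap w(R^-)$ is $\dagger$-stable because of the compatibility $\dagger(w(\alpha))=\sigma(w)(\dagger(\alpha))$ from Remark~\ref{MdefisogR1} combined with $\sigma(w)=w$. The same orbit-by-orbit count then gives $|C_w^F|=q^{l(w)}$, and summing over $w\in\bW^\sigma$ produces the claimed formula.

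Finally, $\bW^\sigma$ is a finite Coxeter group by the classical theorem of Steinberg (\cite[\S 11]{St68}): since $\sigma$ permutes $S$, a Coxeter presentation of $\bW^\sigma$ is given by the longest elements $w_J$ of the standard parabolic subgroups $\bW_J$ attached to the $\sigma$-orbits $J\subseteq S$. The main technical obstacle is keeping the bookkeeping clean in the \emph{very twisted} case, where $q_\alpha$ is not constant on $\dagger$-orbits and one cannot simply appeal to Lemma~\ref{Mgenfrob3b}; the orbit-product identity $\prod_i q_{\alpha_i}=q^\ell$ derived above from the finite order of $\varphi_0$ is precisely what makes both counts work uniformly across the untwisted, twisted and very twisted cases.
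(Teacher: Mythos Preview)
Your approach is correct in outline and is essentially the direct argument found in Steinberg \cite[\S 11]{St68}. Note, however, that the paper itself takes a different route: it does \emph{not} prove Theorem~\ref{Morderform} directly. Instead it cites Chevalley (untwisted case) and Steinberg (general case) for the formula when $\bG$ is simple, and its own contribution is the reduction Lemma~\ref{Mreductord}, which shows that the formula for arbitrary connected reductive $\bG$ follows from the simple adjoint case via the adjoint quotient $\pi_{\text{ad}}\colon\bG\to\bGad$ and the direct-product decomposition of $\bGad$. Your proposal bypasses this reduction entirely and attacks the three factors $|\bU_0^F|$, $|\bT_0^F|$, $|\bG^F/\bB_0^F|$ head-on; this has the advantage of being self-contained and of making the orbit-product identity $\prod_i q_{\alpha_i}=q^\ell$ (stated in the paper as \ref{MstdcalC}(c) only for simple roots) the single uniform engine behind both counts.

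One technical point deserves more care. You assert that $\bU_0$ (and each $\bU_w$) can be filtered by closed connected $F$-stable normal subgroups whose successive quotients correspond to a \emph{single} $\dagger$-orbit on $R^+$. In the very twisted cases this is not obvious: for instance in $^2G_2$ the $\dagger$-orbit $\{\alpha_1+\alpha_2,\,\alpha_1+3\alpha_2\}$ sits at heights $2$ and $4$, so the height filtration is not $\dagger$-stable and cannot serve. One must either check by hand (for $^2B_2$, $^2G_2$, $^2F_4$) that the $\dagger$-orbits can be ordered so that each tail $\bigcup_{j\geq i}O_j$ is closed under root addition, or else use a coarser $F$-stable filtration (e.g.\ the descending central series of $\bU_0$, which is automatically $F$-stable) and then decompose each abelian quotient into its $\dagger$-orbits as a vector group. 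Either fix is routine, but as written your sentence ``I would filter $\bU_0$ by \ldots'' hides exactly the case-work that makes the very twisted situation delicate. The rest of your argument---the $F$-equivariance of $\bU_w\to C_w$ after adjusting $n_w$, the $\dagger$-stability of $R^+\cap w(R^-)$ via $\varphi\circ\sigma(w)=w\circ\varphi$, and the Artin--Schreier count on each orbit---is sound.
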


The fact that the list in Table~\ref{Mordertab} exhausts all
the possible pairs $(\bG,F)$ where $\bG$ is simple and $F\colon \bG
\rightarrow\bG$ is a Steinberg map is shown in \cite[\S 11.6]{St68}; 
note that, in this case, we have $|\bG^F|=|\bGad^F|=|\bGsc^F|$ (see 
\ref{Mssquot}). 

\begin{table}[htbp] 
\caption{Order formulae for $|\bG^F|$ when $\bG$ is simple} \label{Mordertab} 
\centerline{\small $\renewcommand{\arraystretch}{1.3}
\begin{array}{ll} \hline \mbox{Type} & |\bG^F|\\\hline 
A_{n{-}1} & q^{n(n{-}1)/2}(q^2{-}1)(q^3{-}1) \cdots (q^n{-}1)\\
B_n & q^{n^2}(q^2{-}1)(q^4{-}1) \cdots (q^{2n}{-}1) \\
C_n & \text{same as $B_n$} \\
D_n & q^{n^2{-}n}(q^2{-}1)(q^4{-}1) \cdots 
(q^{2n{-}2}{-}1)(q^n{-}1) \\
G_2 & q^6(q^2{-}1)(q^6{-}1)\\
F_4 & q^{24}(q^2{-}1)(q^6{-}1)(q^8{-}1)(q^{12}{-}1)\\
E_6 & q^{36}(q^2{-}1)(q^5{-}1)(q^6{-}1)(q^8{-}1)(q^9{-}1)
(q^{12}{-}1)\\
E_7 & q^{63}(q^2{-}1)(q^6{-}1)(q^8{-}1)(q^{10}{-}1)(q^{12}{-}1)
(q^{14}{-}1)(q^{18}{-}1)\\
E_8 & q^{120}(q^2{-}1)(q^8{-}1)(q^{12}{-}1)(q^{14}{-}1)
(q^{18}{-}1)(q^{20}{-}1)(q^{24}{-}1)(q^{30}{-}1)\\\hline
{^2\!A}_{n{-}1} & q^{n(n{-}1)/2}(q^2{-}1)(q^3{+}1) \cdots 
(q^n{-}({-}1)^n)\\
{^2\!D}_n & q^{n^2{-}n}(q^2{-}1)(q^4{-}1) \cdots 
(q^{2n{-}2}{-}1)(q^n{+}1) \\ 
{^3\!D}_4 & q^{12}(q^2{-}1)(q^6{-}1)(q^8{+}q^4{+}1)\\
{^2\!E}_6 & q^{36}(q^2{-}1)(q^5{+}1)(q^6{-}1)(q^8{-}1)
(q^9{+}1)(q^{12}{-}1)\\\hline
{^2\!B}_2 & q^4(q^2{-}1)(q^4{+}1)
\qquad\qquad\qquad\qquad\qquad\quad (q=\sqrt{2}^{2m{+}1}) \\
{^2\!G}_2 & q^6(q^2{-}1)(q^6{+}1)
\qquad\qquad\qquad\qquad\qquad\quad (q=\sqrt{3}^{2m{+}1}) \\ 
{^2\!F}_4 & q^{24}(q^2{-}1)(q^6{+}1)(q^8{-}1)(q^{12}{+}1)
\qquad \qquad\; (q=\sqrt{2}^{2m{+}1}) \\\hline  
\multicolumn{2}{l}{\text{(The first $9$ are ``untwisted'', the next 
$4$ ``twisted'' and the last $3$ ``very twisted''.)}} 
\end{array}$}
\end{table}

\begin{rem} \label{Morderform1} (a)  The formula shows that $q^{|R|/2}$ is
the $p$-part of the order of $\bG^F$; this provides a further 
characterisation of the number $q$. We also note that the above expression 
for $|\bG^F|$ can be interpreted as a polynomial in one variable evaluated 
at $q$, where the polynomial only depends on the root datum of $\bG$ and the 
maps $\varphi_0$, $\sigma$ derived from $F$.  This will be formalised 
in Definition~\ref{MdefBrMa} below.

(b) Steinberg \cite[14.14]{St68} shows that the number of $F$-stable maximal
tori of $\bG$ is equal to $q^{|R|}$. This equality is in fact equivalent
to the following Molien series identity which yields another expression for 
the order of $\bG^F$:
\[\frac{q^{|R|}}{|\bG^F|}=\frac{1}{|\bW|} \sum_{w\in \bW} \frac{1}{\det(q\,
\id_{X_\R}- w\varphi_0^{-1})};\]
see \cite[\S 3.4]{Ca2}, \cite[Exc.~30.15]{MaTe} for further details. One 
advantage of this expression is that it does not involve the length 
function on $\bW$ or the induced automorphism $\sigma$ of $\bW$. The inverse
of the sum on the right hand side can be further expressed as a product
of various cyclotomic polynomials; in this way, one obtains the familiar 
formulae for the order of $\bG^F$ when $\bG$ is simple; see
Table~\ref{Mordertab}. Also note that, instead of considering the maps 
$w\circ \varphi_0^{-1}\colon X_\R \rightarrow X_\R$ in the above formula, 
one can take their transposes $(w\circ \varphi_0^{-1})^\trp\colon Y_\R
\rightarrow Y_\R$ where $Y_\R=\R\otimes_\Z Y$: the characteristic 
polynomials will certainly remain the same. The same is true when we 
replace $w\circ \varphi_0^{-1}$ by $\varphi_0^{-1}\circ w$.

(c) If $F$ is a Frobenius map, then Theorem~\ref{Morderform} can be proved
by a general argument; see \cite[4.2.5]{mybook}. The general case (where
the root exponents $q_s$ may not all be equal) is treated in 
\cite[\S 11]{St68} (see also \cite[\S 24.1]{MaTe}), assuming that $\bG$ 
is semisimple and ``$F$-simple'' (see \ref{MdirprodG3}). But, as 
already noted in \cite[p.~78]{St68}, the case of an arbitrary connected 
reductive $\bG$ can be easily recovered from this case. As an illustration
of the methods developed in the previous section, let us explicitly work 
out the reduction argument.
\end{rem}

\begin{lem} \label{Mreductord} Suppose that the formula in 
Theorem~\ref{Morderform} is known to hold when $\bG$ is simple of 
adjoint type. Then the formula holds in general. 
\end{lem}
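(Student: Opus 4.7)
My plan is to run the order formula through the isogenies and direct-product decompositions recorded in Section~\ref{sec:semisimple}, checking at every stage that both sides of the formula transform compatibly, until all that remains is a product of simple adjoint groups. The key observation is that the right-hand side of Theorem~\ref{Morderform} depends only on the quintuple $(R,X_\R,\bW,\varphi_0,\sigma)$ attached to $(\bG,F)$ in \ref{MstdcalC}, so I need to track how this quintuple behaves under (i) the isogeny $\bZ^\circ\times\bGsc\to\bG$, (ii) the isotypies relating $\bGder$, $\bGss$, $\bGsc$, $\bGad$, and (iii) the $F$-simple decomposition of $\bGad$.

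First I would use the isotypies from \ref{Mssquot} together with Proposition~\ref{Msameorder}(c) to obtain $|\bG^F|=|(\bZ^\circ)^F|\cdot|\bGsc^F|=|(\bZ^\circ)^F|\cdot|\bGad^F|$, where $\bGsc=(\bGder)_{\operatorname{sc}}$ and $\bGad=(\bGss)_{\operatorname{ad}}$ carry the induced Steinberg maps (also denoted $F$). Choosing a maximally split maximal torus $\bT_0\subseteq\bG$ compatibly, $X(\bT_0)_\R$ splits (up to a $\bW$-equivariant $\varphi_0$-compatible isomorphism) as $X(\bZ^\circ)_\R\oplus X(\bT_1)_\R$ with $\bT_1\subseteq\bGsc$ maximally split, and $\bZ^\circ$ contributes no roots and trivial Weyl group. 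Thus the right-hand side for $(\bG,F)$ factors as the torus formula $\det(q\,\id-\varphi_0^{-1})|_{X(\bZ^\circ)_\R}=|(\bZ^\circ)^F|$ times the right-hand side for $(\bGsc,F)$. Since the central isotypy $\bGsc\to\bGad$ induces an isomorphism of rational root data commuting with $F$, the quintuple $(R,X_\R,\bW,\varphi_0,\sigma)$ and the right-hand side agree on the two sides. This reduces the lemma to proving the formula for the semisimple adjoint pair $(\bGad,F)$.

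Next, Proposition~\ref{MdirprodG} writes $\bGad=\bG_1'\times\cdots\times\bG_n'$ as a direct product of simple adjoint factors, and by \ref{MdirprodG3}(c) the map $F$ permutes these via some permutation $\rho$. Picking representatives $I$ of the $\rho$-orbits, Corollary~\ref{Mdirprodfin} yields $|\bGad^F|=\prod_{i\in I}|\bG_i'^{F^{n_i}}|$, where $F^{n_i}$ is a Steinberg map on the simple adjoint group $\bG_i'$ with associated parameter $q^{n_i}$ by Lemma~\ref{Fsimpleorder}. I would then verify that the right-hand side for $(\bGad,F)$ factors across $F$-orbits into the corresponding right-hand sides for each $(\bG_i',F^{n_i})$; given the hypothesis, each factor is equal to $|\bG_i'^{F^{n_i}}|$ and the proof is complete.

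The main technical obstacle is the last matching step on a single $F$-orbit, i.e.\ when $\bGad$ is $F$-simple of the form $\bG_1'\times\sigma(\bG_1')\times\cdots\times\sigma^{n-1}(\bG_1')$. Writing $\bW=\bW_1\times\cdots\times\bW_n$ with $\sigma$ cyclically permuting the factors, an element $w\in\bW^\sigma$ has the form $(w_1,\sigma(w_1),\ldots,\sigma^{n-1}(w_1))$ with $\sigma^n(w_1)=w_1$, and since $\sigma$ sends the distinguished generators of $\bW_j$ bijectively to those of $\bW_{j+1}$, one has $l(w)=n\,l_1(w_1)$; hence
\[
\sum_{w\in\bW^\sigma}q^{l(w)}=\sum_{w_1\in\bW_1^{\sigma^n}}(q^n)^{l_1(w_1)},
\]
which is the Weyl group factor for $(\bG_1',F^n)$ with parameter $q^n$. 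An analogous block-matrix computation shows that on $X_\R=X_1\oplus\cdots\oplus X_n$ the determinant $\det(q\,\id_{X_\R}-\varphi_0^{-1})$ collapses to $\det(q^n\,\id_{X_1}-\psi)$ for the appropriate iterate $\psi$ on $X_1$ induced by $\varphi_0^{-n}$; and $|R|=n|R_1|$ together with the new parameter $q^n$ gives $q^{|R|/2}=(q^n)^{|R_1|/2}$, matching the $p$-part for $(\bG_1',F^n)$. Once this block identity is in hand, combined with the straightforward multiplicativity across distinct $\rho$-orbits, the reduction to simple adjoint groups is complete.
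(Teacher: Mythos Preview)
Your argument is correct. The paper organises the reduction differently: rather than factoring off only the central torus $\bZ^\circ$, it first divides out the \emph{entire} maximal torus, observing via Lang--Steinberg that $|\bG^F|/|\bT_0^F|=|(\bG/\bT_0)^F|$, so that (since $|\bT_0^F|=\det(q\,\id_{X_\R}-\varphi_0^{-1})$ is already known in general) it suffices to prove the identity $|(\bG/\bT_0)^F|=q^{|R|/2}\sum_{w\in\bW^\sigma}q^{l(w)}$. The passage to the adjoint group is then immediate: the adjoint quotient $\pi_{\text{ad}}\colon\bG\to\bGad$ has kernel $\bZ\subseteq\bT_0$, so it induces an $F$-equivariant bijection $\bG/\bT_0\to\bGad/\bT'$, and it also induces an $F$-equivariant isomorphism of Weyl groups; hence both sides of the displayed identity are unchanged. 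The $F$-simple decomposition of $\bGad$ is then handled essentially as you do, except that only the Weyl-group sum needs to be matched across an $F$-orbit, not the determinant. Your route stays closer to the isogeny order-equalities of \ref{Mssquot} and Proposition~\ref{Msameorder}(c); the paper's route trades those for a short Lang--Steinberg argument and in return avoids having to verify the two determinant factorisations you need (the $\varphi_0$-stable splitting $X(\bT_0)_\R\cong X(\bZ^\circ)_\R\oplus X(\bT_1)_\R$, and the block-determinant collapse on an $F$-orbit). Both verifications are standard, so the difference is one of bookkeeping rather than substance.
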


\begin{proof} Since the formula for $|\bT_0^F|$ in \ref{Mgeneralord} is
already known to hold in general, it will be sufficient to consider the 
cardinality of $\bG^F/\bT_0^F$. It will be convenient to slightly rephrase 
this as follows. Let us consider the set of cosets $\bG/\bT_0=\{g\bT_0\mid 
g\in \bG\}$ (just as an abstract set, we don't need the notion of a quotient
variety here). Since $\bT_0$ is $F$-stable, we have an induced action 
of $F$ on $\bG/\bT_0$. Consequently, we have a natural injective map 
$\bG^F/\bT_0^F\rightarrow (\bG/\bT_0)^F$, $g\bT_0^F\mapsto g\bT_0$. Now 
the connected group $\bT_0$ acts transitively on $g\bT_0$ by right 
multiplication. Hence, if $g\bT_0$ is $F$-stable, then 
Proposition~\ref{Mfrobconj0} shows that $g\bT_0$ contains a representative 
fixed by~$F$. It follows that the above map is surjective and so 
$|\bG^F/\bT_0^F|=|(\bG/\bT_0)^F|$. Thus, it will now be sufficient to 
consider the identity:
\begin{equation*}
|(\bG/\bT_0)^F|=\bO(\bW,\sigma,q) \qquad \mbox{where} \qquad \bO(\bW,
\sigma,q):=q^{|R|/2} \sum_{w\in \bW^\sigma} q^{l(w)}.\tag{$*$}
\end{equation*}
Since the formula in Theorem~\ref{Morderform} is assumed to hold when 
$\bG$ is simple of adjoint type, the same is true of the formula ($*$).
We must deduce from this that ($*$) holds in general. We do this in two 
steps.

1) First assume that $\bG$ is semisimple of adjoint type. As in
\ref{MdirprodG3}(b), we have a direct product decomposition
$\bG=\bG_1\times \ldots \times \bG_n$ where each $\bG_i$ is simple
of adjoint type. Furthermore, there is a permutation $\rho$ of $\{1,
\ldots,n\}$ such that $F(\bG_i)=\bG_{\rho(i)}$ for $i=1,\ldots,n$. Now
note that $\bT_0=\bT_1\times \ldots \times \bT_n$ where each $\bT_i$
(for $i=1,\ldots,n$) is a maximal torus of $\bG_i$ such that $F(\bT_i)=
\bT_{\rho(i)}$. Hence, we can also identify $\bG/\bT_0=\bG_1/\bT_1\times 
\ldots \times \bG_n/\bT_n$. Furthermore, if $I\subseteq \{1,\ldots, n\}$ 
and $n_i$ ($i \in I$) are as in Corollary~\ref{Mdirprodfin}, then 
$F^{n_i}(\bG_i/\bT_i)=\bG_i/\bT_i$ for all $i\in I$ and 
\begin{equation*}
|(\bG/\bT_0)^F|=\prod_{i\in I}|(\bG_i/\bT_i)^{F^{n_i}}|.\tag{1a}
\end{equation*}
It remains to show that there is a similar factorisation of the right
hand side of ($*$). Recall from \ref{MdirprodG3} that we have a partition 
$R=R_1\sqcup\ldots\sqcup R_n$. Consequently, we also have a direct product 
decomposition 
\[\bW=\bW_1\times \ldots \times \bW_n\qquad \mbox{where} \qquad
\bW_i:=\langle w_\alpha \mid \alpha\in R_i\rangle;\]
furthermore, $\sigma(\bW_i)=\bW_{\rho(i)}$ for $i=1,\ldots,n$. Here, $\bW_i$
is the Weyl group of the factor $\bG_i$ (relative to $\bT_i\subseteq 
\bG_i$). Now, if $w\in \bW$ and $w=w_1 \cdots w_n$ with $w_i\in \bW_i$ for 
all $i$, then $l(w)=l(w_1)+\ldots + l(w_n)$. Using this formula, it is
straightforward to verify that the expression for $\bO(W,\sigma,q)$ is 
compatible with the above product decomposition, that is, we have 
$\sigma^{n_i}(\bW_i)=\bW_i$ for all $i\in I$ and
\begin{equation*}
\bO(\bW,\sigma,q)=\prod_{i\in I} \bO(\bW_i,\sigma^{n_i},q^{n_i}).\tag{1b}
\end{equation*}
By assumption and Lemma~\ref{Fsimpleorder}, we have $|(\bG_i/
\bT_i)^{F^{n_i}}|=\bO(\bW_i,\sigma^{n_i}, q^{n_i})$ for all $i\in I$. 
Hence, comparing (1a) and (1b), we see that ($*$) holds for $\bG$ as well.

2) Now let $\bG$ be arbitrary (connected and reductive). As in
Remark~\ref{Madjquot}, we consider an adjoint quotient $\pi_{\text{ad}}
\colon \bG \rightarrow\bGad$ with kernel $\bZ=\bZ(\bG)$. By 
Remark~\ref{Msemisimple}(a), we have $\bZ\subseteq \bT_0$; furthermore, 
$\bT':=\pi_{\text{ad}}(\bT_0)$ is an $F$-stable maximal torus of $\bGad$ 
(see \ref{Mconnredcomp}(a)). So we get a bijective map 
\[\bG/\bT_0\rightarrow \bGad/\bT',\qquad g\bT_0\mapsto \pi_{\text{ad}}
(g)\bT',\]
which is compatible with the action of $F$ on $\bG/\bT_0$ and on $\bGad/
\bT'$. In particular, $|(\bG/\bT_0)^F|=|(\bGad/\bT')^F|$ and so the left
hand side of ($*$) does not change when we pass from $\bG$ to $\bGad$. 
On the other hand, by \ref{Mconnredcomp}(d), $\pi$ induces an 
$F$-equivariant isomorphism from the Weyl group of $\bG$ (relative to 
$\bT_0$) onto the Weyl group of $\bGad$ (relative to $\bT'$). Hence, the 
right hand side of ($*$) does not change either when we pass from $\bG$ 
to $\bGad$. Thus, if ($*$) holds for $\bGad$, then ($*$) also holds for 
$\bG$.
\end{proof}

Following \cite{BrMa}, we now formally introduce ``series of finite groups 
of Lie type''. This relies on the following definition, which is a slight 
modification of that in \cite[\S 1]{BrMa}. (See Example~\ref{MexpBrMa}
below for further comments on this.)

\begin{defn} \label{MdefBrMa} Let $\cR=(X,R,Y,R^\vee)$ be a root datum, 
with Weyl group $\bW \subseteq \Aut(X)$. We set $X_\R:=\R \otimes_\Z X$. We
can canonically regard $X$ as a subset of $X_\R$; thus, we also have 
$\bW \subseteq \GL(X_\R)$. Let $\varphi_0\in \GL(X_\R)$ be an invertible 
linear map of finite order which normalises $\bW$, and assume that 
\[ \cP=\cP_\G:=\left\{q\in \R_{>0} \,\Big|\,\begin{array}{l} 
\mbox{$q\varphi_0(X) \subseteq X$ and the corresponding map}\\
\mbox{$q\varphi_0\colon X\rightarrow X$ is a $p$-isogeny for some 
prime $p$} \end{array}\right\}\]
is non-empty. We form the coset $\varphi_0\bW \subseteq \GL(X_\R)$. Then 
\[ \G=\bigl((X,R,Y,R^\vee),\varphi_0\bW\bigr)\]
is called a \nm{complete root datum} or a \nm{generic finite reductive 
group}. We define a corresponding rational function $|\G|\in \R(y)$ 
(where $y$ is an indeterminate) by 
\[\frac{y^{|R|}}{|\G|}=\frac{1}{|\bW|} \sum_{w\in \bW} \frac{1}{\det(y\,
\id_{X_\R}- w\varphi_0^{-1})}.\]
We call $|\G|$ the \nm{order polynomial} of $\G$; this will be justified 
in Remark~\ref{MdefBrMa3} below. Note that $\cP$ is an infinite set: 
If $q\in \cP$ and $q\varphi_0$ is a $p$-isogeny (where $p$ is a prime), 
then $p^mq\in\cP$ for all integers $m\geq 1$. 
\end{defn}

\begin{exmp} \label{MexpBrMa} 
Let $\bG$ be connected reductive and $F\colon \bG\rightarrow \bG$ be 
a Steinberg map. Then we obtain a corresponding complete root datum by
taking the root datum of $\bG$ (relative to an $F$-stable maximal torus
$\bT\subseteq \bG$) together with the linear map $\varphi_0$ defined in 
Proposition~\ref{Mdefnq}(b). In particular, this includes all the cases 
discussed in Examples~\ref{Mquasisplit}, \ref{Msuzuki2}, \ref{expdelu1}.
This shows that the above Definition~\ref{MdefBrMa} is somewhat more 
general than that in \cite{BrMa}, in which cases like those in 
Example~\ref{expdelu1} are not included. 
\end{exmp}

Let us now fix a complete root datum $\G=\bigl((X,R,Y,R^\vee),\varphi_0\bW
\bigr)$.

\begin{rem} \label{MdefBrMa0} Let $q\in \cP$ and set $\varphi:=
q\varphi_0$. Then $\varphi$ is a $p$-isogeny (for some prime $p$) and 
we have $\varphi^d=q^d\id_X$, where $d\geq 1$ is the order of $\varphi_0$. 
In particular, this implies that $q^d=p^m$ for some $m\geq 1$. Let $\bG$ be 
a connected reductive algebraic group over $k=\overline{\F}_p$ whose root 
datum (relative to a maximal torus $\bT\subseteq \bG$) is isomorphic to 
$(X,R,Y,R^\vee)$. Then $\varphi$ gives rise to an isogeny $F\colon \bG 
\rightarrow \bG$ which is a Steinberg map by Proposition~\ref{Mgenfrob}. We 
write $\G(q):=\bG^F$. Thus, we obtain a family of finite groups 
\[ \{\G(q) \mid q \in \cP\}\]
which we call the \nm{series of finite groups of Lie type} defined by $\G$.
There are some choices involved in the definition of $\G(q)$ but we shall 
see in the remarks below that different choices lead to isomorphic
finite groups.
\end{rem}

\begin{rem} \label{MdefBrMa1} Since $\varphi_0$ normalises $\bW$, we obtain
a group automorphism $\sigma \colon \bW \rightarrow \bW$ such that 
\[\sigma(w)=\varphi_0^{-1}w \varphi_0 \qquad \mbox{for all $w \in \bW$}.\]
Note that this is compatible with Remark~\ref{MdefisogR1}: For any 
$q\in \cP$, the automorphism of $\bW$ induced by the $p$-isogeny 
$q\varphi_0$ (where $p$ is a prime) is given by $\sigma$. Let us now see 
what happens when we replace $\varphi_0$ by another map in the coset 
$\varphi_0\bW$. First note that $\varphi_0w$ has finite order for any 
$w \in \bW$. Furthermore, if $q\in \cP$ and $q\varphi_0$ is a $p$-isogeny 
(where $p$ is a prime), then $(q\varphi_0)w$ also is a $p$-isogeny. 
Thus, if $\varphi_0$ satisfies the defining conditions for a complete root 
datum, then so does $\varphi_0 w$ for any $w \in \bW$. We also note
the following identity:
\[ (\varphi_0 w)^m=\varphi_0^m \cdot \bigl(\sigma^{m-1}(w)\cdots
\sigma^2(w)\sigma(w) w\bigr) \qquad \mbox{for all $m \geq 1$}.\] 
Now let $q \in \cP$ and let $\bG,\bT,F$ be as in Remark~\ref{MdefBrMa0}.
Then $\varphi:=q\varphi_0$ is the linear map induced on $X\cong X(\bT)$ 
by~$F$. Let $w \in \bW$ and $\dot{w}$ be a representative of $w$ in 
$N_{\bG}(\bT)$. We define $F'\colon \bG\rightarrow \bG$ by $F'(g):=
\dot{w}^{-1}F(g)\dot{w}$ for $g \in \bG$. By Lemma~\ref{canfrob0a}, $F'$ 
also is a Steinberg map and we have $\bG^{F'}\cong \bG^F$. Now $\bT$ is 
$F'$-stable and one easily sees that $\varphi w \colon X
\rightarrow X$ is the linear map induced by~$F'$. (See, for example, 
\cite[3.3.4]{Ca2}.) This shows that, if we replace $\varphi_0$ by 
$\varphi_0w$ for some $w \in \bW$, then $F$ changes to $F'$ but we obtain 
isomorphic finite groups.
\end{rem}

\begin{rem} \label{MdefBrMa2} Let $q\in\cP$ and set $\varphi:=q\varphi_0$. 
Then $\varphi$ is a $p$-isogeny (where $p$ is a prime) and so there is a 
corresponding permutation $\alpha\mapsto \alpha^\dagger$ of $R$; we have 
$\varphi(\alpha^\dagger)=q_\alpha \alpha$ for all $\alpha\in R$, where 
$\{q_\alpha\}$ are the root exponents of $\varphi$. Let $\sigma \colon
\bW \rightarrow \bW$ be the group automorphism in Remark~\ref{MdefBrMa1}. 
By Remark~\ref{MdefisogR1}, we have $\sigma(w_\alpha)=w_{\alpha^\dagger}$ 
for all $\alpha \in R$; also recall that the root exponents are positive.
Hence, we conclude that the permutation $\alpha\mapsto \alpha^\dagger$ 
only depends on $\varphi_0$, but not on $q$.
\end{rem}

\begin{rem} \label{MdefBrMa3} Let us fix a base $\Pi$ of $R$. Since any
two bases can be tranformed into each other by a unique element of $\bW$,  
there is a unique $w \in \bW$ such that, if we replace $\varphi_0$ by
$\varphi_0':= \varphi_0w$, then $\Pi^\dagger=\Pi$ where $\alpha \mapsto
\alpha^\dagger$ is the permutation induced by $\varphi_0'$; see 
Remarks~\ref{Misosimple} and \ref{MdefBrMa2}. Assume now that this is the 
case. Let $q \in \cP$ and $\bG,\bT,F$ be as in Remark~\ref{MdefBrMa0}.
Then $\bT$ lies in the $F$-stable Borel subgroup $\bB=\langle \bT, 
\bU_\alpha \mid \alpha \in R^+\rangle$ (where $R^+$ are the positive 
roots with respect to $\Pi$) and we are in the setting of \ref{MstdcalC}, 
where $\bT_0:= \bT$. So, by Remark~\ref{Morderform1}, we have $|\bG^F|=
|\G|(q)$ and this is also equal to the expression in 
Theorem~\ref{Morderform}. Since this holds for all
$q\in\cP$, we obtain an identity of rational functions in~$y$:
\begin{equation*}
|\G|=y^{|R|/2}\det(y\, \id_{X_\R}-\varphi_0^{-1})\sum_{w \in \bW^\sigma}
y^{l(w)}. \tag{a}
\end{equation*}
Thus, the rational function $|\G|$ actually is a polynomial in $y$ such 
that $|\bG^F|=|\G|(q)$. This provides the justification for calling $|\G|$
the {\em order polynomial} of~$\G$ (see also \cite[1.12]{BrMa}). Now
let $K\subseteq \R$ be a subfield such that $\det(y\, \id_{X_\R}-
\varphi_0^{-1})\in K[y]$. Since $\varphi_0$ has finite order, all 
eigenvalues of this polynomial are roots of unity. By \cite[2.1]{St68}, 
an analogous result is also true for the term $\sum_{w} y^{l(w)}$ in (a).
So there is a factorisation
\begin{equation*}
|\G|=y^{|R/2|}\times \mbox{ product of cyclotomic polynomials in 
$K[y]$}.\tag{b}
\end{equation*}
If $\bG$ is simple, then such factorisations can be seen explicitly in 
Table~\ref{Mordertab} (p.~\pageref{Mordertab}).
\end{rem}

\begin{rem} \label{MdefBrMa0a} Let $\Pi=\{\alpha_s\mid s\in S\}$ be a base
of $R$ and $C=(c_{st})_{s,t\in S}$ be the corresponding Cartan matrix; also 
choose a $\Z$-basis of $X$. Then $\cR$ is determined by a factorisation 
$C=\breve{A}\cdot A^{\text{tr}}$ as in Remark~\ref{Mcorbrulu1}. Assume that 
$\varphi_0$ is chosen such that the permutation of $R$ induced by $\varphi_0$
leaves $\Pi$ invariant (which is possible by Remark~\ref{MdefBrMa3}). Let 
$Q$ be the matrix of $\varphi_0\colon X_\R \rightarrow X_\R$ (with respect 
to the chosen basis of $X$). If $q\in \cP$, 
then $q\varphi_0$ is a $p$-isogeny (for some prime $p$) and the conditions 
(MI1), (MI2) in \ref{Mdefmatrixisog} show that $qQ A^{\text{tr}}=
A^{\text{tr}}P^\circ$ and $P^\circ \breve{A}= q \breve{A}Q$, where 
$P^\circ$ is a monomial matrix whose non-zero entries are all powers of~$p$. 
It follows that 
\[ Q A^{\text{tr}}=A^{\text{tr}}Q^\circ \qquad \mbox{and} \qquad Q^\circ 
\breve{A}= \breve{A}Q,\]
where $Q^\circ:=q^{-1}P^\circ$ is a monomial matrix; each non-zero entry 
of $Q^\circ$ is a positive real number such that some positive power of it 
is an integral power of~$p$. Now note that, although the pair $(P,P^\circ)$
is used in the construction, $Q^\circ$ is uniquely determined by $Q$ and, 
hence, independent of $(P,P^\circ)$. There are two cases:
\begin{itemize}
\item[(I)] All non-zero entries of $Q^\circ$ are equal to $1$. Then 
$\varphi_0$ is a $1$-isogeny, as in Example~\ref{Mquasisplit}. Consequently,
the set $\cP$ consists of {\em all} prime powers. This is what is called 
the ``{\em cas g\'en\'eral }'' in \cite[\S 1]{BrMa}.
\item[(II)] Otherwise, there is a unique prime number $p$ such that each
non-zero entry of $Q^\circ$ has the property that some positive power of it
is a positive integral power of $p$. In this case, $\cP$ will only consist of
positive real numbers $q$ such that $q\varphi_0$ is a $p$-isogeny for this
prime $p$.
\end{itemize}
For example, consider the root datum of Cartan type $A_1\times A_1$
in Example~\ref{Mstrangexp}(b), where $\varphi_0$ is determined by a
certain matrix of order $2$, denoted $P_0$. Then 
\[ Q=P_0=\left(\begin{array}{cc} 0 & 1 \\ 1  & 0  \end{array}\right)\qquad
\mbox{and}\qquad Q^\circ=\left(\begin{array}{cc} 0 & 1/2  \\ 2 & 0 
\end{array}\right).\]
So we are in case (II) where $p=2$ and $\cP=\{2^m \mid m\geq 1\}$. 
Similarly, the complete root data of the Suzuki and Ree groups 
in Example~\ref{Msuzuki2} are of type (II), where $\cP=\{\sqrt{2}^{2m+1} 
\mid m\geq 0\}$ (for ${^2\!B}_2$, ${^2\!F}_4$) or $\cP=
\{\sqrt{3}^{2m+1} \mid m\geq 0\}$ (for ${^2\!G}_2$). 
\end{rem}

\begin{defn}[See \protect{\cite[p.~250]{BrMa}, \cite[1.5]{BMM0}}] 
\label{Mennola} The \nm{Ennola dual} of a complete root datum
$\G=\bigl((X,R,Y,R^\vee),\varphi_0 \bW\bigr)$ is defined by 
\[\G^-:= \bigl((X,R,Y,R^\vee),-\varphi_0 \bW\bigr).\]
(Note that $\G^-$ is a complete root datum since, for any $p$-isogeny of
root data $\varphi\colon X\rightarrow X$, the map $-\varphi$ also is a 
$p$-isogeny of root data; in particular, $\cP_{\G^-}=\cP_{\G}$.)
In this situation, we write $\G(-q):=\G^-(q)$ for any $q\in \cP_{\G}$. 
We have
\[ |\G^-|(y)=(-1)^{\text{rank} X} |\G|(-y).\]
For the origin of the name ``Ennola dual'', see Example~\ref{Mennola1}
below.
\end{defn}

\begin{exmp} \label{Mennola1} (a) Assume that $-\id_X \in\bW$. Then, clearly,
we have $\G^-=\G$ and $|\G^-|=|\G|\in \R[y]$. 

(b) Let $\bG=\GL_n(k)$ and $\bT_0\subseteq \bG$ be the maximal torus 
consisting of the diagonal matrices in $\bG$. We have described the 
corresponding root datum in Example~\ref{rootdatGL}. If we set $\varphi_0=
\id_{X_\R}$ and denote by $\G$ the corresponding complete root datum, 
then $\G(q)\cong \GL_n(q)$ for all prime powers $q$. We claim that
\[\G(-q)\cong \GU_n(q)\qquad \mbox{for all $q\in \cP_\G$}.\]
This is seen as follows. Let $\tau\colon \bG\rightarrow \bG$ be the
automorphism which sends an invertible  matrix to its transpose inverse.
Then $\tau(\bT_0)=\bT_0$ and the induced map on $X$ is $-\id_X$. (Thus, 
$\tau$ is a concrete realisation of the isogeny in Example~\ref{Mopposite}.) 
Let $F_q\colon\bG\rightarrow\bG$ be the standard Frobenius map (raising
every matrix entry to its $q$-th power). Then $\tau$ commutes with $F_q$
and so $F'=\tau\circ F_q$ is a Frobenius map on $\bG$; see
Remark~\ref{MintrinsF}(c). We have $F'(\bT_0)=\bT_0$ and the induced map on 
$X$ is given by $-q\,\id_X$. Thus, $(\bG,F')$ gives rise to the complete
root datum $\G^-$; finally, note that $\bG^{F'}\cong \GU_n(q)$. (The 
difference between this realisation of $\GU_n(q)$ and the one in
Example~\ref{Mquasisplit} is that, here, $\bT_0$ is not a maximally split
torus for $F'$.) Now the identity $|\G^-|(y)=(-1)^{\text{rank} X} |\G|(-y)$
gives an {\em a priori} explanation for the fact that the order 
formula for $\GU_n(q)$ in Table~\ref{Mordertab} (p.~\pageref{Mordertab}) 
is obtained from that of $\GL_n(q)$ by simply changing $q$ to $-q$ (and 
fixing the total sign). Ennola \cite{En63} observed that a similar statement
should even be true for the irreducible characters of these groups; we will 
discuss this in further detail at a later stage.
\end{exmp}

\begin{exmp} \label{Mdefdualcr} The \nm{dual complete root datum} of
$\G=\bigl((X,R,Y,R^\vee),\varphi_0 \bW\bigr)$ is defined by 
\[\G^*:= \bigl((Y,R^\vee,X,R),\varphi_0^\trp \bW\bigr),\]
where $\varphi_0^\trp\colon Y_{\R}\rightarrow Y_{\R}$ is the transpose map
defined, as in \ref{Mhomrootdata}, through the canonical extension of the 
pairing $\langle \;,\; \rangle \colon X\times Y\rightarrow \Z$ to a pairing 
$X_\R\times Y_\R \rightarrow \R$. Here, we also use the identification
of $\bW^\vee\subseteq \Aut(Y)$ with $\bW$, as in Remark~\ref{MidentW}. We 
have $\cP_{\G^*}=\cP_\G$; furthermore, by 
Remark~\ref{Morderform1}(b), we also have $|\G^*|(y)=|\G|(y)$. 
Now, for each $q\in \cP_\G$, we obtain a finite group $\G(q)$ (arising 
from a pair $(\bG,F)$ as in Remark~\ref{MdefBrMa0}) and a finite group
$\G^*(q)$ (arising from an analogous pair $(\bG^*,F^*)$). We then see that 
$(\bG,F)$ and $(\bG^*,F^*)$ are in duality as in Definition~\ref{Mdefdual}.
\end{exmp}

\begin{defn}[See \protect{\cite[1.1]{BrMa}}] \label{MdefBrMa4} Let 
$\G=\bigl((X,R,Y,R^\vee),\varphi_0\bW\bigr)$ be a complete root datum. 
For any $w \in \bW$, the complete root datum 
\[\G_w:=\bigl((X,\varnothing, Y, \varnothing),\varphi_0w^{-1}
\bigr)\]
is called a \nm{maximal toric sub-datum} of $\G$. (We choose $w^{-1}$ here
in order to have consistency with the order formulae below and the 
notation in \cite[2.1]{LuB}.) In general, $\G$ is said to be a \nm{toric 
datum} if $R=\varnothing$; in this case, a corresponding connected reductive 
algebraic group is a torus.
\end{defn}

\begin{abs} \label{Mftori} Let $\G=\bigl((X,R,Y,R^\vee),\varphi_0\bW\bigr)$ 
be a complete root datum. We assume that $\varphi_0$ is chosen 
such that the permutation of $R$ induced by $\varphi_0$ leaves a base of 
$R$ invariant (which is possible by Remark~\ref{MdefBrMa3}). Thus, if
$q\in \cP$ and $\bG,\bT,F$ are as in Remark~\ref{MdefBrMa0}, then we 
are in the setting in \ref{MstdcalC}, where $\bT_0:=\bT$. 

Now let $w \in \bW$ and consider the maximal toric sub-datum $\G_w$ in 
Definition~\ref{MdefBrMa4}. The corresponding order polynomial is just 
given by 
\[ |\G_w|=\det(y\, \id_{X_\R}-w\varphi_0^{-1})\in \R[y].\]
Let $q \in \cP$ and $\bG,\bT_0,F$ be as above. Let $\dot{w}$ be a 
representative of $w$ in $N_{\bG}(\bT_0)$. By Theorem~\ref{langst} 
(Lang--Steinberg), we can write $\dot{w}=g^{-1}F(g)$ for some $g\in \bG$. 
Then $\bT':=g\bT_0g^{-1}$ is an $F$-stable maximal torus of $\bG$; we say 
that $\bT'$ is a \nm{torus of type $w$}. Now conjugation with $g$ defines 
an isomorphism of algebraic groups from $\bT'$ onto $\bT_0$. This 
isomorphism sends $\bT'^F$ onto the subgroup
\[ \bT_0[w]:=\{t\in \bT_0 \mid F(t)=\dot{w}^{-1}t\dot{w}\}\subseteq\bT_0.\]
(Another common notation for this subgroup is $\bT_0^{wF}$.)
Note that $\bT_0[w]$ only depends on $w$, but not on the choice of $\dot{w}$.
Furthermore, the isomorphism $\bT'\cong \bT_0$ induces an isomorphism of 
abelian groups $X(\bT') \cong X(\bT_0)$. One easily checks that, under
this isomorphism and the identification $X=X(\bT_0)$, the map induced
by $F$ on $X(\bT')$ corresponds to the map $\varphi_0 w^{-1}\colon X 
\rightarrow X$. (See \cite[3.3.4]{Ca2} for further details.)
By \cite[3.3.5]{Ca2}, this implies that 
\[ |\bT'^F|=|\bT_0[w]|=|\G_w|(q)=\det(q\,\id_{X_\R}-w\varphi_0^{-1}).\] 
Thus, $\bT'^F\cong \bT_0[w]$ is a member of the series of finite groups of
Lie type defined by the complete root datum $\G_w$. Since $|\bT'^F|$
divides $|\bG^F|$ and since this holds for all $q \in \cP$, we conclude that 
\[ |\G_w| \quad \mbox{divides} \quad |\G| \quad \mbox{in $\R[y]$}.\]
Conversely, since all maximal tori are conjugate in $\bG$, an arbitrary 
$F$-stable maximal torus is of the form $g\bT_0 g^{-1}$ for some $g\in \bG$ 
such that $g^{-1}F(g)\in N_{\bG}(\bT_0)$. If $w$ denotes the image of 
$g^{-1}F(g)$ in $\bW$, then $g\bT_0 g^{-1}$ is a torus of type $w$, as 
above. This discussion shows that the subgroups of $\bG^F$ arising from 
$F$-stable maximal tori can all be realized as subgroups of the form 
$\bT_0[w]\subseteq \bT_0$, for various $w\in \bW$. 
\end{abs}

In later chapters, we will see that various other classes of subgroups
of $\bG^F$ fit into the framework of complete root data. The general
formalism is further developed in \cite{BMM0}, \cite{BMM1}, \cite{BMM2}.
Note that, even with our slightly more general definition, any complete  
root datum as above defines a {\em reflection datum} as in 
\cite[Def.~2.6]{BMM2}, over a suitable subfield $K\subseteq \R$.

\section{Regular embeddings} \label{sec:regemb}

Lusztig's work \cite{LuB}, \cite{Lu5} (to be discussed in more detail in 
later chapters) shows that the character theory of finite groups of Lie 
type is considerably easier when the center of the underlying algebraic 
group is connected. Thus, when trying to prove a result about a general 
finite group of Lie type, it often happens that one first tries to 
establish this result in the case where the center is connected. The
concept of ``regular embedding'' provides a technical tool in order
to reduce a general statement to the connected center case. A major 
result on representations in this context will be stated in 
Theorem~\ref{multfree}. 

\begin{defn} \label{Mdefregemb}
Let $\bG$, $\bG'$ be connected reductive algebraic groups over 
$k=\overline{\F}_p$ and $F\colon \bG\rightarrow \bG$, $F'\colon \bG'
\rightarrow \bG'$ be Steinberg maps. Let $i\colon \bG \rightarrow \bG'$ 
be a homomorphism of algebraic groups such that $i\circ F=F'\circ i$.
Following \cite[\S 7]{Lu5}, we say that $i$ is a \nm{regular embedding} 
if $\bG'$ has connected center, $i$ is an isomorphism of $\bG$ with a 
closed subgroup of $\bG'$ and $i(\bG), \bG'$ have the same derived subgroup.

Note that $\bGder'\subseteq i(\bG)$ and so $i(\bG)$ is normal in
$\bG'$ with $\bG'/i(\bG)$ abelian. Then the finite group $i(\bG^F)=
i(\bG)^{F'}$ contains the derived subgroup of the finite group 
$\bG'^{F'}$ and so $i(\bG^F)$ is normal in $\bG'^{F'}$ with 
$\bG'^{F'}/i(\bG^F)$ abelian. Thus, as far as the representation theory
of $\bG^F$ and of $\bG'^{F'}$ is concerned, we are in a situation where 
Clifford theory (with abelian factor group) applies.
\end{defn}

\begin{exmp} \label{Mregembexp1}
(a) Let $\bG$ be a connected reductive algebraic group with a connected 
center and $F\colon \bG\rightarrow\bG$ be a Steinberg map. Then 
$\bGder$ is semisimple and, clearly, $\bGder \subseteq \bG$ is a 
regular embedding. A standard example is given by $\bG=\GL_n(k)$ where 
$\bGder=\SL_n(k)$; note that this works for both the Frobenius maps in 
Example~\ref{Mquasisplit}, where either $\bG^F=\GL_n(q)$ or $\bG^F=\GU_n(q)$.

(b) Let $\bG=\SL_n(k)$ and $F\colon\bG\rightarrow \bG$ be a Frobenius map.
Without using $\GL_n(k)$ directly, we can {\it construct} a regular embedding 
$i\colon\bG\rightarrow \bG'$ as follows. Let 
\[\bG':=\{(A,\xi)\in M_n(k) \times \bkm\mid \xi\det(A)=1\};\]
then $\bZ(\bG')=\{(\xi I_n,\xi^{-n})\mid \xi\in\bkm\}$ is connected and 
$\dim \bZ(\bG')=1$. For $A\in\bG$ we set $i(A):=(A,1)\in\bG'$; then $i$ is 
a closed embedding. A Frobenius map $F' \colon\bG'\rightarrow\bG'$ is 
defined by $F'(A,\xi)=(F(A),\xi^q)$ if $\bG^F=\SL_n(q)$, and by $F'(A,
\xi) =(F(A), \xi^{-q})$ if $\bG^F=\SU_n(q)$. In Lemma~\ref{Mregemex} below,
the basic idea of this construction is generalised to an arbitrary connected 
reductive group~$\bG$.

(c) Let $n\geq 2$ and $\bG\subseteq \GL_n(k)$ be one of the classical 
groups in \ref{subsecclassic}. Then one can apply a similar construction
as in (b). In each case, $\bZ(\bG)$ consists of the scalar matrices in 
$\bG$. If $\bG=\mbox{SO}_2(k)$, then $\bG=\bZ(\bG) \cong \bkm$; 
otherwise, we have $\bZ(\bG)=\{\pm I_n\}$ where $I_n$ is the identity 
matrix. So let us now assume that $\mbox{char}(k)\neq 2$ and $\bZ(\bG)=
\{\pm I_n\}$. Then $\bG=\Gamma(Q_n,k)$ where $Q_n^\trp=\pm Q_n$. We set 
\[\bG'=C\Gamma(Q_n,k):=\{(A,\xi)\in M_n(k)\times \bkm\mid  A^\trp Q_nA=
\xi Q_n\}; \]
this is called the \nm{conformal group} corresponding to $\bG$. Then
$\bG'$ is a linear algebraic group such that $\bZ(\bG')=\{(\xi I_n,\xi^2)
\mid \xi\in\bkm\}$ is connected and $\dim \bZ(\bG')=1$. Consider the closed 
subgroup $\bG_1=\{(A,1)\mid A\in \bG\} \subseteq \bG'$. Then we have an 
injective homomorphism of algebraic groups $i \colon \bG\rightarrow 
\bG_1$, $A\mapsto (A,1)$, with inverse given by $(A,1)\mapsto A$. Hence,
$i$ is a closed embedding. 

Let $F\colon\GL_n(k)\rightarrow \GL_n(k)$ be the standard Frobenius map 
(raising each entry of a matrix to its $q$th power). Then $F$ restricts 
to a Frobenius map on $\bG$. The map $F'\colon \bG'\rightarrow \bG'$, 
$(A,\xi)\mapsto (F(A),\xi^q)$, is easily seen to be a Frobenius map such 
that $i\circ F=F'\circ i$. Thus, $i$ is a regular embedding. 

If $n$ is even and $\bG=\mbox{SO}_{n}(k)$, then we also have a ``twisted'' 
Frobenius map $F_1\colon \bG\rightarrow \bG$, $A\mapsto t_n^{-1}F(A)t_n$, 
where 
\[\renewcommand{\arraystretch}{1.2} t_n:=
\left[\begin{array}{c|c|c} \; I_{m-1}\; &0&0 \\ \hline  0 & 
\begin{array}{cc}0 & 1 \\ 1 & 0\end{array} & 0  \\\hline 0&0 &
\;  I_{m-1}\;\end{array}\right]\in \mbox{GO}_{2m}(k) \qquad (m=n/2).\] 
This gives rise to the finite ``non-split'' orthogonal group $\bG^{F_1}=
\mbox{SO}_{n}^-(q)$; see \cite[4.1.10(d)]{mybook}. Again, we have a
Frobenius map $F_1'\colon \bG'\rightarrow \bG'$, $(A,\xi)\mapsto (F_1(A),
\xi^q)$, such that $i\circ F_1=F_1'\circ i$. Thus, $i$
also is a regular embedding with respect to $F_1$. (These examples
already appeared in \cite[\S 8.1]{Lu1}.)
\end{exmp}

We have the following general existence result.

\begin{lem}[Cf.\ \protect{\cite[1.21]{DeLu}}] \label{Mregemex} Let $\bG$ 
be connected reductive and $F\colon \bG\rightarrow \bG$ be a Steinberg 
map. Let $\bZ$ be the center of $\bG$ and $\bS\subseteq \bG$ be an
$F$-stable torus such that $\bZ \subseteq \bS$. (For example, one could 
take any $F$-stable maximal torus of $\bG$.) Let $\bG'$ be the quotient 
of $\bG \times \bS$ by the closed normal subgroup $\{(z,z^{-1}) \mid z 
\in \bZ\}$. Let $\bS'$ be the image of $\{1\}\times \bS\subseteq \bG
\times\bS$ in $\bG'$. Then the map $F'\colon \bG' \rightarrow \bG'$ 
induced by $F$ is a Steinberg map and the map $i \colon \bG \rightarrow \bG'$ 
induced by $\bG \rightarrow \bG\times \bS$, $g\mapsto (g,1)$, is a 
regular embedding, where $\bS'$ is the center of $\bG'$.
\end{lem}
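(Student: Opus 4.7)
The plan is to verify the four ingredients of Definition~\ref{Mdefregemb} separately: that $\bG'$ is a connected reductive algebraic group, that $F'$ is a well-defined Steinberg map, that $i$ is a closed embedding of algebraic groups intertwining $F$ and $F'$, and that $\bS'=\bZ(\bG')$ is connected with $\bG'$ and $i(\bG)$ sharing the same derived subgroup. First I would check that $\bK:=\{(z,z^{-1})\mid z\in\bZ\}$ is a closed normal subgroup of $\bG\times\bS$: closedness comes from realising $\bK$ as the image of $\bZ$ under the closed embedding $z\mapsto(z,z^{-1})$, and normality follows because $\bK\subseteq\bZ\times\bS=\bZ(\bG\times\bS)$ (using that $\bS$ is abelian and $\bZ\subseteq\bZ(\bG)$). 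By \ref{subsec1quot}, $\bG'$ is then an affine algebraic group; it is connected as the image of $\bG\times\bS$ and reductive by \ref{Mconnredcomp}(b). Since $\bS$ is $F$-stable, $F|_{\bS}$ is a Steinberg map on the torus $\bS$ (by Remark~\ref{frobclosed}), so $F\times F|_{\bS}$ is a Steinberg map on $\bG\times\bS$ that preserves $\bK$ (as $F(\bZ)=\bZ$), and Lemma~\ref{Mgenfrob2} yields that the induced $F'$ is a Steinberg map on $\bG'$.

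Next I would treat the map $i$. It is clearly a homomorphism of algebraic groups that intertwines $F$ with $F'$, with trivial abstract kernel since $(g,1)\in\bK$ forces $g=1$. Its image is closed because the preimage of $i(\bG)$ under the quotient map $\bG\times\bS\to\bG'$ equals $\bG\times\bZ$. The subtle point---and the step I expect to be the main obstacle---is to show that $i$ is genuinely an isomorphism onto its image rather than merely a bijective homomorphism, since in positive characteristic bijective homomorphisms of algebraic groups need not be isomorphisms (\ref{subsec1wrong}). I would bypass this by exhibiting a retraction: the morphism of affine varieties $\psi\colon\bG\times\bS\to\bG$, $(g,s)\mapsto gs$ (well defined because $\bS\subseteq\bG$), satisfies
\[\psi\bigl((z,z^{-1})(g,s)\bigr)\,=\,zg\cdot z^{-1}s\,=\,gs\,=\,\psi(g,s),\]
since $z\in\bZ$ is central in $\bG$, so $\psi$ descends through the quotient to a morphism $\bar\psi\colon\bG'\to\bG$ of affine varieties with $\bar\psi\circ i=\id_\bG$. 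This forces $i$ to be an isomorphism of $\bG$ onto the closed subgroup $i(\bG)$ of $\bG'$.

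Finally I would compute $\bZ(\bG')$ and compare derived subgroups. Unravelling the centrality condition for $(g,s)\bK$ and using that $\bS$ is abelian, one finds that $(g,s)\bK\in\bZ(\bG')$ if and only if $g\in\bZ$; but then $(z,s)\bK=(1,sz)\bK\in\bS'$, whence $\bZ(\bG')=\bS'$. Since $\bS'$ is the image of the connected torus $\bS$ under a morphism, it is connected. The decomposition $\bG\times\bS=(\bG\times\{1\})(\{1\}\times\bS)$ gives $\bG'=i(\bG)\cdot\bS'$, and because $\bS'$ is central in $\bG'$ the commutator $[h_1 s_1,h_2 s_2]$ equals $[h_1,h_2]$ for $h_i\in i(\bG)$ and $s_i\in\bS'$; consequently $[\bG',\bG']=[i(\bG),i(\bG)]=i(\bGder)$, so $i(\bG)$ and $\bG'$ share the same derived subgroup, completing the verification that $i$ is a regular embedding.
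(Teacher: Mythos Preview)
Your proposal is correct and follows essentially the same approach as the paper. Both arguments hinge on the same retraction idea: the paper uses the multiplication map $\pi\colon\bG\times\bZ\to\bG$, $(g,z)\mapsto gz$, restricted to the preimage of $i(\bG)$, to produce an inverse of $i_1\colon\bG\to i(\bG)$; you extend this to $\psi\colon\bG\times\bS\to\bG$, $(g,s)\mapsto gs$, on all of $\bG\times\bS$, obtaining a variety retraction $\bar\psi\colon\bG'\to\bG$ with $\bar\psi\circ i=\id_\bG$. The remaining verifications (connectedness and reductivity of $\bG'$, the Steinberg property of $F'$, the identification $\bZ(\bG')=\bS'$, and the equality of derived subgroups) are handled identically in substance.
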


\begin{proof} By Lemma~\ref{Mgenfrob3}, $\bG'$ is reductive and,
by Lemma~\ref{Mgenfrob2}, $F'\colon \bG'\rightarrow \bG'$ is a Steinberg 
map. Furthermore, one easily sees that $i$ is injective, 
that $i\circ F=F'\circ i$ and that $\bS'=\bZ(\bG')$. (Thus, the center 
of $\bG'$ indeed is connected.) Let $\bZ':=\{(z,z^{-1}) \mid z \in 
\bZ\}$ and $\bH:=i(\bG)=(\bG\times\bZ)/\bZ' \subseteq\bG'$. We have 
$\bG'=\bH.\bS'$. Since $\bS'=\bZ(\bG')$, it follows that $\bGder'=
\bH_{\text{der}}=i(\bGder)$. Now we claim that $i_1\colon\bG \rightarrow 
\bH$, $g\mapsto \iota(g)$, is an isomorphism of algebraic groups. To 
see this, consider the homomorphism $\pi\colon \bG\times \bZ \rightarrow 
\bG$, $(g,z)\mapsto gz$. Since $\bZ' \subseteq \ker(\pi)$, we have an 
induced homomorphism $\bar{\pi} \colon\bH \rightarrow \bG$, which is 
obviously inverse to $i_1\colon \bG\rightarrow \bH$. Thus, the claim 
is proved, and it follows that $i$ is a regular embedding.  
\end{proof}

\begin{exmp} \label{Mexpregemb} Let $\bG$ be simple of 
simply-connected type and $F\colon \bG\rightarrow \bG$ be a Steinberg
map. Assume that $\bZ=\bZ(\bG)$ is non-trivial. Let $\bT_0\subseteq \bG$ 
be a maximally split torus and $\bS\subseteq \bT_0$ be a subtorus as in 
Example~\ref{Mcentersc}. Then $\bZ\subseteq \bS$ and we would like to 
perform the construction in Lemma~\ref{Mregemex} using $\bS$. For this 
purpose, we need to check that $\bS$ is $F$-stable. 

To see this, let $\cR=(X,R,Y,R^\vee)$ be the root datum of $\bG$ with 
respect to $\bT_0$. Let $\Pi=\{\alpha_1,\ldots,\alpha_n\}$ be a base for 
$R$, with a labelling as in Table~\ref{Mdynkintbl}. Then $Y=\Z R^\vee$ 
and $\{\alpha_1^\vee, \ldots,\alpha_n^\vee\}$ is a $\Z$-basis of $Y$. 
Now $F$ induces a linear map $\varphi\colon X\rightarrow X$ which is a 
$p$-isogeny of $\cR$. Hence, there is a permutation $i\mapsto i'$ of
$\{1,\ldots,n\}$ and there are integers $q_i>0$ (each an integral power 
of~$p$) such that 
\[\varphi^{\trp}(\alpha_{i'}^\vee)=q_{i'}\alpha_i^\vee \qquad \mbox{for 
$1\leq i \leq n$}.\]
(Note: The permutation $\alpha_i\mapsto \alpha_{i'}$ is inverse to the
permutation $\alpha_i\mapsto \alpha_{i^\dagger}$ in \ref{MstdcalC}.) 
Now consider the isomorphism $Y\otimes_\Z \bkm\rightarrow \bT_0$, $\xi
\otimes \nu \mapsto\nu(\xi)$. Then, for each $\xi\in\bkm$ and $\nu\in Y$, 
the element $\xi\otimes \varphi^\trp(\nu)$ corresponds to $F(\nu(\xi))$.
(See \cite[\S 3.2]{Ca2}.) Thus, in terms of the notation $\bT_0=\{h(\xi_1,
\ldots,\xi_n) \mid \xi_1,\ldots, \xi_n\in\bkm\}$ in Example~\ref{Mcentersc}, 
the action of $F$ on $\bT_0$ is given by 
\[ F\bigl(h(\xi_1,\xi_2,\ldots,\xi_n)\bigr)=h(\xi_{1'}^{q_{1'}},
\xi_{2'}^{q_{2'}}, \ldots,\xi_{n'}^{q_{n'}}) \qquad \mbox{for all $\xi_1,
\ldots, \xi_n\in\bkm$}.\]
First note that we do not need to consider the case where $F$ is a Steinberg 
map but not a Frobenius map. For, this case only occurs in types $B_2$,
$G_2$, $F_4$ and, in all three cases, we have $\bZ=\{1\}$ (since 
$\mbox{char}(k)=2$ in type $B_2$). So let now $F$ be a Frobenius map. 
Then all $q_i$ are equal, to $q$ say, and $i\mapsto i'$ determines a 
symmetry of the Dynkin diagram of $\cR$. If $i\mapsto i'$ is the identity, 
then 
\[F\bigl(h(\xi_1,\xi_2,\ldots, \xi_n)\bigr)=h(\xi_1^q, \xi_2^q, \ldots,
\xi_n^q)\qquad \mbox{for all $\xi_i\in\bkm$}.\]
The cases where there exists a non-trivial permutation $i\mapsto i'$ are 
as follows. 
\begin{itemize}
\item[$A_n$:] $i'=n+1-i$ for $1\leq i\leq n$ and so
\[ F\bigl(h(\xi_1,\xi_2,\ldots,\xi_n)\bigr)=h(\xi_n^q, \xi_{n-1}^q, 
\ldots,\xi_1^q)\qquad \mbox{for all $\xi_i\in\bkm$}.\]
\item[$D_n$:] $1'=2$, $2'=1$, $i'=i$ for $3\leq i\leq n$ and so 
\[ F\bigl(h(\xi_1,\xi_2,\ldots,\xi_n)\bigr)=h(\xi_2^q, \xi_1^q, 
\xi_3^q,\ldots,\xi_n^q)\qquad \mbox{for all $\xi_i\in\bkm$}.\]
\item[$D_4$:] $1'=2$, $2'=4$, $3'=3$, $4'=1$ and so 
\[ F\bigl(h(\xi_1,\xi_2,\xi_3,\xi_4)\bigr)=h(\xi_2^q,\xi_4^q, 
\xi_3^q,\xi_1^q)\qquad \mbox{for all $\xi_i\in\bkm$}.\]
\item[$E_6$:] $1'=6$, $2'=2$, $3'=5$, $4'=4$, $5'=3$, $6'=1$ and so 
\[ F\bigl(h(\xi_1,\xi_2,\xi_3,\xi_4,\xi_5,\xi_6)\bigr)=h(\xi_6^q,\xi_2^q, 
\xi_5^q,\xi_4^q,\xi_3^q,\xi_1^q)\qquad \mbox{for all $\xi_i\in\bkm$}.\]
\end{itemize}
In each case, the description in Example~\ref{Mcentersc} 
immediately shows that $\bS$ is $F$-stable. 
\end{exmp}

\begin{prop}[Cf.\ \protect{\cite[\S 14.1]{LuB}, \cite[\S 10]{Lu5}, 
\cite[5.3]{Lu08a}}] \label{Mexpregemb1} Assume that $\bG$ is simple and 
let $F\colon \bG \rightarrow \bG$ be a Steinberg map. Then there exists 
a regular embedding $i \colon \bG\rightarrow \bG'$ with the following 
properties.
\begin{itemize}
\item[(a)] If $\bG$ is of simply-connected type $D_n$ with $n$ even, 
$\operatorname{char}(k)\neq 2$ and $\bG^F$ is ``untwisted'' (i.e., 
the above permutation $i\mapsto i'$ is the identity), then $\dim \bZ(\bG')=2$
and there is a surjective map $\bG'^F/i(\bG^F)\rightarrow \Z/2\Z\times
\Z/2\Z$.
\item[(b)] In all other cases, $\bG'^F/i(\bG^F)$ is cyclic.  
\end{itemize}
\end{prop}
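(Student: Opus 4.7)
If $\bZ:=\bZ(\bG)$ is already connected (i.e.\ $\bG$ is of adjoint type, or is simply-connected of type $G_2$, $F_4$ or $E_8$, cf.\ Example~\ref{Mvartypes}), one can take $\bG'=\bG$ and $i=\id$, and the quotient is trivial; hence I may assume $\bZ$ is finite and non-trivial. The construction I have in mind is Lemma~\ref{Mregemex}: pick an $F$-stable torus $\bS\subseteq\bG$ containing $\bZ$ and form
\[ \bG'\;=\;(\bG\times\bS)/\{(z,z^{-1})\mid z\in\bZ\}.\]
Then $\bG'$ has connected centre $\bZ(\bG')\cong\bS$, and $i\colon\bG\hookrightarrow\bG'$, $g\mapsto(g,1)\bmod \bZ'$, is a regular embedding with $\bG'/i(\bG)\cong\bS/\bZ$ as algebraic groups with $F$-action. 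Since $i(\bG)$ is connected, Proposition~\ref{Msameorder}(b) gives a natural isomorphism
\[ \bG'^F/i(\bG^F)\;\cong\;(\bS/\bZ)^F,\]
so the problem reduces entirely to choosing $\bS$ wisely and analysing this torus of $F$-fixed points.

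For the choice of $\bS$ I would reduce to $\bG$ of simply-connected type by pulling $\bS$ back through $\pi_{\mathrm{sc}}\colon\bGsc\to\bG$, and then use the low-dimensional explicit subtori $\bS\subseteq\bT_0$ with $\bZ\subseteq\bS$ listed in Example~\ref{Mcentersc} (with $\bT_0$ a maximally split maximal torus). The formulas of Example~\ref{Mexpregemb} show $\bS$ is $F$-stable in all Frobenius-twisted situations with non-trivial centre, except for the triality case $\tw{3}D_4$: there the naive $\bS$ is not $F$-stable, and I would replace it by the smallest $F$-stable subtorus of $\bT_0$ containing it, namely $\bS+F(\bS)+F^{2}(\bS)$, which turns out to be three-dimensional. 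By Example~\ref{Mcentersc}, $\dim\bS\leq 1$ in every type except $D_n$ with $n$ even, where $\dim\bS=2$.

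The final step is a case-by-case analysis of $(\bS/\bZ)^F$. Because $\bZ$ is finite, $\bS/\bZ$ is a torus of the same dimension as $\bS$, and the $F$-action is read off from the explicit formulas of Example~\ref{Mexpregemb}. When $\dim\bS=1$ (types $A_n$, $B_n$, $C_n$, $E_6$, $E_7$, and $D_n$ with $n$ odd), $\bS/\bZ\cong\bkm$ and $F$ acts as $\eta\mapsto\eta^{\pm q^{d}}$ for some $d\geq 1$ depending on the twist, so $(\bS/\bZ)^F$ is cyclic. When $\dim\bS=2$, i.e.\ $\bG$ of type $D_n$ with $n$ even: in the untwisted case $F$ acts on $\bS/\bZ\cong\bkm\times\bkm$ as $q$-th power on each factor (via the squaring isogeny $\bS\to\bS$ whose kernel is $\bZ$), yielding $\Z/(q{-}1)\Z\times\Z/(q{-}1)\Z$, which surjects onto $\Z/2\Z\times\Z/2\Z$ since $q$ is odd, proving~(a). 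In the $\tw{2}D_n$ case the Dynkin involution swaps the two $\bkm$-factors, so the $F$-fixed points form a cyclic group of order $q^{2}{-}1$; for $\tw{3}D_4$ on the enlarged three-dimensional $\bS$, the cyclic permutation of order three of the coordinates forces the $F$-fixed subgroup to be cyclic as well.

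The main obstacle I expect is the $\tw{3}D_4$ case: verifying that the enlarged three-dimensional $F$-stable torus produces a genuinely cyclic fixed-point group, which requires explicit coordinate computation and uses that triality cyclically permutes the three non-trivial elements of $\bZ=\Z/2\Z\times\Z/2\Z$ in a single orbit (so the potential Klein-four obstruction collapses after passing to $F$-fixed points). A secondary technicality is the reduction from an arbitrary simple $\bG$ (possibly intermediate between $\bGsc$ and $\bGad$, which occurs in types $A_n$ and $D_n$) to the simply-connected case, which I would handle by noting that any $F$-stable torus of $\bGsc$ containing $\bZ(\bGsc)$ projects to an $F$-stable torus of $\bG$ containing $\bZ$, and the induced isogeny of quotient tori identifies $(\bS_{\mathrm{sc}}/\bZ(\bGsc))^F$ with $(\bS/\bZ)^F$ up to cyclic factors.
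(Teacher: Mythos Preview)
Your approach is the paper's: build $\bG'$ via Lemma~\ref{Mregemex} from an $F$-stable torus $\bS\supseteq\bZ(\bG)$ and identify $\bG'^F/i(\bG^F)\cong(\bS/\bZ)^F$ (the paper phrases this as $\bK^{F'}$ with $\bK=\bG'/\bGder'$, which is the same thing since $\bG=\bGder$). Two differences deserve comment.

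Your caution about $\tw{3}D_4$ is justified: the two-dimensional $\bS$ of Example~\ref{Mcentersc} is \emph{not} $F$-stable under triality, contrary to the blanket assertion at the end of Example~\ref{Mexpregemb}. Indeed $F(h(\xi_1,\xi_2,1,\xi_1\xi_2))=h(\xi_2^q,\xi_1^q\xi_2^q,1,\xi_1^q)$, and the fourth coordinate violates the constraint unless $\xi_2^{2q}=1$. Your proposed enlargement works cleanly: in the cocharacter basis $\nu_1=\alpha_1^\vee+\alpha_4^\vee$, $\nu_2=\alpha_2^\vee+\alpha_4^\vee$, $\nu_3=\alpha_1^\vee+\alpha_2^\vee$ the map $\varphi^\trp$ cyclically permutes $\nu_1\mapsto q\nu_2\mapsto q\nu_3\mapsto q\nu_1$, so the enlarged torus is three-dimensional with $\tilde\bS^F\cong\F_{q^3}^\times$ cyclic; since triality has no fixed points on $\bZ\cong(\Z/2\Z)^2$ one gets $\bZ^F=\bZ_F=0$ and hence $(\tilde\bS/\bZ)^F\cong\tilde\bS^F$. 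The paper's proof, as written, treats only the order-two twist in the $\dim\bK=2$ discussion, so your treatment fills a genuine gap there.

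Your reduction from arbitrary simple $\bG$ to $\bGsc$ is actually cleaner than you fear: if $\pi\colon\bGsc\to\bG$ is the covering and $\bS_{\mathrm{sc}}\supseteq\bZ(\bGsc)$, then setting $\bS=\pi(\bS_{\mathrm{sc}})$ gives $\bS/\bZ(\bG)\cong\bS_{\mathrm{sc}}/\bZ(\bGsc)$ as $F$-tori on the nose (both equal $\bS_{\mathrm{sc}}/\bZ(\bGsc)$), so the ``up to cyclic factors'' caveat is unnecessary. The paper instead handles the non-simply-connected $D_n$ cases ad hoc: for the untwisted case it replaces $\bS$ by a one-dimensional subtorus adapted to $\ker\pi$, and for the twisted case it invokes the conformal group of Example~\ref{Mregembexp1}(c). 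Your uniform projection argument is simpler and equally valid.
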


\begin{proof} We begin by noting that, if $\bZ(\bG)=\{1\}$, then we can take 
$\bG'=\bG$ and $i$ the identity; we have $\bG'^F=i(\bG^F)$ in this 
case. So, for the remainder of the proof, we can assume that 
$\bZ(\bG)\neq\{1\}$. 

First we deal with the case where $\bG$ is simple of simply-connected
type. Let $\bS$ be the torus in Example~\ref{Mcentersc}. We have $\bZ(\bG)
\subseteq \bS$ and $\bS$ is $F$-stable by the discussion in 
Example~\ref{Mexpregemb}. So, applying Lemma~\ref{Mregemex} with $\bS$, we 
obtain a regular embedding $i \colon \bG\rightarrow \bG'$. Since 
$\bG=\bGder$, we have $\bGder'= i(\bG)$. Then $i(\bG^F)=
i(\bG)^{F'}=\bGder'^{F'}$ and so Proposition~\ref{Msameorder}(b) 
implies that
\[ \bG'^F/i(\bG^F)\cong \bK^{F'} \qquad \mbox{where} \qquad \bK:=
\bG'/\bGder'.\]
(We have an induced action of $F'$ on $\bK$ by Lemma~\ref{Mgenfrob2}.)
Since $\bG'=i(\bG).\bS'$ and $\bS'=\bZ(\bG')$, the inclusion $\bS'
\hookrightarrow \bG'$ induces an isogeny  $\bS'\rightarrow \bK$.
Composition with the map $\bS\rightarrow \bS'$ from Lemma~\ref{Mregemex} 
yields an isogeny $f\colon \bS\rightarrow \bK$ such that $f\circ F=
F'\circ f$ and $\ker(f)=\bZ(\bG)$. In particular, $\bK$ is a torus and
$\dim \bK=\dim \bS \leq 2$.  

If $\dim \bK=0$, then $i(\bG^F)=\bG'^F$. If $\dim \bK=1$, then $\bK\cong
\bkm$ and so $\bK^{F'}$ is isomorphic to a finite subgroup of $\bkm$; 
hence, $\bK^{F'}$ is cyclic in this case.

Finally, assume that $\dim \bK=2$. This case only occurs in type $D_n$
with $n$ even, where $\bG=\mbox{Spin}_{2n}(k)$ and $\bZ(\bG)=\{t\in\bS\mid 
t^2=1\}$. Since $\bZ(\bG)\neq \{1\}$, we have $\mbox{char}(k)\neq 2$ and 
$\bZ(\bG)\cong \Z/2\Z\times \Z/2\Z$. If $F$ is ``untwisted'', then 
Remark~\ref{Mremregemb1}(c) below will show that $\bG'^F/i(\bG^F)$ 
has a factor group isomorphic to $\Z/2\Z \times \Z/2\Z$. This completes
the proof of (a). 

It remains to consider the case where $F$ is ''twisted'', with all
root exponents equal to~$q$. By the description in 
Example~\ref{Mexpregemb}, there is an isomorphism of algebraic groups 
$\bS \cong \bkm\times \bkm$ such that the action of $F$ on $\bS$ 
corresponds to the map $(s_1,s_2)\mapsto (s_2^q,s_1^q)$ on $\bkm\times\bkm$. 
Consequently, $\bS^{F}\cong \F_{q^2}^\times$ is cyclic. We want to show
that a similar argument works for $\bK$. To see this, let 
$\{\varepsilon_1,\varepsilon_2\}$ be a $\Z$-basis of $X(\bS)$. Then $F$
induces the linear map $\varphi\colon X(\bS)\rightarrow X(\bS)$ such
that $\varphi(\varepsilon_1)=q\varepsilon_2$ and $\varphi(\varepsilon_2)=
q\varepsilon_1$. Now consider the isogeny $f\colon \bS\rightarrow\bK$ 
mentioned above. Since it has kernel $\bZ(\bG)$, and since $\mbox{char}(k)
\neq 2$, the correspondences in \ref{subsec17} show that 
\[X(\bS)/f^*(X(\bK))\cong X(\bZ(\bG)) \cong \Z/2\Z\times \Z/2\Z.\]
Hence, we must have $f^*(X(\bK))=2X(\bS)$. For $i=1,2$, let $\delta_i
\in X(\bK)$ be such that $f^*(\delta_i)=2\varepsilon_i$.
Then $\{\delta_1,\delta_2\}$ is a $\Z$-basis of $X(\bK)$. Let $\beta
\colon X(\bK)\rightarrow X(\bK)$ be the linear map induced by $F'$. Since 
$f\circ F=F'\circ f$, we also have $\varphi\circ f^*=f^*\circ \beta$.
Hence, we must have $\beta(\delta_1)=q\delta_2$ and $\beta(\delta_2)=
q\delta_1$. Then $\bK \rightarrow \bkm \times \bkm$, $t\mapsto 
(\delta_1(t), \delta_2(t))$, is an isomorphism of algebraic groups such 
that the action of $F'$ on $\bK$ corresponds to the map $(t_1,t_2)
\mapsto (t_2^q, t_1^q)$ on $\bkm\times \bkm$. Consequently, $\bK^{F'}
\cong \F_{q^2}^\times$ is also cyclic.

This settles all cases where $\bG$ is simple of simply-connected type.
Now let $\bG_1$ be simple and $F_1\colon \bG_1 \rightarrow \bG_1$ be a 
Steinberg map. We can assume that $\bZ(\bG_1)\neq \{1\}$ and that $\bG_1$ 
is not of simply-connected type. By Example~\ref{Mvartypes}, there
are only two cases to consider: $\bG_1$ of type $A_n$ or $D_n$. 

Let $\bG$ be simple of simply-connected type such that $\bG,\bG_1$ have 
the same Cartan type. By Proposition~\ref{st916}, we can find an isogeny
$f\colon\bG\rightarrow \bG_1$ and a Steinberg map $F\colon\bG\rightarrow
\bG$ such that $f\circ F=F_1\circ f$. We have $\ker(f)\subseteq\bZ(\bG)$ 
and $f(\bZ(\bG))=\bZ(\bG_1)$. If $\bG_1$ is not of type $D_n$ with $n$ 
even, then let $\bS$ be an $F$-stable torus in $\bG$ with $\bZ(\bG)\subseteq 
\bS$ and $\dim \bS\leq 1$, as above. Then $\bS_1:=f(\bS) \subseteq \bG_1$
is an $F_1$-stable maximal torus such that $\bZ(\bG_1)\subseteq \bS_1$ and
$\dim \bS_1\leq 1$. Performing the construction in Lemma~\ref{Mregemex} 
on $\bG_1$ using $\bS_1$, we obtain a regular embedding $i_1\colon 
\bG_1\rightarrow \bG_1'$ such that $\bG_1'^{F_1'}/ i_1(\bG_1^{F_1})$
is cyclic, by the same argument as above. 

It remains to consider the case where $\bG_1$ is of type $D_n$ with $n$ 
even. Since we are also in the case where $\bZ(\bG_1) \neq \{1\}$ and 
$\bG_1$ is not of simply-connected type, we must have $\mbox{char}(k)
\neq 2$ and $|\bZ(\bG_1)|=|\ker(f)|=2$. Assume first that 
$\bG_1^{F_1}$ is untwisted. Then we can argue as follows. Recall that 
\[ \ker(f)\subseteq \bZ(\bG)=\{h(\xi_1,\xi_2,1, \xi_1\xi_2,1, \xi_1 
\xi_2,\ldots )\mid \xi_1^2=\xi_2^2=1\}.\]
Let $\tilde{\bS}$ be one of the following $1$-dimensional subtori of $\bS$:
\[ \{h(\xi,1,1, \xi,1, \xi,\ldots )\mid \xi\in\bkm\} \quad \mbox{or}
\quad \{h(1,\xi,1,\xi,1, \xi,\ldots )\mid \xi\in\bkm\}.\]
Each of these is $F$-stable, and we can choose $\tilde{\bS}$ such that
$\bZ(\bG) \subseteq \ker(f).\tilde{\bS}$. But, in this case, $\bZ(\bG_1)
\subseteq f(\tilde{\bS})$ and so the same construction as above, using 
$\bS_1=f(\tilde{\bS})$, yields the desired conclusion. 
Finally, if $\bG_1^{F_1}$ is twisted, then $\bG_1\cong \mbox{SO}_{2n}(k)$ 
(see Example~\ref{Mvartypes}(c)) and so a regular embedding with the
desired properties is obtained as in Example~\ref{Mregembexp1}(c), using 
the corresponding conformal group.
\end{proof}

The following remark contains a number of useful, purely group-theoretical 
properties of a regular embedding.

\begin{rem} \label{Mremregemb1} Let $i\colon \bG\rightarrow\bG'$ be a 
regular embedding. To simplify notation, we identify $\bG$ with its image 
in $\bG'$ and use the symbol $F$ for both Steinberg maps; thus, $F'=F$, 
$\bG\subseteq \bG'$ and $\bGder=\bGder'$. Let $\bZ$ denote the center of 
$\bG$ and $\bZ'$ denote the center of $\bG'$. Then it is rather
straightforwad to prove the following results (see \cite[\S 1]{Le78} 
for details).
\begin{itemize}
\item[(a)] We have $\bZ=\bZ'\cap \bG$ and $\bZ^F=\bZ'^F\cap \bG^F$. 
Furthermore, the inclusion $\bG\subseteq \bG'$ induces isomorphisms
$\bG/\bZ\cong \bG'/\bZ'$ and $(\bG/\bZ)^F \cong \bG'^F/\bZ'^F$.
\item[(b)] Let $\bT$ be an $F$-stable maximal torus of $\bG$. Then $\bT'
:=\bT.\bZ'$ is an $F$-stable maximal torus of $\bG'$, and every $F$-stable 
maximal torus of $\bG'$ is of this form. In this situation, we have 
\[\bT=\bG\cap \bT', \qquad \bG'^F=\bG^F.\bT'^F, \qquad \bT^F=\bG^F\cap 
\bT'^F.\]
Furthermore, the inclusion $\bT\subseteq \bT'$ induces an isomorphism 
$N_{\bG}(\bT)/\bT\cong N_{\bG'}(\bT')/\bT'$ which is compatible with the 
action of $F$ on both sides. 
\item[(c)] Let $\bT,\bT'$ be as in (b). Then there are canonical exact 
sequences 
\[\renewcommand{\arraystretch}{1.3} \begin{array}{ccccccc}
\{1\} & \longrightarrow & \bG^F.\bZ'^F & \longrightarrow  &\bG'^F & 
\longrightarrow & (\bZ/\bZ^\circ)_F \\ \{1\} & \longrightarrow & \bT^F.
\bZ'^F & \longrightarrow & \bT'^F & \longrightarrow & (\bZ/\bZ^\circ)_F,
\end{array}\]
where $(\bZ/\bZ^\circ)_F$ is defined in Remark~\ref{Madjquot}; the map
$\bG'^F\rightarrow (\bZ/\bZ^\circ)_F$ is given by sending $g'\in\bG'^F$ 
to $g^{-1}F(g)$ where $g\in \bG$ is such that $g\in g'\bZ'$ (which exists
by (a)). The map $\bT'^F\rightarrow (\bZ/\bZ^\circ)_F$ is given similarly.
\end{itemize}
It follows from (c) that, if $\bZ$ is connected, then $\bG'^F=\bG^F.\bZ'^F$.
\end{rem}
 
\begin{lem}[Cf.\ \protect{\cite[p.~164]{Lu5}}] \label{Mluslem164} Let 
$\bG \subseteq \bG'$ be a regular embedding (notation as in 
Remark~\ref{Mremregemb1}) and $\pi_{\text{ad}} \colon \bG \rightarrow 
\bGad$ be an adjoint quotient (see Remark~\ref{Madjquot}). Then 
$\pi_{\operatorname{ad}}$ has a unique extension to an abstract group
homomorphism $\hat{\pi}_{\operatorname{ad}} \colon \bG' \rightarrow 
\bGad$, and this induces a surjective homomorphism
\[ \bG'^F/\bG^F \rightarrow \bGad^F/\pi_{\operatorname{ad}}(\bG^F).\]
\end{lem}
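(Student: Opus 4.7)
My plan is to construct the extension explicitly using the decomposition $\bG' = \bG.\bZ'$ (which holds because $\bG'_{\operatorname{der}} = \bG_{\operatorname{der}} \subseteq \bG$ and $\bG' = \bZ'.\bG_{\operatorname{der}}'$), then check surjectivity of the induced fixed-point map by a Lang--Steinberg argument on the connected torus $\bZ'$.

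First I would define $\hat{\pi}_{\operatorname{ad}}\colon \bG' \rightarrow \bGad$ by the formula $\hat{\pi}_{\operatorname{ad}}(gz) := \pi_{\operatorname{ad}}(g)$ for $g \in \bG$ and $z \in \bZ'$. To see this is well-defined as a function, suppose $g_1z_1 = g_2z_2$ with $g_i\in \bG$, $z_i \in \bZ'$. Then $g_2^{-1}g_1 = z_2z_1^{-1} \in \bG \cap \bZ' = \bZ$ by Remark~\ref{Mremregemb1}(a); since $\bZ = \ker(\pi_{\operatorname{ad}})$ (Remark~\ref{Madjquot}), we get $\pi_{\operatorname{ad}}(g_1) = \pi_{\operatorname{ad}}(g_2)$. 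Then $\hat{\pi}_{\operatorname{ad}}$ is a group homomorphism because $\bZ'$ is central in $\bG'$ and $\pi_{\operatorname{ad}}$ is already a homomorphism on $\bG$. Uniqueness is immediate: any abstract group extension $\widetilde{\pi}\colon \bG' \rightarrow \bGad$ agrees with $\pi_{\operatorname{ad}}$ on $\bG$ and must kill $\bZ' \subseteq \bZ(\bG')$ (since $\bZ(\bGad) = \{1\}$, as $\bGad$ is of adjoint type), so it coincides with $\hat{\pi}_{\operatorname{ad}}$ on $\bG.\bZ' = \bG'$.

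Next I would verify that $\hat{\pi}_{\operatorname{ad}}$ commutes with $F$: this holds on $\bG$ by construction of $\pi_{\operatorname{ad}}$, and trivially on $\bZ'$ since both $\hat{\pi}_{\operatorname{ad}}|_{\bZ'}$ and $\hat{\pi}_{\operatorname{ad}}|_{\bZ'} \circ F$ are the constant map to $1 \in \bGad$. Consequently $\hat{\pi}_{\operatorname{ad}}$ sends $\bG'^F$ into $\bGad^F$ and $\bG^F$ onto $\pi_{\operatorname{ad}}(\bG^F)$, yielding the well-defined homomorphism $\bG'^F/\bG^F \rightarrow \bGad^F/\pi_{\operatorname{ad}}(\bG^F)$.

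For surjectivity, since $\pi_{\operatorname{ad}}(\bG^F) \subseteq \hat{\pi}_{\operatorname{ad}}(\bG'^F)$, it suffices to show $\hat{\pi}_{\operatorname{ad}}(\bG'^F) = \bGad^F$. Let $y \in \bGad^F$. Since $\pi_{\operatorname{ad}}\colon \bG \rightarrow \bGad$ is surjective, pick $g \in \bG$ with $\pi_{\operatorname{ad}}(g) = y$. Then $\pi_{\operatorname{ad}}(g^{-1}F(g)) = y^{-1}F(y) = 1$, so $g^{-1}F(g) \in \bZ \subseteq \bZ'$. Because $\bZ'$ is a connected torus, Theorem~\ref{langst} applied to $F|_{\bZ'}$ lets us write $g^{-1}F(g) = z^{-1}F(z)$ for some $z \in \bZ'$. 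Setting $g' := gz^{-1} \in \bG.\bZ' = \bG'$, one checks $F(g') = F(g)F(z)^{-1} = g(z^{-1}F(z))F(z)^{-1} = gz^{-1} = g'$, so $g' \in \bG'^F$, and $\hat{\pi}_{\operatorname{ad}}(g') = \pi_{\operatorname{ad}}(g)\cdot 1 = y$. This completes the proof. The main obstacle, which is really only a minor one, is ensuring that $\bG'$ genuinely decomposes as $\bG.\bZ'$ so that the extension formula covers all of $\bG'$; this is essentially built into the definition of a regular embedding via $\bGder = \bGder'$ and the connectedness of $\bZ'$.
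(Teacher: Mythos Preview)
Your proof is correct and follows essentially the same route as the paper's: both define $\hat{\pi}_{\operatorname{ad}}$ by the formula $gz\mapsto \pi_{\operatorname{ad}}(g)$ on $\bG'=\bG.\bZ'$, check well-definedness via $\bG\cap\bZ'=\bZ=\ker(\pi_{\operatorname{ad}})$, and deduce surjectivity on $F$-fixed points from Lang--Steinberg applied to the connected group $\bZ'$. The only cosmetic difference is that the paper packages the last step as an application of Proposition~\ref{Mfrobconj0} (transitive $\bZ'$-action on the fibre), whereas you unwind that lemma explicitly.
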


\begin{proof} Suppose that there exists an abstract homomorphism 
$\hat{\pi}_{\text{ad}}\colon \bG'\rightarrow \bGad$ extending 
$\pi_{\text{ad}}$. Then $\hat{\pi}_{\text{ad}} (\bZ') \subseteq 
\bZ(\bGad)=\{1\}$ and so $\bZ'\subseteq \ker(\hat{\pi}_{\text{ad}})$. Since 
$\bG'=\bG.\bZ'$, it follows that $\hat{\pi}_{\text{ad}}(gz')=\pi(g)$ for all 
$g\in\bG$ and $z'\in\bZ'$. Thus, $\hat{\pi}_{\text{ad}}$ is uniquely 
determined (if it exists). Conversely, it is straightforward to check that 
this formula defines an abstract homomorphism $\bG'\rightarrow \bGad$; 
note that $\ker(\pi_{\text{ad}})=\bZ=\bZ'\cap\bG$ (see 
Remark~\ref{Mremregemb1}(a) for the last equality). Furthermore, 
$\hat{\pi}_{\text{ad}}$ commutes with the action of $F$ on both sides 
and we have $\ker(\hat{\pi}_{\text{ad}})=\bZ'$. Hence, since 
$\hat{\pi}_{\text{ad}}$ extends $\pi_{\text{ad}}$, we obtain an induced 
homomorphism $\bG'^F/\bG^F\rightarrow \bGad^F/\pi_{\text{ad}}(\bG^F)$ and 
all that remains to show is that $\hat{\pi}_{\text{ad}}(\bG'^F)=\bGad^F$. 
This is seen as follows. Let $g\in\bGad^F$. Then 
$\hat{\pi}_{\text{ad}}^{-1}(g)$ is an $F$-stable coset of 
$\ker(\hat{\pi}_{\text{ad}})=\bZ'$. Now the connected group $\bZ'$ acts 
transitively on this coset by multiplication and 
Proposition~\ref{Mfrobconj0} shows that $\hat{\pi}_{\text{ad}}^{-1}(g)^F
\neq \varnothing$.
\end{proof}

In order to obtain further properties of regular embeddings, it will
be useful to characterise these maps entirely in terms of root data. In
particular, this will allow us to show how regular embeddings relate 
to dual groups. 

\begin{lem} \label{Mregasai1} Let $\bG$, $\bG'$ be connected reductive 
groups over $k$ and $f\colon \bG \rightarrow \bG'$ be an isotypy (see 
{\rm \ref{Mdefcentralhom}}). Let $\bT\subseteq \bG$ and $\bT'\subseteq\bG'$ 
be maximal tori such that $f(\bT)\subseteq \bT'$. Let $\varphi\colon X(\bT') 
\rightarrow X(\bT)$, $\chi'\mapsto \chi\circ f|_{\bT}$, be the induced 
homomorphism. Then the following two conditions are equivalent.
\begin{itemize}
\item[(i)] $f$ is an isomorphism of $\bG$ onto a closed subgroup of 
$\bG'$.
\item[(ii)] $f$ is a central isotypy and $\varphi$ is surjective.
\end{itemize}
\end{lem}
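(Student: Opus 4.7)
The plan is to establish both implications by working with root subgroups, appealing to the character-group criterion \ref{subsec17}(a) and, in the harder direction, reducing to the Extended Isogeny Theorem~\ref{thmiso1}(b). For (i)~$\Rightarrow$~(ii), since $f\colon\bG\to f(\bG)$ is an isomorphism, $f|_{\bT}$ is an isomorphism onto the closed subtorus $f(\bT)\subseteq\bT'$, i.e.\ a closed embedding of tori; by \ref{subsec17}(a) the induced map $\varphi$ is therefore surjective. To upgrade the isotypy to a central isotypy in the sense of Definition~\ref{Mdefcentralhom}(b), I would verify that $\varphi$ is a homomorphism of root data (\ref{Mhomrootdata}). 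The isotypy hypothesis gives $\bGder'\subseteq f(\bG)$, so every root subgroup $\bU_{\alpha'}\subseteq\bG'$ lies in $f(\bG)$ and pulls back via the isomorphism $f\colon\bG\to f(\bG)$ to a unique one-dimensional closed connected unipotent subgroup $\bU_\alpha\subseteq\bG$ normalised by $\bT$, hence the root subgroup of a unique $\alpha\in R$. Applying $f$ to the conjugation formula $tu_\alpha(\xi)t^{-1}=u_\alpha(\alpha(t)\xi)$ yields $\varphi(\alpha')=\alpha$, and composing $f$ with $\varphi_\alpha\colon\SL_2(k)\to\bG$ yields $\varphi^\trp(\alpha^\vee)={\alpha'}^\vee$, giving the required bijections $R'\leftrightarrow R$ and $R^\vee\leftrightarrow R'^\vee$.

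For the converse (ii)~$\Rightarrow$~(i), the first task is to show that $f$ is injective on points. By \ref{subsec17}(a), the surjectivity of $\varphi$ forces $f|_{\bT}$ to be a closed embedding, in particular injective. Since $\bZ(\bG)\subseteq C_{\bG}(\bT)=\bT$ by Theorem~\ref{MrootdatumG}, and $\ker(f)\subseteq\bZ(\bG)$ by the isotypy hypothesis, we conclude $\ker(f)\subseteq\ker(f|_{\bT})=\{1\}$. Set $\bH:=f(\bG)\subseteq\bG'$; by \ref{Mconnredcomp} this is a closed connected reductive subgroup containing $\bGder'$, with $\bT_1:=f(\bT)$ a maximal torus. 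Since $\bGder'\subseteq\bH\subseteq\bG'$, we have $\bH_{\operatorname{der}}=\bGder'$, so the root subgroups of $\bH$ relative to $\bT_1$ coincide with those of $\bG'$ relative to $\bT'$.

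The crux is then to verify that $\tilde{f}\colon\bG\to\bH$, obtained from $f$ by restricting the codomain, is itself a central isotypy. Granting this, $\tilde{f}|_{\bT}\colon\bT\to\bT_1$ is an isomorphism (it is $f|_{\bT}$ viewed as a map onto its image) and Theorem~\ref{thmiso1}(b) immediately yields that $\tilde{f}$ is an isomorphism of algebraic groups, whence $f$ realises $\bG$ as the closed subgroup $\bH$ of $\bG'$. To verify the central isotypy property, I would factor $\varphi=\tilde{\varphi}\circ r$, where $r\colon X(\bT')\twoheadrightarrow X(\bT_1)$ is the surjection from \ref{subsec17}(a) for the closed embedding $\bT_1\hookrightarrow\bT'$, and $\tilde{\varphi}\colon X(\bT_1)\to X(\bT)$ is the group isomorphism induced by $\tilde{f}|_{\bT}$. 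Since $\varphi$ is a bijection $R'\to R$ and $\tilde{\varphi}$ is a group isomorphism, $r|_{R'}$ must be injective, hence bijective onto the root set $R_H$ of $\bH$; then $\tilde{\varphi}$ restricts to a bijection $R_H\to R$. The analogous statement for coroots uses that each ${\alpha'}^\vee$ factors through $\bGder'=\bH_{\operatorname{der}}$ and so lifts canonically to a cocharacter of $\bT_1$, giving $R_H^\vee$ as the image of $R^\vee$ under $\tilde{\varphi}^\trp$.

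The main obstacle is this last bookkeeping: one must track characters and cocharacters among the three tori $\bT$, $\bT_1$, and $\bT'$ and confirm that the Weyl-group-determined coroots match correctly after factoring $\varphi$ through $r$. Once this is cleanly set up, the isomorphism criterion in Theorem~\ref{thmiso1}(b) closes the argument immediately.
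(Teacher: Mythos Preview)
Your proof is correct and follows essentially the same route as the paper's: factor $f$ through its image $\bG_1=f(\bG)$, use \ref{subsec17}(a) to control the torus level, and appeal to Theorem~\ref{thmiso1}(b) for (ii)$\Rightarrow$(i). The paper streamlines (i)$\Rightarrow$(ii) by first observing once and for all that the inclusion $i\colon\bG_1\hookrightarrow\bG'$ is itself a central isotypy (since $(\bG_1)_{\text{der}}=\bGder'$ contains all root subgroups), so that $f=i\circ f_1$ is central simply as a composition of an isomorphism with a central isotypy---this bypasses your direct root-subgroup pullback and simultaneously supplies, via the factorisation $\varphi=\varphi_1\circ\varepsilon$ with $\varepsilon$ a surjective homomorphism of root data, exactly the bookkeeping you flag as the main obstacle for (ii)$\Rightarrow$(i).
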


\begin{proof} First note that the assumptions imply that $\bG'=
f(\bG).\bZ(\bG')$ and $f(\bGder)=\bGder'$. Let $\bG_1:=f(\bG)\subseteq 
\bG'$; this is a closed subgroup which is 
connected and reductive (see Lemma~\ref{Mgenfrob2}); furthermore, 
$\bGder'=f(\bGder)=(\bG_1)_{\text{der}}$. Let $\bT_1:=f(\bT) \subseteq 
\bT'$; then $\bT_1$ is a maximal torus of $\bG_1$ (see 
\ref{Mconnredcomp}(a)) and we have $\bT_1=\bG_1\cap \bT'$. (We have $\bG_1
\cap \bT'\subseteq C_{\bG_1}(\bT_1)=\bT_1$ where the last equality holds 
since $\bG_1$ is connected reductive; the inclusion ``$\subseteq$'' is 
clear.) Thus, we have $f=i \circ f_1$ where $f_1 \colon \bG \rightarrow 
\bG_1$ is the restricted map and $i\colon \bG_1 \hookrightarrow \bG'$ is 
the inclusion; it is clear that $i$ is a central isotypy. (Note 
that $(\bG_1)_{\text{der}}$ contains all the root subgroups of $\bG_1$; see 
Remark~\ref{MdirprodG1}.) Correspondingly, we have a factorisation 
$\varphi=\varphi_1\circ \varepsilon$ where $\varphi_1 \colon X(\bT_1)
\rightarrow X(\bT)$ is induced by $f_1$ and $\varepsilon \colon X(\bT')
\rightarrow X(\bT_1)$ is given by restriction. Note that $\varepsilon$ is 
surjective; see \ref{subsec17}.

Now suppose that (i) holds, that is, $f_1\colon \bG\rightarrow \bG_1$ is an 
isomorphism of algebraic groups. Then the composition $f=i\circ f_1$ will 
be a central isotypy. Furthermore, $\varphi_1\colon X(\bT_1)
\rightarrow X(\bT)$ is an isomorphism of abelian groups. Since 
$\varepsilon$ is surjective, it follows that $\varphi=\varphi_1 \circ 
\varepsilon$ must be surjective. Thus, (ii) holds.

Conversely, assume that (ii) holds. Then $\varphi_1$ is also surjective. 
So the correspondences in \ref{subsec17} show that $f_1\colon \bT 
\rightarrow \bT_1$ is a closed embedding. But, $\dim \bT_1=\dim f_1(\bT)=
\dim \bT - \dim \ker(f_1|_{\bT})=\dim \bT$ and so $f_1\colon \bT 
\rightarrow \bT_1$ is an isomorphism. Then Theorem~\ref{thmiso1}(b) shows 
that $f_1\colon \bG\rightarrow\bG_1$ also is an isomorphism. 
\end{proof}

\begin{cor} \label{Mregasai2}
Let $\bG$, $\bG'$ be connected reductive and $F\colon \bG\rightarrow \bG$, 
$F'\colon \bG' \rightarrow \bG'$ be Steinberg maps. Let $i\colon \bG 
\rightarrow \bG'$ be a homomorphism of algebraic groups such that $i\circ 
F=F'\circ i$ and $i(\bT)\subseteq \bT'$, where $\bT$ is an $F$-stable maximal
torus of $\bG$ and $\bT'$ is an $F'$-stable maximal torus of $\bG'$. Then
$i$ is a regular embedding if and only if the following three conditions hold.
\begin{itemize}
\item[(1)] $i$ is a central isotoy, i.e., the induced map $\varphi
\colon X(\bT')\rightarrow X(\bT)$ is a homomorphism of root data;
\item[(2)] the map $\varphi\colon X(\bT')\rightarrow X(\bT)$ is surjective; 
and
\item[(3)] $X(\bT')/\Z R'$ has no $p'$-torsion, where $R'$ are
the roots relative to~$\bT'$. 
\end{itemize}
\end{cor}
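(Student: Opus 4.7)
\medskip

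\noindent\textbf{Proof plan.} The plan is to combine the previous two technical results directly. Namely, condition (3) will handle the connectedness of $\bZ(\bG')$ via Lemma~\ref{Mconncent}, while conditions (1) and (2) will handle the geometric part of being a regular embedding via Lemma~\ref{Mregasai1}. The compatibility $i\circ F=F'\circ i$ is assumed on both sides of the equivalence, so it plays no role in what has to be proved.

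\medskip

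\noindent First I would establish the ``only if'' direction. Assume that $i\colon\bG\rightarrow\bG'$ is a regular embedding. Since $i$ is injective, we have $\ker(i)=\{1\}\subseteq\bZ(\bG)$; since $\bGder'=i(\bG)_{\text{der}}\subseteq i(\bG)$, we also have $\bGder'\subseteq i(\bG)$. Hence $i$ is an isotypy in the sense of \ref{Mdefcentralhom}. As $i$ is an isomorphism onto a closed subgroup, Lemma~\ref{Mregasai1} applies and yields that $i$ is a \emph{central} isotypy and that $\varphi\colon X(\bT')\rightarrow X(\bT)$ is surjective; this gives (1) and (2). Finally, by the definition of a regular embedding, $\bZ(\bG')$ is connected, so Lemma~\ref{Mconncent} (applied to $\bG'$ in place of $\bG$) gives condition~(3).

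\medskip

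\noindent Next I would prove the ``if'' direction. Assume that (1), (2), (3) hold. As observed in the remarks following \cite[II, Prop.~1.14]{Jan} and recalled in \ref{Mdefcentralhom}(b), a central isotypy is automatically an isotypy; in particular $\bG'=i(\bG).\bZ(\bG')$ and $i(\bGder)=\bGder'$, so $i(\bG)$ and $\bG'$ have the same derived subgroup. Now (1) together with (2) put us precisely in case~(ii) of Lemma~\ref{Mregasai1}, so $i$ is an isomorphism of $\bG$ onto a closed subgroup of $\bG'$. Finally, (3) and Lemma~\ref{Mconncent} (applied to $\bG'$) show that $\bZ(\bG')$ is connected. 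All three defining properties of a regular embedding are thus satisfied.

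\medskip

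\noindent The argument is essentially a two-line composition of Lemma~\ref{Mregasai1} with Lemma~\ref{Mconncent}, so there is no genuinely hard step. The only point that requires a moment's care is the verification that the conditions in the definition of a regular embedding imply (and are implied by) the hypotheses of Lemma~\ref{Mregasai1}; specifically, one has to notice that ``isomorphism onto a closed subgroup with matching derived subgroups'' already packages an isotypy, and conversely that an isotypy together with the conclusion of Lemma~\ref{Mregasai1} recovers the equality of derived subgroups via $\bG'=i(\bG).\bZ(\bG')$.
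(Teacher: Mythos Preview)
Your proof is correct and follows essentially the same route as the paper: both directions are reduced to Lemma~\ref{Mregasai1} (for conditions (1) and (2)) together with Lemma~\ref{Mconncent} (for condition (3)), with the observation that a central isotypy is automatically an isotypy ensuring the hypothesis of Lemma~\ref{Mregasai1} in the ``if'' direction. Your write-up is slightly more explicit than the paper's in checking that a regular embedding is an isotypy, but the argument is the same.
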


\begin{proof} Suppose that $i$ is a regular embedding. Since 
$i(\bGder)= \bGder'$, we have $\bG'=i(\bG).\bZ(\bG')$. So the general 
assumptions of Lemma~\ref{Mregasai1} plus condition (i) are satisfied. Hence,
the first two conditions hold; the third one holds because $\bZ(\bG')$ is
connected (see Lemma~\ref{Mconncent}). Conversely, if the 
above three conditions are satisfied, then $\bZ(\bG')$ is connected and 
Lemma~\ref{Mregasai1} shows that $i$ is an isomorphism of $\bG$ onto a 
closed subgroup of $\bG'$. Since $i$ is central, we have $\bG'=
i(\bG).\bZ(\bG')$ which implies that $\bGder'=i(\bGder)$. Hence, $i$ is 
a regular embedding. 
\end{proof}

\begin{abs} \label{Mdefdual2} Assume that $(\bG,F)$ and $(\bG^*,F^*)$ are 
in duality (see Definition~\ref{Mdefdual}), with respect to maximally 
split tori $\bT_0\subseteq \bG$ and $\bT_0^*\subseteq \bG^*$. Furthermore, 
assume that $(\bG',F')$ and $(\bG'^*, F'^*)$ are in duality, with respect 
to maximally split tori $\bT_0'\subseteq \bG'$ and $\bT_0'^*\subseteq 
\bG'^*$.  Let $f \colon \bG\rightarrow \bG'$ be a central isotypy such 
that $f\circ F= F'\circ f$ and $f(\bT_0)\subseteq \bT_0'$. Thus, the induced 
map $\varphi\colon X(\bT_0')\rightarrow X(\bT_0)$ is a homomorphism of 
root data as in \ref{Mhomrootdata}. But then the transpose map $\varphi^\trp
\colon Y(\bT_0) \rightarrow Y(\bT_0')$ defines a homomorphism of the dual 
root data. Using the isomorphisms $\delta^\trp\colon Y(\bT_0) \rightarrow
X(\bT_0^*)$ and $\delta'^\trp\colon Y(\bT_0') \rightarrow X(\bT_0'^*)$ 
from Definition~\ref{Mdefdual}, we obtain a map $\hat{\varphi}\colon 
X(\bT_0^*) \rightarrow X(\bT_0'^*)$ which is a homomorphism between the 
root data of $\bG'^*$ and $\bG^*$. Hence, by Theorem~\ref{thmiso1} 
(extended isogeny theorem), there exists a central isotypy 
$f^*\colon \bG'^*\rightarrow \bG^*$ which maps $\bT_0'^*$ 
into $\bT_0^*$ and induces $\hat{\varphi}$. Arguing as in 
Lemma~\ref{lemcompatiso}, one shows that $f^*$ can be chosen such that 
$f^*\circ F'^*=F^*\circ f^*$. In this situation, we say that 
the two central isotypies
\begin{center}
{\it $f\colon \bG\rightarrow \bG'$ and $f^*\colon \bG'^*\rightarrow \bG^*$ 
correspond to each other by duality.}
\end{center}
(This relation is symmetric.) With this notation, we can now state:
\end{abs}

\begin{lem} \label{Mregasai4} Let $f\colon \bG \rightarrow \bG'$ and
$f^*\colon \bG'^*\rightarrow \bG^*$ correspond to each other by duality,
as above. Assume that $f\colon \bG\rightarrow \bG'$ is an isomorphism
with a closed subgroup of $\bG'$. Then the following hold.
\begin{itemize}
\item[{\rm (a)}] $f^*\colon \bG'^*\rightarrow \bG^*$ is surjective and 
$\ker(f^*)$ is a central torus. 
\item[{\rm (b)}] $\bG'/f(\bG)$ is a torus and the pairs $(\ker(f^*),F'^*)$,
$(\bG'/f(\bG),\bar{F}')$ are in duality, where $\bar{F}'\colon 
\bG'/f(\bG)\rightarrow \bG'/f(\bG)$ is induced by $F'$.
\item[{\rm (c)}] The restricted map $f^*\colon (\bG'^*)^{F'^*}\rightarrow 
(\bG^*)^{F^*}$ is surjective.
\end{itemize}
\end{lem}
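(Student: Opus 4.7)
The plan is to translate everything into the character/cocharacter language of \ref{subsec17} and reduce each assertion to a statement about the map of root data $\varphi\colon X(\bT_0')\to X(\bT_0)$ induced by~$f$.

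For (a), I first apply Lemma~\ref{Mregasai1} to the hypothesis that $f$ is a closed embedding: this gives that $\varphi$ is surjective. Since $X(\bT_0)$ is free abelian, the short exact sequence $0\to\ker(\varphi)\to X(\bT_0')\to X(\bT_0)\to 0$ splits, so the transpose $\varphi^\trp\colon Y(\bT_0)\to Y(\bT_0')$ is injective with free cokernel. Via the duality isomorphisms $\delta^\trp,\delta'^\trp$ of \ref{Mdefdual2}, this transfers to $\hat\varphi\colon X(\bT_0^*)\to X(\bT_0'^*)$ being injective with free cokernel. By \ref{subsec17}(b), the restriction of $f^*$ to $\bT_0'^*$ is surjective onto $\bT_0^*$, with kernel $\bK^*$ satisfying $X(\bK^*)\cong X(\bT_0'^*)/\hat\varphi(X(\bT_0^*))$; freeness of this quotient forces $\bK^*$ to be a torus. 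Since $f^*$ is a central isotypy, $\ker(f^*)\subseteq\bZ(\bG'^*)\subseteq\bT_0'^*$, so $\ker(f^*)=\bK^*$ is a central torus. Finally, the isotypy property gives $\bG^*=f^*(\bG'^*)\cdot\bZ(\bG^*)$, and $\bZ(\bG^*)\subseteq\bT_0^*=f^*(\bT_0'^*)$, so $f^*$ is surjective.

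For (b), I first observe that $\bG'/f(\bG)$ is connected and abelian (since $\bGder'\subseteq f(\bG)$), and every unipotent element of $\bG'$ lies in some conjugate of the unipotent radical of a Borel subgroup, hence in $\bGder'\subseteq f(\bG)$; so $\bG'/f(\bG)$ has no non-trivial unipotent elements and is thus a torus by the criterion in \ref{subsec14}. To set up the duality, I would exhibit the short exact sequence of tori
\[1\longrightarrow f(\bT_0)\longrightarrow\bT_0'\longrightarrow\bG'/f(\bG)\longrightarrow 1,\]
which requires showing $\bT_0'\cap f(\bG)=f(\bT_0)$. This holds because $f(\bT_0)$ is a maximal torus of the connected reductive subgroup $f(\bG)$, so $C_{f(\bG)}(f(\bT_0))=f(\bT_0)$, and any element of $\bT_0'\cap f(\bG)$ centralises $f(\bT_0)$. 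Applying \ref{subsec17}(b) to this sequence identifies $X(\bG'/f(\bG))$ with $\ker(\varphi)$. Dualising and transferring through $\delta^\trp$, $\delta'^\trp$ as in (a) identifies $Y(\ker(f^*))$ with $\ker(\varphi)$ as well, yielding a canonical isomorphism $\delta_1\colon X(\bG'/f(\bG))\xrightarrow{\sim}Y(\ker(f^*))$. Compatibility of $\delta_1$ with the Frobenius maps $\bar{F}'$ and $F'^*$ follows by naturality from condition (b) of Definition~\ref{Mdefdual} for the original pair $(\delta,\delta')$.

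For (c), I simply apply Proposition~\ref{Msameorder}(b) to the surjection $f^*\colon\bG'^*\to\bG^*$: its kernel $\ker(f^*)$ is central (by the central isotypy hypothesis on $f^*$) and connected (being a torus, by (a)), so the ``$\bK$ connected'' hypothesis of that proposition is satisfied and gives $f^*\big((\bG'^*)^{F'^*}\big)=(\bG^*)^{F^*}$.

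The main technical obstacle will be the Frobenius compatibility in (b): writing down the isomorphism $\delta_1$ on the nose and checking that it respects $\bar{F}'$ and $F'^*$. All other parts are essentially reductions to the elementary dictionary of \ref{subsec17} between morphisms of tori and maps of their character and cocharacter groups.
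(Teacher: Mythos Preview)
Your proof is correct and follows essentially the same approach as the paper's: reduce everything to the character/cocharacter dictionary of \ref{subsec17}, use surjectivity of $\varphi$ to get injectivity of $\hat\varphi$ with free cokernel, and invoke Proposition~\ref{Msameorder}(b) for part~(c). The only minor difference is in (b): the paper shows directly that $\bT_0'/f(\bT_0)\to\bG'/f(\bG)$ is an isomorphism (using $\bG'=f(\bG).\bT_0'$ and the equality $\bT_0'\cap f(\bG)=f(\bT_0)$ that you prove anyway), which immediately gives that the quotient is a torus; your separate no-unipotents argument is correct but not needed once you have that isomorphism.
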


\begin{proof} (a) We follow \cite[2.5]{Bo3}. 
Let $\bT_0,\bT_0',\bT_0^*,\bT_0'^*$ be as in \ref{Mdefdual2}. By 
restriction, $f$ yields a closed embedding $f\colon \bT_0\rightarrow
\bT_0'$; let $\varphi\colon X(\bT_0')\rightarrow X(\bT_0)$ be the
induced map. Then \ref{subsec17}(a) implies that $\varphi$ is surjective
and $\ker(\varphi)\cong X(\bT_0'/f(\bT_0))$ is torsion-free. Hence, 
taking transpose maps and using the isomorphisms $\delta\colon X(\bT_0) 
\rightarrow Y(\bT_0^*)$ and $\delta'\colon X(\bT_0') \rightarrow 
Y(\bT_0'^*)$ from Definition~\ref{Mdefdual}, we obtain an exact 
sequence
\[ \{0\} \;\longrightarrow \; X(\bT_0^*)\;
\stackrel{\psi}{\longrightarrow} \;X(\bT_0'^*)\;
\longrightarrow \;\Hom(\ker(\varphi), \Z)\;\longrightarrow \;\{0\},\]
where $\psi$ is induced by $f^*\colon \bT_0'^*\rightarrow \bT_0^*$ and 
we have $\psi^\trp\circ \delta'=\delta\circ \varphi$. Now, by
\ref{subsec17}(b), we deduce that $f^*\colon \bT_0'^*\rightarrow \bT_0^*$ 
is surjective and that 
\[ X(\ker(f^*))\cong X(\bT_0'^*)/\psi(X(\bT_0^*))\cong \Hom(\ker(\varphi), 
\Z)\cong \Hom(X(\bT_0'/f(\bT_0)),\Z).\]
Hence, $X(\ker(f^*)$ is torsion-free and so $\ker(f^*)$ must be a torus. 
Since $\bG^*= f^*(\bG'^*).\bT_0^*$ and $f^*(\bT_0'^*)=\bT_0^*$, we also 
have $f^*(\bG'^*)=\bG^*$. 

(b) We follow \cite[2.6]{Bo3}. First note that
the inclusion $\bT_0'\subseteq \bG'$ induces an isomorphism of 
algebraic groups $\bT_0'/f(\bT_0)\rightarrow \bG'/f(\bG)$; in particular,
$\bG'/f(\bG)$ is a torus. Furthermore, from the above proof of (a), we 
deduce that there is an isomorphism $X(\ker(f^*))\rightarrow Y(\bT_0'/
f(\bT_0))$, and one easily checks that this is compatible with the actions 
of $F'$ and $F'^*$.

(c) Since $\ker(f^*)$ is connected, this follows from 
Proposition~\ref{Msameorder}(b).
\end{proof}

\begin{cor} \label{Mregasai4a} Let $i\colon \bG\rightarrow \bG'$ be a
regular embedding and $i^*\colon \bG'^*\rightarrow \bG^*$ be a 
corresponding dual homomorphism, as in \ref{Mdefdual2}. Then there is
an induced (non-canonical) isomorphism $\ker(i^*)^{F^*} 
\stackrel{\sim}{\rightarrow} \Irr(\bG'^F/i(\bG^F))$, $z\mapsto \theta_z$. 
(This isomorphism depends on the same choices as in Remark~\ref{Mdualtori}.)
\end{cor}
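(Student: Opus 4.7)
The plan is to combine Lemma~\ref{Mregasai4} with Remark~\ref{Mdualtori} via the identification of $\bG'^{F'}/i(\bG^F)$ with the fixed points of the quotient torus $\bG'/i(\bG)$.

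First I would note that by Lemma~\ref{Mregasai4}(b), the quotient $\bG'/i(\bG)$ is a torus, and if we denote by $\bar{F}'\colon \bG'/i(\bG)\rightarrow \bG'/i(\bG)$ the Steinberg map induced by $F'$, then the pairs $(\ker(i^*),F'^*)$ and $(\bG'/i(\bG),\bar{F}')$ are tori in duality. Hence Remark~\ref{Mdualtori} yields a (non-canonical) isomorphism of abelian groups
\[ \ker(i^*)^{F'^*} \;\stackrel{\sim}{\longrightarrow}\; \Irr\bigl((\bG'/i(\bG))^{\bar{F}'}\bigr),\qquad z\mapsto \theta_z,\]
depending on the same choices as in Remark~\ref{Mdualtori}.

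Next I would identify the right-hand side with $\Irr(\bG'^F/i(\bG^F))$. For this, apply Proposition~\ref{Msameorder} to the natural surjective homomorphism $\bG'\rightarrow \bG'/i(\bG)$: its kernel $i(\bG)\cong \bG$ is connected (since $i$ is an isomorphism of $\bG$ onto a closed subgroup and $\bG$ is connected), so $\cL(i(\bG))=i(\bG)$ in the notation of Proposition~\ref{Msameorder}, and this yields an exact sequence
\[ \{1\}\longrightarrow i(\bG)^{F'}\longrightarrow \bG'^{F'} \longrightarrow (\bG'/i(\bG))^{\bar{F}'}\longrightarrow \{1\}.\]
Since $i\colon \bG^F\stackrel{\sim}{\rightarrow} i(\bG)^{F'}$ is an isomorphism (as $i$ commutes with $F,F'$), this gives a canonical isomorphism
\[ \bG'^{F'}/i(\bG^F) \;\stackrel{\sim}{\longrightarrow}\; (\bG'/i(\bG))^{\bar{F}'},\]
whose right-hand side is abelian because $\bG'/i(\bG)$ is a torus. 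Taking irreducible complex characters preserves this isomorphism, so $\Irr(\bG'^F/i(\bG^F))\cong \Irr\bigl((\bG'/i(\bG))^{\bar{F}'}\bigr)$.

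Finally I would compose the two isomorphisms to obtain the desired $\ker(i^*)^{F'^*}\stackrel{\sim}{\rightarrow} \Irr(\bG'^{F'}/i(\bG^F))$, $z\mapsto \theta_z$. I do not foresee a real obstacle: the only point requiring care is checking that $F'^*$ preserves $\ker(i^*)$ (which is immediate from the equivariance $i^*\circ F'^*=F^*\circ i^*$ built into \ref{Mdefdual2}, so that $F^*$ in the statement may be read as the restriction of $F'^*$ to the central subtorus $\ker(i^*)$) and that all three isomorphisms above are compatible with the actions involved. Once the duality between the two tori $\ker(i^*)$ and $\bG'/i(\bG)$ is in hand from Lemma~\ref{Mregasai4}(b), the rest is assembly.
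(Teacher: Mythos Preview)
Your overall strategy matches the paper's: both use Lemma~\ref{Mregasai4}(b) to put $\ker(i^*)$ and $\bG'/i(\bG)$ in duality as tori, then invoke Remark~\ref{Mdualtori}, and both must identify $\bG'^{F'}/i(\bG^F)$ with the $\bar{F}'$-fixed points of the quotient torus.

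There is, however, a citation problem in your identification step. Proposition~\ref{Msameorder} assumes the kernel $\bK$ is contained in the \emph{center} of the domain; in your application $\bK=i(\bG)\supseteq\bGder'$, which is certainly not central in $\bG'$. The conclusion you want---surjectivity of $\bG'^{F'}\to(\bG'/i(\bG))^{\bar{F}'}$---is nonetheless correct and follows directly from Theorem~\ref{langst} applied to the connected normal subgroup $i(\bG)$: lift $\bar{g}$ to $g'\in\bG'$, observe $g'^{-1}F'(g')\in i(\bG)$, write it as $h^{-1}F'(h)$ with $h\in i(\bG)$ by Lang--Steinberg, and note $g'h^{-1}\in\bG'^{F'}$. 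So your argument is easily repaired by replacing the appeal to Proposition~\ref{Msameorder} with this direct use of Lang--Steinberg (compare the argument in Proposition~\ref{Mfrobconj0}).

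The paper's proof sidesteps this issue by routing through maximal tori instead: it uses the isomorphism $\bT_0'/i(\bT_0)\stackrel{\sim}{\to}\bG'/i(\bG)$ (already noted in the proof of Lemma~\ref{Mregasai4}) together with $\bG'^{F'}=\bG^F\cdot\bT_0'^{F'}$ and $\bT_0^F=\bG^F\cap\bT_0'^{F'}$ from Remark~\ref{Mremregemb1}(b) to obtain $\bG'^{F'}/i(\bG^F)\cong\bT_0'^{F'}/i(\bT_0^F)$ by the second isomorphism theorem, so that only Lang--Steinberg on the (central, connected) subgroup $i(\bT_0)\subseteq\bT_0'$ is needed.
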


\begin{proof} As in the proof of Lemma~\ref{Mregasai4}, the inclusion
$\bT_0'\subseteq \bG'$ induces an isomorphism $\bT_0'/i(\bT_0)\rightarrow 
\bG'/i(\bG)$. Since $i(\bT_0)$ and $i(\bG)$ are connected, we have
an induced isomorphism $\bT_0'^F/i(\bT_0^F)\cong \bG'^F/i(\bG^F)$ and,
hence, an isomorphism $\Irr(\bT_0'^F/i(\bT_0^F))\cong \Irr(\bG'^F/
i(\bG^F))$. By Lemma~\ref{Mregasai4}, $\ker(i^*)$ and $\bG'/i(\bG)$ 
are tori in duality, so it remains to use the isomorphism in 
Corollary~\ref{Mdualtori}.
\end{proof}

The above results will play a certain role in the study of ``Lusztig
series'' of characters of finite groups of Lie type; see 
a later section. The following result is cited in 
\cite[8.8]{LuB}, \cite[8.1]{Lu5}, \cite[0.1]{LuRem} in relation to 
certain reduction arguments; it appears in the unpublished manuscript 
\cite{Asai}.

\begin{lem}[Asai \protect{\cite[\S 2.3]{Asai}}] \label{Mregasai5} Let 
$\bG$ be connected reductive and $F\colon \bG\rightarrow \bG$ a
Steinberg map. Then there exists a connected reductive group 
$\tilde{\bG}$, a Steinberg map $\tilde{F}\colon \tilde{\bG} 
\rightarrow \tilde{\bG}$ and a homomorphism of algebraic groups
$f\colon \tilde{\bG}\rightarrow \bG$, such that:
\begin{itemize}
\item[(a)] $\tilde{\bG}_{\operatorname{der}}$ is semisimple of
simply-connected type,
\item[(b)] $f$ is surjective and $F \circ f=\tilde{F}\circ f$,
\item[(c)] $\ker(f)$ is a central torus of
$\tilde{\bG}$.
\end{itemize}
In particular, $f$ induces a surjective homomorphism of finite groups
$\tilde{\bG}^{\tilde{F}}\rightarrow \bG^F$. Furthermore, if $\bG$ has a
connected center, then $\tilde{\bG}$ has a connected center, too.
\end{lem}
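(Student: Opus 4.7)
The plan is to build $\tilde{\bG}$ as a quotient of the natural isogenous cover $\bZ^\circ\times\bGsc$ of $\bG$, enlarged by an auxiliary torus whose role is to \emph{fatten} the finite kernel into a torus.

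First I would invoke \ref{Mssquot} and Proposition~\ref{st916} to produce a simply-connected cover $\pi_{\operatorname{sc}}\colon\bGsc\to\bGder$ together with a Steinberg map $F_{\operatorname{sc}}$ on $\bGsc$ intertwined with $F$. The multiplication map
\[
\mu\colon \bZ^\circ\times\bGsc\to\bG,\qquad (z,g)\mapsto z\,\pi_{\operatorname{sc}}(g),
\]
is surjective and $F$-equivariant (for the product Steinberg map), and its kernel is a finite central subgroup isomorphic via the second projection to $K_0:=\pi_{\operatorname{sc}}^{-1}(A)$, where $A:=\bZ^\circ\cap\bGder$. Because $\bZ^\circ$ is a torus (so consists of elements of order prime to $p$ by \ref{subsec14}) and because $\ker\pi_{\operatorname{sc}}$ lies in the center of the semisimple group $\bGsc$ (which is itself prime-to-$p$ as a set of $k$-points), the group $K_0$ is a finite abelian $p'$-group equipped with an action of $F$ of finite order.

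Next I would select an auxiliary torus $\bT_1$, a Steinberg map $F_1$ on $\bT_1$, and an $F$-equivariant closed embedding $\iota\colon K_0\hookrightarrow \bT_1$. Such data always exist: writing $K_0$ as a direct sum of finite cyclic $p'$-groups one embeds $K_0$ into a product of copies of $\bkm$, and by enlarging the torus (and, if necessary, cycling its factors in the manner of Example~\ref{expdelu1}) one can arrange that $F_1$ realizes the prescribed action of $F$ on $K_0$ through $\iota$. I would then form
\[
\tilde{\bG}:=(\bZ^\circ\times\bGsc\times\bT_1)/\Delta,
\]
where $\Delta:=\{(\pi_{\operatorname{sc}}(h)^{-1},\,h,\,\iota(h)^{-1})\mid h\in K_0\}$ is an ``antidiagonal'' copy of $K_0$. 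This $\Delta$ is a finite central subgroup stable under the product Steinberg map $F_{\bZ^\circ}\times F_{\operatorname{sc}}\times F_1$; hence by \ref{subsec1quot} and Lemma~\ref{Mgenfrob2} the group $\tilde{\bG}$ is connected reductive and the product map descends to a Steinberg map $\tilde{F}$.

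Finally I would verify the stated properties by direct computation. The assignment $(z,g,t)\mapsto\mu(z,g)$ is constant on $\Delta$-cosets and induces a surjective $F$-equivariant homomorphism $f\colon\tilde{\bG}\to\bG$; the third projection $(z,g,t)\mapsto t$ induces an isomorphism $\ker(f)\cong\bT_1$, giving~(c). For~(a), the image of $\{1\}\times\bGsc\times\{1\}$ meets $\Delta$ only in the identity (using injectivity of $\iota$ on $\ker\pi_{\operatorname{sc}}$), hence embeds as $\bGsc$, and this image is $\tilde{\bG}_{\operatorname{der}}$ since the derived subgroup of a product of a torus and a semisimple group is the semisimple factor. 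Surjectivity on the level of finite groups follows from Proposition~\ref{Msameorder}(b) applied to the connected kernel $\bT_1$. For the connected-center addendum: $\ker(f)=\bT_1$ is central in $\tilde{\bG}$, so by \ref{Mconnredcomp}(c) there is a short exact sequence $1\to\bT_1\to\bZ(\tilde{\bG})\to\bZ(\bG)\to 1$, and an extension of connected algebraic groups by a connected algebraic group is connected. The main obstacle, I expect, is the construction of the auxiliary pair $(\bT_1,\iota)$ with the correct $F$-equivariance; once this is in place the remainder is bookkeeping with the quotient machinery of Section~\ref{sec00}.
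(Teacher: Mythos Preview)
Your approach is correct but takes a genuinely different route from the paper. The paper argues via duality: it chooses a regular embedding $i\colon\bG^*\hookrightarrow\bH$ (Lemma~\ref{Mregemex}), dualizes to a surjection $i^*\colon\bH^*\to\bG$ with connected central kernel (Lemma~\ref{Mregasai4}), and then notes via Lemma~\ref{Mconncent} that $\bH^*$ has bijective simply-connected cover of its derived group, reducing to a trivial case. Your ``fattening'' construction is more elementary and avoids dual groups entirely; the paper's route, on the other hand, ties the result to the regular-embedding/duality machinery that drives the rest of the section, and makes the connected-center addendum immediate from Lemma~\ref{Mconncent}.

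Two small points. The obstacle you flag has a one-line resolution you may have missed: since $K_0\subseteq\bZ(\bGsc)$ lies in every maximal torus, you can take $\bT_1$ to be an $F_{\operatorname{sc}}$-stable maximal torus of $\bGsc$ with $F_1:=F_{\operatorname{sc}}|_{\bT_1}$ and $\iota$ the inclusion. This choice also ensures that the product map $F|_{\bZ^\circ}\times F_{\operatorname{sc}}\times F_1$ is a Steinberg map, which is \emph{not} automatic for an arbitrary $F_1$ (a product of Steinberg maps with mismatched $q$'s need not be Steinberg; cf.\ Example~\ref{Mdiffexmp}), so your Example~\ref{expdelu1}-style construction would need extra care here. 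Second, the third projection $(z,g,t)\mapsto t$ does not descend to $\tilde\bG$, so it cannot literally ``induce'' the isomorphism $\ker(f)\cong\bT_1$; what works instead is that $t\mapsto[(1,1,t)]$ embeds $\bT_1$ into $\tilde\bG$ with image exactly $\ker(f)$.
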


\begin{proof} Asai \cite{Asai} shows this by explicitly constructing
the appropriate root datum for $\tilde{\bG}$ and then using 
Theorem~\ref{thmiso1} (extended isogeny theorem). Here is
a more direct argument. Let $\pi_{\text{sc}}\colon (\bGder)_{\text{sc}} 
\rightarrow \bGder$ be a simply-connected covering of the derived group of 
$\bG$, as in Remark~\ref{Msccover}. Assume first that $\pi_{\text{sc}}$ 
is bijective. Let $\bZ$ be the center of $\bG$. We have an isogeny
\[ f_1\colon (\bGder)_{\text{sc}}\times \bZ^\circ\rightarrow \bG,
\qquad (g,z)\mapsto \pi_{\text{sc}}(g)z.\]
Let $\tilde{\bG}:=((\bGder)_{\text{sc}}\times \bZ^\circ)/\ker(f_1)$. 
Then $f_1$ induces a bijective morphism of algebraic groups 
$f\colon \tilde{\bG}\rightarrow \bG$ and one easily verifies that (a), 
(b), (c) hold. Furthermore, since $f$ is bijective, the center of 
$\tilde{\bG}$ is connected if and only if $\bZ$ is connected.

Now consider the general case, where $\pi_{\text{sc}}$ may not be bijective.
By Lemma~\ref{Mregemex}, there exists a regular embedding $i \colon 
\bG^*\rightarrow \bH$. By duality, we obtain a homomorphism of 
algebraic groups $i^*\colon \bH^*\rightarrow \bG$; note that, as
remarked in Definition~\ref{Mdefdual}, we can identify $(\bG^*)^*$ with 
$\bG$. By Lemma~\ref{Mregasai4}, $i^*$ is surjective and $\ker(i^*)$
is a central torus of $\bH^*$; furthermore, by Lemma~\ref{Mconncent}, the 
simply-connected covering $(\bH_{\text{der}}^*)_{\text{sc}}\rightarrow 
\bH_{\text{der}}^*$ is bijective. By the previous argument, there exists 
a bijective homomorphism of algebraic groups $f_1\colon \tilde{\bG}
\rightarrow \bH^*$ such that (a), (b), (c) hold. Then (a), (b), (c) hold 
for the composition $f=i^*\circ f_1\colon \tilde{\bG}\rightarrow \bG$. 
Finally, assume that $\bZ$ is connected. Now the derived subgroup of $\bH$ 
is isomorphic to that of $\bG^*$. Hence, Lemma~\ref{Mconncent} implies 
that the center of $\bH^*$ is connected as well. Since $f_1$ is bijective, 
it follows that $\tilde{\bG}$ also has a connected center.
\end{proof} 

\begin{exmp} \label{Mregasai6} Assume that $\bG$ is semisimple and let 
$i\colon \bG\rightarrow \bG'$ be a regular embedding. Applying 
Lemma~\ref{Mregasai5} to $\bG'$, we obtain a homomorphism of algebraic 
groups $f\colon \tilde{\bG}'\rightarrow\bG'$ satisfying the above three 
conditions. Furthermore, since $\bZ(\bG')$ is connected, we have that 
$\bZ(\tilde{\bG}')$ is connected, too. Now $f(\tilde{\bG}_{\text{der}}')=
\bGder=\bG$ and so, by restriction, we obtain an isogeny $\tilde{f}
\colon \tilde{\bG}_{\text{der}}' \rightarrow \bG$ which is a 
simply-connected covering of $\bG$. We have a commutative diagram:
\[\renewcommand{\arraystretch}{1.3} \begin{array}{ccc}  
\bG & \stackrel{i}{\longrightarrow} & \bG'\\
\uparrow & &  \uparrow \\ \tilde{\bG}_{\text{der}}' & \hookrightarrow &
\tilde{\bG}'\end{array}\]
In this sense, a simply-connected covering of $\bG$ can always be chosen 
to be compatible with the given regular embedding $i\colon \bG\rightarrow 
\bG'$. (This remark appears in \cite[8.1(d)]{Lu5}; it will be relevant in 
the discussion of Lusztig series in a later section.)
\end{exmp}

The following result was first stated (for $\K$ of characteristic~$0$) by 
Lusztig \cite[Prop.~10]{Lu5}, together with a reduction argument which 
reduces the proof to the case where $\bG$ is simple of simply-connected type. 
As far as such groups are concerned, one can use 
Proposition~\ref{Mexpregemb1}, which shows that type $D_n$ with $n$ even 
is the most complicated case to deal with. A full proof for this case 
appeared only much later, first in \cite[Chap.~16]{CaEn}, and then in 
\cite[\S 5]{Lu08a}.

\begin{thm}[Multiplicity-Freeness Theorem] \label{multfree} Let $i 
\colon \bG\rightarrow \bG'$ be a regular embedding and $\K$ be any
algebraically closed field. Then the restriction of every simple 
$\K\bG'^{F'}$-module to $\bG^F$ (via $i$) is multiplicity-free.
\end{thm}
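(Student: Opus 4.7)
The plan is to apply Clifford theory to the normal subgroup inclusion $H := i(\bG^F)\trianglelefteq G := \bG'^{F'}$. The quotient $G/H$ is abelian by Remark~\ref{Mremregemb1}(c), since $(\bZ/\bZ^\circ)_F$ is. For a simple $\K G$-module $V$, Clifford's theorem yields
\[ V|_H \;\cong\; e\cdot \bigoplus_{[W']\in \Omega_V} W', \]
where $\Omega_V$ is a single $G/H$-orbit of isomorphism classes of simple $\K H$-modules and $e\geq 1$ is the ramification index. Fixing a representative $W$, setting $T := \operatorname{Stab}_G([W])$, and writing $V \cong \Ind_T^G U$ for the unique simple $\K T$-module $U$ above $W$, the equality $e=1$ is equivalent to $W$ extending to a $\K T$-module. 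The obstruction is a class in $H^2(T/H, \K^{\times})$, so my goal is to show this class vanishes for every $V$ and every choice of $W$.

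My first step is to reduce to the case where $\bG$ is simple of simply-connected type. Using Example~\ref{Mregasai6}, any regular embedding $i\colon \bG\to \bG'$ fits into a commutative diagram where $\bG$ is replaced by its simply-connected cover and $\bG'$ by a connected reductive group $\tilde{\bG}'$ with connected center and simply-connected derived subgroup. By Remark~\ref{Msccover}(b) and Proposition~\ref{Msameorder}, the kernels of the vertical maps on fixed-point sets are central and the quotient on fixed points is abelian of order prime to $p$, and the cohomological obstruction to extending an irreducible character is preserved under such pullbacks and pushforwards. Using the direct product decomposition of $\tilde{\bG}'_{\operatorname{der}}$ in Proposition~\ref{MdirprodG}, together with Lemma~\ref{Fsimpleorder} applied to each $F$-orbit of simple factors, the problem reduces to the case of a simple simply-connected $\bG$ equipped with a Steinberg map $F$ and a regular embedding $i\colon \bG\to \bG'$ as in Lemma~\ref{Mregemex}.

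For such $\bG$, Proposition~\ref{Mexpregemb1} gives us two cases. In the ``cyclic'' case~(b), the group $G/H$ is cyclic, hence so is every $T/H$; consequently $H^2(T/H,\K^\times)=0$ for any algebraically closed $\K$, and $W$ extends automatically. The remaining case is~(a): $\bG\cong \Spin_{2n}(k)$ with $n$ even, $\operatorname{char}(k)\neq 2$, $F$ untwisted, with $G/H$ surjecting onto $\Z/2\Z\times \Z/2\Z$.

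The hard part will be this exceptional $D_n$-case, where $T/H$ can be $\Z/2\Z\times \Z/2\Z$ and $H^2(T/H,\K^\times)$ is in general non-trivial. My plan here is to exploit the explicit description
\[\bZ(\bG)=\{h(\xi_1,\xi_2,1,\xi_1\xi_2,1,\xi_1\xi_2,\ldots)\mid \xi_1^2=\xi_2^2=1\}\]
from Example~\ref{Mcentersc} to split the offending $\Z/2\Z\times \Z/2\Z$ into two cyclic factors. For $j=1,2$, each of the two one-dimensional $F$-stable subtori $\bS_j\subseteq \bS$ containing a single $\Z/2\Z$-factor of $\bZ(\bG)$ produces, via Lemma~\ref{Mregemex}, an intermediate regular embedding $\bG\hookrightarrow \bG_j'\hookrightarrow \bG'$ with $\bG_j'^F/\bG^F$ cyclic of order~$2$; the cyclic case already treated shows that $W$ extends across each step. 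The remaining task is to assemble these two commuting extensions into a single extension across the full $\Z/2\Z\times \Z/2\Z$, which amounts to proving that the $2$-cocycle measuring their incompatibility is a coboundary. Following the bookkeeping of Lusztig \cite[\S 10]{Lu5} and its completion in \cite[Chap.~16]{CaEn} and \cite[\S 5]{Lu08a}, one shows that the two lifts of the involutions in $\bZ(\bG')^F/\bZ(\bG)^F$ can be chosen to commute in their action on $W$ using the freedom to twist by scalars, whence the cocycle is trivial and $e=1$.
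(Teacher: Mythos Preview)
Your reduction to the simply-connected simple case and the treatment of the cyclic quotient are in line with the paper's approach, and the overall Clifford-theoretic framing is correct. The genuine gap is in your handling of the exceptional $D_n$ case ($n$ even, untwisted, $\operatorname{char}(k)\neq 2$), which is precisely where the entire difficulty of the theorem is concentrated.

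Your proposed argument there does not work. You want to extend $W$ separately across the two cyclic $\Z/2\Z$ subgroups and then ``assemble'' the two extensions, claiming that ``the two lifts of the involutions \dots\ can be chosen to commute in their action on $W$ using the freedom to twist by scalars''. But this is exactly what is not automatic: if $\sigma_1,\sigma_2$ are lifts of the two commuting involutions to $\GL(W)$, then by Schur's lemma $[\sigma_1,\sigma_2]=c\cdot\id_W$ for some scalar $c$ with $c^2=1$, and replacing $\sigma_i$ by $\lambda_i\sigma_i$ leaves this commutator unchanged. So the freedom to twist by scalars gives you nothing; the sign $c\in\{\pm 1\}$ is an honest invariant, and it is the nontrivial element of $H^2(\Z/2\Z\times\Z/2\Z,\K^\times)\cong\Z/2\Z$ (for $\operatorname{char}(\K)\neq 2$) that you must rule out by other means. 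Your two intermediate extensions exist, but there is no cohomological mechanism forcing them to be compatible.

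The paper's proof of this case is not a cocycle manipulation at all. It first disposes of $\operatorname{char}(\K)=2$ and $\operatorname{char}(\K)=p$ by separate general arguments, and then for $\operatorname{char}(\K)\notin\{2,p\}$ it invokes the elaborate counting argument of \cite[Chap.~16]{CaEn} and \cite[\S 5]{Lu08a}: one determines how the four linear characters of $G'/H$ permute the simple $\K G'$-modules under tensoring, and one counts conjugacy classes and irreducible characters of $\Spin_{2n}(q)$ to see that the numbers are incompatible with any $V$ having $e=2$. As the paper remarks, this is ``very long and unpleasant'' and was only completed long after Lusztig's original statement. You cannot replace it by the formal argument you sketch; a genuine input specific to $\Spin_{2n}(q)$ is required.
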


\begin{proof} We can only sketch the general strategy here, and highlight
where the principal difficulty of the proof lies. First we note 
that the reduction argument described in the proof of \cite[Prop.~10]{Lu5} 
works for simple modules over any algebraically closed field $\K$, not just 
for $\mbox{char}(\K)=0$. (Some adjustments of a different kind are 
required, since Lusztig considers Frobenius maps, not Steinberg maps in
general.) Hence, it suffices to prove the theorem in the case where $\bG$ 
is simple of simply-connected type. Furthermore, the reduction argument
shows that it is sufficient to consider only one particular regular 
embedding $i\colon\bG \rightarrow \bG'$, namely, one satisfying the 
conditions in Proposition~\ref{Mexpregemb1}. So let us now assume that 
these conditions are satisfied. 

If $\bG'^F/i(\bG^F)$ is cyclic, then a 
standard result on representations of finite groups shows that the 
desired assertion holds; see, e.g., \cite[Theorem~III.2.14]{Feit}. (This 
uses that $\K$ is algebraically closed, but works without any assumption 
on $\mbox{char}(\K)$.) 

It remains to consider the case where $\bG$ is of type $D_n$ with $n$ 
even, $\mbox{char}(k) \neq 2$ and $F$ is ``untwisted''. Let us identify $\bG$
with $i(\bG)$ and use the notational conventions in Remark~\ref{Mremregemb1}.
Writing $G=\bG^F$, $G'=\bG'^F$, $H:=G.\bZ'^F$, we have  
\[H\trianglelefteq G'\quad\mbox{and}\quad G'/H \cong (\bZ/\bZ^\circ)_F=
\bZ \cong \Z/2\Z\times \Z/2\Z.\]
Let $V$ be a simple $\K G'$-module and  denote by $V_H$ its restriction to
$H$. Since $H=G.\bZ'^F$ and $\bZ'^F$ is contained in the center of $G'$, 
it is sufficient to show that $V_H$ is multiplicity-free. (To see this, 
one only needs to show that non-isomorphic simple $H$-submodules of 
$V_H$ remain simple and non-isomorphic upon restriction to $G$. And this 
easily follows, for example, by the argument in \cite[p.~265]{bs2}.) Now, 
if $\mbox{char} (\K)=2$, then $V_H$ is multiplicity-free by some general 
results on representations of finite groups; see, e.g., 
\cite[Lemma~3.14]{KlTi}. If $\mbox{char}(\K)= \mbox{char}(k)=p$, then 
$V_H$ is even simple by \cite[Lemma~3.4]{BruLu}. 

So, finally, assume that $\mbox{char}(\K)\neq \mbox{char}(k)$ and
$\mbox{char}(\K)\neq 2$. In particular, $\mbox{char}(\K)$ is either $0$ or
a prime not dividing the index of $H$ in $G'$. By Clifford's Theorem 
(see \cite[Theorem~VII.9.18]{Huppert2}), $V_H$ is semisimple and there 
are two possibilities: either $V_H$ is multiplicity-free (with $1$, $2$ or
$4$ irreducible constituents) or the direct sum of $2$ copies of a simple 
$\K H$-module. In the case where $\mbox{char}(\K)=0$, it is shown by an 
elaborate counting argument (first published in \cite{CaEn}; see also
\cite{Lu08a}) that the second type does not occur. This argument involves:
\begin{itemize}
\item knowledge of the action (by tensor product) of the four 
$1$-dimensional representations of $G'/H$ on the simple $\K G'$-modules;
\item counting conjugucay classes and simple modules for $\mbox{Spin}_{2n}
(q)$. (As noted in \cite[\S 13]{Lu5}, this is ``very long and unpleasant''.) 
\end{itemize}
Finally, it is shown in \cite[\S 3]{bs2}, using the results on basic sets 
of Brauer characters in \cite{bs1}, that Lusztig's argument can be adapted 
to work as well when $\mbox{char}(\K)>0$ (but still $\mbox{char}(\K)\neq 
\mbox{char}(k)$ and $\mbox{char}(\K)\neq 2$). 
\end{proof}

It would be highly desirable to find a more conceptual proof of this 
result which does not rely on a case-by-case analysis and the counting 
arguments for $\mbox{Spin}_{2n}(q)$.


\begin{theindex}

  \item adjoint quotient, 47
  \item adjoint representation, 9
  \item affine algebraic group, 3
  \item affine variety, 2
  \item algebraic $BN$-pair, 11

  \indexspace

  \item base, 14
  \item $BN$-pair, 10
  \item Bruhat cells, 11
  \item Bruhat decomposition, 11

  \indexspace

  \item Cartan matrix, 14, 15
  \item Cartan type, 14
  \item central isogeny, 30
  \item central isotypy, 30
  \item character group, 8
  \item characteristic exponent, 28
  \item co-character group, 8
  \item complete root datum, 55
  \item conformal group, 60
  \item connected, 3

  \indexspace

  \item defined over ${\mathbb  {F}}_q$, 31
  \item diagonal automomorphisms, 47
  \item dual complete root datum, 58
  \item dual root datum, 12
  \item duality, 49
  \item Dynkin diagram, 15

  \indexspace

  \item Ennola dual, 58

  \indexspace

  \item $F$-simple, 48
  \item finite group of Lie type, 33
  \item finite reductive group, 33
  \item Frobenius map, 31
  \item fundamental group, 15, 48

  \indexspace

  \item general orthogonal group, 5
  \item general unitary group, 37
  \item generic finite reductive group, 55

  \indexspace

  \item homomorphism of root data, 12

  \indexspace

  \item indecomposable Cartan matrix, 15
  \item isogeny, 27
  \item isogeny theorem, 27
  \item isotypy, 30

  \indexspace

  \item Jordan decomposition of elements, 6

  \indexspace

  \item length function, 10
  \item linear algebraic group, 3

  \indexspace

  \item maximal toric sub-datum, 58
  \item maximally split, 33, 49, 51

  \indexspace

  \item order polynomial, 56

  \indexspace

  \item $p$-isogeny of root data, 16
  \item perfect pairing, 8
  \item positive roots, 14
  \item preservation results, 27
  \item projective linear group, 26

  \indexspace

  \item rank, 14
  \item rational structure, 31
  \item reduced expression, 10
  \item reductive, 8
  \item reductive $BN$-pair, 23
  \item reductive $BN$-pair, 11
  \item regular embedding, 60
  \item regular functions, 2
  \item representation, 9
  \item root datum, 12
  \item root datum of adjoint type, 19
  \item root datum of simply-connected type, 19
  \item root exponents, 16, 28
  \item root space decomposition, 9
  \item root subgroup, 22
  \item roots, 9

  \indexspace

  \item semidirect product (of algebraic groups), 7
  \item semisimple, 6, 10
  \item semisimple group of adjoint type, 42
  \item semisimple group of simply-connected type, 42
  \item series of finite groups of Lie type, 56
  \item simple algebraic group, 8
  \item simply-connected covering, 47
  \item special linear group, 26
  \item special orthogonal group, 5
  \item standard Frobenius map, 31
  \item Steinberg map, 33
  \item symplectic group, 4

  \indexspace

  \item tangent space, 5
  \item Tits system, 10
  \item toric datum, 58
  \item torus, 6
  \item torus of type $w$, 59

  \indexspace

  \item unipotent, 6
  \item unipotent radical, 7

  \indexspace

  \item weight spaces, 9
  \item weights, 9
  \item Weyl group, 10, 12

  \indexspace

  \item Zariski topology, 2

\end{theindex}

\end{document}